\DeclareFontFamily{U}{mathx}{\hyphenchar\font45}
\DeclareFontShape{U}{mathx}{m}{n}{
	<5> <6> <7> <8> <9> <10>
	<10.95> <12> <14.4> <17.28> <20.74> <24.88>
	mathx10
}{}
\DeclareSymbolFont{mathx}{U}{mathx}{m}{n}
\DeclareMathAccent{\widecheck}{0}{mathx}{"71}
\DeclareMathOperator{\Nilp}{Nilp}
\DeclareMathOperator{\id}{id}
\DeclareMathOperator{\Hom}{Hom}
\DeclareMathOperator{\nc}{nc}
\DeclareMathOperator{\LAC}{IC}
\DeclareMathOperator{\cb}{cb}
\DeclareMathOperator{\rk}{rk}
\DeclareMathOperator{\End}{End}
\DeclareMathOperator{\GL}{GL}
\DeclareMathOperator{\tr}{tr}
\DeclareMathOperator{\opm}{M}
\newcommand{\ve}{\varepsilon}
\newcommand{\N}{\mathbb{N}}
\newcommand{\R}{\mathbb{R}}
\newcommand{\C}{\mathbb{C}}
\newcommand{\B}{\mathbb{B}}
\newcommand{\BB}{\mathbb{B}}
\newcommand{\cA}{\mathcal{A}}
\newcommand{\cB}{\mathcal{B}}
\newcommand{\cC}{\mathcal{C}}
\newcommand{\cI}{\mathcal{I}}
\newcommand{\cM}{\mathcal{M}}
\newcommand{\cO}{\mathcal{O}}
\newcommand{\cS}{\mathcal{S}}
\newcommand{\cT}{\mathcal{T}}
\newcommand{\cU}{\mathcal{U}}
\newcommand{\mm}{\mathbbm m}
\newcommand{\kk}{\mathbbm k}
\newcommand{\ckk}{\overline{\kk}}
\newcommand{\lac}{canonical intertwining conditions\xspace}
\newcommand{\Lac}{Canonical intertwining conditions\xspace}
\newcommand{\gX}{\Xi}
\newcommand{\glc}{\GL_n(\C)}
\newcommand{\uc}{\operatorname{U}_n(\C)}
\newcommand*{\mat}[1]{\opm_{#1}(\kk)}
\newcommand*{\matc}[1]{\opm_{#1}(\C)}
\newcommand{\mats}{\opm_s(\kk)}
\newcommand{\matn}{\opm_n(\kk)}
\newcommand{\matcs}{\opm_s(\C)}
\newcommand{\matcn}{\opm_n(\C)}
\newcommand*{\gl}[1]{\GL_{#1}(\kk)}
\newcommand{\gm}{\operatorname{GM}_n}
\newcommand{\tn}{\operatorname{T}_n}
\newcommand{\ud}{\operatorname{UD}_n}
\newcommand{\gms}{\widehat{\operatorname{GM}}_n}
\newcommand{\uds}{\widehat{\operatorname{UD}}_n}
\newcommand{\matser}{\opm_n(\kk[[\xi]])}
\newcommand{\matquot}{\opm_n(\kk((\xi)))}
\newcommand{\Oa}{\cO^{\operatorname{a}}}
\newcommand{\Oua}{\cO^{\operatorname{ua}}}
\newcommand{\Ma}{\cM^{\operatorname{a}}}
\newcommand{\Mua}{\cM^{\operatorname{ua}}}
\newcommand{\Langle}{\mathop{<}\!}
\newcommand{\Rangle}{\!\mathop{>}}
\newcommand{\mx}{\Langle x \Rangle}
\newcommand{\px}{\kk\!\mx}
\newcommand{\py}{\kk\!\Langle y \Rangle}
\newcommand{\pyc}{\C\!\Langle y \Rangle}
\newcommand{\ser}{\kk\!\Langle\!\!\mx\!\!\Rangle}
\newcommand{\dser}[1]{{#1}\!\Langle\!\!\mx\!\!\Rangle}
\newcommand{\usf}{\cU}
\def\moverlay{\mathpalette\mov@rlay}
\def\mov@rlay#1#2{\leavevmode\vtop{
		\baselineskip\z@skip \lineskiplimit-\maxdimen
		\ialign{\hfil$#1##$\hfil\cr#2\crcr}}}
\newcommand{\pxc}{\C\!\mx}
\newcommand{\serc}{\C\!\Langle\!\!\mx\!\!\Rangle}
\newcommand{\matserc}{\opm_n(\C[[\xi]])}
\newcommand{\matquotc}{\opm_n(\C((\xi)))}
\newtheorem{thm}{Theorem}[section]
\newtheorem{lem}[thm]{Lemma}
\newtheorem{cor}[thm]{Corollary}
\newtheorem{prop}[thm]{Proposition}
\newtheorem{thmA}{Theorem}
\theoremstyle{definition}
\newtheorem{defn}[thm]{Definition}
\newtheorem{exa}[thm]{Example}
\theoremstyle{remark}
\newtheorem{rem}[thm]{Remark}
\numberwithin{equation}{section}
\begin{document}
	
\setcounter{tocdepth}{3}
\contentsmargin{2.55em} 
\dottedcontents{section}[3.8em]{}{2.3em}{.4pc} 
\dottedcontents{subsection}[6.1em]{}{3.2em}{.4pc}
\dottedcontents{subsubsection}[8.4em]{}{4.1em}{.4pc}

\makeatletter
\def\enddoc@text{\ifx\@empty\@translators \else\@settranslators\fi
	\ifx\@empty\addresses \else\@setaddresses\fi
	\newpage\tableofcontents\newpage\printindex}
\makeatother

\setcounter{page}{1}

\title[Local theory of free nc functions]{Local theory of free noncommutative functions: germs, meromorphic functions and Hermite interpolation}

\author[I. Klep]{Igor Klep${}^1$}
\address{Igor Klep, Department of Mathematics, University of Ljubljana}
\email{igor.klep@fmf.uni-lj.si}
\thanks{${}^1$Supported by the Marsden Fund Council of the Royal Society of New Zealand. 
	Partially supported by the Slovenian Research Agency grants J1-8132, N1-0057 and P1-0222.}

\author[V. Vinnikov]{Victor Vinnikov${}^2$}
\address{Victor Vinnikov, Department of Mathematics, Ben-Gurion University of the Negev}
\email{vinnikov@math.bgu.ac.il}
\thanks{${}^2$Supported by the Deutsche Forschungsgemeinschaft (DFG) Grant No. SCHW 1723/1-1}

\author[J. Vol\v{c}i\v{c}]{Jurij Vol\v{c}i\v{c}${}^3$}
\address{Jurij Vol\v{c}i\v{c}, Department of Mathematics, Texas A\&M University}
\email{volcic@math.tamu.edu}
\thanks{${}^3$Supported by the Deutsche Forschungsgemeinschaft (DFG) Grant No. SCHW 1723/1-1}

\subjclass[2010]{32A20, 47A56, 16W60 (Primary); 16R50, 32A05, 16K40 (Secondary).}
\date{\today}
\keywords{Free analysis, noncommutative function, analytic germ, universal skew field of fractions, noncommutative meromorphic function, Hermite interpolation}

%%%%%%%%%%%%%%%%%%%%%%%%%%%%%%%%%%%%%%%%%%%%%%%%%%%%%%%%%%%%%%%%%%%%%%%%%%%%

\begin{abstract}
Free analysis is a quantization of the usual function theory much like operator space theory is a quantization of classical functional analysis. Basic objects of free analysis are noncommutative functions. These are maps on tuples of matrices of all sizes that preserve direct sums and similarities.

This paper investigates the local theory of noncommutative functions. The first main result shows that for a {\it scalar} point $Y$, the ring $\Oua_Y$ of uniformly analytic noncommutative germs about $Y$ is an integral domain and admits a universal skew field of fractions, whose elements are called meromorphic germs. A corollary is a local-global rank principle that connects ranks of matrix evaluations of a matrix $A$ over $\Oua_Y$ with the factorization of $A$ over $\Oua_Y$. Different phenomena occur for a semisimple tuple of {\it non-scalar} matrices $Y$. Here it is shown that $\Oua_Y$ contains copies of the matrix algebra generated by $Y$. In particular, there exist nonzero nilpotent uniformly analytic functions defined in a neighborhood of $Y$, and $\Oua_Y$ does not embed into a skew field. 
Nevertheless, the ring $\Oua_Y$ is described as the completion of a free algebra with respect to the vanishing ideal at $Y$. This is a consequence of the second main result, a free Hermite interpolation theorem: if $f$ is a noncommutative function, then for any finite set of semisimple points and a natural number $L$ there exists a noncommutative polynomial that agrees with $f$ at the chosen points up to differentials of order $L$. All the obtained results also have analogs for (non-uniformly) analytic germs and formal germs.
\end{abstract}

\if
Free analysis is a quantization of the usual function theory much like operator space theory is a quantization of classical functional analysis. Basic objects of free analysis are noncommutative functions. These are maps on tuples of matrices of all sizes that preserve direct sums and similarities.

This paper investigates the local theory of noncommutative functions. The first main result shows that for a scalar point $Y$, the ring $O_Y$ of uniformly analytic noncommutative germs about $Y$ is an integral domain and admits a universal skew field of fractions, whose elements are called meromorphic germs. A corollary is a local-global rank principle that connects ranks of matrix evaluations of a matrix $A$ over $O_Y$ with the factorization of $A$ over $O_Y$. Different phenomena occur for a semisimple tuple of non-scalar matrices $Y$. Here it is shown that $O_Y$ contains copies of the matrix algebra generated by $Y$. In particular, there exist nonzero nilpotent uniformly analytic functions defined in a neighborhood of $Y$, and $O_Y$ does not embed into a skew field. 
Nevertheless, the ring $O_Y$ is described as the completion of a free algebra with respect to the vanishing ideal at $Y$. This is a consequence of the second main result, a free Hermite interpolation theorem: if $f$ is a noncommutative function, then for any finite set of semisimple points and a natural number $L$ there exists a noncommutative polynomial that agrees with $f$ at the chosen points up to differentials of order $L$. All the obtained results also have analogs for (non-uniformly) analytic germs and formal germs.
\fi

%%%%%%%%%%%%%%%%%%%%%%%%%%%%%%%%%%%%%%%%%%%%%%%%%%%%%%%%%%%%%%%%%%%%%%%%%%%%

\maketitle

%%%%%%%%%%%%%%%%%%%%%%%%%%%%%%%%%%%%%%%%%%%%%%%%%%%%%%%%%%%%%%%%%%%%%%%%%%%%
%%%%%%%%%%%%%%%%%%%%%%%%%%%%%%%%%%%%%%%%%%%%%%%%%%%%%%%%%%%%%%%%%%%%%%%%%%%%

\section{Introduction}

The study of analytic functions in noncommuting variables goes back to the seminal work of Taylor \cite{Tay1,Tay2}. Recently, noncommutative function theory or {\it free analysis} saw a rapid development fueled by free probability, dilation theory, operator systems and spaces, control theory and optimization \cite{BGM,Pop2,Voi,MS,HKM,KVV3,AM1}. The central objects are {\it noncommutative} (nc) functions $f$ defined on tuples of square matrices of finite size that respect basis change and direct sums (see Subsection \ref{ss:nc} for a precise definition). For example,
$$f(x_1,x_2)=x_1\exp\left(x_2(x_1x_2-x_2x_1)^{-1}\right)$$
is an nc function defined on all pairs of matrices $(X_1,X_2)$ such that $X_1X_2-X_2X_1$ is nonsingular. Thus $f$ is not defined for any pair of scalar matrices, but it is defined on an open set in $\matcn^2$ for $n>1$. The key idea of functions respecting basis change and direct sums (albeit in the setting of $C^*$-algebras) was first addressed by Takesaki \cite{Tak}.

Nc functions admit a differential calculus and posses extraordinary analytic properties. If an nc function $f$ is bounded on a neighborhood of $Y$, then $f$ is continuous and even analytic there, and equals its noncommutative power series expansion about $Y$ determined by its differentials at $Y$ \cite{Voi,HKM1,KVV3}. The precise nature of convergence depends on the underlying topology on tuples of matrices of all sizes. When using the disjoint union topology, we obtain {\it analytic} nc functions. Another natural option is the uniformly open topology generated by noncommutative balls about matrix points (see Subsection \ref{ss:ua}), in which case we talk about {\it uniformly analytic} nc functions. While the methods for dealing with analytic nc functions derive mainly from complex analysis, uniformly analytic nc functions are closer to operator space theory. In both settings, several classical analytic results have their free analogs, such as the implicit/inverse function theorem \cite{AKV,AM2}, the Oka--Weil approximation theorem \cite{AM0}, the Nevanlinna--Pick interpolation \cite{Pop3}, Choquet theory \cite{DK}, a homogeneous Nullstellensatz \cite{SSS}, the Jacobian conjecture \cite{Pas} and the Grothendieck theorem \cite{Aug}.

This paper addresses the local behavior of analytic nc functions. Given a matrix point $Y$, a {\it (uniformly) analytic noncommutative germ} about $Y$ is the equivalence class of a (uniformly) analytic nc function on a neighborhood of $Y$. By the previous paragraph, such a germ is determined by the power series expansion of an nc function. For this reason we also define {\it formal germs} as formal noncommutative power series about $Y$ satisfying certain natural linear constraints, called {\it \lac} about $Y$ (Definition \ref{d:lac}). Roughly speaking, these conditions encode preservation of similarity and direct sums behavior of nc functions, so that a (uniformly) analytic germ is precisely a formal germ given by a (uniformly) convergent power series satisfying \lac. This paper presents the first systematic study of algebras of noncommutative germs with a view toward functional calculus.

\subsection*{Main results and guide to the paper}

For a matrix point $Y\in\matcs^g$ let $\cO_Y$, $\Oa_Y$ and $\Oua_Y$ denote the $\C$-algebras of formal, analytic, and uniformly analytic germs in $g$ freely noncommuting variables $x=(x_1,\dots,x_g)$, respectively. After preliminary results in Section \ref{sec2}, our study of these algebras branches in two directions, depending on whether $Y$ is a scalar point or not.

If $Y$ is a scalar point ($s=1$), then $\cO_Y$ is isomorphic to $\serc$, the noncommutative power series in $x$. A formal rational expression in elements of $\serc$ is called a {\it meromorphic expression}. One can attempt to evaluate a meromorphic expression $m$ at a $g$-tuple $\gX^{(n)}$ of $n\times n$ generic matrices with independent commuting entries; if all the inverses appearing through the calculation exist, then the output is an $n\times n$ matrix of commutative power series. On the set of meromorphic expressions admitting evaluation at $\gX^{(n)}$ for at least one $n$ we impose the following equivalence relation: $m_1\sim m_2$ if and only if $m_1(\gX^{(n)})=m_2(\gX^{(n)})$ whenever both sides exist. The equivalence classes are called {\it formal meromorphic germs}, and form the universal skew field of fractions of $\serc$ in the sense of Cohn \cite{Coh}; see Theorem \ref{t:formal}. From a purely algebraic perspective, this result places $\serc$ among the sporadic examples of rings with explicit universal skew fields of fractions \cite{KVV2,KVV}. On the other hand, the universality specifies what it means for a meromorphic expression to be identically zero, which is essential for analysis because it allows us to talk about functions induced by meromorphic expressions. More concretely, we prove the Amitsur--Cohn theorem for meromorphic identities. An algebra $\cA$ is {\it stably finite} if for every $n\in\N$ and $A,B\in \cA^{n\times n}$, $AB=I$ implies $BA=I$. For instance, a $C^*$-algebra with a faithful trace, such as a  type II\textsubscript{1} von Neumann algebra, is stably finite, while the full algebra of bounded operators on a separable Hilbert space is not; see \cite{RLL,Bla} for details and more examples.

\begin{thmA}\label{t:a}
Let $m=m(x_1,\dots,x_g)$ be a meromorphic expression. The following are equivalent:
\begin{enumerate}
\item for every $n$, $m(\gX^{(n)})$ is either $0$ or undefined;
\item for every stably finite algebra $\cA$ and $a_1,\dots,a_g\in\cA$, if there is a homomorphism $\serc\to\cA$ given by $x_i\mapsto a_i$, then $m(a_1,\dots,a_g)$ is either $0$ or undefined.
\end{enumerate}
\end{thmA}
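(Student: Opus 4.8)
The plan is to read condition~(1) as saying that $m$, viewed as an element of the universal skew field of fractions $\fl$ of $R:=\serc$, equals $0$; this is precisely the content of Theorem~\ref{t:formal}. The second ingredient is the \emph{linearization} of a meromorphic expression, which I would take from the machinery developed around that theorem: recursively extracting inversions from $m$ via Schur complements yields $N\in\N$ and matrices $A\in R^{N\times N}$, $b\in R^{N\times1}$, $c\in R^{1\times N}$ with $m=cA^{-1}b$ in $\fl$, such that for \emph{every} $\C$-algebra $\cA$ and homomorphism $\phi\colon R\to\cA$ the evaluation $m^\phi:=m(\phi(x_1),\dots,\phi(x_g))$ is defined if and only if $\phi(A)$ is invertible in $\cA^{N\times N}$, in which case $m^\phi=\phi(c)\phi(A)^{-1}\phi(b)$. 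With this in hand $(2)\Rightarrow(1)$ is immediate: whenever $m(\gX^{(n)})$ is defined, the assignment $x_i\mapsto\gX^{(i)}$ extends by $\mathfrak{m}$-adic convergence of power series to a homomorphism $R\to\opm_n(\C[[\xi]])$, and then (inverting finitely many nonzero power series) to a homomorphism $\phi\colon R\to\opm_n(\cK)$ with $m^\phi=m(\gX^{(n)})$, where $\cK$ is a suitable localization of $\C[[\xi]]$; since $\opm_n(\cK)$ is a full matrix ring over a commutative ring it is stably finite (an equation $AB=I$ forces $\det A$ to be a unit), so $(2)$ gives $m(\gX^{(n)})=0$, and letting $n$ vary yields $(1)$.

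For $(1)\Rightarrow(2)$, assume $m=0$ in $\fl$. The Schur complement identity
\[
\widetilde A:=\begin{pmatrix}A&b\\ c&0\end{pmatrix}=\begin{pmatrix}I&0\\ cA^{-1}&1\end{pmatrix}\begin{pmatrix}A&b\\ 0&-cA^{-1}b\end{pmatrix}
\]
holds over $\fl$, and because $cA^{-1}b=m=0$ the right-hand factor, hence $\widetilde A$, is singular over $\fl$. Since $\fl$ is the \emph{universal} skew field of fractions of $\serc$, singularity over $\fl$ coincides with non-fullness over $\serc$, so $\widetilde A$ is not full; and as $A$ itself is full (being invertible over $\fl$) the inner rank of $\widetilde A$ is exactly $N$, whence $\widetilde A=PQ$ for some $P\in R^{(N+1)\times N}$ and $Q\in R^{N\times(N+1)}$. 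Writing $P_1,Q_1\in R^{N\times N}$ for the top $N\times N$ block of $P$ and the left $N\times N$ block of $Q$, and $P_2\in R^{1\times N}$, $Q_2\in R^{N\times1}$ for the complementary row of $P$ and column of $Q$, comparison of blocks in $\widetilde A=PQ$ gives $A=P_1Q_1$, $b=P_1Q_2$, $c=P_2Q_1$ and, crucially, $P_2Q_2=0$. Now let $\cA$ be stably finite, let $\phi\colon R\to\cA$ send $x_i\mapsto a_i$, and suppose $m^\phi$ is defined, so that $\phi(A)$ is invertible in $\cA^{N\times N}$. Stable finiteness enters here: $\cA^{N\times N}$ is Dedekind-finite, and from $\phi(P_1)\bigl(\phi(Q_1)\phi(A)^{-1}\bigr)=\phi(A)\phi(A)^{-1}=I$ we get that $\phi(P_1)$ is invertible with inverse $\phi(Q_1)\phi(A)^{-1}$, hence $\phi(Q_1)\phi(A)^{-1}\phi(P_1)=I$; substituting the block identities then collapses everything:
\[
m^\phi=\phi(c)\phi(A)^{-1}\phi(b)=\phi(P_2)\,\bigl(\phi(Q_1)\phi(A)^{-1}\phi(P_1)\bigr)\,\phi(Q_2)=\phi(P_2)\phi(Q_2)=\phi(P_2Q_2)=0.
\]

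I expect the main obstacle to be the step from singularity of $\widetilde A$ over $\fl$ to non-fullness of $\widetilde A$ over $\serc$: this is what converts the structural zero in the corner of $\widetilde A$ into an actual factorization over the ring itself, and it relies on the universality of $\fl$ together with favourable ring-theoretic properties of $\serc$ (such as its being a Sylvester domain), which I would expect to be in place by the time Theorem~\ref{t:formal} is proved. Once the factorization $\widetilde A=PQ$ is available, stable finiteness does exactly the remaining work --- it is precisely the hypothesis that forces the square blocks $\phi(P_1),\phi(Q_1)$ to become invertible after evaluation. Finally, the degenerate case in which $m$ admits no evaluation at any $\gX^{(n)}$ (so that $(1)$ is vacuous) is handled by the same dichotomy applied directly to $A$: then $A$ is non-full over $\serc$, so $\phi(A)$ is invertible in no stably finite algebra, $m^\phi$ is never defined, and $(2)$ holds trivially as well.
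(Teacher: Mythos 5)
Your core argument is sound and is, in essence, the mirror image of the paper's proof (Theorem \ref{t:id}). The paper argues the contrapositive: if $\phi(m)$ is defined and nonzero in a stably finite $\cA$, then the realization matrix $Q(s_1,\dots,s_\ell)$ from \cite[Theorem 4.12]{HMS} and the bordered matrix of \eqref{e:full} must be full over $\serc$ (non-full matrices cannot become invertible, resp.\ full, over a stably finite algebra, via \cite[Proposition 0.1.3]{Coh}), hence invertible over $\cM$ by Theorem \ref{t:formal}, so $m\in\cM'$ and $\mm\neq0$. You run the same circle of ideas forwards: from $\mm=0$ you get singularity of the bordered matrix over $\cM$, hence non-fullness over $\serc$ by universality (note that non-fullness alone suffices; you do not need the inner rank to be exactly $N$, since a rank-$r$ factorization with $r<N$ can be padded), and you then push the factorization $\widetilde A=PQ$ through $\phi$ and use Dedekind-finiteness of $\cA^{N\times N}$ to compute $m^\phi=\phi(P_2Q_2)=0$. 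That block computation is in effect a self-contained proof of the special case of Cohn's Proposition 0.1.3 that the paper cites, so the two proofs rest on the same pillars (linearization of the rational expression, the bordered/Schur-complement trick, Theorem \ref{t:formal}, stable finiteness), differing mainly in direction and in whether the weak-finiteness lemma is cited or reproved.

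The one step I cannot accept as written is the degenerate case, and it traces back to the ``if and only if'' you attribute to the linearization. The realization the paper uses gives only one direction: if the expression is defined at $b$ then $Q(b)$ is invertible and the value is $v^tQ(b)^{-1}u$; the invertibility set of the pencil may be strictly larger than the domain of the expression (already for simple nested inverses), so ``$\phi(A)$ invertible $\Rightarrow m^\phi$ defined'' is not available off the shelf. Your main case uses only the legitimate direction and is fine (together with the observation that $m\in\cM'$ forces $A(\gX^{(n)})$ to be invertible at some generic tuple, hence $A$ full and $m=cA^{-1}b$ holds over $\cM$). But in the degenerate case you assert ``then $A$ is non-full over $\serc$'', whose contrapositive is exactly the missing converse: invertibility of $A$ at a generic tuple would have to force definedness of $m$ there. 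To close this you need either a refined linearization whose invertibility set equals the domain of the expression (constructible by recursively direct-summing the linearizations of all inverted subexpressions, but you would have to build and justify it), or an induction on the inversion height of $r$: if $m^\phi$ is defined in a stably finite algebra, every inverted subexpression is defined and invertible under $\phi$, hence by the theorem at lower height it is generically defined and nonzero in the skew field $\cM$, and after passing to a common matrix size $m$ itself is generically defined, contradicting degeneracy. This is precisely how the paper's argument disposes of the same point (``therefore $m\in\cM'$'' in the proof of Theorem \ref{t:id}, with the height induction made explicit in the analogous step of Proposition \ref{p:fd}). With that repair your proof is complete.
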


See Theorem \ref{t:id} for the proof. If $Y$ is a scalar point, one can similarly start with rational expressions in elements of $\Oa_Y$ or $\Oua_Y$, and compare their evaluations on matrix points close to $Y$ (in the suitable topology). This results in (uniformly) meromorphic germs, $\Ma_Y$ and $\Mua_Y$, which are universal skew fields of fractions of $\Oa_Y$ and $\Oua_Y$, respectively (Corollary \ref{c:univ}). This universality property together with Theorem \ref{t:a} has two important consequences. Firstly, uniformly meromorphic germs can be evaluated in stably finite Banach algebras (Corollary \ref{c:sf}), which is fundamental for the functional calculus of meromorphic nc functions. Secondly, we obtain the following local-global rank principle for (uniformly) analytic functions and ranks of their evaluations on matrix points close to $Y$ (Theorem \ref{t:b}). The {\it inner rank} of a matrix $A$ over a ring $R$ \cite{Sch,Coh} is the smallest $r$ such that $A=BC$  for some matrices $B$ with $r$ columns and $C$ with $r$ rows over $R$.

\begin{thmA}\label{t:b}
Let $Y\in\C^g$. The inner rank of a matrix $A$ over $\Oa_Y$ (resp. $\Oua_Y$) equals
$$\max\left\{
\frac{\rk A(X)}{n}\colon n\in\N,\ X\in\matcn^g\text{ \rm in a neighborhood of $Y$}
\right\}.$$
\end{thmA}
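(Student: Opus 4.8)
The statement to prove is Theorem~B: for $Y\in\C^g$ and a matrix $A$ over $\Oa_Y$ (resp. $\Oua_Y$), the inner rank $\rho(A)$ over that ring equals $\max\{\rk A(X)/n : n\in\N,\ X\in\matcn^g \text{ near }Y\}$. Note $A$ is say $p\times q$ with entries germs, and $A(X)$ is the $pn\times qn$ matrix obtained by evaluating each entry (a matrix-valued function) at $X$; so $\rk A(X)\le n\cdot\min(p,q)$ and the quotient is at most $\min(p,q)$, matching the usual range for inner rank.

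**The easy inequality.** First I would establish that $\rho(A)$ dominates the right-hand side. If $A=BC$ is a rank factorization over the germ ring with $B$ having $r$ columns and $C$ having $r$ rows, then for any $X$ in the common domain of all entries we get $A(X)=B(X)C(X)$, where $B(X)$ is $pn\times rn$ and $C(X)$ is $rn\times qn$; hence $\rk A(X)\le rn$, so $\rk A(X)/n\le r=\rho(A)$. Taking the maximum over $n$ and $X$ gives $\text{RHS}\le \rho(A)$. The only subtlety is ensuring the domains are compatible: since $A$ is a finite matrix over germs, finitely many representatives suffice and their common domain is a genuine neighborhood of $Y$; the factorization $A=BC$ holds on the intersection of all relevant domains, which is again a neighborhood. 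This is routine.

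**The hard inequality via the universal skew field.** The reverse inequality $\rho(A)\le\text{RHS}$ is the substantive part. The plan is to use that $\Oa_Y$ (resp. $\Oua_Y$) embeds in its universal skew field of fractions $\Ma_Y$ (resp. $\Mua_Y$), established in Corollary~\ref{c:univ}. Over a skew field, the inner rank of $A$ equals the ordinary rank of $A$ as a matrix over that skew field, and by the general theory of universal fields of fractions (Cohn), for a matrix over a ring $R$ embedding in its universal field of fractions $U$, the inner rank of $A$ over $R$ equals the inner rank over $U$. So it suffices to compute $\rho(A)$ as the rank of $A$ over $\Ma_Y$ (resp. $\Mua_Y$), i.e. the largest $r$ such that some $r\times r$ minor of $A$ (over the skew field) is invertible — equivalently nonzero in the skew field. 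Now invoke the concrete description of the universal skew field from Theorem~\ref{t:formal} and Corollary~\ref{c:univ}: elements are meromorphic germs, which are nonzero precisely when they do not vanish identically on matrix points near $Y$ (the universality packaged as the Amitsur--Cohn-type identity theorem, Theorem~\ref{t:a}). Thus $\rho(A)\ge r$ iff there is an $r\times r$ submatrix $A'$ of $A$ whose "determinant" (as a meromorphic germ — more carefully, whose image under $A'\mapsto A'$ as a matrix over the skew field is invertible) is a nonzero meromorphic germ, which happens iff $\det A'(X)\ne 0$ for some $n$ and some $X\in\matcn^g$ near $Y$ where $A'(X)$ is defined, i.e. iff $\rk A'(X)=rn$ for some such $X$. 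Since $\rk A(X)\ge\rk A'(X)$ and conversely any rank-$rn$ configuration of $A(X)$ produces an invertible $r\times r$ submatrix over the skew field (by an elementary-divisor / generic-position argument on the matrix $A(X)$ over the commutative power series ring, pushing down to the skew field), we conclude $\rho(A)=\max\{r: \rk A(X)=rn \text{ for some }n,X\}=\text{RHS}$.

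**Main obstacle.** The delicate point is the bookkeeping in the last step: relating the inner rank of the matrix $A$ over the \emph{noncommutative} germ ring to ranks of the \emph{block} matrices $A(X)$ over commutative (power series / function) rings, via the skew field. Concretely, one must argue that an $r\times r$ submatrix of $A$ is invertible over $\Ma_Y$ iff the corresponding $rn\times rn$ block submatrix of $A(X)$ is invertible for some $n$ and $X$ near $Y$; the forward direction is clear from the evaluation homomorphism, but the converse requires knowing that the vanishing of every such block determinant forces the meromorphic germ $\det$-datum to be zero, which is exactly where Theorem~\ref{t:a} (the identity theorem for meromorphic expressions, applied entrywise to cofactor/Cramer expressions) is used. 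One also needs that evaluation at generic matrices $\gX^{(n)}$ detects nonvanishing near an arbitrary scalar point $Y$ — a translation $x\mapsto x+Y$ reduces to $Y=0$, and a germ nonzero at the generic point is nonzero on a dense (hence neighborhood-meeting) set of matrix points. Assembling these pieces carefully, while harmless conceptually, is the place where an error would most likely creep in.
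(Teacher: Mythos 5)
Your proposal is correct and follows essentially the same route as the paper's proof of Theorem \ref{t:rank}: reduce to $Y=0$, obtain the easy inequality from a rank factorization, and for the reverse direction use that $\Oa_0$ (resp.\ $\Oua_0$) is a semifir whose inner rank is preserved in the universal skew field $\Ma_0$ (resp.\ $\Mua_0$), then evaluate the relevant inverse matrix at a matrix point near the origin where all of its finitely many meromorphic-germ entries are defined. The only cosmetic differences are that you witness rank $r$ by an invertible $r\times r$ submatrix over the skew field, whereas the paper completes a full matrix to an invertible square one and treats the non-full case by factoring $A=BC$ and passing to direct sums to align evaluation points, and that your appeal to Theorem \ref{t:a} is unnecessary, since the equivalence relation defining $\Ma_0$ already makes a meromorphic germ zero exactly when it vanishes identically at matrix points near the origin.
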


\noindent While Theorem \ref{t:b} does not mention meromorphic germs or the universal property, they are crucial for its proof in Theorem \ref{t:rank} below. 

Our last result pertaining to analytic nc functions about the origin concerns the action of $\GL_n(\C)$ on $\matcn^g$ via simultaneous conjugation. Then the formal meromorphic germs are closely related to meromorphic $\GL_n(\C)$-concomitants (equivariant maps) $q:\matcn^g\to\matcn$. More precisely, for every concomitant $q$ there exist $f_1,f_2\in\serc$ such that $q=f_1(\gX^{(n)})f_2(\gX^{(n)})^{-1}$; see Theorem \ref{t:inv}.

Now suppose $Y\in\matcs^g$ is a non-scalar point. We say that $Y$ {\it semisimple} is if every invariant subspace for $Y$ admits a complementary invariant subspace. In this case let $\cS(Y)$ be the unital $\C$-subalgebra of $\matcs$ generated by $Y_1,\dots,Y_g$. We show that the algebras $\cO_Y$, $\Oa_Y$ and $\Oua_Y$ for a non-scalar semisimple point $Y$ are not domains and thus do not admit skew fields of fractions. Moreover, Corollary \ref{c:hom} below implies the following.

\begin{thmA}\label{t:c}
If $Y$ is a semisimple point, then $\Oua_Y$ contains a subalgebra isomorphic to $\cS(Y)$. Hence if $\cS(Y)=\matc{s}$, then $\Oua_Y\cong \opm_s(\cA)$ for some algebra $\cA$.

In particular, if a semisimple $Y$ is not similar to a direct sum of scalar points, then there exist uniformly analytic nc functions f about $Y$ such that $f\neq0$ and $f^2=0$.
\end{thmA}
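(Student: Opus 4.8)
The plan is to reduce the whole statement to constructing a unital $\C$-algebra embedding of $\cS(Y)$ into $\Oua_Y$ that splits evaluation at $Y$. The map $\ve_Y\colon\Oua_Y\to\matcs$, $f\mapsto f(Y)$, is a unital $\C$-algebra homomorphism, being evaluation of nc functions at a fixed matrix point. Its image is exactly $\cS(Y)$: it contains $\cS(Y)$ because nc polynomials already map onto $\cS(Y)$, and it is contained in $\cS(Y)$ because if an invertible $T\in\matcs$ commutes with all $Y_i$ then $TYT^{-1}=Y$, so $f(Y)=Tf(Y)T^{-1}$ since $f$ respects similarities; as the finite-dimensional algebra $\cS(Y)'$ is spanned by its invertibles, $f(Y)\in\cS(Y)''=\cS(Y)$, the last equality holding by the double centralizer theorem since $Y$ semisimple makes $\cS(Y)$ semisimple. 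Writing $\mathfrak m_Y:=\ker\ve_Y$ we thus get $\Oua_Y/\mathfrak m_Y\cong\cS(Y)$, and once a unital $\sigma\colon\cS(Y)\to\Oua_Y$ with $\ve_Y\circ\sigma=\id$ is at hand, the theorem follows: $\sigma$ is injective, so $\Oua_Y\supseteq\sigma(\cS(Y))\cong\cS(Y)$; if $\cS(Y)=\matcs$ then $\Oua_Y$ contains a unital copy of $\matcs$, and the classical corner argument yields $\Oua_Y\cong\opm_s(\cA)$ with $\cA=\sigma(E_{11})\Oua_Y\sigma(E_{11})$; and $Y$ fails to be similar to a direct sum of scalar points exactly when $\cS(Y)$ is noncommutative, i.e.\ has a simple summand $\opm_{s_j}(\C)$ with $s_j\ge2$, which contains a nonzero square-zero element whose image under $\sigma$ is the desired nilpotent germ.

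To build $\sigma$ I would lift $\cS(Y)$ through the completion description of $\Oua_Y$ supplied by Corollary~\ref{c:hom} (a consequence of the free Hermite interpolation theorem). For $L\in\N$ let $\mathfrak m^{[L]}\subseteq\Oua_Y$ be the ideal of germs whose value and differentials of order $\le L$ vanish at $Y$; then $\mathfrak m^{[0]}=\mathfrak m_Y$ and $\bigcap_L\mathfrak m^{[L]}=0$ because a uniformly analytic germ equals its noncommutative Taylor expansion at $Y$. The Leibniz rule for nc differentials gives $\mathfrak m_Y^{L+1}\subseteq\mathfrak m^{[L]}$: in the $L$-th differential of a product of $L+1$ germs vanishing at $Y$, every Leibniz summand leaves some factor undifferentiated, hence evaluated at $Y$. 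The interpolation theorem identifies $R_L:=\Oua_Y/\mathfrak m^{[L]}$ with the quotient of $\pxc$ by the polynomials whose value and differentials of order $\le L$ vanish at $Y$; this is a finite-dimensional $\C$-algebra, and $\Oua_Y=\varprojlim_L R_L$. Since $\mathfrak m_Y/\mathfrak m^{[L]}$ is a nilpotent ideal of $R_L$ with semisimple quotient $\cS(Y)$, it is precisely $\operatorname{rad}(R_L)$.

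At each level I would then invoke the Wedderburn--Malcev theorem. Over $\C$, each $R_L$ has a Wedderburn complement: a unital subalgebra $S_L\subseteq R_L$ mapped isomorphically onto $\cS(Y)=R_L/\operatorname{rad}(R_L)$, and any two are conjugate by an element of $1+\operatorname{rad}(R_L)$. I would build a compatible system with $\pi_L(S_{L+1})=S_L$, where $\pi_L\colon R_{L+1}\to R_L$, by induction: given $S_L$, choose any Wedderburn complement $S'$ of $R_{L+1}$; then $\pi_L(S')$ is a Wedderburn complement of $R_L$, so $c\,\pi_L(S')\,c^{-1}=S_L$ for some $c\in1+\operatorname{rad}(R_L)$; lifting $c$ to some $\tilde c\in1+\operatorname{rad}(R_{L+1})$ (possible since $\operatorname{rad}$ surjects along surjections of finite-dimensional algebras) and replacing $S'$ by $\tilde c\,S'\,\tilde c^{-1}$ achieves $\pi_L(S_{L+1})=S_L$. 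The inverse limit $\varprojlim_L S_L$ is then a unital subalgebra of $\varprojlim_L R_L=\Oua_Y$ on which $\ve_Y$ (which is just projection onto $R_0=\cS(Y)$) restricts to an isomorphism onto $\cS(Y)$, and $\sigma$ is its inverse.

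The main obstacle is the input from the interpolation theorem, namely that $\Oua_Y$ is genuinely $\varprojlim_L\Oua_Y/\mathfrak m^{[L]}$: surjectivity of $\pxc\to\Oua_Y/\mathfrak m^{[L]}$ is finite-order Hermite interpolation of a single germ at the single point $Y$, while Hausdorffness and completeness rest on a uniformly analytic germ being determined by its full differential data at $Y$. Granting this, the finite-dimensionality of each $R_L$, the identification $\operatorname{rad}(R_L)=\mathfrak m_Y/\mathfrak m^{[L]}$, the Leibniz estimate, and the inverse-limit bookkeeping are all routine. The same argument applies verbatim to $\Oa_Y$ and $\cO_Y$.
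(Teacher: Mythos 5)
Your reduction of the theorem to producing a unital section $\sigma\colon\cS(Y)\to\Oua_Y$ of the evaluation map is fine (and matches the role of Corollary \ref{c:hom} in the paper), but the construction of $\sigma$ has a genuine gap: the identification $\Oua_Y=\varprojlim_L\Oua_Y/\mathfrak m^{[L]}$ is false. By Hermite interpolation each quotient $R_L=\Oua_Y/\mathfrak m^{[L]}$ is the (finite-dimensional) algebra of truncated sequences satisfying $\LAC_L(Y)$, so the inverse limit $\varprojlim_L R_L$ is the algebra of \emph{all} sequences satisfying $\LAC(Y)$, i.e.\ the formal germ algebra $\cO_Y$ (this is Corollary \ref{c:compl} combined with Proposition \ref{p:power}). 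The paper stresses that the inclusions $\Oua_Y\subset\Oa_Y\subset\cO_Y$ are strict; the fact that a uniformly analytic germ is determined by its differentials at $Y$ gives only injectivity of $\Oua_Y\to\varprojlim_L R_L$ (your ``Hausdorffness''), not surjectivity, and surjectivity at each finite level does not give surjectivity onto the limit. Consequently your compatible system of Wedderburn--Malcev complements $S_L\subseteq R_L$ yields a copy of $\cS(Y)$ inside $\varprojlim_L S_L\subseteq\cO_Y$, i.e.\ a family of \emph{formal} germs, with no control whatsoever on whether these formal germs are uniformly analytic. The Wedderburn--Malcev conjugations chosen at each stage give no bound on the completely bounded norms of the resulting differentials, so you cannot invoke Theorem \ref{t:old4} to sum them to a function on an nc ball about $Y$. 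Since the whole point of the theorem (in particular the nilpotent-function clause) is to land in $\Oua_Y$ rather than $\cO_Y$, this is exactly the hard analytic content that is missing.

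This is also where your route diverges essentially from the paper's. The paper does not lift $\cS(Y)$ level by level; it constructs, for a fixed $\cC(Y)$-bimodule projection $\pi\colon\matcs^g\to[\matcs,Y]$, an explicit ``minimal'' propagation of the zeroth-order data (Theorem \ref{t:min}, recursion \eqref{e:ugly}) and proves by a combinatorial estimate (Lemma \ref{l:seq}) that the propagated differentials satisfy $\limsup_\ell\sqrt[\ell]{\|f_\ell\|_{\cb}}<\infty$, so Theorem \ref{t:old4} produces an honest uniformly analytic function; multiplicativity of $a\mapsto f^a$ then follows from the uniqueness of the minimal propagation (Theorem \ref{t:alg}). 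Your argument, as written, does prove the analogue of the first part of the statement for $\cO_Y$ (and that is a clean alternative there), but for $\Oua_Y$ and $\Oa_Y$ you would have to choose the complements $S_L$ coherently with quantitative norm bounds, which is precisely what the paper's propagation construction supplies and your proposal does not.
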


The existence of nilpotent analytic nc functions about semisimple points 
poses questions about the structure of germ algebras that are endemic to the non-scalar case. Our main tool for answering them is the following novel Hermite interpolation result for nc functions.

\begin{thmA}\label{t:d}
Let $f$ be an nc function, $S$ a finite collection of semisimple points in its domain, and $L\in\N$. Then there exists a noncommutative polynomial $p$ such that $f$ and $p$ agree on $S$ up to their noncommutative differentials of order $L$.
\end{thmA}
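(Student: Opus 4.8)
The plan is to reduce the theorem to a single semisimple point, recast the interpolation condition there as a problem in finite-dimensional linear algebra, and then settle it by induction on $L$. \emph{Reduction to one point.} Given a finite collection $S=\{Y^{(1)},\dots,Y^{(m)}\}$ of semisimple points in the domain of $f$, I would pass to the single point $Y:=Y^{(1)}\oplus\cdots\oplus Y^{(m)}$. Since the domain of an nc function is an nc set, hence closed under direct sums, $Y$ lies in it; and a direct sum of semisimple points is again semisimple, the ambient space being a direct sum of semisimple modules. Restricting all increment tuples to be block diagonal and permuting coordinates identifies the block triangular matrix evaluations that compute the noncommutative differentials of $f$ at $Y$ with direct sums of the corresponding evaluations at the summands; hence, on block diagonal increments, $\Delta^k f(Y,\dots,Y)=\bigoplus_j\Delta^k f(Y^{(j)},\dots,Y^{(j)})$ for every $k$. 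Consequently a polynomial that agrees with $f$ at $Y$ up to differentials of order $L$ automatically agrees with $f$ at each $Y^{(j)}$ up to differentials of order $L$, so it suffices to interpolate at a single semisimple point $Y\in\matcs^g$.

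\emph{Finite-dimensional reformulation.} Using the calculus of difference-differential operators on block upper triangular matrices, the list $\bigl(f(Y),\Delta f(Y,Y),\dots,\Delta^L f(Y,\dots,Y)\bigr)$ is equivalent data to the restriction of $f$ to the affine subspace
$$A_{Y,L}:=\bigl\{\,Y\otimes I_{L+1}+N : N\in\cN^g\,\bigr\}\subseteq\matc{s(L+1)}^g,$$
where $\cN\subseteq\matc{s(L+1)}$ consists of the block strictly upper triangular matrices. Because $\cN^{L+1}=0$, the noncommutative power series expansion of $f$ about $Y\otimes I_{L+1}$ terminates on such increments, so $f$ coincides near $Y\otimes I_{L+1}$ with a polynomial map of degree at most $L$ along $A_{Y,L}$. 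Letting $\cO_Y^{(L)}$ denote the finite-dimensional algebra obtained from $\cO_Y$ by retaining only the differentials of order at most $L$, agreeing with $f$ at $Y$ up to differentials of order $L$ is exactly having the same image in $\cO_Y^{(L)}$; thus the single-point statement amounts to surjectivity of the natural jet homomorphism $\pxc\to\cO_Y^{(L)}$.

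\emph{Induction on $L$.} For $L=0$ one has $\cO_Y^{(0)}=\{f(Y):f\in\cO_Y\}$; invariance of an nc function under conjugation by invertible matrices commuting with $Y$ forces $f(Y)$ to commute with all such matrices and hence, as these span the commutant, to lie in $\cS(Y)''$, which equals $\cS(Y)$ because $Y$ is semisimple (double commutant theorem); since $\cS(Y)=\{p(Y):p\in\pxc\}$ by the very definition of $\cS(Y)$, the map $\pxc\to\cO_Y^{(0)}$ is onto. For the inductive step I would use the hypothesis to match all differentials of order $<L$ by some polynomial, subtract it, and thereby reduce to realizing an arbitrary purely order-$L$ jet admissible under the \lac as the order-$L$ differential at $Y$ of an nc polynomial whose differentials of order $<L$ at $Y$ all vanish. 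The hard part will be precisely this last point, and it is the only place where semisimplicity is genuinely needed: through the Wedderburn decomposition of $\cS(Y)$, a description of the powers of the vanishing ideal $\{p\in\pxc:p(Y)=0\}$, and again the identity $\cS(Y)=\{p(Y):p\in\pxc\}$, one has to show that the linear constraints the $L$-th differential of any nc function must satisfy at $Y$ are no stronger than those automatically obeyed by such polynomials. Applying the single-point result to $Y=\bigoplus_jY^{(j)}$ then yields the desired polynomial $p$.
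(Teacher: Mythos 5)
Your reduction to a single point (replace $S$ by the direct sum $Y=Y^{(1)}\oplus\cdots\oplus Y^{(m)}$, which is again semisimple, and use that block-diagonal evaluations recover the differentials at each summand) is sound, and your induction-on-$L$ skeleton with the base case $L=0$ settled by the double centralizer theorem matches the structure of the paper's argument (there the base case is exactly the observation that $[f_0,\cC(Y)]=0$ forces $f_0\in\cS(Y)$). But the proposal has a genuine gap: the inductive step you defer --- realizing an arbitrary $L$-linear map satisfying the truncated intertwining conditions, with all lower-order data zero, as $\Delta^L_Y p$ for some $p\in\pxc$ with $p\in\cI_{L-1}(Y)$ --- is not a routine bookkeeping matter about the Wedderburn decomposition of $\cS(Y)$ or the powers of the vanishing ideal; it is the entire mathematical content of the theorem, and you give no argument for it. In the paper this is Proposition \ref{p:sep}, whose proof needs three nontrivial ingredients: a Skolem--Noether-type separation lemma (Lemma \ref{l:sep1}) showing that for $Z\notin[\mats,Y]$ some polynomial vanishing at $Y$ has nonzero first differential in the direction $Z$; a density lemma for $\cC(Y)$-bimodule maps (Lemma \ref{l:la1}), proved via isotypic decompositions and the Bre\v{s}ar--\v{S}emrl theorem on locally linearly dependent operators, which upgrades ``the polynomial differentials separate points'' to ``they exhaust all admissible maps''; and a tensor-factorization lemma (Lemma \ref{l:la2}) that lets one write an admissible $\ell$-linear map as a sum of products of an admissible linear map with an admissible $(\ell-1)$-linear map, so that the order-$\ell$ case follows from lower orders by multiplying interpolating polynomials. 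Without something playing the role of these three steps, your claim that ``the linear constraints \dots are no stronger than those automatically obeyed by such polynomials'' is precisely the statement to be proved, not a consequence of semisimplicity alone.

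Two smaller cautions. First, invoking ``a description of the powers of the vanishing ideal $\{p:p(Y)=0\}$'' is circular in the paper's logical order: the identification $\cI_\ell(Y)=\cI_0(Y)^{\ell+1}$ (Proposition \ref{p:power}) is deduced \emph{from} the interpolation machinery, not available beforehand. Second, your finite-dimensional reformulation via the jet algebra is fine, but note that admissibility of the order-$L$ jet after subtracting the lower-order interpolant must be checked against the truncated conditions $\LAC_L(Y)$ (Remark \ref{r:adm} in the paper); this is easy but should be stated, since it is what licenses applying the order-$L$ realization result to the difference.
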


A more general version is given in Theorem \ref{t:hermite}, and the degree of the interpolating polynomial can be  explicitly estimated. In contrast with other interpolation results for nc functions \cite{Pop1,Pop3,BMV}, Theorem \ref{t:d} is the first one to approximate nc functions with polynomials in \emph{non-scalar} points up to higher order differentials. Also, Theorem \ref{t:d} fails without the semisimplicity assumption; see \cite{AM1} for an example where not even a value of an nc function at a non-semisimple point can be attained by a polynomial. 

The first consequence of our interpolation theorem is Corollary \ref{c:compl} which offers a deeper understanding of the formal germs in terms of the free algebra $\pxc$:

\begin{thmA}\label{t:e}
Let $Y$ be a semisimple point and $\cI(Y)=\{p\in \pxc\colon p(Y)=0 \}$. Then	
$$\cO_Y = \varprojlim_\ell \big(\pxc/\cI(Y)^\ell\big).$$
\end{thmA}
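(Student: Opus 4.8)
The plan is to show, for each $\ell\in\N$, that the natural map
\[
\bar\iota_\ell\colon\ \pxc/\cI(Y)^\ell\ \longrightarrow\ \cO_Y/\mathscr M_\ell,\qquad \mathscr M_\ell:=\{\,f\in\cO_Y\colon f^{(k)}(Y)=0\ \text{for }k<\ell\,\},
\]
is an isomorphism of $\C$-algebras, compatibly in $\ell$, and then to pass to the inverse limit. Here $f^{(k)}(Y)$ denotes the $k$-th noncommutative differential of $f$ at $Y$, each $\mathscr M_\ell$ is a two-sided ideal of $\cO_Y$, and $\iota\colon\pxc\to\cO_Y$ is the homomorphism sending a polynomial to its expansion about $Y$: it is injective (a nonzero nc polynomial vanishes on no nonempty open set, hence has nonzero germ) and $\iota(\cI(Y)^\ell)\subseteq\mathscr M_\ell$ by the Leibniz rule, so $\bar\iota_\ell$ is well defined. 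Since a formal germ about $Y$ \emph{is}, by definition, its family of differentials at $Y$ constrained by the \lac, the filtration $(\mathscr M_\ell)_\ell$ is exhaustive and separated: $\bigcap_\ell\mathscr M_\ell=0$ and $\cO_Y\xrightarrow{\ \sim\ }\varprojlim_\ell\cO_Y/\mathscr M_\ell$. Thus the theorem reduces to the isomorphism claim for every $\bar\iota_\ell$.

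Surjectivity of $\bar\iota_\ell$ is precisely the single-point instance of the free Hermite interpolation theorem, Theorem~\ref{t:hermite}: every family of differentials at the semisimple point $Y$ satisfying the \lac, truncated to orders $<\ell$, is the family of differentials of some nc polynomial. This is where semisimplicity of $Y$ is indispensable; it fails at non-semisimple points (cf.\ the remark following Theorem~\ref{t:d}).

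The crux, and the step I expect to be the main obstacle, is injectivity of $\bar\iota_\ell$, equivalently the inclusion $\{\,p\in\pxc\colon p^{(k)}(Y)=0\text{ for }k<\ell\,\}\subseteq\cI(Y)^\ell$. I would argue by induction on $\ell$, the case $\ell=1$ being the definition of $\cI(Y)$. For the inductive step the induction hypothesis first places $p$ in $\cI(Y)^{\ell}$, say $p\equiv\sum_\nu a_{\nu,1}\cdots a_{\nu,\ell}\pmod{\cI(Y)^{\ell+1}}$ with $a_{\nu,i}\in\cI(Y)$; evaluating a product of $\ell$ elements of $\cI(Y)$ at the block-bidiagonal point with $Y$'s on the diagonal and $H_1,\dots,H_\ell$ on the superdiagonal and reading off the corner block gives the product rule $p^{(\ell)}(Y)[H_1,\dots,H_\ell]=\sum_\nu a_{\nu,1}'(Y)[H_1]\cdots a_{\nu,\ell}'(Y)[H_\ell]$ (all other Leibniz terms vanish because $a_{\nu,i}(Y)=0$). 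Hence the remaining hypothesis $p^{(\ell)}(Y)=0$ says that $\sum_\nu D(\bar a_{\nu,1})\otimes\cdots\otimes D(\bar a_{\nu,\ell})$ dies under the natural map $\Hom_\C((\matcs)^g,\matcs)^{\otimes_{\cS(Y)}\ell}\to\{\text{multilinear maps}\}$, $\phi_1\otimes\cdots\otimes\phi_\ell\mapsto\big((H_i)_i\mapsto\phi_1(H_1)\cdots\phi_\ell(H_\ell)\big)$, where $D\colon\cI(Y)/\cI(Y)^2\to\Hom_\C((\matcs)^g,\matcs)$, $\bar a\mapsto a'(Y)$. That natural map is a bijection because $\cS(Y)$ is semisimple, and all $\cS(Y)$-bimodules being projective, $D^{\otimes_{\cS(Y)}\ell}$ is injective once $D$ is; together these force $\sum_\nu\bar a_{\nu,1}\otimes\cdots\otimes\bar a_{\nu,\ell}=0$, i.e.\ $p\in\cI(Y)^{\ell+1}$. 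So everything reduces to injectivity of $D$, i.e.\ to: if $p\in\cI(Y)$ and $p'(Y)=0$ then $p\in\cI(Y)^2$. I expect to prove this by rewriting $p'(Y)=0$ as the vanishing at $Y$ of all noncommutative partial derivatives $\delta_ip$ (equivalently $\delta_ip\in\cI(Y)\otimes\pxc+\pxc\otimes\cI(Y)$) and exploiting the semisimple structure of $\cS(Y)$ via the conormal presentation of $\cI(Y)/\cI(Y)^2$ over the free algebra together with the fact that $d$ has kernel $\C$, which allows one to "integrate" $p$ back into $\cI(Y)^2$; alternatively, this fact — in the sharp form that $D$ is a bijection onto the space of first-order jets satisfying the \lac — is already implicit in the construction behind Theorem~\ref{t:hermite}.

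Finally, with each $\bar\iota_\ell$ an isomorphism and the squares commuting with the canonical surjections $\pxc/\cI(Y)^{\ell+1}\twoheadrightarrow\pxc/\cI(Y)^\ell$ and $\cO_Y/\mathscr M_{\ell+1}\twoheadrightarrow\cO_Y/\mathscr M_\ell$, passing to the inverse limit yields $\varprojlim_\ell\pxc/\cI(Y)^\ell\cong\varprojlim_\ell\cO_Y/\mathscr M_\ell\cong\cO_Y$, which is the claim.
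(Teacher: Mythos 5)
Your skeleton coincides with the paper's route to Theorem \ref{t:e} (proved there as Corollary \ref{c:compl}): surjectivity at each finite level is exactly the one-point case of Hermite interpolation (Theorem \ref{t:hermite}), the passage to $\varprojlim$ is the same, and the whole weight falls on the injectivity statement $\{p:\Delta^k_Yp=0,\ k<\ell\}\subseteq\cI(Y)^\ell$, which is the paper's Proposition \ref{p:power} ($\cI_{\ell-1}(Y)=\cI_0(Y)^\ell$). It is in your proof of that half that there is a genuine gap, in two places. First, the assertion that the natural map $\Hom_\C((\matcs)^g,\matcs)^{\otimes_{\cS(Y)}\ell}\to\{\ell\text{-linear maps}\}$ is a bijection ``because $\cS(Y)$ is semisimple'' is false unless $\cS(Y)=\matcs$, i.e.\ unless $Y$ is irreducible. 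Writing $\Hom_\C((\matcs)^g,\matcs)\cong((\matcs)^g)^*\otimes\matcs$ with the bimodule action on the output, that map reduces to the multiplication $\matcs\otimes_{\cS(Y)}\matcs\to\matcs$, which is never injective for a proper semisimple subalgebra (for $\cS(Y)$ the diagonal of $\matc{2}$ the source already has dimension $8$). So for a reducible semisimple $Y$ the inductive step collapses; and injectivity of the map restricted to the image of $D^{\otimes\ell}$ is essentially the content one is trying to prove, not a formal consequence of semisimplicity. Second, the base case --- $p(Y)=0$ and $\Delta^1_Yp=0$ imply $p\in\cI(Y)^2$ --- is only gestured at: Theorem \ref{t:hermite} is a surjectivity statement (every admissible truncated germ is attained by a polynomial) and does not yield it, and ``integrating back via the conormal presentation'' is exactly the step that needs an argument.

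For comparison, this is where the paper invests its technical effort: Proposition \ref{p:power} is deduced from Proposition \ref{p:sep} (which rests on the Skolem--Noether-type Lemma \ref{l:sep1} and Lemma \ref{l:la1}) together with the decomposition Lemma \ref{l:la2}, and the relevant bimodule structure there is over the centralizer $\cC(Y)$, not over $\cS(Y)$; the identity $\cI_\ell=\cI_0\cdot\cI_{\ell-1}$ is then extracted by a filtration argument rather than by any injectivity of a multiplication map on tensor products. Your strategy is probably repairable: replace your claim (a) by factoring $D$ through the conormal map $\cI(Y)/\cI(Y)^2\to\cS(Y)\otimes\C^g\otimes\cS(Y)$, whose injectivity for separable $\cS(Y)$ (vanishing of the relevant Hochschild $H_2$) would also settle the base case, and then use that $\cS(Y)\otimes(\C^g\otimes\cS(Y))^{\otimes\ell}\to\{\ell\text{-linear maps}\}$ is injective (a matrix-units ``sandwich'' computation, using $\cS(Y)\subseteq\matcs$). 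But neither of these inputs is proved, or even precisely stated, in your proposal, so as written the argument does not close.
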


Furthermore, we classify the germ algebras about $Y$ up to isomorphism in terms of $Y$ as follows.
\begin{thmA}\label{t:f}
If $Y$ and $Y'$ are semisimple points,  then
$$\cO_{Y}\cong \cO_{Y'}
\quad\iff\quad
\Oa_{Y}\cong \Oa_{Y'}
\quad\iff\quad
\Oua_{Y}\cong \Oua_{Y'}
\quad\iff\quad
\cS(Y)\cong \cS(Y').$$
\end{thmA}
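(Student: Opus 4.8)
The plan is to prove the four‑fold equivalence by showing that each of the isomorphisms $\cO_Y\cong\cO_{Y'}$, $\Oa_Y\cong\Oa_{Y'}$, $\Oua_Y\cong\Oua_{Y'}$ forces $\cS(Y)\cong\cS(Y')$, and conversely that $\cS(Y)\cong\cS(Y')$ yields all three; together these give the chain.

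For the forward implications I would recover $\cS(Y)$ from the germ algebra (of any of the three flavours) as its quotient by the Jacobson radical. Evaluation at $Y$ is a unital algebra homomorphism $f\mapsto f(Y)$ into $\matcs$, and its image is exactly $\cS(Y)$: it contains $\cS(Y)$ because already the polynomials do, and it is contained in $\cS(Y)$ because an nc function respects intertwiners, so $f(Y)$ commutes with the commutant $\cS(Y)'$ and hence lies in $\cS(Y)''=\cS(Y)$, the last equality by the double centralizer theorem ($\cS(Y)$ being semisimple since $Y$ is). Its kernel, the ideal of germs vanishing at $Y$, equals the Jacobson radical: it lies inside the radical because $1-f$ is invertible whenever $f(Y)=0$ — for formal germs this is immediate from the inverse‑limit description in Theorem~\ref{t:e}, where such $f$ is topologically nilpotent; for analytic germs the Neumann series $\sum_n f^n$ converges near each $Y^{\oplus k}$ by continuity; for uniformly analytic germs one invokes in addition a Schwarz‑type estimate to get $\sup\|f\|<1$ on a small nc‑ball about $Y$ — and it contains the radical because the quotient $\cS(Y)$ is semisimple. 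Hence each of $\cO_Y$, $\Oa_Y$, $\Oua_Y$ modulo its Jacobson radical is isomorphic to $\cS(Y)$, which is therefore an isomorphism invariant of the germ algebra, and every isomorphism between two germ algebras descends to an isomorphism $\cS(Y)\cong\cS(Y')$.

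For the reverse implication, suppose $\cS(Y)\cong\cS(Y')$. By Theorem~\ref{t:e}, $\cO_Y$ is the $\cI(Y)$‑adic completion of $\pxc$. Since $\pxc$ is a formally smooth (quasi‑free) algebra and $\pxc/\cI(Y)\cong\cS(Y)$ is separable semisimple, this completion is isomorphic to the completed tensor algebra $\widehat{T_{\cS(Y)}\big(\cI(Y)/\cI(Y)^2\big)}$ over $\cS(Y)$. The crux is that the $\cS(Y)$‑bimodule $\cI(Y)/\cI(Y)^2$ is, up to an isomorphism compatible with the algebra structure, determined by $\cS(Y)$ and the number of variables $g$ alone: its isotypic components are indexed by pairs of simple $\cS(Y)$‑modules and occur with multiplicities $\dim\mathrm{Ext}^1$ between the corresponding simple $\pxc$‑modules, which for the free algebra equal $\delta_{jj'}+(g-1)d_jd_{j'}$ in terms of the dimensions $d_j$ of the simples — in particular independent of the multiplicities with which the simples occur in $Y$. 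Transporting all of this along a chosen isomorphism $\cS(Y)\cong\cS(Y')$ produces $\cO_Y\cong\cO_{Y'}$. The same scheme applies to $\Oa_Y$ and $\Oua_Y$ via the convergent, respectively uniformly convergent, analogues of Theorem~\ref{t:e} (equivalently the generalisation of the matrix description in Theorem~\ref{t:c}), provided one checks that the isomorphism constructed above is realised by convergent, respectively uniformly bounded, power series.

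I expect the reverse implication to be the main obstacle, for two reasons. First, identifying $\cO_Y$ with a completed tensor algebra and verifying that $\cI(Y)/\cI(Y)^2$ depends only on $\cS(Y)$ and $g$ rests on the formal smoothness of $\pxc$ together with a computation of the low‑degree Ext groups — the ``local quiver'' of $\pxc$ at a semisimple point — which is the technical heart of the argument. Second, for $\Oa_Y$ and $\Oua_Y$ one cannot simply transport an abstract algebra isomorphism: one must arrange that it is analytic, which forces the use of the analytic and uniformly analytic structure theory (the analogues of Theorems~\ref{t:c} and~\ref{t:e}) rather than only their formal counterparts.
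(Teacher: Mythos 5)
Your forward implications are sound and amount to the same mechanism as the paper's: the evaluation $f\mapsto f(Y)$ has image exactly $\cS(Y)$ (by similarity-invariance and the double centralizer theorem) and its kernel is the Jacobson radical (Neumann series for $1-f$ in each of the three flavours), so $\cS(Y)$ is an isomorphism invariant of the germ algebra. The paper phrases this through Corollary~\ref{c:compl} and the maximal ideals of $\cO_Y$, but the content is the same, and your radical argument also covers $\Oa_Y$ and $\Oua_Y$ directly.

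The reverse implication is where the proposal has genuine gaps. First, even in the formal case, the identification $\cO_Y\cong\widehat{T}_{\cS(Y)}\big(\cI(Y)/\cI(Y)^2\big)$ is asserted from ``formal smoothness'' but not proved; it requires lifting $\cS(Y)$ into the completion (Wedderburn/separability, fine) \emph{and} showing that the associated graded of the $\cI(Y)$-adic filtration is the tensor algebra $T_{\cS(Y)}(\cI(Y)/\cI(Y)^2)$. That second point is essentially the content of Propositions~\ref{p:sep} and~\ref{p:power} (equivalently the Hermite interpolation machinery), so you have not bypassed the technical heart, only relabelled it; your $\mathrm{Ext}^1$ computation $\delta_{jj'}+(g-1)d_jd_{j'}$ and the conclusion that $\cI(Y)/\cI(Y)^2$ depends only on $(\cS(Y),g)$ are correct, so this part is a fixable gap rather than a wrong turn. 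Second, and more seriously, the analytic and uniformly analytic cases are not handled: there is no convergent or uniformly convergent analogue of Theorem~\ref{t:e} available, and an abstract isomorphism of the formal completions transported along $\cS(Y)\cong\cS(Y')$ does not restrict to $\Oa$ or $\Oua$ unless it is implemented by (uniformly) analytic changes of coordinates. You flag this yourself (``provided one checks\dots''), but no mechanism is offered, and this is exactly the step the theorem requires.

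The paper closes that gap by a different construction which handles all three categories at once: after reducing via Lemma~\ref{l:ss} and a similarity to the case $\cS(Y)=\cS(Y')$, it builds a $\cC(Y)$-bimodule isomorphism $L$ of $\mats^g$ extending $[S,Y]\mapsto[S,Y']$, uses Hermite interpolation (Theorem~\ref{t:hermite}) to produce an nc \emph{polynomial} map $F$ with $F(Y)=Y'$ and $\Delta_Y^1F=L$, and invokes the free inverse function theorem of \cite{AKV} to get a uniformly analytic inverse $G$; composition with $F$ and $G$ then yields mutually inverse homomorphisms between the germ algebras in the formal, analytic, and uniformly analytic settings simultaneously. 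If you want to salvage your tensor-algebra route, you would still need an argument of this concrete ``analytic change of variables'' type (or new convergent versions of Theorem~\ref{t:e}) for $\Oa_Y$ and $\Oua_Y$.
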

See Theorem \ref{t:iso} for the proof. Finally, in Section \ref{sec7} we describe a uniformly analytic nc function on a neighborhood of $Y$ with finitely many prescribed differentials at $Y$ that is minimal in a certain sense. This construction is quite different from the aforementioned polynomial interpolation. It unveils some additional features of the germ algebras and provides examples of nc functions with unusual properties, such as ones in Theorem \ref{t:c}.

\section{Preliminaries}\label{sec2}

Let $\kk$ be a field of characteristic $0$ and fix $g\in\N$. Let $x=\{x_1,\dots,x_g\}$ be freely noncommuting variables. Let $\mx$ be the free monoid over $x$, $\px$ the free $\kk$-algebra over $x$, and $\ser$ the completion of $\px$ with respect to the $(x_1,\dots,x_g)$-adic topology. This is the topology induced by the norm on $\px$ given as follows: $\|f\|=2^{-d}$ if $f$ belongs to the ideal $(x_1,\dots,x_g)^d$ but not to $(x_1,\dots,x_d)^{d+1}$. The elements of $\px$ and $\ser$ are called {\bf noncommutative (nc) polynomials} and {\bf noncommutative (nc) power series}, respectively.

For $X\in\mat{m}^g$, $Y\in\mat{n}^g$ and $S,T\in\matn$ we write
\begin{align*}
X\oplus Y &= (X_1\oplus Y_1,\dots, X_g\oplus Y_g)\in\mat{m+n}^g, \\
SYT &= (SY_1T,\dots, SY_gT)\in\matn^g, \\
[S,Y] &= (SY_1-Y_1S,\dots,SY_g-Y_gS)\in\matn^g.
\end{align*}
Furthermore, we write $\oplus^n Y$ for $Y\oplus\cdots\oplus Y$ with $n$ summands. Also, $\otimes$ denotes both the tensor product over $\kk$ and the Kronecker product of matrices.

\subsection{Noncommutative functions}\label{ss:nc}

Let us follow the terminology and definitions of \cite{KVV3}. A {\bf noncommutative (nc) space} over $\kk^g$ is
$$\kk^g_{\nc} = \bigsqcup_{n\in\N} \matn^g=\bigsqcup_{n\in\N} (\kk^g)^{n\times n}.$$
In particular, $\kk_{\nc}=\bigsqcup_n \matn$. For $\Omega\subseteq \kk^g_{\nc}$ we write $\Omega_n=\Omega \cap \matn^g$. We say that $\Omega\subseteq \kk^g_{\nc}$ is a {\bf noncommutative (nc) set} if $X\oplus Y\in\Omega$ for every $X,Y\in\Omega$. A map $f:\Omega\to \kk_{\nc}$ on an nc set is a {\bf noncommutative (nc) function} if
\begin{enumerate}
	\item $f$ is graded, $f(\Omega_n)\subseteq \matn$ for all $n$;
	\item $f$ respects direct sums, $f(X\oplus Y)=f(X)\oplus f(Y)$ for all $X,Y\in\Omega$;
	\item $f$ respects similarities, $f(SXS^{-1})=Sf(X)S^{-1}$ for all $X\in\Omega_n$ and $S\in\gl{n}$ such that $SXS^{-1}\in\Omega_n$.
\end{enumerate}
For a more thorough treatment of free analysis and noncommutative function theory see \cite{Voi,KVV3,AM1}.

\subsection{Differential operators}

An nc set $\Omega$ is {\bf (right) admissible} if for every $X\in\Omega_m$, $Y\in\Omega_n$ and $Z\in(\kk^{m\times n})^g$ there exists $\alpha\in\kk\setminus\{0\}$ such that
$$\begin{pmatrix}X & \alpha Z \\ 0 & Y\end{pmatrix}\in\Omega_{m+n}.$$
Let $f$ be an nc function on an admissible set $\Omega$, $Y\in\Omega_s$ and $\ell\in\N$. Then the {\bf $\ell$-th order (right) noncommutative (nc) differential operator at $Y$} is the $\ell$-linear map
$$\Delta_Y^\ell f\colon (\mats^g)^\ell\to \mats$$
determined by
\begin{equation}\label{e:diff}
f\begin{pmatrix}
Y & Z^1 &  & \\
& \ddots & \ddots & \\
& & \ddots & Z^\ell \\
& & & Y
\end{pmatrix}
=\begin{pmatrix}
f(Y) & \Delta_Y^1f(Z^1) & \cdots & \Delta_Y^\ell f(Z^1,\dots,Z^\ell)\\
& \ddots & \ddots & \vdots\\
& & \ddots & \Delta_Y^1f(Z^\ell) \\
& & & f(Y)
\end{pmatrix}.
\end{equation}
The particular block structure in \eqref{e:diff} is due to $f$ being noncommutative; cf. \cite[Theorem 3.11]{KVV3}. For convenience we write $\Delta_Y^0f=f(Y)$.

\subsubsection{Ampliations}

Let $V,W$ be vector spaces over $\kk$ and let $T:V^\ell\to W$ be an $\ell$-linear map. Then $T$ can be viewed as a linear map $T:V^{\otimes\ell}\to W$. For every $n$ we can naturally extend it to a linear map $(V^{\otimes\ell})^{n\times n}\to W^{n\times n}$ by block-wise application of $T$. By composing it with the canonical map
$$(V^{n\times n})^{\otimes\ell} \to (V^{\otimes\ell})^{n\times n}$$
we obtain an $\ell$-linear map $T_n:(V^{n\times n})^{\ell}\to W^{n\times n}$. Whenever $n$ is clear from the context, we simply write $T$ instead of $T_n$.

As a special case, an $\ell$-linear map $T:(\mats^g)^\ell\to \mats$ extends to an $\ell$-linear map $T:(\mat{ns}^g)^\ell\to \mat{ns}$ as above using the identification $\mats^{n\times n}=\mat{ns}$.

\begin{rem}
In \cite{KVV3}, this amplified map is denoted as
$$T_n(Z^1,\dots,Z^\ell)= (Z^1\odot_s \cdots \odot_s Z^\ell) T$$
for $Z^i\in\mat{ns}^g = ((\kk^g)^{s\times s})^{n\times n}$ using the faux product $\odot_s$ for $n\times n$ matrices over the tensor algebra $\mathbb{T}((\kk^g)^{s\times s})$.
\end{rem}

Returning to the differential operators, we have the following relation between amplifying points in the nc space and amplifying operators \cite[Proposition 3.3]{KVV3}:
\begin{equation}\label{e:amp}
(\Delta_{\oplus^n Y}^\ell f) (Z^1,\dots,Z^\ell) = \left(\Delta_Y^\ell f\right)_n (Z^1,\dots,Z^\ell).
\end{equation}

\subsubsection{\Lac}

Since nc functions respect direct sums and similarities, their differential operators satisfy certain intertwining conditions, which we describe next.

\begin{defn}\label{d:lac}
Let $Y\in\mats^g$. A sequence $(f_\ell)_{\ell=0}^\infty$ of $\ell$-linear maps
$$f_\ell:\left(\mats^g\right)^\ell\to \mats$$
satisfies the {\bf \lac} with respect to $Y$ (shortly $\LAC(Y)$) if
$$f_1([S,Y])=[S,f_0]$$
and for $\ell\ge2$,
\begin{align*}
f_\ell([S,Y],Z^1,\dots, Z^{\ell-1})
& =S f_{\ell-1}(Z^1,\dots, Z^{\ell-1})-f_{\ell-1}(SZ^1,Z^2,\dots, Z^{\ell-1}), \\
f_\ell(\dots,Z^j,[S,Y], Z^{j+1},\dots)
& = f_{\ell-1}(\dots,Z^{j-1},Z^jS,Z^{j+1},\dots) \\
 &\quad -f_{\ell-1}(\dots,Z^j,SZ^{j+1},Z^{j+2},\dots), \\
f_\ell(Z^1,\dots Z^{\ell-1},[S,Y])
& =f_{\ell-1}(Z^1,\dots,Z^{\ell-2}, Z^{\ell-1} S)-f_{\ell-1}(Z^1,\dots, Z^{\ell-1})S
\end{align*}
for $1\le j\le \ell-2$ and all $S\in\mats$, $Z^j\in\mats^g$.
\end{defn}

\begin{rem}
If $Y\in\kk^g$, then $\LAC(Y)$ are void.
\end{rem}

\begin{rem}\label{r:diff}
If $f$ is an nc function on an admissible set $\Omega$ and $Y\in\Omega_s$, then
$$ \left(\Delta^\ell_Y f\right)_{\ell=0}^\infty$$
satisfies $\LAC(Y)$ by \cite[Remark 4.3]{KVV3}.
\end{rem}

\subsection{Topologies on a noncommutative space and analyticity of noncommutative functions}

In this subsection let $\kk=\C$. We will consider two natural topologies on $\C^g_{\nc}$. For $X=(X_1,\dots,X_g)\in\matcn^g$ let $\|X\|_n$ denote the maximum of the operator norms of $X_j$; when $n$ is clear from the context, we simply write $\|\cdot\|$. The norms $\|\cdot\|_n$ on $\matcn^g=(\C^g)^{n\times n}$ correspond to $\C^g$ viewed as an operator space via Ruan's theorem \cite{Pau,Pis}. For an $\ell$-linear map $T:(\matcs^g)^\ell\to\mats$ we have
$$\|T_n\|_{ns}=\max\left\{\|T_n(Z^1,\dots,Z^\ell)\|_{ns}\colon \|Z^1\|_{ns}=\cdots=\|Z^\ell\|_{ns}=1\right\}.$$
If
$$\|T\|_{\cb}=\sup_n\|T_n\|_{ns}$$
is finite, then $T$ is {\bf completely bounded} (in the sense of Christensen and Sinclair \cite[Chapter 17]{Pau}; see also \cite[Proposition 7.49]{KVV3}). When $\ell=1$, this definition of course agrees with the usual notion of a completely bounded linear map. Actually, the results of this paper apply to any operator space structure on $\C^g$, but we restrict to this standard one for the sake of simplicity.

\subsubsection{Disjoint union topology}

A subset $\Omega\subseteq \C^g_{\nc}$ is open in the {\bf disjoint union topology} if $\Omega_n$ is open in the Euclidean topology on $\matcn^g$ for all $n\in\N$ (\cite[Section 7.1]{KVV3}; cf. fine topology \cite[Example 3.3]{AM1}). Let $f$ be an nc function on an open nc set $\Omega$. We say that $f$ is {\bf locally bounded} on $\Omega$ if for every $X\in\Omega$, $f$ is bounded on some open neighborhood of $X$. 

Let $Y\in \matcs^g$. If $f$ is a locally bounded nc function on some nc neighborhood of $Y$, then for every $n\in\N$ we have
$$\limsup_{\ell\to\infty}\sqrt[\ell]{
\sup_{\|Z\|_{ns}=1}	\|\Delta_{\oplus^nY}^\ell f(Z,\dots,Z)\|_{ns}
}<\infty$$
and for all $X$ in some Euclidean neighborhood $U_{ns}\subseteq\matc{ns}^g$ of $\oplus^nY$,
\begin{equation}\label{e:old1}
f(X)=\sum_{\ell=0}^\infty \Delta_{\oplus^nY}^\ell f (X-\oplus^nY,\dots,X-\oplus^nY)
\end{equation}
holds and the series \eqref{e:old1} converges absolutely and uniformly on $U_{ns}$ \cite[Theorem 7.8]{KVV3}. On the other hand, if $(f_\ell)_{\ell=0}^\infty$ is a sequence of multilinear maps satisfying $\LAC(Y)$ and
$$\limsup_{\ell\to\infty}\sqrt[\ell]{
\sup_{\|Z\|_{ns}=1}	\|(f_\ell)_n (Z,\dots,Z)\|_{ns}
}<\infty$$
for all $n$, then the series
\begin{equation}\label{e:old2}
\sum_{\ell=0}^\infty f_\ell(X-\oplus^nY,\dots,X-\oplus^nY)
\end{equation}
converges absolutely and uniformly for $X$ in some open neighborhood $U_{ns}\subseteq \matc{ns}^g$ of $Y$, for all $n\in\N$ \cite[Theorem 8.8]{KVV3}. Furthermore, if $\bigsqcup_n U_{ns}\subset\C^g_{\nc}$ contains an nc set $\Omega$ that is open in the disjoint union topology, then $\eqref{e:old2}$ is a locally bounded nc function on $\Omega$.

\subsubsection{Uniformly open topology}\label{ss:ua}

For $Y\in\matcs^g$ and $\ve>0$ let
$$\BB_\ve(Y)=\bigsqcup_{n\in\N}\left\{X\in\matc{ns}^g\colon \left\|X-\oplus^nY\right\|_{ns}<\ve\right\}\subset \C^g_{\nc}$$
be the {\bf noncommutative (nc) ball} about $Y$ of radius $\ve$. By \cite[Proposition 7.12]{KVV3}, nc balls form a basis of a topology on $\C^g_{\nc}$, which is called the {\bf uniformly open topology} (\cite[Section 7.2]{KVV3}; cf. fat topology \cite[Example 3.5]{AM1}). Let $\Omega$ be a uniformly open nc set. An nc function $f$ is {\bf uniformly locally bounded} on $\Omega$ if for every $X\in\Omega$, $f$ is bounded on some nc ball about $X$. Similarly to the disjoint union topology case, uniform local boundedness is related to uniform analyticity.

\begin{thm}[{\cite[Theorems 7.21 and 8.11]{KVV3}}]\label{t:old4}
Let $Y\in \matcs^g$.

If $f$ is a uniformly locally bounded nc function on some uniformly open nc neighborhood of $Y$, then
\begin{equation}\label{e:old3}
f(X)=\sum_{\ell=0}^\infty \Delta_{Y}^\ell f(X-\oplus^nY,\dots,X-\oplus^nY)
\end{equation}
for all $X$ in some nc ball about $Y$, where the series \eqref{e:old3} converges absolutely and uniformly, and $\limsup_{\ell\to\infty}\sqrt[\ell]{\|\Delta_Y^\ell f\|_{\cb}}<\infty$.

Conversely, let $(f_\ell)_{\ell=0}^\infty$ be a sequence of multilinear maps satisfying $\LAC(Y)$ and $$\limsup_{\ell\to\infty}\sqrt[\ell]{\|f_\ell\|_{\cb}}<\infty.$$
Then the series
\begin{equation}\label{e:old4}
\sum_{\ell=0}^\infty f_\ell(X-\oplus^nY,\dots,X-\oplus^nY)
\end{equation}
converges absolutely and uniformly for all $X$ in some nc ball about $Y$, and $\eqref{e:old4}$ is a locally bounded nc function on that nc ball.
\end{thm}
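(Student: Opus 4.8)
The plan is to bootstrap the uniform‑topology statement from its disjoint‑union‑topology analogues (cf.\ \eqref{e:old1} and \eqref{e:old2}, i.e.\ \cite[Theorems 7.8 and 8.8]{KVV3}), the single genuinely new ingredient being a \emph{dimension‑uniform} Cauchy estimate for the nc differential operators. Throughout, for a point $X$ at level $ns$ write $W=X-\oplus^nY$.

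For the forward direction, suppose $f$ is bounded by $M$ on the nc ball $\BB_\ve(Y)$. Fix $\ell$, $n$ and $Z^1,\dots,Z^\ell\in\matc{ns}^g$ with $\|Z^i\|_{ns}\le 1$, and for $0<r<\ve$ let $\mathbf Z^{(r)}$ be the $(\ell+1)\times(\ell+1)$ block matrix over $\matc{ns}^g$ with $\oplus^nY$ on the diagonal and $rZ^i$ in position $(i,i+1)$. A block nilpotent supported on the first superdiagonal has operator norm equal to the largest norm of its blocks, so $\mathbf Z^{(r)}$ lies in $\BB_\ve(Y)$ (at level $(\ell+1)n$) and $\|f(\mathbf Z^{(r)})\|\le M$; on the other hand, \eqref{e:diff}, $\ell$‑linearity and \eqref{e:amp} show that the $(1,\ell+1)$ block of $f(\mathbf Z^{(r)})$ equals $r^\ell(\Delta_Y^\ell f)_n(Z^1,\dots,Z^\ell)$. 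Comparing norms and letting $r\uparrow\ve$ gives $\|(\Delta_Y^\ell f)_n(Z^1,\dots,Z^\ell)\|\le M\ve^{-\ell}$; taking suprema over the $Z^i$ and over $n$ yields $\|\Delta_Y^\ell f\|_{\cb}\le M\ve^{-\ell}$, hence $\limsup_\ell\sqrt[\ell]{\|\Delta_Y^\ell f\|_{\cb}}\le\ve^{-1}<\infty$ and, by a geometric bound uniform across all levels, the series \eqref{e:old3} converges absolutely and uniformly on $\BB_{\ve'}(Y)$ for every $\ve'<\ve$. To identify the sum with $f$: since nc balls form a basis of the uniformly open topology, every point of $\BB_\ve(Y)$ sits inside a sub‑nc‑ball contained in $\BB_\ve(Y)$ on which $f$ is bounded, so \eqref{e:old1} applies at every point of $\BB_\ve(Y)$; consequently, at each level $n$ the function $f$ is analytic on the connected open set $\BB_\ve(Y)_{ns}\subseteq\matc{ns}^g$, near $\oplus^nY$ it equals the power series $\sum_\ell(\Delta_Y^\ell f)_n(W,\dots,W)$, and that series is — by the $\cb$‑estimate — analytic on all of $\BB_\ve(Y)_{ns}$, so the identity theorem forces equality throughout $\BB_\ve(Y)_{ns}$. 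Reassembling over $n$ gives \eqref{e:old3}.

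For the converse, fix $R>0$ with $\limsup_\ell\sqrt[\ell]{\|f_\ell\|_{\cb}}=R^{-1}$. Since $\|(f_\ell)_n\|_{ns}\le\|f_\ell\|_{\cb}$ for every $n$, the $\ell$‑th term of \eqref{e:old4} is bounded by $\|f_\ell\|_{\cb}(R')^\ell$ on the whole nc ball $\BB_{R'}(Y)$ whenever $R'<R$; as $\sum_\ell\|f_\ell\|_{\cb}(R')^\ell<\infty$ by the root test, the series \eqref{e:old4} converges absolutely and uniformly on $\BB_{R'}(Y)$, to a function bounded there by that sum. Now $\BB_{R'}(Y)$ is a disjoint‑union‑open nc set on which \eqref{e:old4} converges absolutely and uniformly, so the final clause of \cite[Theorem 8.8]{KVV3} applies with this nc set, identifying the sum as a (uniformly) locally bounded nc function on $\BB_{R'}(Y)$. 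Letting $R'\uparrow R$ finishes the proof.

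The forward direction is essentially the one‑line block‑matrix Cauchy estimate; the only thing to watch is that the bound $M$ does not depend on the matrix size, which is precisely what boundedness on an nc ball (rather than mere local boundedness) delivers. All the real difficulty in the converse is already packaged inside the cited result: the nontrivial fact is that $\LAC(Y)$ forces the limit function to respect \emph{arbitrary} similarities $X\mapsto SXS^{-1}$ (not only those induced by block‑diagonal $S$) and direct sums. The residual, somewhat delicate point on our side is the observation that the nc ball of \emph{uniform} convergence governed by $\|{\cdot}\|_{\cb}$ is itself a disjoint‑union‑open nc set, which promotes the nc‑function conclusion of \eqref{e:old2} — a priori only valid on small level‑wise neighborhoods — to the entire ball. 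A self‑contained proof of the converse would instead have to establish the $SXS^{-1}$‑equivariance of $\sum_\ell f_\ell(W,\dots,W)$ directly from the intertwining identities of Definition~\ref{d:lac}, and that computation would be the genuine obstacle.
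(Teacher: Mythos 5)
This statement is quoted background: the paper offers no proof of Theorem \ref{t:old4}, simply citing \cite[Theorems 7.21 and 8.11]{KVV3}, so there is nothing internal to compare against; what matters is whether your reconstruction is sound, and it is. Your forward direction is the standard block Cauchy estimate: evaluating $f$ at the bidiagonal point with diagonal $\oplus^nY$ and superdiagonal $rZ^i$ is legitimate since the strictly upper block-bidiagonal perturbation has norm $\max_i r\|Z^i\|\le r<\ve$ in each coordinate, \eqref{e:diff} together with \eqref{e:amp} identifies the $(1,\ell+1)$ corner as $r^\ell(\Delta^\ell_Yf)_n(Z^1,\dots,Z^\ell)$, and compressing to a block only decreases the norm; this gives $\|\Delta_Y^\ell f\|_{\cb}\le M\ve^{-\ell}$ uniformly in $n$, which is exactly the point where uniform (size-independent) boundedness on an nc ball enters. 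The identification of $f$ with its Taylor--Taylor series on the whole level $\BB_{\ve'}(Y)_{ns}$ via analyticity of both sides (from \eqref{e:old1} applied at each point, respectively from locally uniform convergence), agreement near $\oplus^nY$, convexity/connectedness of the ball, and the identity theorem is correct. For the converse, the $\cb$-bound plus $\|(f_\ell)_n(W,\dots,W)\|\le\|f_\ell\|_{\cb}\|W\|^\ell$ and the root test give absolute and uniform convergence on $\BB_{R'}(Y)$ for any $R'<R$, and since $\BB_{R'}(Y)$ is an nc set that is open in the disjoint union topology, invoking the final clause recalled after \eqref{e:old2} (i.e.\ \cite[Theorem 8.8]{KVV3}) to conclude that the sum is a locally bounded nc function there is a legitimate use of the cited background; as you correctly flag, the only genuinely nontrivial content you are outsourcing is that $\LAC(Y)$ forces the limit to respect direct sums and arbitrary similarities, which is precisely what that clause encapsulates. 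In short, your argument is a correct derivation of the uniform statement from the recalled level-wise results plus a dimension-uniform Cauchy estimate, and it parallels how the result is established in \cite{KVV3}; a fully self-contained proof would additionally have to verify the similarity-equivariance of \eqref{e:old4} directly from Definition~\ref{d:lac}, as you note.
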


The infinite sums \eqref{e:old1} and \eqref{e:old3} are called {\bf Taylor-Taylor series} about $Y$ \cite[Chapter 4]{KVV3}; see also \cite{Tay2,Voi} for earlier accounts.

\subsection{Noncommutative germs}\label{ssec2.4}

To study the local behavior of (uniformly) analytic nc functions we define the following.

\begin{defn}\label{d:germ}
Let $Y\in\matcs^g$. An {\bf analytic nc germ about $Y$} is an equivalence class of analytic nc functions about $Y$, where two nc functions are equivalent if they agree on a disjoint union open nc neighborhood of $Y$. Analogously we define a {\bf uniformly analytic nc germ about $Y$} (using the uniformly open topology). The $\C$-algebras of analytic and uniformly analytic nc germs about $Y$ are denoted $\Oa_Y$ and $\Oua_Y$, respectively.
\end{defn}

By Theorem \ref{t:old4}, uniformly analytic nc germs about $Y$ are in one-to-one correspondence with sequences of multilinear maps $(f_\ell)_\ell$ satisfying $\LAC(Y)$ and $$\limsup_{\ell\to\infty}\sqrt[\ell]{\|f_\ell\|_{\cb}}<\infty.$$
Similarly, analytic nc germs about $Y$ embed into the set of sequences of multilinear maps $(f_\ell)_\ell$ satisfying $\LAC(Y)$ and
$$\limsup_{\ell\to\infty}\sqrt[\ell]{
	\sup_{\|Z\|_{ns}=1}	\|(f_\ell)_n (Z,\dots,Z)\|_{ns}
}<\infty$$
for all $n$ (this embedding is indeed proper, see \cite[Example 8.6]{KVV3}). More generally, a sequence of multilinear maps $(f_\ell)_\ell$ satisfying $\LAC(Y)$ for some $Y\in\mats^g$ is called a {\bf formal nc germ about $Y$}. Formal nc germs are endowed with natural addition and convolution multiplication, thus forming a $\kk$-algebra $\cO_Y$. For $Y\in\matcs^g$ we have
$$\Oua_Y\subset\Oa_Y\subset\cO_Y,$$
and these inclusions are strict; see \cite[Example 8.14]{KVV3} or \cite[Section 17]{Voi}.

In \cite[Chapter 5]{KVV3} it is described in detail how formal nc germs about $Y\in\mats^g$ can be viewed as germs of nc functions. We say that $Z=(Z_1,\dots,Z_g)\in\matn^g$ is {\bf jointly nilpotent} if $Z_1,\dots,Z_g$ generate a nilpotent $\kk$-algebra in $\matn$. Let
$$\Nilp(Y)=\bigsqcup_n \left\{
X\in\mat{ns}^g\colon X-\oplus^n Y \text{ is jointly nilpotent}
\right\}\subset \kk^g_{\nc}.$$
Then $\Nilp(Y)$ is an admissible nc set, so every nc function on $\Nilp(Y)$ admits nc differential operators, which form a formal nc germ. Conversely, every formal nc germ determines an nc function on $\Nilp(Y)$ via Taylor-Taylor series.

\begin{rem}\label{r:scalar}
If $Y\in\kk^g$, then $\cO_Y\cong\ser$. Moreover, for every $Y\in \C^g$ we have $\Oa_Y\cong \Oa_0$ and $\Oua_Y\cong\Oua_0$, where $0=0^g\in\C^g$.
\end{rem}

\subsection{Universal skew field of fractions}

Finally, we review some notions from skew field theory following \cite[Section 7.2]{Coh}.

If $F$ and $E$ are skew fields, then a {\bf local homomorphism} from $F$ to $E$ is given by a ring homomorphism $R_0\to E$, whose domain $R_0\subseteq F$ is a local subring and whose kernel contains precisely the elements that are not invertible in $R_0$.

Let $R$ be a ring. A skew field $\cU$ is a {\bf universal skew field of fractions of $R$} if there is an embedding $R\hookrightarrow \cU$ whose image generates $\cU$ as a skew field, and every homomorphism $R\to D$ into a skew field $D$ extends to a local homomorphism from $\cU$ to $D$ whose domain contains $R$. The universal skew field of fractions is, when it exists, unique up to isomorphism \cite[Section 7.2]{Coh}. It also has an alternative characterization \cite[Theorem 7.2.7]{Coh}: every matrix over $R$, which becomes invertible under some homomorphism from $R$ to a skew field, is invertible over $\cU$.

A ring $R$ is a {\bf semifir} \cite[Section 2.3]{Coh} if every finitely generated left ideal in $R$ is a free left $R$-module of unique rank. Let $D$ be a skew field containing $\kk$. Then $D\otimes\px$ and its completion $\dser{D}$ are well-known examples of semifirs \cite[Corollary 2.5.2; Theorems 2.9.5 and 2.9.8]{Coh}. By \cite[Corollary 7.5.14]{Coh}, every semifir $R$ admits a universal skew field of fractions $\cU$.

Let $R$ be an arbitrary ring and $A\in R^{d\times e}$. The {\bf inner rank} of $A$ is the smallest $r\in\N\cup\{0\}$ such that $A=BC$ for some $B\in R^{d\times r}$ and $C\in R^{r\times e}$; we denote $\rho (A)=r$. Furthermore, $A$ is {\bf full} if $\rho (A)=\min \{d,e\}$, and {\bf non-full} otherwise. These notions give us yet another characterization of the universal skew field of fractions in the case $R$ is a semifir. By \cite[Theorem 7.5.13]{Coh}, the following are equivalent for a skew field $\cU$ containing $R$ and generated by $R$:
\begin{enumerate}
	\item $\cU$ is the universal skew field of fractions of $R$;
	\item the embedding $R\subseteq \cU$ is inner-rank preserving;
	\item every full square matrix over $R$ is invertible over $\cU$.
\end{enumerate}

\section{Universal skew field of fractions of formal power series and the Amitsur-Cohn theorem for meromorphic identities}\label{sec3}

In this section we construct noncommutative formal meromorphic functions as the elements of the universal skew field of fractions of noncommutative formal power series, see Subsection \ref{ssec3.4} and Theorem \ref{t:formal}. Further, a meromorphic variant of Amitsur's theorem \cite[Theorem 16]{Ami} and Cohn's theorem \cite[Theorem 7.8.3]{Coh} for noncommutative rational functions is given in Theorem \ref{t:id}: a formal meromorphic expression vanishes under all finite-dimensional representations if and only if it vanishes in every stably finite algebra.

\subsection{Meromorphic expressions and identities}

The $\kk$-algebra $\ser$ has a natural topology as the completion of $\px$. Let $\cA$ be a unital $\kk$-algebra. Then a homomorphism $\phi:\ser\to \cA$ coinduces a topology on its image, and this topology is Hausdorff if and only if the ideal $\ker\phi$ is closed in $\ser$. Whenever $\cA$ does not have any specified topology, we call $\phi$ {\bf continuous} if $\ker\phi$ is closed in $\ser$. If $s\in\ser$ has homogeneous components $s^{(i)}$, that is,
$$s=\sum_{i=0}^\infty s^{(i)},$$
and $\phi$ is continuous, then $\phi(s)=0$ if and only if $\phi(s^{(i)})=0$ for all $i$.

\begin{defn}\label{d:mero}
A formal {\bf meromorphic expression} over $x$ over $\kk$ is an expression of the form $m=r(s_1,\dots,s_\ell)$, where $r$ is a formal rational expression in the letters $y=(y_1,\dots,y_\ell)$ and $s_1,\dots,s_\ell\in\ser$. Here, a formal rational expression \cite[Section 3]{Ber} is a syntactically valid string made of scalars $\kk$, letters $y$, arithmetic operations $+,\cdot,{}^{-1}$ and parentheses; it can be evaluated on a tuple of elements in a $\kk$-algebra in a natural way provided that all the inverses exist; see \cite[Subsection 2.3]{HMS} for technical details. If $\cA$ is a $\kk$-algebra, then $m$ is
\begin{enumerate}
\item a {\bf meromorphic identity (MI) for $\cA$} if for every continuous homomorphism $\phi:\ser\to \cA$, $\phi(m):=r(\phi(s_1),\dots, \phi(s_\ell))$ is either undefined or 0.
\item a {\bf formal meromorphic identity (FMI) for $\cA$} if for every homomorphism $\phi:\ser\to \cA$, $\phi(m):=r(\phi(s_1),\dots, \phi(s_\ell))$ is either undefined or 0.
\end{enumerate}
\end{defn}

\begin{rem}
The distinction between MI and FMI is required because not every ideal in $\ser$ is closed. For example, let $J$ be the ideal in $\ser$ generated by the commutators $[x_i,x_j]$ for $i,j=1,\dots,g$. Then one can check that
$$\left[x_1,\sum_{i=1}^\infty x_1^ix_2x_1^i  \right]=\sum_{i=1}^\infty x_1^i[x_1,x_2]x_1^i$$
does not belong to $J$, but it lies in the closure of $J$. In particular, $\ser/J$ is not commutative and therefore not isomorphic to $\kk[[x]]$. This differs from the commutative setting, where every ideal in $\kk[[x]]$ is closed \cite[Theorem 8.14]{Mat}.
\end{rem}

\begin{rem}
As opposed to the PI theory, central simple algebras of the same degree do not satisfy the same ``series'' identities. For example, $\kk$ and $\kk((t))$ are both 1-dimensional (commutative) fields; however, there is only one homomorphism $\kk((t))\to \kk$, while there are several homomorphisms $\kk((t))\to\kk((t))$. As a consequence, $\sum_{i\ge 1}x_1^i$ is a MI for $\kk$ but not for $\kk((t))$.
\end{rem}

\subsection{Completion of the ring of generic matrices}

Fix $n\in\N$ and let $\kk((\xi))$ be the field of fractions of the polynomial ring in $gn^2$ variables
$$\kk[[\xi]]=\kk[[\xi_{ij}^k:1\le i,j\le n, 1\le k \le g]].$$
Let $\gX_k=(\xi_{ij}^k)_{i,j}$ be $n\times n$ generic matrices, and let $\gm\subset\opm_n(\kk[\xi])$ be the algebra of generic matrices \cite[Definition 1.3.5]{Row}, i.e., the unital $\kk$-algebra generated by $\gX=(\gX_1,\dots,\gX_g)$. Let $\gms$ be the closure of $\gm$ in $\matser$. Equivalently, $\gms$ is the completion of $\gm$ with respect to the ideal generated by $\gX$, and its elements are formal power series in $\gX$; cf. \cite{GMS} for an analytic tracial version. Since $\matquot$ is clearly a scalar extension of $\gm$ and hence of $\gms$, we conclude that $\gms$ is a prime ring. Its center is thus a domain; let $\uds$ be the ring of central quotients of $\gms$. Since $\gms$ is a PI-ring, Posner's theorem \cite[Theorem 1.7.9]{Row} implies that $\uds$ is a central simple algebra of degree $n$.

\begin{prop}\label{p:sf}
$\uds$ is a skew field.
\end{prop}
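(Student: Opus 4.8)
The plan is to show that every nonzero element of $\uds$ is invertible. Recall $\uds$ is the central localization of $\gms$, which by Posner's theorem is a central simple algebra of degree $n$ over its center $F$, the field of central quotients. So $\uds \cong \opm_m(D)$ for some skew field $D$ with $m^2 \dim_F D = n^2$; our task is equivalent to showing $m=1$, i.e.\ that $\uds$ has no zero divisors. An element of $\uds$ is, up to a central factor, an element of $\gms \subset \matser$; and $\matser$ sits inside $\matquot$. So it suffices to prove: if $a \in \gms$ is nonzero, then $a$ is invertible in $\uds$.

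First I would use that $\gms \subseteq \opm_n(\kk[[\xi]])$ and that the latter is an integral domain's matrix ring over which one can speak of determinants: for $a \in \gms$, $\det a \in \kk[[\xi]]$. The key claim is that $a \neq 0$ in $\gms$ implies $\det a \neq 0$ in $\kk[[\xi]]$. Granting this, $a$ is invertible in $\matquot$, with $a^{-1} = (\det a)^{-1}\adj(a)$. Since $\uds$ is a central simple algebra of degree $n$ equal to the central closure of $\gms$, and $a^{-1} \in \matquot$ lies in the centralizer structure, one argues $a^{-1} \in \uds$: indeed $\uds$ is finite-dimensional over its center $F \subseteq \kk((\xi))$, hence integral, so if $a \in \uds$ is not a zero divisor in the finite-dimensional $F$-algebra $\uds$ then it is already invertible in $\uds$. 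Thus the whole proposition reduces to: \emph{a nonzero element of $\gms$ has nonzero determinant}, equivalently $\gms$ contains no zero divisors of the form witnessed by a singular matrix.

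To prove the determinant claim I would argue by contradiction and pick the lowest-degree homogeneous component. Suppose $a \in \gms$ is nonzero with $\det a = 0$. Write $a = \sum_{i \geq d} a^{(i)}$ with $a^{(d)} \neq 0$, where $a^{(i)} \in \opm_n(\kk[\xi])$ is the part of multidegree $i$ in the entries of $\gX$; note $a^{(d)}$ is a nonzero element of $\gm$ (a homogeneous generic-matrix polynomial). Now recall that $\gm$, being a domain of generic matrices, embeds into $\ud$, which is a division algebra — this is classical (Amitsur): the generic division algebra $\ud$ is a skew field. Hence $a^{(d)}$, being nonzero in $\gm$, is invertible in $\ud$, so $\det a^{(d)} \neq 0$ in $\kk[\xi]$. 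But $\det a^{(d)}$ is the lowest-degree homogeneous component of $\det a$ (each entry of $a$ has order $\geq d$, so $\det a$ has order $\geq nd$, with the order-$nd$ part being exactly $\det a^{(d)}$). Therefore $\det a \neq 0$, a contradiction. This also directly shows $a$ is invertible in $\matquot$: its determinant is a nonzero power series.

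The main obstacle is the passage from ``$a$ invertible in $\matquot$'' to ``$a$ invertible in $\uds$''. The clean way is the finite-dimensionality argument: $\uds$ is a central simple $F$-algebra of finite $F$-dimension $n^2$, hence Artinian, hence every non-zero-divisor is invertible; and $a$ is a non-zero-divisor in $\uds$ precisely because its image in the overring $\matquot$ (a scalar extension of $\uds$, by the remark preceding the proposition) is invertible, so $ab = 0$ with $b \in \uds$ forces $b=0$ already in $\matquot \supseteq \uds$. I would double-check that $\matquot$ really is a scalar extension of $\uds$ — this is the sentence ``$\matquot$ is clearly a scalar extension of $\gm$ and hence of $\gms$'' in the text, combined with the fact that $\uds$ sits between $\gms$ and $\matquot$ with the same center up to the localization. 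Once this compatibility is nailed down, the proof is complete: $\uds$ is a central simple algebra with no zero divisors, hence a skew field.
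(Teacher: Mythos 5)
Your proof is correct, and it rests on the same key observation as the paper's --- that the lowest-degree homogeneous component of a nonzero element of $\gms$ is a nonzero element of $\gm$, to which Amitsur's classical theorem on generic matrices applies --- but the surrounding strategy is different. The paper argues via nilpotents: if the central simple algebra $\uds$ were not a skew field it would contain a nilpotent, hence (after clearing a central denominator) $\gms$ would contain a nilpotent $f=\sum_{i\ge d}f_i$ with $f_d\neq 0$, and then $f^k=0$ forces $f_d^k=0$, contradicting the fact that $\gm$ is a domain. You instead show directly that every nonzero $a\in\gms$ has $\det a\neq 0$ --- its degree-$nd$ homogeneous part is the determinant of the lowest-degree component, which is nonzero because that component is invertible in the division algebra $\ud\subseteq\opm_n(\kk(\xi))$ --- hence $a$ is invertible in $\matquot$, hence a non-zero-divisor in $\uds$, hence invertible there because $\uds$ is finite-dimensional over its center. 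Both routes are sound; yours needs two extra (correct) pieces of bookkeeping that the paper's argument avoids: the embedding $\uds\hookrightarrow\matquot$, which indeed follows from the scalar-extension remark preceding the proposition since nonzero central elements of $\gms$ are nonzero scalar matrices over $\kk[[\xi]]$ and thus invertible in $\matquot$, and the Artinian step ``non-zero-divisor implies invertible.'' The paper's nilpotent argument dispenses with determinants and with $\matquot$ altogether and is correspondingly shorter, while a small dividend of your approach is the explicit conclusion that every nonzero $a\in\gms$ is already invertible in $\matquot$ with $a^{-1}=(\det a)^{-1}\adj(a)$.
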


\begin{proof}
Suppose that $\uds$ is not a skew field. Since it is a central simple algebra, we conclude that $\gms$ contains nilpotents. Let $f\in\gms$ be nilpotent. Write $f=\sum_{i=d}^\infty f_i$, where $f_i\in \gm$ is homogeneous of degree $i$, and $f_d\neq0$. Then $f^n=0$ implies $f_d^n=0$, which is a contradiction since $\gm$ is a domain.
\end{proof}

The skew field $\uds$ has a special role among the division algebras of degree $n$ (Proposition \ref{p:fd} below), which will be important in subsequent construction of nc germs. First we require the following.

\begin{lem}\label{l:ext}
For $n\in\N$ let $\kk[\xi]\subset R\subset \kk[[\xi]]$ be the ring generated by the entries of elements in $\gms$. Let $C$ be a commutative $\kk$-algebra. Then every continuous homomorphism $\gms\to \opm_n(C)$ extends to a homomorphism $\opm_n(R)\to\opm_n(C)$.
\end{lem}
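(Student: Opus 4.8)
The statement asks us to extend a continuous homomorphism $\phi\colon\gms\to\opm_n(C)$ to $\opm_n(R)\to\opm_n(C)$, where $R$ is the (commutative) subring of $\kk[[\xi]]$ generated by all matrix entries of elements of $\gms$. The natural plan is: (1) observe that a homomorphism $\opm_n(R)\to\opm_n(C)$ is, by Morita theory / the classical coordinatization of matrix rings, the same data as a homomorphism of the base rings $R\to C$ compatible with the matrix units; (2) show that $\phi$ already determines such a ring homomorphism $\psi\colon R\to C$ on the \emph{generators} of $R$ (the entries $\xi_{ij}^k$ and whatever further entries of power series in $\gms$ lie in $R$), in a way that is forced and well-defined; (3) check that $\psi$ respects the relations among these generators, i.e.\ is genuinely a homomorphism, using continuity and the fact that $\gms$ is dense in $\matser$. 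The key point making this work is that $\gms$ contains enough elements to ``see'' each coordinate $\xi_{ij}^k$: since $\gX_k\in\gms$ has $(i,j)$ entry $\xi_{ij}^k$, applying $\phi$ to $\gX_k$ and reading off the $(i,j)$ entry of the resulting matrix in $\opm_n(C)$ gives a canonical element of $C$ that $\psi$ must send $\xi_{ij}^k$ to.

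More precisely, here is how I would carry it out. First, fix the standard matrix units $E_{ij}\in\opm_n(\kk)\subseteq\gm\subseteq\gms$; these already lie in $\gms$ (they are polynomials in the $\gX_k$? — no, they need not be, so instead note that $\opm_n(R)$ is generated as a ring by $\opm_n(\kk)$ together with scalar matrices $r I$ for $r\in R$, and it suffices to define the extension on these). The cleaner route: recall that $R$ is generated by entries of elements of $\gms$, and each such entry $a_{ij}$ of $f\in\gms$ can be extracted \emph{inside} the ambient matrix ring via $a_{ij}=\big(\sum_p E_{pi}\,f\,E_{jp}\big)$ read as a scalar (the trace-like contraction), or more simply $a_{ij}I=\sum_p E_{pi} f E_{jp}$. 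Since $E_{pi},E_{jp}\in\gms$ only when they happen to lie in $\gm$ — which fails in general — I would instead pass to $\opm_n(\gms)=\opm_n(\kk)\otimes\gms$ conceptually, or argue directly: define $\psi$ on generators of $R$ by $\psi(a_{ij})=$ the $(i,j)$ entry of $\phi(f)$, and then the content is that this is independent of the representation of a given element of $R$ as such an entry, and multiplicative and additive. For well-definedness and homomorphism property, use that if a polynomial identity (with $\kk$-coefficients) holds among finitely many entries $a_{ij}$ of elements $f_1,\dots,f_m\in\gms$, then this identity already holds among the corresponding entries, \emph{because} such a relation can be lifted to a relation in $\opm_n(R)\subseteq\opm_n(\kk[[\xi]])$ and then pushed through $\phi$ after ampliating $\phi$ to $\opm_n(\phi)\colon\opm_n(\gms)\to\opm_n(\opm_n(C))$; here continuity of $\phi$ is what guarantees $\opm_n(\phi)$ is still well-behaved on the completed matrix entries.

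The step I expect to be the main obstacle is \textbf{showing that $\psi$ is well-defined and multiplicative on $R$}, i.e.\ that the assignment ``entry of $f\in\gms$'' $\mapsto$ ``corresponding entry of $\phi(f)\in\opm_n(C)$'' descends to a ring homomorphism $R\to C$. The subtlety is that $R$ is an abstract commutative ring defined by \emph{generators without explicit relations}, sitting inside $\kk[[\xi]]$, and a single element of $R$ may arise as an entry of $\gms$ in many ways; moreover products of entries of two elements of $\gms$ are entries of products of matrices, which involve sums over an internal index, so multiplicativity of $\psi$ is not literally multiplicativity of $\phi$ on $\gms$ but on $\opm_n(\gms)$. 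The resolution is to work systematically with the ampliation $\opm_n(\phi)$ and the identity $\opm_n(\gms)=\gms\otimes_\kk\opm_n(\kk)$, together with the observation that $R=\kk[\xi_{ij}^k]$ localized/completed appropriately is determined once we know $\psi(\xi_{ij}^k)$, after which $\psi$ on all of $R$ is forced and automatically a homomorphism because $R$ is a quotient of a polynomial ring on the $\xi_{ij}^k$ by relations that hold in $\kk[[\xi]]$ and hence (being polynomial relations with $\kk$-coefficients, evaluated through the homomorphism) are preserved. Continuity of $\phi$ enters precisely to ensure that relations in $R$ coming from the \emph{completion} (infinite sums) rather than from $\gm$ itself are still respected. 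Once $\psi\colon R\to C$ is in hand, setting the extension $\opm_n(R)\to\opm_n(C)$ to be $\psi$ applied entrywise gives a homomorphism, and one checks it restricts to $\phi$ on $\gms$ by construction.
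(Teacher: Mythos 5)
Your reduction of the lemma to producing a ring homomorphism $\psi\colon R\to C$ with $\psi\big((f)_{ij}\big)=\phi(f)_{ij}$ for all $f\in\gms$, followed by entrywise application, is a legitimate reformulation, and you correctly locate the difficulty in the well-definedness of $\psi$. But your proposed resolution of that difficulty rests on a false description of $R$: you claim that $R$ is (a suitable quotient of) a polynomial ring on the coordinates $\xi_{ij}^k$, so that $\psi$ is ``forced'' by the values $\psi(\xi_{ij}^k)=\phi(\gX_k)_{ij}$ and is ``automatically a homomorphism''. In fact $R$ is generated by the entries of \emph{all} elements of $\gms$, and these include genuine power series (for instance the entries of $\sum_{d\ge1}\gX_1^d\in\gms$), which are not polynomials in the $\xi_{ij}^k$; since $C$ is an arbitrary commutative $\kk$-algebra with no topology, the values of $\psi$ on such entries are not determined by $\psi|_{\kk[\xi]}$ and must be prescribed directly through $\phi$. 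The entire content of the lemma is then that every polynomial relation over $\kk$ holding in $\kk[[\xi]]$ among entries of elements of $\gms$ is preserved by this prescription; your sentence that continuity ``ensures that relations coming from the completion are respected'' merely restates this obligation, while continuity itself only says that $\ker\phi$ is closed and is never actually used in your argument. The ampliation device does not rescue this: a relation among entries, once encoded with matrix units via $(f)_{ij}I_n=\sum_p E_{pi}fE_{jp}$, lives in the subalgebra of $\matser$ generated by $\gms$ and $\matn$, i.e.\ exactly in $\opm_n(R)$, where $\phi$ is not yet defined, and $\opm_n(\phi)\colon\opm_n(\gms)\to\opm_n(\opm_n(C))$ never sees the matrix units (which, as you yourself note, do not lie in $\gms$); so pushing such a relation ``through $\phi$'' is circular.

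What is missing is the mechanism that converts relations involving power-series elements of $\gms$ into relations involving only polynomial ones, which is how the paper argues. There one first observes that $\phi|_{\gm}$ extends trivially to $\phi'\colon\opm_n(\kk[\xi])\to\opm_n(C)$ (apply $\xi_{ij}^k\mapsto\phi(\gX_k)_{ij}$ entrywise); since $\opm_n(\kk[\xi])$ is generated by $\gm$ and $\matn$, this shows that any relation $\sum_i a_{i_1}f_{i_1}a_{i_2}\cdots f_{i_{\ell_i}}a_{i_{\ell_i+1}}=0$ with $a_{i_j}\in\matn$ and $f_{i_j}\in\gm$ is preserved when $\phi$ is applied to the $f_{i_j}$. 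The extension is then defined on all of $\opm_n(R)$ (generated by $\gms$ and $\matn$) by the same formula with $f_{i_j}\in\gms$, and well-definedness is checked by passing to homogeneous components, whose ingredients lie in $\gm$ and are thus covered by the polynomial-level statement; this degree-wise reduction is precisely where the grading and the continuity hypothesis enter. Your proposal contains the germ of the polynomial-level step (reading off the entries of $\phi(\gX_k)$), but lacks this reduction, and the ``quotient of a polynomial ring'' claim cannot substitute for it; without it, the well-definedness and multiplicativity of $\psi$ on the power-series entries of $R$ remain unproved.
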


\begin{proof}
Let $\phi:\gms\to \opm_n(C)$ be a homomorphism. Clearly there is a homomorphism $\phi':\opm_n(\kk[\xi])\to \opm_n(C)$ such that $\phi|_{\gm}=\phi'|_{\gm}$. Since $\opm_n(\kk[\xi])$ is generated by $\gm$ and $\matn$, this implies
\begin{equation}\label{e:prod}
\sum_{i=1}^m a_{i_1}f_{i_1}a_{i_2}\cdots f_{i_{\ell_i}}a_{i_{\ell_i+1}}=0\qquad\Rightarrow\qquad
\sum_{i=1}^m a_{i_1}\phi(f_{i_1})a_{i_2}\cdots \phi(f_{i_{\ell_i}})a_{i_{\ell_i+1}}=0
\end{equation}
for all $a_{i_j}\in\matn$ and $f_{i_j}\in\gm$. Because $\opm_n(R)$ is the $\kk$-subalgebra in $\matser$ generated by $\gms$ and $\matn$, there is a homomorphism $\phi'':\opm_n(R)\to \opm_n(C)$ defined by
$$\sum_{i=1}^m a_{i_1}f_{i_1}a_{i_2}\cdots f_{i_{\ell_i}}a_{i_{\ell_i+1}}\mapsto \sum_{i=1}^m a_{i_1}\phi(f_{i_1})a_{i_2}\cdots \phi(f_{i_{\ell_i}})a_{i_{\ell_i+1}}$$
for all $a_{i_j}\in\matn$ and $f_{i_j}\in\gms$. Indeed, to show that $\phi''$ is well-defined it suffices to verify this on homogeneous elements in $\opm_n(R)$, for which $\phi''$ is well-defined by \eqref{e:prod}. 
\end{proof}

\begin{prop}\label{p:fd}
Let $m$ be a meromorphic expression and $n\in\N$. If $m(\Xi)=0$ in $\uds$, then $m$ is a MI for every division algebra of degree $n$. 
\end{prop}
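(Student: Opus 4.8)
The plan is to transport the hypothesis $m(\Xi)=0$ from $\uds$ down to an arbitrary division algebra $D$ of degree $n$ by means of a specialization of the ground ring $\kk[[\xi]]$; Lemma~\ref{l:ext} handles the fact that $m$ involves power series rather than polynomials, and the assumed definedness of an evaluation of $m$ handles the inverses occurring in $m$. Write $m=r(s_1,\dots,s_\ell)$ with $s_i\in\ser$, and fix a division algebra $D$ of degree $n$ with centre $Z$; we must show $m$ is an MI for $D$. Put $K=\overline Z$ and fix an embedding $\iota\colon D\hookrightarrow D\otimes_Z K\cong\opm_n(K)$. Let $\phi\colon\ser\to D$ be a continuous homomorphism; we must show $\phi(m)$ is undefined or $0$, so assume $\phi(m)$ is defined and aim for $\phi(m)=0$. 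Set $\Phi=\iota\circ\phi\colon\ser\to\opm_n(K)$; since $\ker\Phi=\ker\phi$ it is again continuous, and writing $A_i=\Phi(x_i)$ with entries $\alpha^i_{pq}\in K$, the element $\Phi(m)=\iota(\phi(m))$ is defined in $\opm_n(K)$. Because $\gm$ is by definition the $\kk$-subalgebra of $\opm_n(\kk[\xi])$ generated by $\Xi$, the specialization $\kk[\xi]\to K$, $\xi^i_{pq}\mapsto\alpha^i_{pq}$, induces a homomorphism $\theta_0\colon\gm\to\opm_n(K)$ with $\theta_0(\Xi_i)=A_i$, through which $\Phi|_{\px}$ factors along the canonical surjection $\px\twoheadrightarrow\gm$.

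The first substantial step is to lift $\theta_0$ to a continuous homomorphism on $\gms$, and here continuity of $\Phi$ is indispensable. Since $\ker\Phi$ is closed, the image $\Phi(\ser)\subseteq D$, equipped with the quotient topology inherited from $\ser$, is complete and Hausdorff, and $\theta_0\colon\gm\to\Phi(\ser)$ is continuous for the $\Xi$-adic topology on $\gm$ (it maps the ideal $(\Xi)^d$ onto the $d$-th piece of the natural filtration of $\Phi(\ser)$). As $\gm$ is dense in $\gms$, this extends to a continuous homomorphism $\widehat\theta_0\colon\gms\to\Phi(\ser)\subseteq\opm_n(K)$ with $\widehat\theta_0\circ\psi=\Phi$, where $\psi\colon\ser\to\gms$ is $x_i\mapsto\Xi_i$; one also checks that $\ker\widehat\theta_0$ is closed. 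Lemma~\ref{l:ext}, applied with $C=K$, then extends $\widehat\theta_0$ to a homomorphism $\opm_n(R)\to\opm_n(K)$ which fixes the scalar matrices over $\kk$, hence has the form $\opm_n(\chi)$ for a ring homomorphism $\chi\colon R\to K$ extending the above specialization; here $R\subseteq\kk[[\xi]]$ is the ring generated by the entries of $\gms$. In particular $\opm_n(\chi)\big(s_j(\Xi)\big)=\widehat\theta_0\big(s_j(\Xi)\big)=\Phi(s_j)$ for every $j$.

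It remains to push $m(\Xi)=0$ through $\opm_n(\chi)$. Since the centre of $\gms$ consists of scalar matrices over $\kk[[\xi]]$, the central localization $\uds$ lies inside $\opm_n(\kk((\xi)))$, so by hypothesis the evaluation $m(\Xi)=r\big(s_1(\Xi),\dots,s_\ell(\Xi)\big)$ is legitimate and equals $0$ already in $\opm_n(\kk((\xi)))$. Tracking the ``to‑be‑inverted'' subexpressions of $r$, which evaluate to matrices $e_1,\dots,e_N$, one checks inductively that each $e_\nu$, as well as the final value $0$, has entries in the finite iterated localization $R':=R[\det(e_1)^{-1}]\cdots[\det(e_N)^{-1}]\subseteq\kk((\xi))$, that each $e_\nu$ is invertible over $R'$, and hence that $r\big(s_1(\Xi),\dots,s_\ell(\Xi)\big)=0$ holds already in $\opm_n(R')$. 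On the other hand, definedness of $\phi(m)$ means precisely that the parallel evaluation $r\big(\Phi(s_1),\dots,\Phi(s_\ell)\big)$ in $\opm_n(K)$ is legitimate, say with intermediate matrices $e'_\nu\in\GL_n(K)$; and since $\opm_n(\chi)$ carries $s_j(\Xi)$ to $\Phi(s_j)$, applying the partially extended $\chi$ to $e_\nu$ returns exactly $e'_\nu$. Therefore $\chi(\det e_\nu)=\det e'_\nu\neq0$ for each $\nu$, so $\chi$ extends successively over the localizations to a ring homomorphism $\chi'\colon R'\to K$. Applying $\opm_n(\chi')$ to the identity $r\big(s_1(\Xi),\dots,s_\ell(\Xi)\big)=0$ in $\opm_n(R')$ yields $r\big(\Phi(s_1),\dots,\Phi(s_\ell)\big)=\Phi(m)=0$, whence $\phi(m)=0$ because $\iota$ is injective.

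I expect the main obstacle to be the passage from the polynomial to the power‑series setting carried out in the second paragraph: the content is that a \emph{continuous} evaluation of $\ser$ in a degree‑$n$ division algebra factors through the completed generic matrix ring $\gms$, and hence, via Lemma~\ref{l:ext}, through $\opm_n(R)$. Continuity is precisely what lets $\theta_0$ be extended over the completion; without it the ring $R$ of entries of $\gms$ — which contains genuine power series and is strictly larger than $\kk[\xi]$ — cannot in general be specialized compatibly with the chosen point, so the argument would collapse. The other point requiring care, but which is essentially bookkeeping once this is in place, is the identification of the finitely many denominators needed to make sense of $r$ and the verification that definedness of $\phi(m)$ is exactly the hypothesis that carries the specialization $\chi$ through those denominators.
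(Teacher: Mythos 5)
Your proof is correct, and its skeleton is the one the paper uses: reduce the continuous homomorphism $\ser\to D$ to a homomorphism out of $\gms$, split $D$ over a field, and invoke Lemma~\ref{l:ext} to extend to $\opm_n(R)\to\opm_n(C)$ (your completion/density argument for the factorization through $\gms$ is a more explicit version of the paper's one-line appeal to PI theory applied to homogeneous components, and like the paper you rely on the fact that the extension produced in the proof of Lemma~\ref{l:ext} fixes $\matn$, hence is entrywise $\opm_n(\chi)$). Where you genuinely diverge is the transfer of the identity $m(\gX)=0$ across the specialization: the paper uses the Cayley--Hamilton trick to write, universally over commutative rings, $r(A)=q(A)^{-1}p(A)$ with $q$ a scalar trace polynomial and $p$ a trace-coefficient matrix polynomial, so that $m(\gX)=0$ becomes the single polynomial identity $p(s(\gX))=0$ inside $\opm_n(R)$, which is pushed through $\varphi_2$ with no enlargement of $R$; you instead keep the rational expression intact, localize $R$ at the determinants of the intermediate inverted matrices $e_\nu$, and extend $\chi$ step by step using $\chi(\det e_\nu)=\det e'_\nu\neq0$, which is guaranteed by definedness of $\phi(m)$. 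Your route is more elementary (no central polynomials or trace identities) but buys this at the cost of the inductive bookkeeping you gesture at: the localizations must be formed innermost-first so that $\det e_\nu$ lies in the ring built so far, and the compatibility $\opm_n(\chi_{\mathrm{so\ far}})(e_\nu)=e'_\nu$ must be carried along the induction; the paper's $q^{-1}p$ normal form concentrates all denominators into one scalar and avoids this. Both arguments use the hypothesis in the same two places: definedness and vanishing of $m(\gX)$ over $\uds\subset\opm_n(\kk((\xi)))$, and definedness of $\phi(m)$ in $D$ to control the denominators after specialization.
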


\begin{proof}
Let $D$ be a division algebra of degree $n$. By applying PI theory to homogeneous components it follows that every continuous homomorphism $\phi:\ser\to D$ factors through $\varphi:\gms\to D$. Let $C$ be a splitting field for $D$ and compose the inclusion $D\hookrightarrow M_n(C)$ with $\varphi$ to obtain $\varphi_1:\gms\to M_n(C)$. Furthermore, $\varphi_1$ extends to $\varphi_2: \opm_n(R)\to\opm_n(C)$ by Lemma \ref{l:ext}, where $R$ is the ring generated by the entries of elements in $\gms$. Let $m=r(s_1,\dots,s_\ell)$ be a meromorphic expression and assume $\phi(m)$ is defined. By induction on the height of $r$ we can assume that $m(\gX)$ is also defined.

By the Cayley-Hamilton theorem there exists a polynomial $p$ in generic matrices and traces of products of generic matrices, and a polynomial $q$ in traces of products of generic matrices, such that the following holds: for every commutative ring $S$ and matrices $A_1,\dots,A_\ell\in\opm_n(S)$ such that all inverses appearing in the evaluation of $r$ at $A=(A_1,\dots,A_\ell)$ exist, then $q(A)$ is invertible in $S$ and $r(A)=q(A)^{-1}p(A)$.

Observe that $p(s_1(\gX),\dots,s_\ell(\gX))\in \opm_n(R)$ and therefore
\begin{align*}
r(\varphi_1(s_1(\gX)),\dots,\varphi_1(s_\ell(\gX)))
&=q(\varphi_1(s_1(\gX)),\dots,\varphi_1(s_\ell(\gX)))^{-1} p(\varphi_1(s_1(\gX)),\dots,\varphi_1(s_\ell(\gX))) \\
&=\varphi_2\big(q(s_1(\gX),\dots,s_\ell(\gX))\big)^{-1}\varphi_2\big(p(s_1(\gX),\dots,s_\ell(\gX))\big)
\end{align*}
by the previous paragraph. Since $m(\gX)=0$ implies $p(s_1(\gX),\dots,s_\ell(\gX))=0$, we have
$$r(\varphi_1(s_1(\gX)),\dots,\varphi_1(s_\ell(\gX)))=0.$$
Consequently,
\[\phi(m)=r(\phi(s_1),\dots,\phi(s_\ell))=r(\varphi(s_1(\gX)),\dots,\varphi(s_\ell(\gX)))=0.\qedhere\]
\end{proof}

\subsection{Construction of the skew field \texorpdfstring{$\cM$}{M} and its universality}\label{ssec3.4}

Every nc power series can be evaluated at a tuple of generic matrices, resulting in a matrix of commutative power series. Likewise, one can evaluate a formal rational expression of nc power series on a tuple of generic matrices, which either yields a matrix of fractions of commutative power series or is undefined due to a matrix singularity at some point of the calculation.

The following type of construction first originated with noncommutative rational functions \cite{HMV}. Let $\cM'$ be the set of formal rational expressions over $\ser$ such that for $m\in\cM'$, $m(\gX)$ is defined for a generic tuple $\gX$ of some size. If $m_1,m_2\in\cM'$, then let $m_1\sim m_2$ if and only if $m_1(\gX)=m_2(\gX)$ for a generic tuple $\gX$ of any size (when both are defined). It is easy to see that $\sim$ is an equivalence relation on $\cM'$. By Proposition \ref{p:sf}, $\cM=\cM'/\!\sim$ is a skew field; the equivalence class of $m$ is denoted $\mm$. If $s\in\ser$, then $s(\gX)=0$ for all sizes of $\gX$ implies $s=0$; hence $\ser$ naturally embeds into $\cM$. Elements of $\cM$ are called {\bf formal meromorphic nc germs}.

\begin{lem}\label{l:main}
Let $m\in\cM'$ be an MI for $\widehat{\operatorname{UD}}_{N}$ for all $N\in\N$. Then $m$ represents 0 in the universal skew field of fractions of $\ser$.
\end{lem}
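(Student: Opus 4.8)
The plan is to show that $m$ is a \emph{full} matrix over $\ser$ is impossible to avoid, so instead I would argue directly via the characterization of the universal skew field of fractions of the semifir $\ser$: by \cite[Theorem 7.5.13]{Coh}, $m$ represents $0$ in $\cU$ iff, writing $m$ as $c^\top A^{-1} b$ for a linear (affine) representation (a "linearization" or Cohn–Higman normal form) with $A$ a square matrix over $\ser$ that is invertible over $\cM$, the corresponding bordered matrix $\begin{pmatrix} A & b \\ c^\top & 0\end{pmatrix}$ is \emph{non-full} over $\ser$. So the first step is to produce such a linearization of $m$: since $m\in\cM'$ admits an evaluation at some generic tuple, $m$ defines an element $\mm\in\cM$, and the standard linearization procedure for rational expressions (as in \cite{HMV}, \cite{HMS}) yields matrices $A\in\ser^{d\times d}$, $b,c\in\ser^{d\times 1}$ with $A$ invertible over $\cM$ and $\mm = c^\top A^{-1} b$ in $\cM$.

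Second, I would use the hypothesis. Saying $m$ is an MI for $\uds$ for all $N$ means: for every $N$ and every continuous homomorphism $\varphi:\ser\to\uds$, the expression $m$ evaluates to $0$ (or is undefined) in $\uds$. The key point is that the tautological map $\ser\to\gms\hookrightarrow\uds$ (sending $x_i$ to the generic matrix $\gX_i$) is exactly such a continuous homomorphism, and by construction of $\cM'$ the evaluation $m(\gX)$ is defined for some $N$, hence equals $0$ for that $N$; more importantly, being an MI for \emph{all} $\uds$ forces $A(\gX)$ to remain invertible and $c^\top(\gX)A(\gX)^{-1}b(\gX)=0$ over the central quotient field for every $N$ large enough. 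Translating back through the linearization, this says the bordered matrix $B=\begin{pmatrix} A & b \\ c^\top & 0\end{pmatrix}$ over $\ser$ becomes non-full under the homomorphism $\ser\to\uds$ for every $N$. Indeed $A(\gX)^{-1}b(\gX)$ being a genuine column vector $v$ over the quotient field with $c^\top v=0$ exhibits a rank drop: $B(\gX)\begin{pmatrix} v \\ -1\end{pmatrix}=0$, so $B(\gX)$ is non-full over $\uds$ (it is square of size $d+1$ with nontrivial kernel).

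Third, I would promote "non-full over $\uds$ for all $N$" to "non-full over $\ser$". This is the crux, and I expect it to be the main obstacle. The idea is a specialization/Amitsur-type argument: the inner rank of a matrix over $\ser$ can be detected by its images under enough homomorphisms into division rings, and the division rings $\uds$ are "generic" enough — via Proposition \ref{p:fd}, an identity holding in $\uds$ holds in every division algebra of degree $N$, and these exhaust (in the relevant PI sense, working homogeneous-component by homogeneous-component) all finite-dimensional division-algebra specializations. Concretely, if $B$ were full over $\ser$ then, since $\ser$ is a semifir with universal skew field of fractions $\cM$ (so full matrices are invertible over $\cM$), $B$ would be invertible over $\cM$; but $\cM$ was built precisely as equivalence classes of generic-matrix evaluations, and invertibility of $B$ over $\cM$ would contradict $\det B(\gX)$ vanishing (equivalently $B(\gX)$ non-full) for all large $N$. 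Hence $B$ is non-full over $\ser$, which by \cite[Theorem 7.5.13]{Coh} is exactly the statement that $m$ represents $0$ in the universal skew field of fractions of $\ser$.

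The delicate points to get right are: (i) the bookkeeping in the linearization so that "$m(\gX)=0$ as a meromorphic expression" corresponds cleanly to "$B(\gX)$ non-full", including the case where $m(\gX)$ is undefined for small $N$ but defined and zero for large $N$ (one passes to $N$ large enough that $A(\gX)$ is invertible, using primeness of $\gms$ and that a nonzero element of $\gms$ evaluates nontrivially at generic matrices of all sufficiently large sizes); and (ii) the transfer from "$B$ non-full over each $\uds$" to "$B$ non-full over $\ser$", for which I would invoke that $\cM$ is (by construction and Proposition \ref{p:sf}) an honest skew field into which $\ser$ embeds, together with the semifir characterization that a matrix full over $\ser$ stays invertible over any such field — so non-fullness over $\cM$ (which is implied by the vanishing over all $\uds$, since $\cM$ is assembled from those evaluations) forces non-fullness over $\ser$.
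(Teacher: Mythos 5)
There is a genuine gap: your argument is circular. The crux of your third step is the implication that a matrix full over $\ser$ is invertible over $\cM$ (equivalently, your parenthetical claim that non-fullness over $\cM$ forces non-fullness over $\ser$). By Cohn's characterization of universal skew fields of fractions via full matrices, this is precisely the assertion that $\cM$ \emph{is} the universal skew field of fractions of $\ser$, i.e.\ Theorem \ref{t:formal} --- but Theorem \ref{t:formal} is deduced from Lemma \ref{l:main}, so it cannot be used here. If $\cM$ were merely \emph{some} skew field of fractions of $\ser$, a full matrix over $\ser$ could perfectly well become singular under every generic-matrix evaluation; nothing in the bare construction of $\cM$ rules this out. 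Indeed, the ability to detect fullness over $\ser$ by evaluations at generic matrices is the local-global rank principle (the corollary following Theorem \ref{t:rank}), which in the paper is a downstream consequence of this very lemma. The lemma must be proved for the abstract universal skew field $\usf$, whose existence is guaranteed only by $\ser$ being a semifir, without presupposing $\usf\cong\cM$. (A secondary issue: your step two needs the bordered matrix to become singular at generic matrices of cofinally many sizes $N$, which does not follow from $m\in\cM'$ alone without further argument.)

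The paper supplies the missing idea by a different mechanism that never tries to detect fullness over $\ser$ directly: one defines the (non-continuous) shift homomorphism $\phi:\ser\to\dser{\uds}$, $x_j\mapsto x_j+\gX_j$, checks that $\phi(m)$ actually lies in $\dser{\uds}$ because $m(\gX)$ is defined, and then exploits the MI hypothesis at sizes $nn'$ through the identity $\phi(m)(\gX')=m(\gX\otimes I+I\otimes \gX')$; this kills every homogeneous component of $\phi(m)$ at generic matrices of all sizes, hence (by a staircase-of-matrix-units argument) $\phi(m)=0$ in $\dser{\uds}$, and finally $m=0$ in $\usf$ follows by specializing $\gX\mapsto 0$ via a maximal-subring extension of the evaluation homomorphism. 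Note that this route only needs $m(\gX)$ to be defined at a single size $n$, since the hypothesis is then applied at the sizes $nn'$ for all $n'$. Your linearization framework itself is sound (it is essentially how Theorem \ref{t:id} is later proved), but the transfer from vanishing over all $\uds$ to vanishing in $\usf$ is exactly the content that needs a non-circular argument, and the shift-and-tensor device is what provides it.
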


\begin{proof}
By the assumption, $m(\gX)$ is defined for a tuple of $n\times n$ generic matrices $\gX$. Let $\usf$ and $\usf'$ be universal skew fields of fractions of $\ser$ and $\dser{\uds}$, respectively. Since $m(\gX)\in \uds$ and $\uds$ is a skew field by Proposition \ref{p:sf}, $m$ represents an element in $\usf$ by the definition of $\cU$. We define a homomorphism $\phi:\ser\to \dser{\uds}$ as follows. For $w\in\mx$ consider $w(x+\gX)\in \uds\otimes\px$; we can write it as
$$w(x+\gX)=\sum_{i=1}^{2^{|w|}} u_{w,i}(\gX)\otimes v_{w,i}$$
for $u_{w,i},v_{w,i}\in\mx$. Let $s=\sum_w \alpha_w w\in\ser$. Then for every $v\in\mx$,
$$\sum_{w\colon \exists \iota:v=v_{w,\iota}} \alpha_w \left(\sum_{i\colon v=v_{w,i}}u_{w,i}(\gX) \right)\in \gms$$
because the inner sum is finite and homogeneous, and the outer sum contributes only finitely many terms of a fixed degree. Therefore we can define
$$\phi(s)=
\sum_v\left(\sum_{w\colon \exists \iota:v=v_{w,\iota}} \alpha_w \left(\sum_{i\colon v=v_{w,i}}u_{w,i}(\gX) \right)\right)v
\in\dser{\uds}.$$
It is easy to check that $\phi$ is indeed a homomorphism (although not a continuous one). Because $\phi(m)|_{x=0}=m(\gX)\in \uds$, $\phi(m)$ represents an element in $\usf'$ by the universal property of $\cU'$. Moreover, since $m$ can be evaluated at $\Xi$, all the inverses appearing in $\phi(m)\in\usf'$ already appear in $\dser{\uds}$, so actually $\phi(m)\in\dser{\uds}$.

Next observe that $m$ represents 0 if $\phi(m)$ represents 0. Indeed: consider the continuous homomorphism $\psi:\dser{\gms} \to \ser$ determined by $\psi(\gX_j)=0$ and $\psi(x_j)=x_j$. Since $\usf'$ contains a skew field of fractions of $\dser{\gms}$, by Zorn's Lemma there exists a subring $\dser{\gms}\subseteq L\subseteq\usf'$ maximal with the property that $\psi$ extends to a (not necessarily local) homomorphism $\psi':L\to \usf$. By induction on the inversion height of $m$ we see that $\phi(m)\in L$ and $\psi'(\phi(m))=m$, so $m\neq0$ implies $\phi(m)\neq0$.

Let $n'\in\N$ and $\gX'$ be a tuple of $n'\times n'$ generic matrices. Then there is a continuous homomorphism
$$\dser{\uds}\to \opm_{nn'}\big(\kk((\xi))((\xi'))\big),\qquad S\mapsto S(\gX').$$
By the definition of $\phi$ we have
\begin{equation}\label{e:23}
\phi(m)(\gX')=m(\gX\otimes I+I\otimes \gX').
\end{equation}
Since $\phi(m)\in\dser{\uds}$, we have $\phi(m)=\sum_w q_w w$ for $q_w\in \uds$. Observe that
\begin{equation}\label{e:24}
\frac{{\rm d}}{{\rm d} t^h} \phi(m)\big(t \gX'\big)\Big|_{t=0}=h!\sum_{|w|=h} q_w w(\gX')
\end{equation}
for every $h\in\N\cup\{0\}$. Let $p_h=\sum_{|w|=h} q_w w\in \uds\otimes\px$.

Under the natural inclusion $\kk((\xi,\xi'))\subset\kk((\xi))((\xi'))$ we see that
\begin{equation}
m(\gX\otimes I+I\otimes \gX')\in \opm_{nn'}\big(\kk((\xi,\xi'))\big).
\end{equation}
Since the homomorphism
$$\ser\to \opm_{nn'}\big(\kk[[\xi,\xi']]\big),\qquad s\mapsto s(\gX\otimes I+I\otimes \gX')$$
is continuous with respect to the natural topology on $\opm_{nn'}(\kk[[\xi,\xi']])$ and $m$ is an MI for $\widehat{\operatorname{UD}}_{nn'}$, we have $m(\gX\otimes I+I\otimes \gX')=0$ by assumption. Therefore $p_h(\gX')=0$ for every $n'\in\N$ by \eqref{e:23} and \eqref{e:24}, and consequently $p_h(X)=0\in\opm_{n'}(\uds)$ for every $X\in \mat{n'}^g$. As in the proof of \cite[Lemma 1.4.3]{Row}, we can use a ``staircase'' of standard matrix units to show that $p_h=0$. Hence $\phi(m)=0$ and thus $m$ represents 0 in $\usf$.
\end{proof}

\begin{thm}\label{t:formal}
$\cM$ is the universal skew field of fractions of $\ser$.
\end{thm}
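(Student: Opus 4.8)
The plan is to identify $\cM$ with the abstract universal skew field of fractions of $\ser$. Recall that $\ser$ is a semifir, so it does admit a universal skew field of fractions $\usf$; since $\cM$ is a skew field generated by the subring $\ser$ (Subsection~\ref{ssec3.4}), it suffices to build an $\ser$-algebra homomorphism $\Phi\colon\cM\to\usf$ and to check that it is bijective. Then $\cM\cong\usf$ as $\ser$-algebras, and uniqueness of the universal skew field of fractions yields the theorem.

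To define $\Phi$, I would first record --- this is the opening move in the proof of Lemma~\ref{l:main} --- that every $m\in\cM'$ represents a well-defined element of $\usf$. Indeed, fix $n$ so that $m(\gX)$ is defined for a tuple $\gX$ of $n\times n$ generic matrices. The canonical continuous homomorphism $\ser\to\uds$, $x_j\mapsto\gX_j$, extends by the universal property of $\usf$ (using Proposition~\ref{p:sf}, so that $\uds$ is a skew field) to a local homomorphism $\theta$ from $\usf$ to $\uds$ whose domain $R_0$ contains $\ser$. Writing $m=r(s_1,\dots,s_\ell)$ with $s_i\in\ser$, an induction on the inversion height of $r$ shows $m$ represents an element of $R_0\subseteq\usf$: whenever a subexpression is inverted it has nonzero value in the skew field $\uds$, hence $\theta$ sends it to a nonzero element, hence it is a unit of the local ring $R_0$. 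Write $\widetilde\Phi(m)\in\usf$ for the resulting element, equivalently the value of $r$ at $s_1,\dots,s_\ell$ computed inside $\usf$; it is compatible with sums, products and inverses of meromorphic expressions, and $\theta(\widetilde\Phi(m))=m(\gX)$.

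Next I would show $\widetilde\Phi$ factors through the equivalence relation $\sim$. If $m_1\sim m_2$ in $\cM'$, then $m:=m_1-m_2$ again lies in $\cM'$ (which is closed under the ring operations) and represents $0$ in $\cM$, i.e.\ $m(\gX)=0$ for every generic tuple $\gX$ at which it is defined. By Proposition~\ref{p:fd} this makes $m$ a meromorphic identity for $\widehat{\operatorname{UD}}_N$ for every $N$, so Lemma~\ref{l:main} gives $\widetilde\Phi(m_1)-\widetilde\Phi(m_2)=\widetilde\Phi(m)=0$. Hence $\Phi(\mm):=\widetilde\Phi(m)$ is a well-defined $\ser$-algebra homomorphism $\cM\to\usf$.

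Finally, $\Phi$ is a homomorphism between skew fields fixing $1$, hence injective. Its image is a subring of $\usf$ containing $\ser$ and closed under inversion of nonzero elements: if $\mm\neq0$, then $m(\gX)\neq0$ at some size, so $m^{-1}\in\cM'$ represents $\mm^{-1}$, while $\widetilde\Phi(m)\neq0$ (it is sent to $m(\gX)\neq0$ by $\theta$) and $\Phi(\mm^{-1})=\widetilde\Phi(m^{-1})=\widetilde\Phi(m)^{-1}=\Phi(\mm)^{-1}$. Thus $\operatorname{im}\Phi$ is a sub-skew-field of $\usf$ containing $\ser$; as $\ser$ generates $\usf$ as a skew field, $\operatorname{im}\Phi=\usf$, so $\Phi$ is an isomorphism. (Alternatively one could verify Cohn's criterion that every full square matrix over $\ser$ becomes invertible over $\cM$, but the route above is more direct and matches the concrete description of $\cM$.) I expect the only real obstacle to be the well-definedness of $\Phi$, which is precisely Lemma~\ref{l:main} (together with Proposition~\ref{p:fd} turning ``vanishes on all generic matrices'' into ``is a meromorphic identity for every $\widehat{\operatorname{UD}}_N$''); its technical core --- the auxiliary homomorphism $\ser\to\dser{\uds}$ and the ``staircase'' argument with matrix units --- has already been carried out, so the present proof is essentially formal once that is in hand.
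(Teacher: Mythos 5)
Your proposal is correct, and its mathematical core is the same as the paper's: all the real work is done by Proposition \ref{p:fd} and Lemma \ref{l:main}, and your opening step (evaluating a meromorphic expression inside $\usf$ by certifying each needed inverse through a local homomorphism $\theta$ from $\usf$ to $\uds$) is precisely the first move in the paper's proof of Lemma \ref{l:main}. The only genuine difference is the direction of the comparison map: the paper invokes the universal property once to get a local homomorphism from $\usf$ to $\cM$ extending the inclusion $\ser\subset\cM$ and then shows its kernel vanishes via Proposition \ref{p:fd} and Lemma \ref{l:main}, whereas you build $\Phi\colon\cM\to\usf$ directly, use the same two results for well-definedness on $\sim$-classes, and finish with the standard skew-field bookkeeping (injectivity is automatic; surjectivity because the image is a sub-skew-field containing $\ser$, which generates $\usf$). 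The paper's direction gets those last verifications for free from the universal property; yours pays for them with a few routine checks but yields an explicit isomorphism $\cM\to\usf$. Two shared caveats, neither of which is a new gap: you use that $\cM'$ is closed under the ring operations (so that $m_1-m_2\in\cM'$, i.e.\ two expressions defined at generic tuples of some sizes are defined at a common size), which the paper already assumes implicitly when declaring $\cM=\cM'/\!\sim$ a skew field; and the passage from ``$m(\gX)=0$ whenever defined'' to ``MI for $\widehat{\operatorname{UD}}_N$ for all $N$'' via Proposition \ref{p:fd} is taken verbatim from the paper's own argument, including the sizes at which $m$ may be undefined.
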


\begin{proof}
Let $\usf$ be the universal skew field of fractions of $\ser$. By the universality there exists a local homomorphism from $\cU$ to $\cM$. That is, there is a subring $\ser\subseteq L\subseteq\usf$ and a homomorphism $\phi:L\to\cM$ extending the inclusion $\ser\subset \cM$ such that $\phi(u)\neq0$ implies $u^{-1}\in L$. It suffices to prove that $\ker\phi=0$.

Let $m$ be a meromorphic expression representing an element of $L$, and suppose $\phi(m)=0$. Since $\phi$ extends the inclusion $\ser\subset\cM$, we have $m\in\cM'$ and $m(\gX)=0$ for every generic tuple $\gX$ (if defined) by the construction of $\cM$. By Proposition \ref{p:fd}, $m$ is an MI for $\uds$ for all $n\in\N$. Therefore $m$ represents 0 in $\cU$ by Lemma \ref{l:main}, so $\ker\phi=0$.
\end{proof}

\subsection{Amitsur-Cohn theorem for meromorphic identities}

An algebra $\cA$ is {\bf stably finite} (or weakly finite) if for every $n\in\N$ and $A,B\in \cA^{n\times n}$, $AB=I$ implies $BA=I$; see e.g. \cite[Chapter 5]{RLL} and \cite[Section V.2]{Bla} for analytic examples. The following result is a meromorphic fusion of theorems on rational identities by Amitsur \cite[Theorem 16]{Ami} and Cohn \cite[Theorem 7.8.3]{Coh}.

\begin{thm}\label{t:id}
Let $m$ be a meromorphic expression. Then the following are equivalent:
\begin{enumerate}
	\item $m\notin\cM'$ or $\mm=0\in\cM$;
	\item $m$ is an MI for $\uds$ for all $n\in\N$;
	\item $m$ is an FMI for every stably finite algebra.
\end{enumerate}
\end{thm}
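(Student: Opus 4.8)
The plan is to prove the cycle of implications $(1)\Rightarrow(2)\Rightarrow(3)\Rightarrow(1)$, leaning on the machinery already in place. The implication $(1)\Rightarrow(2)$ is essentially immediate: if $m\notin\cM'$ then for every $n$ the evaluation $m(\gX^{(n)})$ at the generic tuple is undefined, so $m$ is trivially an MI for every $\uds$; and if $\mm=0$ in $\cM$, then by the very construction of $\cM=\cM'/\!\sim$ we have $m(\gX^{(n)})=0$ in $\opm_n(\kk((\xi)))$ whenever defined, hence $m(\gX^{(n)})=0$ in $\uds$ for all $n$, and then Proposition \ref{p:fd} upgrades this to: $m$ is an MI for every division algebra of degree $n$, in particular for $\uds$ itself. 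So $(1)\Rightarrow(2)$ is just unwinding definitions plus one invocation of Proposition \ref{p:fd}.

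The implication $(2)\Rightarrow(1)$ is exactly the content of Lemma \ref{l:main} together with Theorem \ref{t:formal}: if $m$ is an MI for $\uds$ for all $n$, then in particular (if $m\in\cM'$) Lemma \ref{l:main} says $m$ represents $0$ in the universal skew field of fractions of $\ser$, which by Theorem \ref{t:formal} is $\cM$, so $\mm=0$; and if $m\notin\cM'$ we are already in case $(1)$. Thus $(1)\Leftrightarrow(2)$ costs us nothing beyond what is proven. The substantive work is the equivalence with $(3)$.

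For $(3)\Rightarrow(2)$, the key observation is that each $\uds$ is a skew field (Proposition \ref{p:sf}), and skew fields are stably finite (over a skew field, $AB=I$ forces $A$ to be full as a square matrix, hence invertible, hence $BA=I$ — or simply: a skew field has invariant basis number and more). So an FMI for every stably finite algebra is in particular an FMI for $\uds$; and an FMI is a fortiori an MI (the MI condition only quantifies over \emph{continuous} homomorphisms, a subclass of all homomorphisms). Hence $(3)\Rightarrow(2)$ is immediate. The real content is $(1)\Rightarrow(3)$: assuming $\mm=0$ in $\cM$ (or $m\notin\cM'$), we must show $m$ is an FMI for an arbitrary stably finite algebra $\cA$. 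Here is the strategy. Let $\phi:\ser\to\cA$ be any homomorphism for which $\phi(m)$ is defined; we want $\phi(m)=0$. The plan is to reduce to the finite-dimensional case via a standard ultraproduct/residual argument: by stable finiteness, $\cA$ embeds into a product of matrix algebras over skew fields — more precisely, one uses the Malcev/local-theorem style fact (or Cohn's criterion via fullness) that a meromorphic expression which is $0$ in every $\uds$ and whose evaluation is defined must have its underlying ``numerator polynomial'' $p$ (from the Cayley--Hamilton reduction $r(A)=q(A)^{-1}p(A)$ used in the proof of Proposition \ref{p:fd}) be a polynomial identity-type consequence; concretely, one shows $p$ vanishes under $\phi$ while $q$ is inverted, by specializing the generic-matrix argument. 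The cleanest route: apply Theorem \ref{t:a} from the introduction — but since $(2)$ of Theorem \ref{t:a} is precisely what we are proving, the honest route is to imitate the Amitsur--Cohn argument for rational identities: write $m\sim 0$, use that in the universal skew field $\cM$ the matrix associated to $m$ is non-full, pull this factorization back along $\phi$ (possible because $\ser\hookrightarrow\cM$ is inner-rank preserving by Theorem \ref{t:formal} and the semifir characterization), and conclude that the corresponding matrix over $\cA$ is non-full hence singular; stable finiteness is exactly what guarantees that a full square matrix over $\cA$ stays full, which is what prevents spurious vanishing of $q$ and forces $\phi(m)=0$.

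The main obstacle I expect is the $(1)\Rightarrow(3)$ direction, specifically transferring the factorization/fullness data from $\cM$ (equivalently from all the $\uds$) to an arbitrary stably finite $\cA$ through a possibly non-continuous homomorphism $\phi$. The subtleties are: (i) $\phi$ need not be continuous, so one cannot argue homogeneous-component-wise as in the continuous case — this is precisely why the statement needs \emph{FMI} rather than MI for $\cA$, and one must track which inverses are assumed to exist purely syntactically along the rational expression $r$; (ii) one must invoke the right form of Cohn's theorem (\cite[Theorem 7.8.3]{Coh}) identifying vanishing of a rational expression in $\cM$ with non-fullness of an associated linear matrix pencil over $\px$ (or over $\ser$), then use stable finiteness of $\cA$ to deduce the evaluated pencil over $\cA$ is non-full, and finally translate non-fullness back to $\phi(m)=0$ via the Schur-complement/system-of-equations bookkeeping that connects the pencil to the rational expression. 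I would handle (ii) by citing the linearization-by-pencils formalism (as in \cite{HMV,HMS,Coh}) rather than reproving it, and handle (i) by carefully phrasing everything in terms of the existence of the inverses in the formal evaluation of $r$, exactly as in Definition \ref{d:mero}(2).
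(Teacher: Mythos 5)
You have essentially reproduced the paper's proof: the easy implications are handled as in the paper ($(3)\Rightarrow(2)$ because skew fields are stably finite and FMI implies MI, $(2)\Rightarrow(1)$ from the construction of $\cM$, which you route slightly more heavily through Lemma~\ref{l:main} and Theorem~\ref{t:formal}), and your ``honest route'' for $(1)\Rightarrow(3)$ --- linearize $r$ by a pencil $Q$ over the free algebra as in \cite{HMS}, use that $\cM$ is the universal skew field of fractions of the semifir $\ser$ (Theorem~\ref{t:formal}) to turn $\mm=0$ into non-fullness over $\ser$ of the associated matrix, push that factorization forward along $\phi$, and conclude $\phi(m)=0$ from stable finiteness via the Schur-complement identity and \cite[Proposition~0.1.3]{Coh} --- is exactly the paper's argument, written there in contrapositive form. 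The only blemishes are inessential: your stand-alone $(1)\Rightarrow(2)$ is not needed (and its claim that $m\notin\cM'$ ``trivially'' gives an MI is unjustified, since MI quantifies over all continuous homomorphisms into $\uds$, not just the generic evaluation), because the cycle $(2)\Rightarrow(1)\Rightarrow(3)\Rightarrow(2)$ you do establish suffices, and stable finiteness enters not in deducing non-fullness of the evaluated pencil (that is mere functoriality of the factorization under $\phi$) but in the final step, where non-fullness of $\phi(m)\oplus I$ forces $\phi(m)=0$.
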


\begin{proof}
$(3)\Rightarrow(2)$ is trivial since every skew field is stably finite, and $(2)\Rightarrow(1)$ follows by the construction of $\cM$.

$(1)\Rightarrow(3)$ Let $m=r(s_1,\dots,s_\ell)$. By \cite[Theorem 4.12]{HMS} there exist $Q\in\py^{d\times d}$ and $u,v\in\kk^d$ satisfying the following: for every $\kk$-algebra $\cB$ and $b\in \cB^\ell$ such that $r(b)$ exists, $Q(b)$ is invertible over $\cB$ and $r(b)=v^t Q(b)^{-1}u$. Note that
\begin{equation}\label{e:full}
\begin{pmatrix}1 & v^t Q^{-1}(b) \\ 0 & I\end{pmatrix}
\begin{pmatrix}-r(b) & 0 \\ 0 & I\end{pmatrix}
\begin{pmatrix}1 & 0 \\ u & Q(b)\end{pmatrix}
=
\begin{pmatrix}0 & v^t \\ u & Q(b)\end{pmatrix}=:A(b).
\end{equation}
If $\cB$ is stably finite, then $r(b)\neq0$ implies that $A(b)$ is a full matrix by \cite[Proposition 0.1.3]{Coh}.

Now let $\cA$ be a stably finite algebra and $\phi:\ser\to \cA$ a homomorphism such that $\phi(m)$ is well-defined and nonzero. Since $Q(\phi(s_1),\dots,\phi(s_\ell))$ is invertible over $\cA$, $Q(s_1,\dots,s_\ell)$ is full over $\ser$. Since $\ser$ is a semifir and $\cM$ is its universal skew field of fractions by Theorem \ref{t:formal}, $Q(s_1,\dots,s_\ell)$ is invertible over $\cM$. Therefore $m\in\cM'$. Furthermore, since $A(\phi(s_1),\dots,\phi(s_\ell))$ is full over $\cA$, $A(s_1,\dots,s_\ell)$ is full over $\ser$. As before, $A(s_1,\dots,s_\ell)$ is invertible over $\cM$. Therefore $\mm=r(s_1,\dots,s_\ell)$ is nonzero in $\cM$ by \eqref{e:full}.
\end{proof}

\begin{rem}
Formal expressions involving inverses behave pathologically for algebras that are not stably finite. For example, take $\cA=\cB(\ell^2(\N))$, $m=x_1(x_2x_1)^{-1}x_2-1$ and $X=(S,S^*)$, where $S$ is the right shift operator on $\ell^2(\N)$. Then $m$ is a rational identity but $m(X)\neq0$.
\end{rem}

\section{Meromorphic \texorpdfstring{$\glc$}{GLn(C)}-invariants}\label{sec4}

As nc functions respect similarities, invariant theory plays an important role in free analysis \cite{KS}. Let $n\in\N$ and consider the action of $\glc$ on $\matcn^g$ given by $X^a= aXa^{-1}$ for $X\in\matcn^g$ and $a\in\GL_n(\C)$. A map $f:\matcn^g\to\matcn$ is a $\glc$-{\bf concomitant} (or an {\bf equivariant map}) if it intertwines with the action of $\glc$ on $\matcn^g$ and $\matcn$. In parallel with the classical invariant theory, where $\ud$ is identified with the ring of rational concomitants \cite{Pro,Sal}, we relate $\uds$ with meromorphic concomitants; see \cite{Lum,GMS} for analytic concomitants.

Consider the action of $\glc$ on $\matquotc$ given by
\begin{equation}\label{e:act}
f^a=a^{-1}f(a\xi a^{-1})a
\end{equation}
for $f\in \matquotc$ and $a\in\GL_n(\C)$. Then $f$ is invariant for this action if and only if it is a $\glc$-concomitant. Observe that this action preserves $\matserc$ and its homogeneous components. By \cite[Theorem 2.1]{Pro} it follows that elements of $\matserc^{\glc}$ are power series in products of words and traces of words in the tuple of generic matrices $\gX$.

We say that $f\in\C[[\xi]]$ is analytic if it converges absolutely and uniformly on some neighborhood of $0\in\C^{gn^2}$. Let $O\subset\C[[\xi]]$ be the subring of analytic series, and let $M$ be its field of fractions. Let $\uc\subset\glc$ be the unitary group.

\begin{lem}\label{l:rey}
Let $f\in \C[[\xi]]$ and $f(0)=0$. If $f$ divides $f^a$ for every $a\in\uc$, then $f=\tilde{f}h$ for some $h\in \C[[\xi]]^*$ and $\tilde{f}\in \C[[\xi]]^{\glc}$.

Moreover, if $f\in O$, then one can choose $\tilde{f},h\in O$.
\end{lem}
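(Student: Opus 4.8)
The plan is to extract the $\glc$-invariant part of $f$ by an averaging (Reynolds-operator) argument over the compact group $\uc$, using that divisibility of $f^a$ by $f$ for all $a\in\uc$ forces the ``quotients'' $f^a/f$ to form a one-cocycle which, after averaging, becomes a coboundary. First I would work in the local ring $\C[[\xi]]$ (a UFD) and record the key consequence of the hypothesis: since $f(0)=0$ and $f \mid f^a$ for every $a$, and since the action \eqref{e:act} is by degree-preserving linear automorphisms (hence preserves the order of vanishing at $0$), one has $f^a = c(a) f$ with $c(a)\in\C[[\xi]]^*$ a unit depending on $a$; comparing lowest-degree homogeneous parts shows $c(a)$ is in fact a \emph{nonzero scalar} $\chi(a)\in\C^*$ (the leading form of $f$ is an eigenvector of $a$). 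The cocycle identity $f^{ab}=(f^a)^b$ then gives $\chi(ab)=\chi(a)\chi(b)$, so $\chi:\uc\to\C^*$ is a group homomorphism; since $\uc$ is compact and connected with $[\uc,\uc]$ dense-ish (more simply, its abelianization is $\operatorname{U}_1(\C)$ via the determinant), $\chi$ factors through $\det$, i.e.\ $\chi(a)=(\det a)^k$ for some $k\in\Z$. But $a\mapsto a^{-1}f(a\xi a^{-1})a$ sends $\det a\mapsto \det a$-invariantly on the scalar level only if $k=0$: evaluating at a well-chosen $\xi$, or simply noting that $f^a$ and $f$ have the same \emph{set} of monomials up to a bounded region so $\chi$ is bounded on $\uc$, forces $\chi\equiv1$. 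Hence $f^a=f$ for all $a\in\uc$.

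Next I would upgrade invariance under $\uc$ to invariance under all of $\glc$. This is the standard Weyl unitarian trick: $\glc$ is the complexification of $\uc$, the orbit map $a\mapsto f^a - f$ is holomorphic in $a\in\glc$ (polynomial in the entries of $a$ and $a^{-1}$, coefficientwise), and it vanishes on the totally real submanifold $\uc$, which is a set of uniqueness for holomorphic functions on $\glc$; therefore $f^a=f$ for all $a\in\glc$. Thus $\tilde f:=f$ already lies in $\C[[\xi]]^{\glc}$ and we may take $h=1$. (Equivalently, one can phrase this via the Reynolds operator $E(g)=\int_{\uc} g^a\,da$: here $E(f)=f$ because $f^a=f$, and $E(f)$ is automatically $\glc$-invariant.)

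For the ``moreover'' clause, suppose $f\in O$, i.e.\ $f$ converges on a polydisc $\Delta\subset\C^{gn^2}$ about $0$. The action by a \emph{fixed} $a\in\uc$ permutes a neighborhood of $0$ (since $\xi\mapsto a\xi a^{-1}$ is linear and $\|a\xi a^{-1}\|$ is comparable to $\|\xi\|$ for $a$ unitary), so each $f^a$ is again analytic on a fixed polydisc; since we have shown $f^a=f$ as formal series, $f$ is genuinely $\glc$-invariant (in the formal sense) and analytic, so $\tilde f=f\in O\cap\C[[\xi]]^{\glc}$ and $h=1\in O$ works.

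The main obstacle I expect is the bookkeeping in the first paragraph: rigorously justifying that $f\mid f^a$ together with the linear, degree-preserving nature of the action forces the quotient to be a \emph{scalar} rather than a genuine power-series unit, and then pinning down the resulting character $\chi$ of $\uc$ to be trivial. One clean route is to pass to the leading homogeneous form $f_d$ of $f$: it is a nonzero homogeneous polynomial with $a\cdot f_d=\chi(a)f_d$ for all $a$, so $\C f_d$ is a one-dimensional $\glc$-subrepresentation of a space of polynomials, forcing $\chi$ to be a power of $\det$; and since the span of $f_d$'s monomials is a single Weyl-module weight line, a weight count (the total $\xi$-degree is fixed at $d$ whereas $\det^k$ has weight $kn$ in the diagonal torus) pins $k=0$ unless $d$ is a multiple of $n$, after which one eliminates the remaining cases by testing on a rank-one generic configuration. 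Alternatively, and perhaps more simply, one avoids the character analysis entirely: average $\log$ of the quotient, or observe directly that the \emph{set} of formal power series $\{\,g\in\C[[\xi]] : f\mid g\text{ and }\operatorname{ord}_0 g=\operatorname{ord}_0 f\,\}$ is exactly $f\cdot\C[[\xi]]^*$, and that $a\mapsto f^a/f$ is a continuous (indeed algebraic) map $\uc\to\C[[\xi]]^*$ whose image, by compactness, has values with bounded-below order and bounded coefficients, hence lands in $\C^*$; the homomorphism property then finishes as above.
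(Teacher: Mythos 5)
There is a genuine gap, and it sits exactly where you flagged it: the claim that the quotient $f^a/f$ must be a \emph{scalar}. Comparing lowest-degree homogeneous parts only identifies the constant term of the quotient: if $f=f_d+f_{d+1}+\cdots$ and $f^a=c(a)f$, then the degree-$d$ comparison gives $(f_d)^a=c(a)(0)\,f_d$, which says nothing about the higher-order terms of $c(a)$. In fact the claim is false, and with it your conclusion that $f^a=f$ for all $a\in\uc$ and hence $\tilde f=f$, $h=1$. A concrete counterexample: take $f=(1+\xi_{11}^1)\tr(\gX_1)$. Then $f(0)=0$, $f$ is even a polynomial (so analytic), and $f^a=(u^a/u)f$ with $u=1+\xi_{11}^1$, so $f$ divides $f^a$ for every $a$; but $f$ is not $\glc$-invariant, and the unit $h$ in the conclusion is genuinely nontrivial. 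Your fallback argument at the end (compactness of $\uc$ forces the image of $a\mapsto f^a/f$ to land in $\C^*$) fails for the same reason: a compact family of unit power series with bounded coefficients need not consist of constants, as the family $u^a/u$ shows. (Your reasons for triviality of the character are also shaky --- every character of the compact group $\uc$ is bounded, e.g.\ $\det^k$ is unimodular there --- though this part is fixable: since scalar matrices act trivially in \eqref{e:act}, the character kills the center, which forces the exponent of $\det$ to vanish, as in the paper.)

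What the paper does, and what is missing from your proposal, is the actual averaging of the full divisibility relation rather than of $f$ alone. One writes $f^a=\bigl(1+\sum_{i\ge1}h_{a,i}\bigr)f$ with $h_{a,i}$ homogeneous of degree $i$ (after showing the constant term of the quotient is $1$ via the character argument), proves by induction on $i$ that $a\mapsto h_{a,i}$ is continuous on $\uc$, and then sets $\tilde f=\int f^a\,d\mu(a)$ and $h=1+\sum_i\int h_{a,i}\,d\mu(a)$ with respect to Haar measure; this yields $hf=\tilde f$ with $\tilde f$ $\uc$-invariant, hence $\glc$-invariant by Zariski density, and $h$ a unit. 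Your parenthetical Reynolds-operator remark is the right mechanism, but you apply it only under the (false) premise $f^a=f$; applied correctly it must be applied to the unit-valued cocycle $a\mapsto f^a/f$ as well, which is the heart of the proof. The analytic clause then also needs an argument that $h$ is analytic given that $f$ and $\tilde f$ are (the paper invokes Artin approximation for $fh-\tilde f=0$), rather than the trivial $h=1$.
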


\begin{proof}
Write $f=\sum_{i=d}^\infty f_i$ with $d\ge1$ and $f_i$ homogeneous of degree $i$. Consider the map
$$\lambda:\uc\to\C^*,\qquad \lambda(a)=\left(\frac{f^a}{f}\right)(0).$$
Since
$$\left(\frac{f^{ab}}{f}\right)(0)
=\left(\frac{f^{ab}}{f^b}\frac{f^b}{f}\right)(0)
=\left(\left(\frac{f^a}{f}\right)^b\right)(0)\left(\frac{f^b}{f}\right)(0)
=\left(\frac{f^a}{f}\right)(0)\left(\frac{f^b}{f}\right)(0),
$$
$\lambda$ is a continuous group homomorphism. As every 1-dimensional representation of $\uc$ factors through the determinant, we have $\lambda(a)=\det(a)^t$ for some integer $t$. By \eqref{e:act} we see that $\ker\lambda$ contains all scalar matrices, so $t=0$ and $\lambda=1$.

For every $a\in\uc$ there exist homogeneous polynomials  $h_{a,\ell}$ of degree $\ell$ for $\ell\in\N$ such that
$$f^a=\left(1+\sum_{i\ge1}h_{a,i}\right)f.$$
For each $\ell\ge1$ we thus have
\begin{equation}\label{e:ell}
h_{a,\ell}f_d=f_{d+\ell}^a-f_{d+\ell}-\sum_{i=1}^{\ell-1}h_{a,i}f_{d+\ell-i}.
\end{equation}
By induction on $\ell$ we see from \eqref{e:ell} that the map $a\mapsto h_{a,\ell}$ from $\uc$ to the space of homogeneous polynomials of degree $\ell$ is continuous with respect to the Euclidean topology. Hence we can define homogeneous polynomials of degree $\ell$
$$k_\ell=\int h_{a,\ell}\, d\mu(a),$$
where $\mu$ is the (right) Haar measure on $\uc$. Let
$$\tilde{f}=\sum_{i=d}^\infty\left(\int f_i^a\, d\mu(a)\right),\qquad h=1+\sum_{i\ge1} k_i.$$
By \eqref{e:ell} we have $hf=\tilde{f}$ and $\tilde{f}$ is $\uc$-invariant by construction. Furthermore, $\uc$ is Zariski dense in $\glc$ by the unitarian trick \cite[Corollary 8.6.1]{Pro1}, so $\tilde{f}$ is also $\glc$-invariant.

Now suppose $f$ is analytic. Then there is a neighborhood $D$ of the origin such that $f^a$ converges absolutely and uniformly on $D$ for all $a\in\uc$. For every $\alpha\in D$ we have
$$\tilde{f}(\alpha)=\int f^a(\alpha)\, d\mu(a),$$
so $\tilde{f}$ also converges absolutely and uniformly on $D$. Since $fh-\tilde{f}=0$ and $f,\tilde{f}$ are analytic, $h$ is also analytic, e.g. by Artin's approximation theorem \cite[Theorem 1.2]{Art}.
\end{proof}

\begin{lem}\label{l:13}
If $r\in \matquotc^{\glc}$, then $r=f_0/f$ for some $f_0\in\matserc^{\glc}$ and $f\in \C[[\xi]]^{\glc}$.

Moreover, if $r\in\opm_n(M)^{\glc}$, then one can choose $f_0\in\opm_n(O)^{\glc}$ and $f\in O^{\glc}$.
\end{lem}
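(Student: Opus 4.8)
The plan is to reduce the statement to Lemma~\ref{l:rey} by a common-denominator argument in the unique factorization domain $\C[[\xi]]$. Since $\C((\xi))$ is the fraction field of $\C[[\xi]]$, I would first write $r=q^{-1}p$ with $p\in\matserc$ and $q\in\C[[\xi]]\setminus\{0\}$; dividing $p$ and $q$ by $\gcd\big(q,p_{11},\dots,p_{nn}\big)$ I may further assume the denominator is \emph{reduced}, i.e.\ $q$ shares no irreducible factor with all entries of $p$ simultaneously. If $q$ is a unit of $\C[[\xi]]$ then $r=q^{-1}p\in\matserc$ is already $\glc$-invariant and there is nothing to prove (take $f=1$); so from now on assume $q$ is a nonunit, equivalently $q(0)=0$ as $\C[[\xi]]$ is local.

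The crucial step is to show that $q$ divides every translate $q^a:=q(a\xi a^{-1})$ for $a\in\glc$. Writing $\sigma_a$ for the substitution automorphism $\xi\mapsto a\xi a^{-1}$ of $\C[[\xi]]$, invariance of $r$ under \eqref{e:act} gives $q^{-1}p=(q^a)^{-1}p^a$ with $p^a:=a^{-1}\sigma_a(p)a\in\matserc$, hence $p\,q^a=p^a q$ entrywise in $\matserc$. Now fix an irreducible $\pi$ dividing $q$; by reducedness there is an entry $p_{i_0j_0}$ with $\pi\nmid p_{i_0j_0}$, and from $p_{i_0j_0}q^a=p^a_{i_0j_0}q$ we read off $\pi^{v_\pi(q)}\mid p_{i_0j_0}q^a$, forcing $\pi^{v_\pi(q)}\mid q^a$. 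Letting $\pi$ run over the prime factors of $q$ yields $q\mid q^a$ for all $a\in\glc$, in particular for all $a\in\uc$.

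At this point Lemma~\ref{l:rey} applies to $q$ and produces $h\in\C[[\xi]]^*$ and $\tilde q\in\C[[\xi]]^{\glc}$ with $q=\tilde q\,h$. Setting $f:=\tilde q$ and $f_0:=h^{-1}p$ --- a genuine matrix over $\C[[\xi]]$ because $h$ is a unit --- one gets $r=q^{-1}p=\tilde q^{-1}h^{-1}p=f^{-1}f_0$, and since $r$ and $f$ are $\glc$-invariant so is $f_0=fr$. This proves the first claim. For the analytic refinement I would rerun the same argument inside the convergent power series ring $O$, which is again a UFD whose units are exactly the series not vanishing at the origin and which is preserved by each $\sigma_a$; the ``moreover'' clause of Lemma~\ref{l:rey} then supplies $\tilde q,h\in O$ with $h(0)\neq0$, hence $h\in O^*$, giving $f_0\in\opm_n(O)^{\glc}$ and $f\in O^{\glc}$.

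The one point that needs care is the reducedness normalization: if $q$ and the entries of $p$ shared a common factor, the identity $p\,q^a=p^a q$ would only give $v_\pi(q^a)\ge v_\pi(q)-v_\pi\big(\gcd_{ij}p_{ij}\big)$, too weak to feed into Lemma~\ref{l:rey}. The remaining ingredients --- that $\C((\xi))$ is the fraction field of a UFD, that $\sigma_a$ is an automorphism, and that units of a local ring are detected by nonvanishing at $0$ --- are routine.
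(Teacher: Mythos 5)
Your argument is correct and rests on the same key ingredient as the paper, namely Lemma \ref{l:rey} applied to a denominator $q$ with $q(0)=0$ and $q\mid q^a$; the difference lies in how the matrix structure is handled. The paper first reduces to the scalar case: since $\{\gX_1^i\gX_2^j\colon 1\le i,j\le n\}$ is a $\glc$-invariant basis of $\matquotc$ over $\C((\xi))$, an invariant $r$ has $\glc$-invariant scalar coefficients, so one may assume $r\in\C((\xi))^{\glc}$, write a single fraction $f_0/f$ in lowest terms, and deduce $f\mid f^a$ in one line from coprimality. You instead keep $r$ as a matrix over a scalar common denominator $q$, replace coprimality by the normalization that no irreducible factor of $q$ divides every entry of $p$, and extract $q\mid q^a$ from the entrywise identity $p_{ij}q^a=p^a_{ij}q$; from there the two proofs coincide (Lemma \ref{l:rey} plus the observation that $f_0=fr$ is again invariant, and the same run-through in the UFD $O$ for the analytic refinement). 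Your route avoids invoking the invariant-basis fact from generic-matrix theory, at the cost of the slightly more delicate entrywise bookkeeping, which you do handle correctly, including the point that dividing $p$ and $q$ by their common gcd really makes them coprime. One notational caveat: the gcd must be taken with \emph{all} entries $p_{ij}$, not only the diagonal ones as $\gcd(q,p_{11},\dots,p_{nn})$ literally suggests --- your own gloss (``no irreducible factor shared with all entries of $p$ simultaneously'') shows this is what you intend, and with that reading the choice of an entry prime to a given factor of $q$ is legitimate and the proof goes through.
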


\begin{proof}
Because $\matquotc$ has a $\glc$-invariant basis $\{\gX_1^i\gX_2^j\colon 1\le i,j\le n\}$, it suffices to assume $r\in \C((\xi))^{\glc}$.
Since $\C[[\xi]]$ is a unique factorization domain, we can write $r=f_0/f$ for some coprime $f_0,f\in\C[[\xi]]$. If $r\notin \C[[\xi]]^{\glc}$, then $f(0)=0$. For every $a\in\glc$ we have $r^a=r$ and hence $f_0^af=f^af_0$, so $f$ divides $f^a$. By Lemma \ref{l:rey} there exist $h\in\C[[\xi]]^*$ and $\tilde{f}\in\C[[\xi]]^{\glc}$ such that $f=h\tilde{f}$. Then $r=(h^{-1}f_0)/\tilde{f}$ and $h^{-1}f_0\in\C[[\xi]]^{\glc}$.

Exactly the same reasoning applies in the analytic situation since $O$ is also a unique factorization domain \cite[Proposition 6.4.9]{Kra}.
\end{proof}

\begin{prop}\label{p:denom}
There exists a homogeneous polynomial in the center of $\gm$ such that every $s\in \matserc^{\glc}$ can be written as $s = p^{-1}q$ for some $q\in\gms$. If $s\in \opm_n(O)^{\glc}$, then $q\in \gms\cap \opm_n(O)$.
\end{prop}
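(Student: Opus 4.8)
The plan is to combine Procesi's structure theorem for $\glc$-invariants in $\matserc$ (the unitarian trick / Reynolds operator approach already invoked via \cite[Theorem 2.1]{Pro}) with a degree-by-degree argument showing that the denominators appearing in Lemma \ref{l:13} can all be taken to be a single fixed central polynomial. First I would recall that, by the discussion preceding Lemma \ref{l:rey}, every $s\in\matserc^{\glc}$ is a power series whose homogeneous components are polynomials in the generic matrices $\gX$ and in traces of words in $\gX$; equivalently $s\in\uds\cap\matserc$, since $\uds$ is generated over $\gms$ by inverting nonzero central elements. So the content of the statement is that one universal central denominator $p$ suffices: $p\cdot s\in\gms$ for every invariant $s$.

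The key step is to produce such a $p$. I would use the fact that $\uds$ is a central simple algebra of degree $n$ (Posner's theorem, as recalled before Proposition \ref{p:sf}), so its center $Z$ is the field of fractions of the center $Z_0$ of $\gm$, and $\uds = \gms\otimes_{Z_0}Z$. The traces $\tr(w(\gX))$ for words $w$ generate (a localization of) $Z_0$; more precisely, by the first fundamental theorem for matrix invariants, the commutative ring $\C[\xi]^{\glc}$ is generated by traces of words of length at most $n^2$, hence is a finitely generated $\C$-algebra, say $\C[\xi]^{\glc}=\C[t_1,\dots,t_N]$ with $t_i=\tr(w_i(\gX))$ homogeneous. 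The ring $\C[[\xi]]^{\glc}$ is the completion, and its analytic subring $O^{\glc}$ consists of convergent such series. Now for a fixed invariant $s$, each homogeneous component lies in the $Z_0$-module spanned by finitely many words in $\gX$ with trace-coefficients, and I would invoke the Cayley--Hamilton/Procesi trace identity machinery (as used in the proof of Proposition \ref{p:fd}) to see that there is a single nonzero homogeneous element $p$ in the center of $\gm$ — concretely, a product of ``discriminant-type'' central polynomials, or simply the image of a generic central polynomial for $\matn$ — such that $p\,\matserc^{\glc}\subseteq\gms$: multiplication by $p$ clears all the trace-denominators simultaneously because $p$ generates the unit ideal in the relevant localization of $Z_0$ at which $\gms$ becomes Azumaya. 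One must check $p$ can be chosen \emph{homogeneous} and independent of $s$; homogeneity follows since the action \eqref{e:act} preserves homogeneous components and one may take homogeneous generators.

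For the analytic refinement, suppose $s\in\opm_n(O)^{\glc}$. Writing $s=p^{-1}q$ with $q=p\,s\in\gms$, I need $q\in\opm_n(O)$. But $q=ps$ is a product of the polynomial $p$ with the convergent series $s$, hence convergent; and $q\in\gms$ by the first part, so $q\in\gms\cap\opm_n(O)$ as claimed. I expect the main obstacle to be the bookkeeping in the key step: exhibiting a \emph{single} central homogeneous $p$ that works for \emph{all} invariants at once, rather than one depending on $s$. The resolution is that $\gms$ becomes Azumaya (a separable algebra, and in particular all invariants become polynomial) over the localization $Z_0[p^{-1}]$ for an appropriate central element $p$ — this is where one uses that $\uds$ is central simple of degree $n$ together with the fact that over an algebraically closed base the generic point of $\matn^g/\glc$ already sees the full degree-$n$ structure, so a discriminant-like $p$ detecting ``genuinely $n$-dimensional'' tuples does the job uniformly.
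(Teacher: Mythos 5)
Your outline follows the paper's: reduce degree by degree via Procesi's description of the polynomial invariants, exhibit a single homogeneous central $p$ clearing all denominators, and observe that the analytic refinement is immediate because $q=ps$ is a polynomial times a convergent series. The gap is in the key step. What your degreewise argument actually needs is the uniform, first-power statement $p\,\tn\subseteq\gm$, where $\tn=\opm_n(\C[\xi])^{\glc}$ is the trace ring: each homogeneous component $s_i$ of $s\in\matserc^{\glc}$ lies in $\tn$, and only if a \emph{single fixed} $p$ satisfies $p s_i\in\gm$ for \emph{every} $i$ can you form $q=\sum_i ps_i\in\gms$ and conclude $s=p^{-1}q$. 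The Azumaya/localization mechanism you invoke does not deliver this. Localizing at a central evaluation $p$ so that $\gm[p^{-1}]$ becomes Azumaya over its center gives at best $\tn\subseteq\gm[p^{-1}]$, i.e.\ for each $a\in\tn$ some power $p^{k(a)}a\in\gm$, where $k(a)$ grows with, say, the number of trace factors in $a$; since these exponents are unbounded over the homogeneous components of a power series (in degree $i$ one may need a power of order $i$), you cannot extract a single denominator, and $s=p^{-1}q$ (or even $s=p^{-N}q$ for a fixed $N$) does not follow. The phrase ``$p$ generates the unit ideal in the relevant localization'' is true of any nonzero central element and carries no force here. A smaller point: your opening claim that every invariant power series already lies in $\uds$ is essentially what is being proved (it is the content of Proposition \ref{p:denom} feeding into Theorem \ref{t:inv}), so it cannot be assumed at the outset.

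The missing ingredient is a \emph{trace-absorbing} central polynomial, and this is exactly what the paper cites: by \cite[Theorem 4.3.1]{Row} (Formanek--Razmyslov--Schelter) there is a multilinear central polynomial $h$ in $n^2$ variables for $\gm$ such that $h(\gm,\dots,\gm)\gm$ is an ideal of $\tn$. Evaluating $h$ at homogeneous $r_1,\dots,r_{n^2}\in\gm$ with $p:=h(r_1,\dots,r_{n^2})\neq0$ produces a homogeneous element of the center of $\gm$ with $p\,\tn\subseteq\gm$ on the nose, which is precisely the uniformity your argument requires; a generic central polynomial or ``discriminant-type'' element has no such absorption property with exponent one. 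With this substitution your plan goes through and coincides with the paper's proof, and your analytic step ($q=ps\in\gms\cap\opm_n(O)$) agrees with the paper's.
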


\begin{proof}
Let $\tn$ be the subalgebra of $\opm_n(\C[\xi])$ generated by $\gm$ and $\tr(\gm)$. Then $\opm_n(\C[\xi])^{\glc}=\tn$ by \cite[Theorems 1.3 and 2.1]{Pro}. By \cite[Theorem 4.3.1]{Row} there is a multilinear polynomial $h$ in $n^2$ variables that is a central polynomial for $\gm$ and $h(\gm,\dots,\gm)\gm$ is an ideal in $\tn$. Since $h$ is central for $\gm$, there exist homogeneous $r_1,\dots,r_{n^2}\in\gm$ such that $p:=h(r_1,\dots,r_{n^2})\neq0$. Therefore $p$ is homogeneous, lies in the center of $\gm$ and $p\tn\subset \gm$.

If $s\in \matserc^{\glc}$, then its homogeneous components $s_i$ are also $\glc$-invariant and thus belong to $\tn$. Therefore $s_i=p^{-1}q_i$ for some homogeneous $q_i\in\gm$. Hence $q=\sum_{i\ge0}q_i\in\gms$ and $s=p^{-1}q$.

Furthermore, if entries of $s$ are analytic, then so are the entries of $q=ps$.
\end{proof}

\begin{thm}\label{t:inv}
$\matquotc^{\glc}=\uds$. Moreover, every meromorphic $\glc$-con\-co\-mitant equals $f_1(\gX)f_2(\gX)^{-1}$ for some analytic power series $f_1(\gX),f_2(\gX)$ in $\gX$.
\end{thm}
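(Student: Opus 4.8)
The plan is to establish the two assertions of Theorem \ref{t:inv} in sequence, using the structural results of this section as the main inputs. For the identity $\matquotc^{\glc}=\uds$: one inclusion is essentially immediate, since $\uds$ is generated by $\gX$ and central quotients, and both $\gX$ and the traces of words in $\gX$ are $\glc$-invariant under the action \eqref{e:act} (the traces land in the center, which consists of scalar matrices fixed by conjugation). For the reverse inclusion, take $r\in\matquotc^{\glc}$. First I would apply Lemma \ref{l:13} to write $r=f_0/f$ with $f_0\in\matserc^{\glc}$ and $f\in\C[[\xi]]^{\glc}$. Then Proposition \ref{p:denom} produces a fixed homogeneous central polynomial $p\in\gm$ with $f_0=p^{-1}q$ for some $q\in\gms$; applying the same proposition to the scalar $\glc$-invariant series $f$ (viewed as $fI\in\matserc^{\glc}$, or directly to its homogeneous components in $\tn$) writes $f=p^{-1}q'$ with $q'\in\gms$ central. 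Hence $r = f_0 f^{-1} = (p^{-1}q)(p^{-1}q')^{-1} = q\, q'^{-1}$, with $q,q'\in\gms$ and $q'$ central and nonzero, so $r\in\uds$. This also proves the ``moreover'' clause: if $r$ is a meromorphic $\glc$-concomitant, i.e. $r\in\matquotc^{\glc}$, then $r=q\,q'^{-1}=f_1(\gX)f_2(\gX)^{-1}$ with $f_1=q$, $f_2=q'$ power series in $\gX$; and tracking the analytic refinements in Lemma \ref{l:13} (the ``moreover'' using $O$) and Proposition \ref{p:denom} (the clause $q\in\gms\cap\opm_n(O)$), one gets that $f_1,f_2$ can be taken to be analytic power series in $\gX$, as claimed.

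The one point that requires a little care — and is the main obstacle — is the passage from a meromorphic \emph{concomitant} (a partially defined equivariant map on $\matcn^g$, or an element of $\matquotc^{\glc}$ obtained from a meromorphic expression) to an honest element of $\matquotc^{\glc}$ to which Lemma \ref{l:13} applies. I would argue that a $\glc$-concomitant arising from a meromorphic expression in $\gX$ extends to an element of $\matquotc$: the entries are ratios of commutative power series (after clearing matrix inverses via Cayley--Hamilton, exactly as in the proof of Proposition \ref{p:fd}), hence lie in $\matquotc$, and invariance is inherited from the construction. Once it is known to be a genuine element of $\matquotc^{\glc}$, the three cited results chain together with no further difficulty, the numerator/denominator bookkeeping being routine.

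A cleaner alternative for the first claim, avoiding even this, is to note that $\uds$ is a central simple algebra of degree $n$ (Posner's theorem, as recalled in Subsection~\ref{sec3}) with center $\kk((\xi))^{\glc}$, while $\matquotc$ is central simple of degree $n$ with center $\C((\xi))$; so $\uds\subseteq\matquotc^{\glc}$ and both sides have the same dimension over the common subfield $\C((\xi))^{\glc}$ once one checks $\matquotc^{\glc}$ is central simple over it — but this dimension count is itself most transparently done via the explicit description above, so I would present the constructive argument as the primary proof.
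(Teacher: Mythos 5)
Your argument is correct and is essentially the paper's own proof: the inclusion $\uds\subseteq\matquotc^{\glc}$ is immediate from $\uds$ being the central quotient ring of $\gms$, the reverse inclusion is exactly the chain Lemma \ref{l:13} followed by Proposition \ref{p:denom} (applied to both numerator and the scalar denominator), and the analytic clause follows from the ``moreover'' parts of those two results. The only superfluous step is your worry about passing from a meromorphic concomitant to an element of $\matquotc^{\glc}$: such a concomitant is by definition an equivariant element of $\opm_n(M)$, and since $M\subset\C((\xi))$ it already lies in $\matquotc^{\glc}$, so Lemma \ref{l:13} applies directly without any Cayley--Hamilton detour.
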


\begin{proof}
Clearly $\uds\subseteq \matquotc^{\glc}$ holds since $\uds$ is the ring of central quotients of $\gms$. Conversely, every $\glc$-invariant $r\in\matquotc$ can be written as $r=f_0/f$ for $f_0\in\matserc^{\glc}$ and $f\in \C[[\xi]]^{\glc}$ by Lemma \ref{l:13}, so $r\in\uds$ by Proposition \ref{p:denom}.

The second statement follows in same manner by the analytic parts of Lemma \ref{l:13} and Proposition \ref{p:denom}.
\end{proof}

The results of this section complement the earlier research of Luminet \cite{Lum} concerning the rings of analytic concomitants on matrices of \emph{fixed size $n$}. Therein it is shown that meromorphic concomitants arising from germs about an irreducible point $X\in\matc{n}^g$ (see Section \ref{sec6} for the definition of irreducibility) are isomorphic to $\opm_n(K)$, where $K$ is the field of fractions of analytic commutative power series in $(g-1)n^2+1$ indeterminates \cite[Propositions 5.3 and 6.2]{Lum}. On the other hand, the results of this section connect meromorphic concomitants arising from germs about the origin (which is far from being irreducible) with the restrictions of nc meromorphic germs to $n\times n$ matrices. The rest of this papers deals with analytic nc germs in the size-independent setting.

\section{Universal skew field of fractions of analytic germs}\label{sec5}

In this section we show that the ring of (uniformly) analytic germs about a scalar point $Y\in\C^g$ admits a universal skew field of fractions, which we call the skew field of nc (uniformly) meromorphic germs; see Subsection \ref{ssec5.2}. This theory is used in the local-global rank principle, Theorem \ref{t:rank}, to relate the intrinsic rank of matrices over $\Oa_Y$ or $\Oua_Y$ with the ranks of their matrix evaluations. We note that there is no commutative analog of this statement.

In Section \ref{sec6} we will see that for $Y\in\matcs^g$ with $s\ge2$, the algebras of germs about $Y$ depend strongly on $Y$. However, for every $Y\in\C^g$ we have
$$\Oua_Y\cong\Oua_0,\qquad \Oa_Y\cong\Oa_0,\qquad \Oua_0\subset\Oa_0\subset\serc.$$
Therefore we can without loss of generality assume $Y=0$.

A (not necessarily commutative) ring $R$ is {\bf local} if it has a unique maximal one-sided ideal $\mathfrak{m}$ \cite[Section 19]{Lam}; in this case, $\mathfrak{m}$ is two-sided and $R/\mathfrak{m}$ is a division ring.

\begin{lem}\label{l:loc}
$\Oa_0$ and $\Oua_0$ are local rings; in both cases, the maximal ideal consists of functions vanishing at the origin.
\end{lem}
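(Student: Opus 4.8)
The plan is to exhibit the set of non-units explicitly and show that it is a proper two-sided ideal, which by the paper's definition forces $R$ to be local. Write $R$ for either $\Oa_0$ or $\Oua_0$ (by Remark~\ref{r:scalar} it suffices to treat the base point $0\in\C^g$, since $\Oa_Y\cong\Oa_0$ and $\Oua_Y\cong\Oua_0$), and let $\mathfrak{m}=\{f\in R:f(0)=0\}$, i.e.\ the germs whose Taylor--Taylor expansion about $0$ has vanishing constant term. The first step is routine: two representatives of a germ agree near $0$ and in particular at $0\in\C^g$, so evaluation at $0$ is a well-defined unital $\C$-algebra homomorphism $R\to\C$ with kernel $\mathfrak{m}$; hence $\mathfrak{m}$ is a two-sided ideal, $R/\mathfrak{m}\cong\C$, and $1\notin\mathfrak{m}$. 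Granting this, once I prove that \emph{every} $f\in R\setminus\mathfrak{m}$ is invertible in $R$, it follows that $\mathfrak{m}$ is exactly the set of non-units (no $f\in\mathfrak{m}$ can be a unit, since $f(0)=0$ would force $0=1$); then every proper one-sided ideal avoids units and so lies in $\mathfrak{m}$, whence $\mathfrak{m}$ is the unique maximal one-sided ideal and $R$ is local.

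The content is thus the inversion of $f$ with $f(0)\neq0$. After normalizing by $f(0)^{-1}$ I may write $f=1-h$ with $h\in\mathfrak{m}$, and I would realize $f^{-1}$ as the Neumann series $\sum_{k\ge0}h^k$. In the formal algebra $\cO_0\cong\serc$ this series converges $(x)$-adically, so all the work is in keeping the series inside the analytic, resp.\ uniformly analytic, subalgebra. For $\Oua_0$: by Theorem~\ref{t:old4} a representative $h$ has completely bounded differentials, $\|\Delta_0^\ell h\|_{\cb}\le Cr^\ell$ for some $C,r>0$, and since $h(0)=0$ this yields, on a small enough nc ball $\BB_\delta(0)$, the estimate $\sup_{X\in\BB_\delta(0)}\|h(X)\|\le Cr\delta/(1-r\delta)<1$ \emph{uniformly over all matrix sizes} — this uniformity is the one genuinely non-formal point, and it is precisely what complete boundedness provides. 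Then $\sum_k h(X)^k$ converges uniformly on $\BB_\delta(0)$ to a bounded map $g$, and $g$ is an nc function: it is graded; it respects direct sums because $1-h(X\oplus Y)=(1-h(X))\oplus(1-h(Y))$ and inversion is block-diagonal; and it respects similarities because $1-h(SXS^{-1})=S(1-h(X))S^{-1}$. Being bounded on an nc ball, $g$ is uniformly locally bounded there, hence uniformly analytic by Theorem~\ref{t:old4}; so $g\in\Oua_0$ and $gf=fg=1$, as germs.

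For $\Oa_0$ I would sidestep size-uniform estimates entirely via the following device: since $h$ respects direct sums, $\|h(X\oplus Y)\|=\max\{\|h(X)\|,\|h(Y)\|\}$, so the set $\Omega'=\{X\in\dom h:\|h(X)\|<1\}$ is automatically an nc set; it is open in the disjoint union topology (each level is, by continuity of $h$) and contains $0$ (as $h(0)=0$). On $\Omega'$ the Neumann series converges and defines an nc function $(1-h)^{-1}$, which is locally bounded there (near any point $\|h\|$ is bounded away from $1$), hence analytic by the discussion preceding Theorem~\ref{t:old4}; so $f=1-h$ is a unit in $\Oa_0$. The only step I expect to need care is the uniform-in-$n$ bound $\sup_{\BB_\delta(0)}\|h\|<1$ in the uniformly analytic case: boundedness of $h$ near $0$ at each fixed size does not suffice, and the estimate must route through the completely bounded norms of the noncommutative differentials. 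The remaining ingredients — that $\mathfrak{m}$ is the kernel of evaluation at $0$, that the Neumann inverse is again an nc function, and that a function bounded on an nc ball is uniformly analytic — are standard and would be dispatched quickly.
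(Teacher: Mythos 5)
Your proof is correct and follows essentially the same route as the paper: invert a germ nonvanishing at the origin by a Neumann (geometric) series, with the only real content being the size-uniform smallness estimate on an nc ball in the $\Oua_0$ case. The paper obtains that estimate by invoking continuity of $f$ in the uniformly open topology and treats $\Oa_0$ with a one-line appeal to locality of commutative analytic germs, whereas you derive it from the completely bounded bounds $\|\Delta_0^\ell h\|_{\cb}\le Cr^\ell$ of Theorem \ref{t:old4} and argue $\Oa_0$ level-wise directly — minor variations of the same argument.
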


\begin{proof}
The statement clearly holds for $\Oa_0$ because commutative analytic germs form a local ring. Now let $f\in \Oua_0$ be such that $f(0)\neq0$. Since $f$ is continuous with respect to the uniformly open topology, there exists $\ve>0$ such that $\|f(0)-f(X)\|<\frac{1}{2|f(0)|}$ for all $X\in\BB_\ve(0)$. Then
$$f^{-1}(X)=f(0)^{-1}\sum_{\ell=0}^\infty \left(\frac{f(0)-f(X)}{f(0)} \right)^\ell$$
is an nc function that converges absolutely and uniformly on $\BB_\ve(0)$, so $f^{-1}\in\Oua_0$.
\end{proof}

\subsection{Semifir property and inertness}

For $j=1,\dots,g$ define linear operators
$$L_j:\serc\to\serc,\qquad L_j\left(\sum_{w\in\mx} \alpha_w w\right)=\sum_{w\in\mx} \alpha_{x_jw} w.$$
A composite of $L_j$'s is called a {\bf right transduction} \cite[Section 2.5]{Coh} or a {\bf left backward shift} \cite[Section 4.2]{KVV2}.

\begin{lem}\label{l:trans}
	Right transductions preserve $\Oa_0$ and $\Oua_0$.
\end{lem}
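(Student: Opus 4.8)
The plan is to show that a right transduction $L_j$ sends a convergent Taylor--Taylor series to a convergent Taylor--Taylor series, in both the locally bounded and uniformly locally bounded regimes. The key point is that $L_j$ acts on the homogeneous expansion in a very transparent way: if $f = \sum_\ell f_\ell$ with $f_\ell$ the $\ell$-linear differential $\Delta_0^\ell f$ (identified with an element of degree $\ell$ in $\serc$), then $L_j f = \sum_\ell g_\ell$ where $g_\ell$ is obtained from $f_{\ell+1}$ by ``plugging $x_j$ into the first slot'', i.e. $g_\ell(Z^1,\dots,Z^\ell) = f_{\ell+1}(e_j, Z^1,\dots,Z^\ell)$ where $e_j$ is the $j$-th standard basis tuple (the tuple with $I$ in position $j$ and $0$ elsewhere). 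The first step is therefore to record this formula carefully at the level of multilinear maps, check that the resulting sequence $(g_\ell)_\ell$ still satisfies $\LAC(0)$ (which is vacuous since $0\in\C^g$, so nothing to check — by the Remark after Definition~\ref{d:lac}), so that $(g_\ell)_\ell$ is again a formal nc germ. Hence $L_j$ does map $\cO_0 = \serc$ to itself; the content is the growth estimate.

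Next I would establish the norm bound. For the uniform case: since $g_\ell(Z^1,\dots,Z^\ell) = (f_{\ell+1})(e_j,Z^1,\dots,Z^\ell)$ after amplification, and $\|e_j\|=1$ at every matrix level, we get $\|g_\ell\|_{\cb}\le \|f_{\ell+1}\|_{\cb}$. Therefore
\[
\limsup_{\ell\to\infty}\sqrt[\ell]{\|g_\ell\|_{\cb}}
\le \limsup_{\ell\to\infty}\sqrt[\ell]{\|f_{\ell+1}\|_{\cb}}
= \limsup_{\ell\to\infty}\sqrt[\ell+1]{\|f_{\ell+1}\|_{\cb}}<\infty,
\]
using that $f\in\Oua_0$ has finite $\cb$-radius by Theorem~\ref{t:old4}. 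By the converse direction of Theorem~\ref{t:old4}, the series $\sum_\ell g_\ell$ then defines a uniformly analytic nc function on some nc ball about $0$, i.e. $L_j f\in\Oua_0$. For the non-uniform case one runs the same argument with the diagonal seminorms: for each fixed $n$, $\sup_{\|Z\|_{ns}=1}\|(g_\ell)_n(Z,\dots,Z)\|$ is controlled by a similar expression involving $(f_{\ell+1})_n$ evaluated at tuples of norm $\le 1$ — here one should be a little careful because the first argument $e_j$ is pinned while the remaining $\ell$ are varying, but one can absorb this by noting $\|(g_\ell)_n(Z,\dots,Z)\| = \|(f_{\ell+1})_n(e_j,Z,\dots,Z)\|$ and bounding the latter by $\sup_{\|W^i\|\le 1}\|(f_{\ell+1})_n(W^1,\dots,W^{\ell+1})\|$, which by multilinearity and polarization is comparable (up to a constant like $(\ell+1)^{\ell+1}/(\ell+1)!$, which is $e^{O(\ell)}$ and hence harmless for the $\ell$-th root) to $\sup_{\|Z\|=1}\|(f_{\ell+1})_n(Z,\dots,Z)\|$. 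Thus the $\limsup$ condition defining $\Oa_0$ is preserved for every $n$, and by Theorem~\ref{t:old4} (disjoint-union version, i.e. \eqref{e:old2}) together with the fact that $L_jf$ is a genuine nc function on $\Nilp(0)$, we conclude $L_jf\in\Oa_0$.

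The main obstacle, and the only place requiring real care, is the bookkeeping that turns the backward-shift formula $L_j(\sum_w\alpha_w w)=\sum_w\alpha_{x_jw}w$ on power series into the statement $g_\ell = f_{\ell+1}(e_j,\,\cdot\,)$ at the level of the multilinear differentials — in other words, verifying that evaluating the $(\ell+1)$-st differential with its first variable set to $e_j$ picks out exactly the degree-$\ell$ part whose coefficients are shifted by deleting a leading $x_j$. This is a direct computation from the definition \eqref{e:diff} of $\Delta_0^\ell$ (with $Y=0$), using that the differential operators of the monomial $w$ satisfy $\Delta_0^\ell w(Z^1,\dots,Z^\ell)=\sum w_0 Z^1_{j_1}w_1\cdots Z^\ell_{j_\ell}w_\ell$ over all ways of writing $w=w_0 x_{j_1}w_1\cdots x_{j_\ell}w_\ell$; substituting $Z^1=e_j$ forces $j_1=j$ and $w_0=1$, leaving precisely $\Delta_0^\ell$ of the shifted series. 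Once this identity is in hand, the two growth estimates above and the two halves of Theorem~\ref{t:old4} finish both cases, and the proof is complete.
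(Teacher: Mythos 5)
Your treatment of the uniform case is correct, though it takes a different route from the paper: you work at the level of Taylor coefficients, using the identity $(L_jf)_\ell=f_{\ell+1}(e_j,\cdot)$ (your verification of this is fine), the bound $\|(L_jf)_\ell\|_{\cb}\le\|f_{\ell+1}\|_{\cb}$, and both halves of Theorem~\ref{t:old4}; since uniformly analytic germs are \emph{exactly} the $\LAC$-sequences with finite $\cb$-radius, this closes the $\Oua_0$ case. The paper instead evaluates $f$ at the bordered point $\left(\begin{smallmatrix}X&0\\ \alpha&0\end{smallmatrix}\right)$ and reads off $\sum_j\alpha_jL_j(f)(X)$ in the lower-left corner, so $L_jf$ inherits (uniform) analyticity of $f$ directly; that argument handles both cases in one line.

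The $\Oa_0$ case of your proposal has a genuine gap, in two places. First, the polarization step is false: for a non-symmetric multilinear map, the pinned (or full) multilinear norm at a \emph{fixed} level $n$ is not controlled by the diagonal norm at the same level up to any $e^{O(\ell)}$ factor. Concretely, for the degree-two component $f_2(z,w)=z_1w_2-z_2w_1$ of $x_1x_2-x_2x_1$ one has $(f_2)_1(Z,Z)=0$ for all $Z\in\C^2$, while $(f_2)_1(e_1,Z)=Z_2$; and for each fixed $n$ this persists in arbitrarily high degrees, since there are homogeneous polynomial identities of $\matcn$ (e.g.\ built from the standard identity $S_{2n}$) whose left backward shifts are not identities. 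So the level-$n$ diagonal sups of the coefficients of $f$ simply cannot bound those of $L_jf$; the nc remedy of placing $e_j,Z,\dots,Z$ on a block superdiagonal forces you up to level of order $n\ell$, which destroys the fixed-$n$ estimate. The correct coefficient-level fix needs only one extra dimension: evaluating the degree-$(\ell+1)$ component of $f$ at the bordered tuple with $k$-th entry $\left(\begin{smallmatrix}Z_k&0\\ \delta_{kj}u^{t}&0\end{smallmatrix}\right)$, $u\in\C^n$ a unit vector, produces $u^{t}\,(L_jf)_\ell(Z,\dots,Z)$ in the lower-left block, so the level-$n$ diagonal sup of $(L_jf)_\ell$ is bounded (up to harmless constants) by the level-$(n+1)$ diagonal sup of $f_{\ell+1}$ — this is exactly the paper's trick run on homogeneous components. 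Second, even granting the level-wise radius condition for $L_jf$, that condition does not characterize $\Oa_0$: as recalled in Subsection~\ref{ssec2.4} (citing \cite[Example 8.6]{KVV3}), the embedding of analytic germs into such sequences is \emph{proper}, and the converse direction of the level-wise statement also requires that the union of the convergence neighborhoods contain a disjoint-union open nc set on which the series is a locally bounded nc function; being an nc function on $\Nilp(0)$ is automatic for every formal germ and contributes nothing. Both issues disappear if you argue with function values on a bordered point, as the paper does.
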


\begin{proof}
	If $f\in\Oa_0$ and $\alpha\in\C^g$, then
	\begin{equation}\label{e:shift}
	f\begin{pmatrix} X & 0 \\ \alpha & 0\end{pmatrix}=
	\begin{pmatrix}f(X) & 0 \\ \sum_j \alpha_jL_j(f)(X) & f(0)\end{pmatrix}
	\end{equation}
	whenever defined. Since $f$ is analytic on some open nc neighborhood of $0$, \eqref{e:shift} implies $L_j(f)\in\Oa_0$ for every $j$. The same reasoning applies to $\Oua_0$.
\end{proof}

An embedding of rings $R\subset S$ is {\bf totally inert} \cite[Section 2.9]{Coh} if for every $d\in\N$, $U\subset S^{1\times d}$, $V\subset S^{d\times 1}$ satisfying $UV\subset R$ there exists $P\in\GL_d(S)$ such that for $u\in UP^{-1}$ and $1\le i\le d$, either $u_i\in R$ or $v_i=0$ for all $v\in PV$; and analogously for $v\in PV$.

\begin{prop}\label{p:inert1}
$\Oa_0$ and $\Oua_0$ are semifirs. Moreover, the embeddings $\Oa_0\subset\serc$ and $\Oua_0\subset\serc$ are totally inert.
\end{prop}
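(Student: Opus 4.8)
The plan is to deduce both claims from Cohn's inertia theorem, which states that if $R \subseteq S$ is a totally inert embedding and $S$ is a semifir, then $R$ is a semifir as well (see \cite[Theorem 2.9.17]{Coh}, combined with the relevant inertia lemmas in \cite[Section 2.9]{Coh}). Since we already know from the preliminaries that $\serc = \dser{\C}$ is a semifir, the entire proposition reduces to establishing the total inertness of the two embeddings $\Oa_0 \subseteq \serc$ and $\Oua_0 \subseteq \serc$. I would therefore concentrate all the effort on the inertness statement and treat the semifir conclusion as a one-line citation afterwards.

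To prove total inertness, I would follow the standard strategy used for the analogous statement about $\px \subseteq \serc$ or about convergent versus formal power series in one variable: exploit the fact that $\serc$ is filtered by order of vanishing and that $\Oa_0, \Oua_0$ are ``saturated'' subrings in a suitable sense. Concretely, given $d \in \N$, $U \in \serc^{1\times d}$, $V \in \serc^{d\times 1}$ with $UV \in \Oa_0$ (resp. $\Oua_0$), I would argue as follows. Using that $\serc$ is a local ring with maximal ideal $\mathfrak{m}$ the set of series vanishing at $0$, and that $\Oa_0$ (resp. $\Oua_0$) is also local with maximal ideal $\mathfrak{m} \cap \Oa_0$ (by Lemma \ref{l:loc}), one performs row/column operations over $\serc$ to bring $U$ and $V$ into a normal form: there is $P \in \GL_d(\serc)$ so that the components of $UP^{-1}$ that are not forced into $\Oa_0$ correspond exactly to components of $PV$ that vanish. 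The combinatorial heart is an iterative reduction on the orders of vanishing of the entries, where at each stage one uses the hypothesis $UV \in \Oa_0$ to show that a potentially ``bad'' (non-analytic) leading behavior of some $u_i$ must be cancelled either by analyticity after a change of basis or by the vanishing of the corresponding $v_i$. The crucial closure property that makes this work in the analytic/uniformly analytic settings — as opposed to working purely formally — is Lemma \ref{l:trans}: right transductions preserve $\Oa_0$ and $\Oua_0$, so that ``dividing off'' a monomial factor keeps us inside the ring of germs. One also needs that $\GL_d(\serc)$ acts with enough flexibility; here the key point is that any invertible matrix over $\serc$ can be factored, via Lemma \ref{l:loc} applied entrywise and Gaussian elimination, but one must be careful that the transition matrix $P$ one finally uses is invertible over $\serc$ (not necessarily over $\Oa_0$), which is all that the definition of total inertness demands.

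The main obstacle I anticipate is the bookkeeping in the inductive normalization step: one must simultaneously control the row vector $U$ and the column vector $V$, and the hypothesis only constrains the single scalar $UV$, so the argument has to propagate information from that one scalar through the entire $d$-dimensional picture. In particular, after a change of basis $P$ one has $UP^{-1} \cdot PV = UV$, so the product is preserved, but extracting which individual entries can be made analytic requires a careful induction — likely on $d$ together with a secondary induction on the minimal order of vanishing among the entries of $U$ (or $V$). A clean way to organize this is to first treat the case where $U$ has a single entry of minimal order that is a ``unit times monomial'' after transduction, peel it off, and reduce $d$; the general case then follows by first using elementary operations to put $U$ in such a form. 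One should also double-check that the uniformly analytic case genuinely goes through verbatim: the only inputs used are local-ring-ness (Lemma \ref{l:loc}) and closure under transductions (Lemma \ref{l:trans}), both of which are established for $\Oua_0$ as well, so the same proof applies with $\Oa_0$ replaced by $\Oua_0$ throughout.
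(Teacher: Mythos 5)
Your architecture inverts the logical direction of the results you would need, and the inversion is not harmless. The paper (and Cohn) argue: local ring whose units are exactly the germs nonvanishing at $0$ (Lemma \ref{l:loc}) plus closure under right transductions (Lemma \ref{l:trans}) $\Rightarrow$ semifir, by \cite[Proposition 2.9.19]{Coh}; then semifir $\Rightarrow$ semihereditary with maximal ideal finitely generated as a right ideal and $\bigcap_n\mathfrak{m}^n=0$ $\Rightarrow$ the embedding into the completion $\serc$ is totally inert, by \cite[Corollary 2.9.17]{Coh}. You propose the reverse: prove total inertness by hand and then invoke an ``inertia theorem'' saying that a totally inert subring of a semifir is a semifir, citing 2.9.17 for this. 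That is not what 2.9.17 says (it produces total inertness \emph{from} semifir-type hypotheses), and the implication you want is not obviously true at all: total inertness only lets you pull factorizations $UV\in R$ with $U,V$ over $S$ into $R$ after a change of basis by some $P\in\GL_d(S)$; it does not produce trivializations of relations $\sum_i a_ib_i=0$ with $a_i,b_i\in R$ by matrices in $\GL_d(R)$, which is exactly what the semifir property requires. At best such an argument yields rank-type conclusions (in the spirit of Sylvester domains), not freeness of finitely generated left ideals. So the second half of your plan rests on an unproved (and, as cited, misattributed) statement.

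The first half is also not carried out: your direct proof of total inertness is a sketch whose ``combinatorial heart'' (the inductive normalization controlling orders of vanishing of the entries of $U$ and $V$ simultaneously, using only the single scalar constraint $UV\in\Oa_0$) is precisely the content of Cohn's inertia machinery for complete inversely filtered rings with inverse weak algorithm; reproducing it from scratch for $\Oa_0$ and $\Oua_0$ is a substantial argument, and nothing in the sketch resolves the step you yourself flag as the obstacle. (Also note the definition of total inertness quantifies over \emph{sets} $U\subset\serc^{1\times d}$, $V\subset\serc^{d\times 1}$, not single vectors.) You did correctly identify the two essential inputs, Lemmas \ref{l:loc} and \ref{l:trans}; the fix is to feed them into \cite[Proposition 2.9.19]{Coh} to get the semifir property first, and only then deduce total inertness of $\Oa_0\subset\serc$ and $\Oua_0\subset\serc$ from \cite[Corollary 2.9.17]{Coh}, as the paper does.
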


\begin{proof}
Since $\Oa_0$ and $\Oua_0$ are local rings whose invertible elements are precisely functions non-vanishing at the origin, and right transductions preserve $\Oa_0$ and $\Oua_0$ by Lemma \ref{l:trans}, they are semifirs by \cite[Proposition 2.9.19]{Coh}. In particular, they are semihereditary rings \cite[Section 2.1]{Coh}, their maximal ideals $\mathfrak{m}$ are finitely generated as right ideals, and $\bigcap_n\mathfrak{m}^n=0$. Therefore $\Oa_0\subset\serc$ and $\Oua_0\subset\serc$ are totally inert embeddings by \cite[Corollary 2.9.17]{Coh}.
\end{proof}

\subsection{Meromorphic noncommutative germs}\label{ssec5.2}

Next we construct universal skew fields of fractions $\Ma_0$ and $\Mua_0$ of $\Oa_0$ and $\Oua_0$, respectively. Note that we already know they exist since $\Oa_0$ and $\Oa_0$ are semifirs. Since these constructions are nearly identical, we consider in detail only the case of analytic germs.

One can consider evaluations of formal rational expressions of analytic germs on tuples of matrices near the origin. Let ${\Ma_0}'$ be the set of those expressions that are defined at some tuple of matrices. Observe that if $m\in{\Ma_0}'$ is well defined at some $X\in\matcn^g$, then the restriction of $m$ to $\matcn^g$ is an $n\times n$ matrix of commutative meromorphic functions whose numerators and denominators are analytic about the origin. Then we impose a relation $\sim$ on ${\Ma_0}'$ such that $m_1\sim m_2$ if $m_1(X)=m_2(X)$ for all $X$ in some neighborhood of the origin where $m_1(X)$ and $m_2(X)$ are defined. By analyticity we see that $\sim$ is a well-defined equivalence relation; the equivalence classes are called {\bf meromorphic nc germs}. Since meromorphic commutative germs embed into the field of fractions of commutative power series, meromorphic nc germs form a skew field by Proposition \ref{p:sf}; we denote it $\Ma_0$.

By Proposition \ref{p:inert1}, $\Oa_0\subset\serc$ is a totally inert embedding, and therefore an honest embedding \cite[Section 5.4]{Coh}. Since a universal skew field of fractions of a semifir is determined by full matrices over the semifir, we conclude that the rational closure of $\Oa_0$ in $\cM$ is a universal skew field of fractions of $\Oa_0$. By comparing equivalence relations used to define $\Ma_0$ and $\cM$ it is clear that this rational closure is precisely $\Ma_0$. Therefore we proved the following.

\begin{cor}\label{c:univ}
$\Ma_0$ (resp. $\Mua_0$) is a universal skew field of fractions of $\Oa_0$ (resp. $\Oua_0$).
\end{cor}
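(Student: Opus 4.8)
The plan is to reduce everything to the structural facts already assembled: $\Oa_0$ (resp. $\Oua_0$) is a semifir by Proposition \ref{p:inert1}, and the inclusion $\Oa_0\subset\serc$ (resp. $\Oua_0\subset\serc$) is totally inert. Recall from Subsection 2.5 that a semifir $R$ admits a universal skew field of fractions $\cU(R)$, and by Cohn's criterion \cite[Theorem 7.5.13]{Coh} it is characterized among skew fields generated by $R$ as the one over which every full matrix over $R$ becomes invertible; equivalently, the embedding $R\hookrightarrow\cU(R)$ is inner-rank preserving. So the whole task is to identify $\Ma_0$ with $\cU(\Oa_0)$ sitting inside $\cM=\cU(\serc)$.

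First I would invoke the transfer principle for totally inert (honest) embeddings: since $\Oa_0\subset\serc$ is totally inert, by \cite[Section 5.4]{Coh} it is an \emph{honest} embedding, meaning full matrices over $\Oa_0$ remain full over $\serc$. Because $\cM$ is the universal skew field of fractions of $\serc$ (Theorem \ref{t:formal}), every full matrix over $\serc$ — in particular every full matrix over $\Oa_0$ — is invertible over $\cM$. Hence the rational closure $R_{\cM}(\Oa_0)$ of $\Oa_0$ inside $\cM$ is a skew field in which every full matrix over $\Oa_0$ is already inverted, and it is generated by $\Oa_0$; by the Cohn criterion above this rational closure \emph{is} a universal skew field of fractions of $\Oa_0$. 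The same argument with $\Oua_0$ in place of $\Oa_0$ gives that $R_{\cM}(\Oua_0)$ is $\cU(\Oua_0)$.

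Second, I would check that this rational closure coincides with the concretely defined $\Ma_0$. Every element of $\Ma_0$ is, by definition, the equivalence class of a formal rational expression $m=r(s_1,\dots,s_\ell)$ with $s_i\in\Oa_0$ that is defined at some matrix tuple near the origin; such an $m$ is built from $\Oa_0$ by finitely many ring operations and inversions performed inside $\cM$ (the inverses exist in $\cM$ because, after clearing denominators via \cite[Theorem 4.12]{HMS} as in the proof of Theorem \ref{t:id}, the relevant square matrices over $\Oa_0$ are full and hence invertible over $\cM$), so $m\in R_{\cM}(\Oa_0)$. Conversely $R_{\cM}(\Oa_0)$ is generated from $\Oa_0$ by the same operations, so it is contained in the set of such classes. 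The only point needing care is that the equivalence relation defining $\Ma_0$ (agreement of matrix evaluations on a neighborhood of $0$) is exactly the restriction of the equivalence relation defining $\cM$ (agreement on generic tuples of every size): agreement on a Euclidean neighborhood of $0$ in every $\matcn^g$ forces, by analyticity and the fact that generic matrices specialize to all matrices near $0$, equality of the corresponding elements of $\matquotc$ for all $n$, and conversely. This matching of relations is what lets one conclude $\Ma_0 = R_{\cM}(\Oa_0) = \cU(\Oa_0)$, and likewise $\Mua_0 = \cU(\Oua_0)$.

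The main obstacle I anticipate is the bookkeeping in the last paragraph: making precise that a formal rational expression of analytic germs, when inverted step by step inside $\cM$, never leaves the rational closure of $\Oa_0$ — i.e. that the intermediate inverses genuinely exist in $\cM$ — and that the two equivalence relations agree. Both hinge on the honesty/total inertness of $\Oa_0\subset\serc$ together with the denominator-clearing (Higman linearization) argument already used for Theorem \ref{t:id}, so no new machinery is required; it is purely a matter of threading the existing results together carefully. Everything else is a direct citation of Cohn's theory, so the corollary follows.
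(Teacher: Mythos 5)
Your argument is essentially the paper's own proof: it likewise uses Proposition \ref{p:inert1} to get that $\Oa_0\subset\serc$ (resp.\ $\Oua_0\subset\serc$) is totally inert and hence honest, concludes via the full-matrix characterization that the rational closure of $\Oa_0$ (resp.\ $\Oua_0$) inside $\cM$ is a universal skew field of fractions, and then identifies this rational closure with $\Ma_0$ (resp.\ $\Mua_0$) by matching the two equivalence relations. Your extra remarks on the intermediate inverses via the linearization of \cite[Theorem 4.12]{HMS} only flesh out a step the paper leaves implicit, so the proposal is correct and follows the same route.
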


Since $\Oa_0$ (resp. $\Oua_0$) is a semifir, every element of $\Ma_0$ (resp. $\Mua_0$) can be represented as
\begin{equation}\label{e:lin}
u^tQ^{-1}v
\end{equation}
for some $u,v\in\C^d$ and a full matrix $Q\in(\Oa_0)^{d\times d}$ (resp. $Q\in(\Oua_0)^{d\times d}$) as a consequence of \cite[Corollary 7.5.14]{Coh}. As we will see in Theorem \ref{t:rank} below, such a $Q$ is of full rank if and only if $Q(X)$ is invertible for some $X\in\C_{\nc}^g$ in a (uniformly) open neighborhood of the origin. If (uniformly) analytic nc germs are given in the form \eqref{e:lin}, then their arithmetic operations can be defined in the same way as for realizations (or linear representations) of nc rational functions \cite{CR,BGM,Vol,HMS}. In the case of nc rational functions, $Q$ can be chosen to be affine, and realizations \eqref{e:lin} with an affine $Q$ of minimal size $d$ exhibit good properties: they are efficiently computable, essentially unique, and the domain of an nc rational function is given as the invertibility set of $Q$. On the other hand, in the (uniformly) analytic case it is unclear whether any of these properties carry over.

\def\D{\mathbb{D}}

Next we show that evaluations of uniformly meromorphic nc germs make sense in arbitrary stably finite Banach algebras, e.g. $C^*$-algebras with a faithful trace.

Let $h:(\C^g)^{\otimes \ell}\to\C$ be a linear map. Then $\|h\|_{\cb}$ is the norm of this functional with respect to the Haagerup norm on $(\C^g)^{\otimes \ell}$; see \cite[Chapter 17]{Pau} or \cite[Chapter 5]{Pis}. Now let $\cA$ be a Banach algebra. If $\mu:\cA^{\otimes\ell}\to\cA$ is given by $\mu(a_1\otimes\cdots\otimes a_\ell)=a_1\cdots a_\ell$, then $\|\mu\|=1$, where $\cA^{\otimes\ell}$ is endowed with the projective cross norm \cite[Section 2.1]{Rya}. Let
$$h^{\cA}:(\cA^g)^\ell\to
(\C^g)^{\otimes \ell}\otimes \cA^{\otimes \ell}\xrightarrow{h\otimes\mu}
\C\otimes\cA=\cA.$$
If $\cA^g$ is endowed with the $\ell_\infty$ norm (with respect to the norm on $\cA$) and $(\cA^g)^{\otimes \ell}$ is endowed with the projective cross norm, then $(\cA^g)^{\otimes \ell}\to
(\C^g)^{\otimes \ell}\otimes \cA^{\otimes \ell}$ is a contraction. Hence the $\ell$-linear map $h^{\cA}$ satisfies
\begin{equation}\label{e:a}
\|h^{\cA}(a_1,\dots,a_\ell)\|\le \|h\|_{\cb}\|a_1\|\cdots\|a_\ell\|
\end{equation}
for $a_i\in\cA$.

For $f\in\Oua_0$ denote
$$\delta(f):=\frac{1}{\limsup_{\ell\to\infty}\sqrt[\ell]{\|\Delta_0^\ell f\|_{\cb}}}>0.$$
If $\cA$ is a Banach algebra, then by applying \eqref{e:a} to $h=\Delta_0^\ell f$ we see that $f$ converges absolutely and uniformly on
$$\left\{a\in\cA^g\colon \|a\|<\delta(f)\right\}.$$

\begin{cor}\label{c:sf}
Let $m$ be a meromorphic expression built of uniformly analytic germs about the origin. If $m$ represents $0$ in $\Mua_0$, then there exists $\ve>0$ such that for every stably finite Banach algebra $\cA$ and $X\in\cA^g$ satisfying $\|X\|<\ve$, $m(X)$ is either undefined or $m(X)=0$.
\end{cor}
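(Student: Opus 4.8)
The plan is to run the matrix‑realization argument of Theorem~\ref{t:id} over $\Oua_0$ rather than over $\serc$, and then to control substitutions in Banach algebras by the convergence estimate \eqref{e:a}. Write $m=r(s_1,\dots,s_\ell)$ with $s_1,\dots,s_\ell\in\Oua_0$. As in the proof of Theorem~\ref{t:id}, I would apply \cite[Theorem~4.12]{HMS} to obtain $d\in\N$, a matrix $Q\in\py^{d\times d}$ and vectors $u,v\in\C^d$ with the following property: for every $\kk$‑algebra $\cB$ and tuple $b$ at which $r$ is defined, $Q(b)$ is invertible over $\cB$ and $r(b)=v^tQ(b)^{-1}u$. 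Put $P:=Q(s_1,\dots,s_\ell)\in(\Oua_0)^{d\times d}$ and
$$A:=\begin{pmatrix}0 & v^t\\ u & P\end{pmatrix}\in(\Oua_0)^{(d+1)\times(d+1)}.$$
Since $m$ represents $0$ in the skew field $\Mua_0$, the expression $r$ evaluates there, so $P$ is invertible over $\Mua_0$; reading the factorisation \eqref{e:full} over $\Mua_0$ then shows that $A$ has the same inner rank as $\operatorname{diag}(-m,I_d)=\operatorname{diag}(0,I_d)$, namely $d$, so $A$ is non‑full over $\Mua_0$. Because $\Oua_0$ is a semifir with universal skew field of fractions $\Mua_0$ (Proposition~\ref{p:inert1} and Corollary~\ref{c:univ}), the embedding $\Oua_0\subseteq\Mua_0$ preserves inner rank, so $A$ is non‑full over $\Oua_0$: there are $B\in(\Oua_0)^{(d+1)\times d}$ and $C\in(\Oua_0)^{d\times(d+1)}$ with $A=BC$. (If one also wishes to allow $m$ to be undefined over $\Mua_0$, the same reasoning applied to $P$ produces a rank factorisation of $P$ over $\Oua_0$, which will force $m(X)$ to be undefined for all small $X$.)

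Next I would take $\ve>0$ to be the minimum of the positive numbers $\delta(h)$ as $h$ ranges over the finitely many germs among $s_1,\dots,s_\ell$ and the entries of $B$ and $C$. By the Leibniz rule for the operators $\Delta_0^\ell$ one has $\delta(h_1+h_2),\delta(h_1h_2)\ge\min(\delta(h_1),\delta(h_2))$, hence $\delta(h)\ge\ve$ for every $h$ in the subalgebra $\cR\subseteq\Oua_0$ generated by those germs; therefore, by \eqref{e:a}, for any Banach algebra $\cA$ and any $X\in\cA^g$ with $\|X\|<\ve$ each element of $\cR$ converges absolutely at $X$ and the evaluation $\cR\to\cA$, $h\mapsto h(X)$, is an algebra homomorphism. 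Now fix a stably finite Banach algebra $\cA$ and $X\in\cA^g$ with $\|X\|<\ve$, and suppose $m(X)=r(s_1(X),\dots,s_\ell(X))$ is defined. Applying this homomorphism to $A=BC$ gives $A(s_1(X),\dots,s_\ell(X))=B(X)C(X)$, a product of a $(d+1)\times d$ matrix by a $d\times(d+1)$ matrix over $\cA$, hence of inner rank at most $d$. On the other hand $Q(s_1(X),\dots,s_\ell(X))$ is invertible over $\cA$ since $r$ is defined there, so \eqref{e:full} writes $A(s_1(X),\dots,s_\ell(X))$ as a product of two invertible matrices and $\operatorname{diag}(-m(X),I_d)$; if $m(X)\ne0$, then stable finiteness of $\cA$ forces $\operatorname{diag}(-m(X),I_d)$, and hence $A(s_1(X),\dots,s_\ell(X))$, to be full, i.e.\ of inner rank $d+1$ (cf.\ \cite[Proposition~0.1.3]{Coh}). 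This contradicts the previous sentence, so $m(X)=0$.

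The main obstacle is precisely this passage between the two worlds: the algebra takes place over $\Oua_0$ and $\Mua_0$, whereas the conclusion concerns substitutions into an arbitrary stably finite Banach algebra, in which only sufficiently fast‑converging germs may be plugged in. Reconciling the two is what forces the restriction to the finitely generated subalgebra $\cR$, on which evaluation at a small point is an honest homomorphism, and what makes the quantitative bound \eqref{e:a} together with the positivity and the ``min''‑preservation property of $\delta$ indispensable. Once this is set up, the remainder is routine inner‑rank bookkeeping resting on Theorem~\ref{t:id}, Proposition~\ref{p:inert1} and Corollary~\ref{c:univ}.
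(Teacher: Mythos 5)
Your proposal is correct and follows essentially the same route as the paper's proof: linearize $r$ via \cite[Theorem 4.12]{HMS} and \eqref{e:full}, deduce non-fullness of $A(s_1,\dots,s_\ell)$ over $\Oua_0$ from Proposition \ref{p:inert1} and Corollary \ref{c:univ}, factor $A=BC$ over $\Oua_0$, take $\ve$ to be the minimum of the radii $\delta(\cdot)$ of the finitely many germs involved, and invoke stable finiteness via \cite[Proposition 0.1.3]{Coh}. Your extra care in checking that evaluation at $X$ is a homomorphism on the finitely generated subalgebra is a harmless elaboration of a step the paper leaves implicit.
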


\begin{proof}
Let $m=r(s_1,\dots,s_\ell)$ for $s_k\in\Oua_0$. As in the proof of Theorem \ref{t:id}, there exists $A\in\pyc^{d\times d}$ satisfying: for every algebra $\cB$ and $b\in \cB^\ell$ such that $r(b)$ exists,
\begin{equation}\label{e:full1}
r(b)\oplus I = PA(b)Q
\end{equation}
for some invertible matrices $P,Q$ over $\cB$. Moreover, if $\cB$ is stably finite and $A(b)$ is not full, then $r(b)=0$ by \cite[Proposition 0.1.3]{Coh}. If $m$ represents $0$ in $\Mua_0$, then $A(s_1,\dots,s_\ell)$ is not invertible over $\Mua_0$, so $A(s_1,\dots,s_\ell)$ is non-full over $\Oua_0$ by Proposition \ref{p:inert1} and Corollary \ref{c:univ}. Then there is $e<d$ such that $A(s_1,\dots,s_\ell)=BC$ for some matrices $B$ and $C$ over $\Oua_0$ of dimensions $d\times e$ and $e\times d$, respectively. Let $$\ve:=\min\{\delta(s_k),\delta(B_{ij}),\delta(C_{ij})\colon i,j,k\}>0.$$

Let $X\in\cA^g$ be such that $\|X\|<\ve$ and $m(X)$ is defined. Then $s_k$ and the entries of $B,C$ converge at $X$. Since $(A\circ s)(X)=B(X)C(X)$ is not full, $m(X)=0$ by the previous paragraph.
\end{proof}

\begin{rem}
Let $\Omega\ni0$ be an nc set, open and connected in the disjoint union (uniformly open) topology. Then the (uniformly) analytic nc functions on $\Omega$ embed into $\Oa_0$ ($\Oua_0$), so they generate a skew field of fractions inside $\Ma_0$ ($\Mua_0$), whose elements deserve to be called (uniformly) meromorphic nc functions; cf. \cite[Section 10]{AM0}.
\end{rem}

\subsection{Local-global rank principle}

As a consequence of our construction of the universal skew fields of $\Oa_0$ and $\Oua_Y$ we obtain the following theorem relating the inner rank of a matrix over a germ algebra with the maximal ranks of its evaluations on a neighborhood of the origin.

\begin{thm}\label{t:rank}
The inner rank of a matrix $A$ over $\Oa_0$ (resp. $\Oua_0$) equals
\begin{equation}\label{e:rank}
\max\left\{
\frac{\rk A(X)}{n}\colon n\in\N,\ X\in\matcn^g\text{ \rm in a neighborhood of $0$}
\right\}.
\end{equation}
\end{thm}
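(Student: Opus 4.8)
The plan is to prove the two inequalities separately, using the universal skew field machinery from Corollary \ref{c:univ} and Proposition \ref{p:inert1} in one direction, and a direct evaluation argument in the other.

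\textbf{The easy inequality ($\ge$).} Suppose $A$ has inner rank $r$ over $\Oa_0$ (resp.\ $\Oua_0$), say $A=BC$ with $B\in(\Oa_0)^{d\times r}$ and $C\in(\Oa_0)^{r\times e}$. Every entry of $B$ and $C$ is an analytic (resp.\ uniformly analytic) nc function on some common neighborhood $\Omega$ of $0$; after intersecting the domains we may evaluate at any $X\in\Omega_n$, and $A(X)=B(X)C(X)$ forces $\rk A(X)\le rn$, since $B(X)\in\matcn^{d\times r}\cdot$ blocks factor the $dn\times en$ matrix $A(X)$ through an $rn$-dimensional space (using the identification $\matcn^{p\times q}=\matc{n}$-block matrices and the fact that rank is subadditive under such a block factorization). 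Hence $\rk A(X)/n\le r$ for all $X$ near $0$, so the maximum in \eqref{e:rank} is at most $r$.

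\textbf{The hard inequality ($\le$).} Here I want to show that if $\rk A(X)\le rn$ for all $X\in\matcn^g$ near $0$ and all $n$, then the inner rank of $A$ over $\Oa_0$ is at most $r$. Equivalently, it suffices to treat a \emph{full} square matrix $Q\in(\Oa_0)^{d\times d}$: I must show that if $Q$ is full over $\Oa_0$, then $Q(X)$ is invertible for some $X\in\matcn^g$ in a neighborhood of $0$ (for some $n$). Indeed, suppose this is established. Given arbitrary $A$ with inner rank $r$ over $\Oa_0$, pick a full $r\times r$ submatrix-type witness: since $\Oa_0$ is a semifir (Proposition \ref{p:inert1}), $A$ is \emph{stably associated} to a diagonal-type form exhibiting a full $r\times r$ block $Q$ (via invertible matrices over $\Oa_0$, using \cite[Theorem 5.4.9 or Proposition 3.1.3]{Coh}); then $Q(X)$ invertible for some small $X\in\matcn^g$ gives $\rk A(X)= rn$, which meets the lower bound from the easy direction and pins the maximum at exactly $r$. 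So the crux is: \emph{a full square matrix over $\Oa_0$ has an invertible evaluation arbitrarily close to $0$.} By Corollary \ref{c:univ}, $\cM^{\mathrm a}_0$ is the universal skew field of fractions of $\Oa_0$, and by the semifir characterization recalled in Section \ref{sec2}, $Q$ full over $\Oa_0$ is equivalent to $Q$ invertible over $\cM^{\mathrm a}_0$. Now $\cM^{\mathrm a}_0\subset\cM$ (the construction of $\cM^{\mathrm a}_0$ as the rational closure of $\Oa_0$ inside $\cM$), so $Q$ is invertible over $\cM$, and by Theorem \ref{t:formal} combined with the definition of $\cM$ via generic matrices, $Q(\gX)$ is an invertible matrix over $\widehat{\operatorname{UD}}_N$ for some $N$; in particular $\det Q(\gX)\neq0$ as an element of $\C((\xi))$. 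Here the entries of $Q$ lie in $\Oa_0$, i.e.\ restrict to $N\times N$ matrices of commutative analytic functions near $0\in\C^{gN^2}$; the (commutative-variable) function $X\mapsto\det Q(X)$ is analytic near $0$ and not identically zero, hence nonzero on a dense set arbitrarily close to $0$. Pick such an $X$: then $Q(X)\in\matc{N}$ is invertible, as required.

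\textbf{Main obstacle.} The delicate point is the reduction from a general matrix $A$ to a full square block and tracking how the inner rank relates to the rank of matrix evaluations through that reduction, i.e.\ ensuring that stable associativity over the semifir $\Oa_0$ (resp.\ $\Oua_0$) translates cleanly into the numerical rank statement \eqref{e:rank} at matrix points — one has to be careful that the invertible matrices realizing the stable association are over $\Oa_0$ and hence can themselves be evaluated near $0$, and that their evaluations remain invertible on a neighborhood (which holds since a matrix over a local ring is invertible iff its image mod $\mathfrak m$ is, and the determinant is analytic). The uniformly analytic case is entirely parallel: replace $\Oa_0$ by $\Oua_0$, $\cM^{\mathrm a}_0$ by $\Mua_0$, use the uniformly open neighborhoods, and invoke Lemma \ref{l:loc} and Proposition \ref{p:inert1} for the same structural facts; the generic-matrix input through $\cM$ is identical because $\Oua_0\subset\Oa_0\subset\serc$.
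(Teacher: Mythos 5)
Your easy direction and your treatment of a \emph{full square} matrix are fine, and the latter is essentially the paper's argument (invertibility over the universal skew field $\Ma_0\subset\cM$, then evaluation of the meromorphic inverse, respectively nonvanishing of the analytic determinant, at points arbitrarily close to $0$ of some matrix size). The genuine gap is the reduction of a general matrix $A$ of inner rank $r$ to that case. The claim that over the semifir $\Oa_0$ (or $\Oua_0$) every matrix is stably associated, via invertible matrices over the ring, to a ``diagonal-type form'' with a full $r\times r$ block $Q$ is false, and Cohn's results you cite do not provide it. Concretely, take $A=(x_1\ \ x_2)\in(\Oa_0)^{1\times 2}$, which has inner rank $1$: stable association preserves the presented (cokernel) module, and the module presented by $(x_1\ \ x_2)$ admits no nonzero homomorphism to the ring (any $a_1,a_2$ with $x_1a_1+x_2a_2=0$ vanish, as one sees on power series expansions), hence has no free direct summand, whereas the module presented by $(q\ \ 0)$ with $q$ full does. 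So $(x_1\ \ x_2)$ is not stably associated to any such block form, and your step ``$Q(X)$ invertible $\Rightarrow \rk A(X)=rn$'' has nothing to rest on; semifirs do not admit Smith-type diagonal reduction, even stably.

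What is available over a semifir, and what the paper uses, is the rank factorization: $A=BC$ with $B\in(\Oa_0)^{d\times r}$ and $C\in(\Oa_0)^{r\times e}$ both \emph{full} (if either factor factored through fewer than $r$ columns/rows, the inner rank of $A$ would drop). This forces you to prove the full case for \emph{rectangular} matrices, not just square ones: the paper completes a full $d\times e$ matrix ($d\le e$) to an invertible matrix over $\Ma_0$ and evaluates, obtaining $\rk=dn$ at some point of some size $n$ near $0$. Then one still has to produce a \emph{single} point at which both $B$ and $C$ attain maximal rank: the paper takes points $X\in\matc{m}^g$, $Y\in\matc{n}^g$ good for $B$ and $C$ respectively, passes to $\oplus^nX$ and $\oplus^mY$ at the common size $m+n$, and uses that the restrictions of $B,C$ to $\matc{m+n}^g$ are matrices of commutative meromorphic functions, so a generic $Z$ near $0$ satisfies $\rk B(Z)=\rk C(Z)=(m+n)r$; since $B(Z)$ then has full column rank, $\rk A(Z)=\rk C(Z)=(m+n)r$. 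Your proposal is missing both the rectangular full case and this direct-sum/genericity argument, and the stable-association shortcut meant to replace them does not exist.
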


\begin{proof}
Let $A$ be a $d\times e$ matrix and let $r$ denote \eqref{e:rank}. Clearly we have $\rho (A)\ge r$. Without loss of generality assume $d\le e$.

First we deal with the case $\rho (A)=d$, i.e., $A$ is a full matrix. Since $\Oa_0$ is a semifir and $\Ma_0$ is its universal skew field of fractions by Corollary \ref{c:univ}, $A$ has full rank over $\Ma_0$. Therefore there exists a $d\times (d-e)$ matrix $A'$ over $\Ma_0$ such that $(A\ A')$ is invertible, so there is a $d\times d$ matrix $B$ such that
$(A\ A')B=I$. By the construction of $\Ma_0$, there exists $n\in\N$ such that each entry of $A'$ and $B$ is well-defined at some tuple of $n\times n$ matrices. Furthermore, when restricted to $\matcn^g$, $A'$ and $B$ are matrices of commutative meromorphic functions, so there exists $X\in\matcn$ such that $A'(X),B(X)$ are well-defined. Therefore $(A\ A')(X)B(X)=I$ implies $\rk A(X)=dn$ and hence $r=\rho (A)$.

Now suppose $\rho (A)<d$. Then there exist full matrices $B$ and $C$ over $\Ma_0$ of dimensions $d\times \rho (A)$ and $\rho (A)\times e$, respectively, such that $A=BC$. By the previous paragraph there exist $X\in\matc{m}^g$ and $Y\in\matc{n}^g$ such that $\rk B(X)=m\rho (A)$ and $\rk C(Y)=n\rho (A)$. Then $\rk B(\oplus^n X)=(m+n)\rho (A)=\rk C(\oplus^mY)$. Since the restrictions of $B$ and $C$ to $\matc{m+n}^g$ are matrices of commutative meromorphic functions, there exists $Z\in\matc{m+n}^g$ such that $\rk B(Z)=(m+n)\rho (A)=\rk C(Z)$. Since $d>\rho (A)$, we have $\ker B(Z)=\{0\}$ and therefore $\rk A(Z)=(m+n)\rk C(Z)=(m+n)\rho (A)$. Hence $r=\rho (A)$.
\end{proof}

\begin{rem}
There is no commutative analog of Theorem \ref{t:rank}. Consider the matrix
$$A=\begin{pmatrix}0 & t_3 & -t_2 \\ -t_3 & 0 & t_1 \\ t_2 & -t_1 & 0\end{pmatrix} \in\kk[t_1,t_2,t_3]^{3\times 3}$$
from \cite[Section 5.5]{Coh}. Its inner rank over $\kk[[t_1,t_2,t_3]]$ equals 3. Indeed, suppose that $A=BC$ for $B\in \kk[[t_1,t_2,t_3]]^{3\times 2}$ and $C\in \kk[[t_1,t_2,t_3]]^{2\times 3}$. Then $B(0)C(0)=0$ because $A$ is linear, so at least one of the scalar matrices $B(0),C(0)$ is of rank at most 1. Without loss of generality let $\rk B(0)\ge 1$. Then there exist $U\in \kk^{2\times 3}$ of full rank and $v\in \kk^3\setminus\{0\}$ such that $UB(0)=0$ and $C(0)v=0$. Then $A=BC$ and linearity of $A$ imply $UAv=0$. However, a short calculation shows that this is impossible.

On the other hand, we claim that
$$\frac{\rk A(T)}{n}\le \frac{54}{19}<3$$
for all $n\in\N$ and triples $T\in\matn^3$ of commuting matrices (this bound is presumably not optimal). Write $\kappa=\frac{9}{19}$ and let $T$ be an arbitrary triple of commuting $n\times n$ matrices. If $\kappa n\le \rk T_1+\rk T_2+\rk T_3$, then $\max_i \rk T_i\ge \frac{\kappa n}{3}$. Since
$$A(T)\begin{pmatrix}
T_1 \\ T_2 \\ T_3
\end{pmatrix}=0,$$
it follows that $\rk A(T)\le 3n-\frac{\kappa n}{3}=\frac{54}{19}n$. Now suppose $\kappa n\ge \rk T_1+\rk T_2+\rk T_3$. Then $\dim\ker T_i\ge(1-\kappa)n$ for all $i$, so
$$\dim\left(\ker T_i\cap\ker T_j\right)\ge2(1-\kappa)n-n=(1-2\kappa)n.$$
Since
$$A(T)\supseteq \begin{pmatrix}
\ker T_2\cap \ker T_3 \\ \ker T_1\cap \ker T_3 \\ \ker T_1\cap\ker T_2
\end{pmatrix},$$
it follows that $\rk A(T)\le 3n-3(1-2\kappa)n=\frac{54}{19}n$.
\end{rem}

\begin{cor}
If $A$ is a matrix over $\ser$, then its inner rank over $\ser$ equals
$$\max_n\frac{\rk A(\gX^n)}{n}$$
where $\rk A(\gX^n)$ is the rank of $A(\gX^n)$ in $\matquot$.
\end{cor}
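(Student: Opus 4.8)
The plan is to derive this from Theorem~\ref{t:formal} via the inner-rank characterisation of universal skew fields of fractions of semifirs. First I would record the two easy ingredients: $\ser$ is a semifir and, by Theorem~\ref{t:formal}, its universal skew field of fractions is $\cM$; hence the embedding $\ser\hookrightarrow\cM$ is inner-rank preserving by \cite[Theorem 7.5.13]{Coh}. So if $r$ denotes the inner rank of $A$ over $\ser$, then $r$ is also the ordinary rank of $A$ over the skew field $\cM$. Write $d\times e$ for the size of $A$, and assume $d\le e$ (otherwise transpose). The inequality $\max_n\rk A(\gX^n)/n\le r$ is then immediate: fixing a factorisation $A=BC$ over $\ser$ with $B$ of size $d\times r$, we have $A(\gX^n)=B(\gX^n)C(\gX^n)$, and $B(\gX^n)$ is a $dn\times rn$ matrix over $\kk((\xi))$, so $\rk A(\gX^n)\le rn$.

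For the reverse inequality I would work over $\cM$. Since $A$ has rank $r$ over the skew field $\cM$, there is a rank factorisation $A=BC$ with $B\in\cM^{d\times r}$ and $C\in\cM^{r\times e}$ of rank $r$; as $r\le d\le e$, the matrix $B$ has a left inverse $B^{-}\in\cM^{r\times d}$ and $C$ a right inverse $C^{-}\in\cM^{e\times r}$, so $B^{-}B=I_r$ and $CC^{-}=I_r$. The entries of $B,C,B^{-},C^{-}$ are finitely many elements of $\cM$, hence each is represented by a meromorphic expression in $\cM'$, that is, one defined at a generic tuple of some size. The key point is that a common size can be chosen: a meromorphic expression in $\cM'$ defined at a generic tuple of size $p$ is defined at a generic tuple of size $kp$ for every $k$, because its domain at each level is a nonempty (by the direct sum $\oplus^k X$) Zariski-open set and therefore contains the generic point of that level; this cofinality is exactly what underlies the construction of $\cM$ in Section~\ref{sec3}. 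Fixing such a common size $n$, evaluation at $\gX^n$ is a homomorphism on the local subring of elements defined there, so $B^{-}(\gX^n)B(\gX^n)=I_{rn}$ and $C(\gX^n)C^{-}(\gX^n)=I_{rn}$; thus $B(\gX^n)$ has full column rank $rn$ and $C(\gX^n)$ full row rank $rn$. Since $A(\gX^n)=B(\gX^n)C(\gX^n)$ over $\kk((\xi))$, the Sylvester rank inequality gives $\rk A(\gX^n)\ge rn+rn-rn=rn$, which together with the previous paragraph forces $\rk A(\gX^n)/n=r$.

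I expect the main obstacle to be the uniformity-of-size step: one must check carefully that the meromorphic expressions representing the entries of $B,C$ and of their one-sided inverses --- which a priori live over $\cM$, not over $\ser$ --- can all be evaluated at one and the same generic tuple, and that this evaluation is compatible with the matrix identities $B^{-}B=I_r$, $CC^{-}=I_r$ and $A=BC$. Everything else (the passage to rank over $\cM$, the easy inequality, the Sylvester-inequality conclusion) is formal. As an alternative one could mimic the proof of Theorem~\ref{t:rank} directly: first treat full square $A$ (such an $A$ becomes invertible over $\cM$, hence invertible at some $\gX^n$), and then reduce a general $A$ to this case via a full rank factorisation over $\ser$ together with a direct-sum argument aligning the two sizes produced by the two factors; this avoids rank factorisations over $\cM$ but meets the same size-alignment issue.
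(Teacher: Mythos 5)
Your argument is correct and lies in the same circle of ideas as the paper's, but the packaging differs slightly. The paper disposes of this corollary in one line (``apply the same arguments as in the proof of Theorem \ref{t:rank}''): first treat a full matrix by completing it over $\cM$ to an invertible square matrix --- possible because $\ser$ is a semifir and $\cM$ is its universal skew field of fractions by Theorem \ref{t:formal} --- and evaluating at a single generic size, and then handle a non-full $A$ by factoring it through full matrices and aligning the two evaluation sizes via direct sums. You instead pass immediately to a rank factorization $A=BC$ over $\cM$ with one-sided inverses $B^{-},C^{-}$, evaluate all finitely many entries at one generic size (a common multiple of the sizes at which the chosen representing expressions are defined), and finish with the Sylvester inequality; this merges the paper's two cases into one and is, if anything, a bit cleaner in the generic-matrix setting, where ``alignment of sizes'' is most naturally done by passing to a common multiple. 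The one place where your wording should be tightened is the cofinality step: elements of $\ser$ cannot be evaluated at honest matrix points, so ``the domain at level $kp$ is a nonempty Zariski-open set containing $\oplus^k X$'' does not literally parse. The correct formulation is formal: if $m\in\cM'$ is defined at the generic tuple of size $p$, then it is defined at the block-diagonal tuple built from $k$ independent generic $p\times p$ tuples, and specializing $\gX^{(kp)}$ to that block-diagonal tuple (a continuous homomorphism on the power series ring, extended to Laurent fractions whose denominators do not specialize to zero) shows, by induction on the height of $m$, that every inverse occurring in $m(\gX^{(kp)})$ exists. This is exactly the fact that makes $\sim$ transitive and the arithmetic of $\cM$ well defined, as you yourself note, so it is a matter of phrasing rather than a gap; with that reformulation your proof is complete, and your ``alternative'' paragraph is precisely the paper's intended argument.
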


\begin{proof}
Apply the same arguments as in the proof of Theorem \ref{t:rank}.
\end{proof}

\subsection{Level-wise meromorphic functions}

One might be tempted to assert that every uniformly analytic level-wise meromorphic function is an element of $\Ma_0$. However, in this subsection we provide an example of an nc function $f$ with the following properties:

\begin{enumerate}
	\item $f$ is defined on $\C^3_{\nc}\cap\{\det X_3\neq0 \}$ and uniformly bounded on some nc ball about every point therein;
	\item $f$ is level-wise rational; that is, when restricted to $\matcn^3$, $f$ equals $p_n/q_n$ for a matrix polynomial $p_n$ and a scalar polynomial $q_n$;
	\item $f\notin \Mua_0$.
\end{enumerate}

For $s\in\N$ let
$$h_s=\sum_{\pi\in S_{s+1}} \operatorname{sign}(\pi)
x_1^{\pi(1)-1} x_2 x_1^{\pi(2)-1}x_2\cdots x_1^{\pi(s+1)-1}x_2.
$$
Then $h_s$ is a homogeneous polynomial of degree $\frac{s(s+1)}{2}+s$ with $(s+1)!$ terms and $h_s$ vanishes on $\matcs^2$ by \cite[Proposition 1.1.33]{Row}. Define
$$f=\sum_{s=1}^\infty \frac{1}{(s+1)!(s^2)!} h_s(x_1,x_2) x_3^{-s}.$$
Then $f$ is an nc function on $\C^3_{\nc}\cap\{\det X_3\neq0 \}$ and for $X\in\matcn^3$,
\begin{equation}\label{e:ex}
f(X)=\sum_{s=1}^{n-1} \frac{1}{(s+1)!(s^2)!} h_s(X_1,X_2)X_3^{-s}.
\end{equation}
Note that the denominator of \eqref{e:ex} is a homogeneous scalar polynomial of degree $n(n-1)$. The factor $(s+1)!(s^2)!$ ensures uniform convergence.

Now let $\mm\in\Mua_0$ be arbitrary. By the induction on the inversion height of $\mm$ it is easy to see that there exists $d\in\N$ such that the denominator of $\mm$ restricted to $\matcn^3$ has order at most $dn$, for all $n\in\N$. Therefore $f\notin \Mua_0$.

\section{Germs about semisimple points and Hermite interpolation}\label{sec6}

In this section we turn our attention to germs about semisimple (non-scalar) points. We establish a noncommutative Hermite interpolation result, Theorem \ref{t:hermite}, which states that values and finitely many differentials of an arbitrary nc function at a finite set of semisimple points can be interpolated by an nc polynomial. Furthermore, we identify $\cO_Y$ as an inverse limit $\cO_Y = \varprojlim_\ell \big(\px/\cI(Y)^\ell\big)$ in Corollary \ref{c:compl}, where $\cI(Y)$ is the vanishing ideal at $Y$. Lastly, we provide a criterion for distinguishing the germ algebras $\cO_Y$, $\Oa_Y$ and $\Oua_Y$ up to isomorphism with respect to $Y$ (Theorem \ref{t:iso}).

For $Y\in\mats$ let
$$\cI_\ell(Y) =\left\{
f\in\px \colon
f\begin{pmatrix}
Y & Z^1 & & \\
& \ddots & \ddots & \\
& & \ddots & Z^\ell \\
& & & Y
\end{pmatrix}=0
\quad \forall Z^1,\dots,Z^\ell
\right\}$$
for $\ell\ge0$. Then $(\cI_\ell(Y))_\ell$ is a decreasing chain of ideals in $\px$, and $\bigcap_\ell \cI_\ell(Y)=\{0\}$ by Remark \ref{r:diff}.

Two points $Y\in\mats^g$ and $Y'\in\mat{s'}^g$ are {\bf similar} if $s= s'$ and $Y'= PYP^{-1}$ for some $P\in\gl{s}$. We say that $Y\in\mats^g$ is {\bf irreducible} if $Y_1,\dots,Y_s$ do not admit a nontrivial common invariant subspace. More generally, $Y\in\mats^g$ is {\bf semisimple} if it is similar to a direct sum of irreducible points.

For $Y\in\mats^g$ let $\cS(Y)$ and $\cC(Y)$ denote the unital $\kk$-algebra in $\mats$ generated by $Y$ and the centralizer of $Y$ in $\mats$, respectively.

\begin{rem}\label{r:facts}
The following hold if $Y$ is semisimple:
\begin{enumerate}[(i)]
\item $\cC(Y)$ and $\cS(Y)$ are semisimple algebras, and the centralizer of $\cC(Y)$ in $\mats$ equals $\cS(Y)$ by the double centralizer theorem \cite[Theorem 6.2.5]{Pro1};
\item every $\cC(Y)$-bimodule homomorphism $\mats\to\mats$ is given by
$$X\mapsto \sum_{t=1}^m\widehat{a}_t X \widecheck{a}_t$$
for some $\widehat{a}_t,\widecheck{a}_t\in\cS(Y)$. This follows from (i) by a standard argument.
\end{enumerate}
\end{rem}

Finally, semisimple points $Y^1,\dots,Y^h$ are {\bf separated} if none of the irreducible blocks in $Y^i$ is similar to an irreducible block in $Y^j$, for $1\le i,j\le h$. In this case we have
$$\cS(Y^1\oplus\cdots\oplus Y^h)=\cS(Y^1)\oplus\cdots\oplus \cS(Y^h),\qquad
\cC(Y^1\oplus\cdots\oplus Y^h)=\cC(Y^1)\oplus\cdots\oplus \cC(Y^h).$$

At this point we can state the first structural result on germs about matrix points (cf. Corollaries \ref{c:compl} and \ref{c:hom} below).

\begin{prop}\label{p:prime}
If $Y$ is irreducible then $\cO_Y$ is a prime ring (and likewise for $\Oa_Y$, $\Oua_Y$ if $\kk=\C$).
\end{prop}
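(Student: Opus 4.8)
The plan is to reduce the primeness of $\cO_Y$ (and, when $\kk=\C$, of $\Oa_Y$ and $\Oua_Y$) to the fact that for an irreducible point $Y$ the algebra $\cS(Y)$ is \emph{simple}, hence prime. Indeed, irreducibility of $Y$ says precisely that $\cS(Y)$ acts faithfully and irreducibly on $\kk^s$, so it is a left primitive left Artinian ring, i.e. simple Artinian (one may also combine Remark~\ref{r:facts} with irreducibility via the double centralizer theorem); in particular $\alpha\,\cS(Y)\,\beta=0$ forces $\alpha=0$ or $\beta=0$ for $\alpha,\beta\in\cS(Y)$. So it suffices to prove the implication: \emph{if $\cS(Y)$ is prime, then $\cO_Y$ is prime}, and likewise for the analytic subalgebras.

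The engine is the order filtration on germs together with the convolution formula for the product in $\cO_Y$. For a nonzero germ $f=(f_\ell)_{\ell\ge0}$ put $\operatorname{ord}(f)=\min\{\ell:f_\ell\neq0\}$. The convolution product satisfies $(f*g)_\ell(Z^1,\dots,Z^\ell)=\sum_{a=0}^\ell f_a(Z^1,\dots,Z^a)\,g_{\ell-a}(Z^{a+1},\dots,Z^\ell)$ (the Leibniz rule for noncommutative differential operators, \cite{KVV3}), so with $i=\operatorname{ord}(f)$ and $j=\operatorname{ord}(g)$ one gets $(f*g)_\ell=0$ for $\ell<i+j$ and the leading identity $(f*g)_{i+j}(Z^1,\dots,Z^{i+j})=f_i(Z^1,\dots,Z^i)\,g_j(Z^{i+1},\dots,Z^{i+j})$; the three-factor version is read off in the same way. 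Now suppose $\cO_Y$ is not prime, say $f*\cO_Y*g=0$ with $f,g\neq0$, and set $i=\operatorname{ord}(f)$, $j=\operatorname{ord}(g)$. For every $p\in\px$ with $p(Y)\neq0$ the germ $c^p:=(\Delta^\ell_Y p)_{\ell\ge0}$ lies in $\cO_Y$ by Remark~\ref{r:diff}, has $\operatorname{ord}(c^p)=0$, and has degree-$0$ component $\Delta^0_Y p=p(Y)$; feeding $c^p$ into the triple product and extracting the degree-$(i+j)$ component (only the index split $(i,0,j)$ contributes) gives $f_i(Z^1,\dots,Z^i)\,p(Y)\,g_j(W^1,\dots,W^j)=0$ for all $Z^k,W^k\in\mats^g$. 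Since $p\mapsto p(Y)$ surjects onto $\cS(Y)$, fixing $Z^k,W^k$ with $\alpha:=f_i(Z^1,\dots,Z^i)\neq0$ and $\beta:=g_j(W^1,\dots,W^j)\neq0$ (possible because $f_i,g_j\neq0$) yields $\alpha\,\cS(Y)\,\beta=0$ with $\alpha,\beta\neq0$, contradicting primeness of $\cS(Y)$. For $\Oa_Y$ and $\Oua_Y$ (with $\kk=\C$) the argument is verbatim: these are subalgebras of $\cO_Y$ on which the order filtration and the product formula restrict, and the germs $c^p$ of noncommutative polynomials already lie in $\Oua_Y\subseteq\Oa_Y$.

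I do not expect a genuine obstacle; the proof is short once the reduction is seen. The only care needed is bookkeeping with the convolution formula — checking that in the triple product $f*c^p*g$ of orders $i$, $0$, $j$ the unique index split contributing to degree $i+j$ is $(i,0,j)$, and that $\operatorname{ord}(c^p)=0$ holds \emph{because} $p(Y)\neq0$ — both immediate from the Leibniz rule. The one conceptual point worth flagging is that no ``generic'' middle test element is required: the noncommutative polynomials, whose values at $Y$ already exhaust $\cS(Y)$, suffice, which is exactly what makes primeness of $\cO_Y$ collapse onto primeness (equivalently, simplicity) of the finite-dimensional algebra $\cS(Y)$.
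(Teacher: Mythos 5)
Your overall strategy is essentially the paper's: pass to the lowest-order nonzero components $f_i$, $g_j$, insert the germ of a noncommutative polynomial as the middle factor, and read off the degree-$(i+j)$ coefficient of the triple product via the convolution/Leibniz formula (that bookkeeping, including the unique index split $(i,0,j)$, is correct). The gap is in the final step. The matrices $\alpha=f_i(Z^1,\dots,Z^i)$ and $\beta=g_j(W^1,\dots,W^j)$ are arbitrary elements of $\mats$, not elements of $\cS(Y)$, so the identity $\alpha\,\cS(Y)\,\beta=0$ does not contradict primeness of $\cS(Y)$: the primeness you quoted only concerns $\alpha,\beta\in\cS(Y)$. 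The stronger statement you actually need, namely $\alpha\,\cS(Y)\,\beta\neq0$ for \emph{all} nonzero $\alpha,\beta\in\mats$, is not a consequence of $\cS(Y)$ being simple as an abstract algebra: for a scalar point $Y\in\mats^g$ with $s\ge2$ the algebra $\cS(Y)=\kk$ is simple, yet for the matrix unit $E_{12}$ one has $E_{12}\,\cS(Y)\,E_{12}=0$. So the advertised ``collapse onto primeness of $\cS(Y)$'' is not the right reduction; irreducibility must enter in a stronger form than simplicity.

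The repair is short, and it is exactly the point where the paper uses irreducibility (there, to realize a suitable $f_0\in\mats$ with $a_{m'}f_0\,b_{m''}\neq0$ as a polynomial value $f(Y)$): if $\alpha,\beta\in\mats$ are nonzero, choose $v$ with $\beta v\neq0$; the span of $\cS(Y)\beta v$ is a nonzero invariant subspace for $Y$, hence equals $\kk^s$ by irreducibility, so $\alpha\,c\,\beta v\neq0$ for some $c\in\cS(Y)$, i.e.\ $\alpha\,\cS(Y)\,\beta\neq0$. (Over $\C$ one may instead invoke Burnside's theorem, $\cS(Y)=\matcs$.) With this lemma inserted in place of the appeal to primeness of $\cS(Y)$, your argument goes through, including the analytic and uniformly analytic cases, since the polynomial germs you use as middle factors already lie in $\Oua_Y\subset\Oa_Y\subset\cO_Y$.
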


\begin{proof}
Let $Y\in\matn^g$ and $a=(a_\ell)_\ell,\,b=(b_\ell)_\ell\in\cO_Y\setminus\{0\}$. Then there exist minimal $m',m''\ge0$ such that $a_{m'}\neq0$ and $b_{m''}\neq0$. Since $a_{m'}$ and $b_{m''}$ are nonzero multilinear maps into $\matn$, there exists $f_0\in\matn$ such that $a_{m'} f_0 b_{m''}$ is a nonzero $(m'+m'')$-linear map. Since $Y$ is irreducible, there exists $f\in\px$ such that $f(Y)=f_0$. By the minimality of $m',m''$ we have
$$(afb)_{m'+m''}=a_{m'} f_0 b_{m''}\neq0$$
and therefore $afb\neq0$. Hence $\cO_Y$ is a prime ring. The same proof applies in the analytic case since $\pxc\subset \Oua_Y\subset\Oa_Y$.
\end{proof}

\subsection{Truncated \lac}

Next we define \lac for finite sequences of multilinear maps.

\begin{defn}\label{d:tru}
Let $Y\in\mats^g$ and $L\in\N$. A sequence $(f_\ell)_{\ell=0}^L$ of $\ell$-linear maps
$$f_\ell:\left(\mats^g\right)^\ell\to \mats$$
satisfies the {\bf truncated \lac} of order $L$ with respect to $Y$ (shortly $\LAC_L(Y)$) if for all $Z^j\in\mats^g$,
$$f_1([S,Y])=[S,f_0]$$
and
\begin{align*}
f_\ell([S,Y],Z^1,\dots, Z^{\ell-1})
& =S f_{\ell-1}(Z^1,\dots, Z^{\ell-1})-f_{\ell-1}(SZ^1,Z^2,\dots, Z^{\ell-1}), \\
f_\ell(\dots,Z^j,[S,Y], Z^{j+1},\dots)
& = f_{\ell-1}(\dots,Z^{j-1},Z^jS,Z^{j+1},\dots) \\
 &\quad -f_{\ell-1}(\dots,Z^j,SZ^{j+1},Z^{j+2},\dots), \\
f_\ell(Z^1,\dots Z^{\ell-1},[S,Y])
& =f_{\ell-1}(Z^1,\dots,Z^{\ell-2}, Z^{\ell-1} S)-f_{\ell-1}(Z^1,\dots, Z^{\ell-1})S
\end{align*}
for all $2\le\ell\le L$, $1\le j\le \ell-2$, $S\in\mats$, and
\begin{align}
 S f_L(Z^1,\dots) &= f_L(SZ^1,\dots), \\
 f_L(\dots,Z^jS,Z^{j+1},\dots) &= f_L(\dots,Z^j,SZ^{j+1},\dots), \\
f_L(\dots, Z^{\ell-1} S) &= f_L(\dots, Z^{\ell-1})S
\end{align}
for all $1\le j\le L-1$, $S\in\cC(Y)$.

We say that $(f_0)$ satisfies $\LAC_0(Y)$ if $[f_0,\cC(Y)]=0$.
\end{defn}

\begin{rem}\label{r:tru}
A sequence $(f_\ell)_{\ell=0}^\infty$ satisfies $\LAC(Y)$ if and only if $(f_\ell)_{\ell=0}^L$ satisfies $\LAC_L(Y)$ for all $L\in\N\cup\{0\}$.
\end{rem}

For $Y\in\mats$ we consider $\mats^g$ as a $\cC(Y)$-bimodule in a natural way. Since $C_1[S,Y]C_2=[C_1SC_2,Y]$ for $S\in\mats$ and $C_i\in\cC(Y)$, $[\mats,Y]$ is a sub-bimodule in $\mats^g$.

\begin{defn}\label{d:adm}
Let $Y\in\mats^g$ and $\ell\in\N$. An $\ell$-linear map $f:(\mats^g)^\ell\to\mats$ is {\bf $Y$-admissible} if it induces a $\cC(Y)$-bimodule homomorphism
$$\big(\mats^g/ [\mats,Y]\big)^{\otimes_{\cC(Y)}\ell}\to \mats.$$
\end{defn}

\begin{rem}\label{r:adm}
By comparing Definitions \ref{d:tru} and \ref{r:adm} we see that an $\ell$-linear map $f$ is $Y$-admissible if and only if $(0,\dots,0,f)$ satisfies $\LAC_\ell(Y)$.
\end{rem}

\subsection{Noncommutative algebra intermezzo}

\def\ckk{\overline{\kk}}
\def\op{{\rm op}}
\def\cCe{\cC\otimes \cC^\op}

Throughout this subsection let $\cC$ be a semisimple $\kk$-algebra. When addressing properties of $\cC$-bimodules, we can identify them as (left) $\cCe$-modules, where $\cC^\op$ is the opposite algebra of $\cC$. Here $\cC^\op$ agrees with $\cC$ as a vector space over $\kk$, and multiplication satisfies $a^\op\cdot b^\op = (b\cdot a)^\op$. Since tensor product is distributive over direct sum, the $\kk$-algebra $\cCe$ is also semisimple, so every $\cCe$-module is semisimple by \cite[Proposition 6.2.2]{Pro1}, i.e., a direct sum of simple (or irreducible) modules. Furthermore, there are only finitely many simple $\cCe$-modules up to isomorphism, say $W_1,\dots, W_d$. By Schur's lemma \cite[Theorem 6.1.7]{Pro1}, $\End_{\cCe}(W_i,W_i)$ is a finite dimensional division algebra over $\kk$, and $\Hom_{\cCe}(W_i,W_j)=\{0\}$ for $i\neq j$.

Let $U,V$ be finitely generated $\cCe$-modules. Then
$$U\cong\bigoplus_i W_i^{m_i},\qquad V\cong\bigoplus_i W_i^{n_i}$$
for some $m_i,n_i$, and $U_i=W_i^{m_i}$ and $V_i=W_i^{n_i}$ are {\bf isotypic components} of type $i$ of $U$ and $V$, respectively \cite[Subsection 6.2.3]{Pro1}. By \cite[Proposition 6.2.3.1]{Pro1} we have
\begin{equation}\label{e:hom}
\Hom_{\cCe}(U,V)= \bigoplus_i \Hom_{\cCe}(U_i,V_i) \cong \bigoplus_i \End_{\cCe}(W_i,W_i)^{n_i\times m_i}.
\end{equation}

\begin{lem}\label{l:la1}
Let $U,V$ be finitely generated $\cC$-bimodules. Let $\cT\subseteq\Hom_{\cC-\cC}(U,V)$ be a subspace such that:
\begin{enumerate}
	\item for every $0\neq u\in U$ there exists $T\in\cT$ such that $Tu\neq0$;
	\item $\Phi\circ \cT\subseteq\cT$ for every $\Phi\in\End_{\cC-\cC}(V,V)$.
\end{enumerate}
Then $\cT=\Hom_{\cC-\cC}(U,V)$.
\end{lem}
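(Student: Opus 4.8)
The plan is to pass to isotypic components and reduce to an elementary statement about matrices over a division ring. Regard $U,V$ as finitely generated $\cCe$-modules, write $U\cong\bigoplus_i W_i^{m_i}$ and $V\cong\bigoplus_i W_i^{n_i}$ with isotypic components $U_i=W_i^{m_i}$, $V_i=W_i^{n_i}$, and put $D_i=\End_{\cCe}(W_i,W_i)$, a division algebra. By \eqref{e:hom}, $\Hom_{\cC-\cC}(U,V)=\bigoplus_i\Hom_{\cCe}(U_i,V_i)\cong\bigoplus_i D_i^{\,n_i\times m_i}$. The projections $\varepsilon_i\colon V\to V_i\hookrightarrow V$ lie in $\End_{\cC-\cC}(V,V)$, so hypothesis (2) gives $\varepsilon_i\circ\cT\subseteq\cT$; since $\sum_i\varepsilon_i=\id_V$ and $\varepsilon_i\varepsilon_j=\delta_{ij}\varepsilon_i$, this yields $\cT=\bigoplus_i\cT_i$ with $\cT_i:=\varepsilon_i\circ\cT=\cT\cap\Hom_{\cCe}(U_i,V_i)$. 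Moreover $\cT_i$ is stable under left composition with $\End_{\cCe}(V_i,V_i)\cong D_i^{\,n_i\times n_i}$ (embedded in $\End_{\cC-\cC}(V,V)$ by zero on the other components), and since $\Hom_{\cCe}(U_j,V_j)$ annihilates $U_i$ for $j\ne i$, hypothesis (1) implies that for every $0\ne u\in U_i$ there is $T\in\cT_i$ with $Tu\ne0$. Thus it suffices to prove the claim when $U=W^m$, $V=W^n$ for a single simple module $W$ with division ring $D=\End_{\cCe}(W,W)$.

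In that case I identify $\Hom_{\cCe}(W^m,W^n)$ with $D^{\,n\times m}$ via $T\mapsto(\phi_{jk})$, $\phi_{jk}\in D$, so that composition is matrix multiplication and $\cT$ becomes a $\kk$-subspace of $D^{\,n\times m}$ closed under left multiplication by $D^{\,n\times n}$, with $\bigcap_{T\in\cT}\ker T=0$ in $W^m$; the goal is $\cT=D^{\,n\times m}$. First I would classify such $\cT$. Let $R\subseteq D^{\,1\times m}$ be the set of first rows of the elements of $\cT$. Multiplying on the left by scalar matrices $eI$ ($e\in D$) shows $R$ is a left $D$-subspace, and multiplying by the matrix units $e_{1j}$ and $e_{j1}$ shows that $\cT$ consists of \emph{exactly} those $B\in D^{\,n\times m}$ all of whose rows lie in $R$ (every row of $T\in\cT$ is a first row of some $e_{1j}T\in\cT$, and conversely any $B$ with rows in $R$ is $\sum_j e_{j1}T^{(j)}$ for suitable $T^{(j)}\in\cT$).

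Finally, suppose $\cT\ne D^{\,n\times m}$, so $R\subsetneq D^{\,1\times m}$. By linear algebra over the division ring $D$ there is a nonzero $v\in D^{\,m\times 1}$ with components $v_1,\dots,v_m$ such that $\rho v=\sum_k\rho_kv_k=0$ for all $\rho\in R$. Fix $0\ne w_0\in W$ and set $u=(v_1w_0,\dots,v_mw_0)\in W^m$; since $D$ is a division ring and some $v_k\ne0$, the map $d\mapsto dw_0$ is injective and hence $u\ne0$. For any $T=(\phi_{jk})\in\cT$ each row $(\phi_{j1},\dots,\phi_{jm})$ lies in $R$, whence $(Tu)_j=\bigl(\sum_k\phi_{jk}v_k\bigr)w_0=0$ for every $j$, i.e.\ $Tu=0$. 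Thus $0\ne u\in\bigcap_{T\in\cT}\ker T$, contradicting hypothesis (1). Hence $\cT=D^{\,n\times m}$, which back in the general situation gives $\cT_i=\Hom_{\cCe}(U_i,V_i)$ for every $i$ and therefore $\cT=\Hom_{\cC-\cC}(U,V)$.

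I expect the only real work to be bookkeeping: keeping the left/right module conventions over the $D_i$ straight, and the elementary but slightly fiddly verification that the submodules of $D^{\,n\times m}$ stable under left multiplication by $D^{\,n\times n}$ are precisely those described by their common row space $R$. There is no deeper obstacle, since everything in sight is semisimple and the crucial point—injectivity of $d\mapsto dw_0$ for $d\in D$, $w_0\ne0$—is immediate from $D$ being a division ring.
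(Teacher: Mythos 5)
Your proof is correct, and it takes a genuinely different route from the paper's. Both arguments begin the same way, splitting $U$, $V$ and $\cT$ along isotypic components (your verification that hypotheses (1) and (2) descend to each component via the projections $\varepsilon_i$ is exactly the needed bookkeeping). From there the paper first passes to the algebraic closure of $\kk$, so that all $\End_{\cCe}(W_i)$ become $\kk$, and then gets the key step from the Bre\v{s}ar--\v{S}emrl theorem on locally linearly dependent operators: if no $T\in\cT$ keeps $T(u_1),\dots,T(u_n)$ independent, some combination $\sum_i\alpha_i\phi_i$ of the evaluation maps has small rank, which is played off against (1) and (2); the general field is then recovered by a base-change dimension count. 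You instead stay over the division ring $D_i=\End_{\cCe}(W_i)$ throughout, classify the subspaces of $D^{\,n\times m}$ stable under left multiplication by $D^{\,n\times n}$ as exactly those cut out by a common left row space $R$, and, when $R$ is proper, produce a nonzero annihilating column $v$ and the explicit vector $u=(v_1w_0,\dots,v_mw_0)$ killed by all of $\cT$, contradicting (1) directly. What your approach buys: it is self-contained (no citation of [BS], no scalar extension, no use of algebraic closedness, and characteristic $0$ plays no role beyond semisimplicity of $\cC\otimes\cC^{\rm op}$), at the cost of the explicit but routine matrix-unit/row-space verification; the paper's approach trades that elementary work for an appeal to a known rank theorem once the scalars are reduced to $\kk$. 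The only points requiring care in your write-up—the left/right conventions making $R$ a left $D$-space and the annihilator existing because a system of fewer than $m$ linear equations over a division ring has a nontrivial solution—are handled or correctly flagged, so there is no gap.
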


\begin{proof}
First assume that $\kk$ is algebraically closed. Then $\End_{\cCe}(W_i,W_i)=\kk$ for all $i$. By \eqref{e:hom} it suffices to show that $L\in\cT$ for every $L\in \Hom_{\cC-\cC}(U_i,V_i)= \kk^{n_i\times m_i}$ and $i=1,\dots,d$. Denote $n=\rk L$. Then there exist $u_1,\dots,u_n\in U_i$ such that $L(u_1),\dots, L(u_n)$ are linearly independent in $V_i$. Clearly $u_1,\dots,u_n$ are linearly independent. By (2) it suffices to find $T\in\cT$ such that $T(u_1),\dots, T(u_n)$ are linearly independent. To simplify the notation we without loss of generality assume $U=U_i$ and $V=V_i$ for a fixed $i$.

Suppose $T(u_1),\dots, T(u_n)$ are linearly dependent for all $T\in\cT$. For $i=1,\dots,n$ let
$$\phi_i:\cT\to V,\qquad T\mapsto T(u_i).$$
Then $\phi_1(T),\dots,\phi_n(T)$ are linearly dependent for all $T$, so by \cite[Theorem 2.2]{BS} there exist $\alpha_1,\dots,\alpha_n\in\kk$, not all $0$, such that
\begin{equation}\label{e:rk}
\rk\left(\sum_i\alpha_i\phi_i\right)\le n-1.
\end{equation}
Let $u=\sum_i\alpha_iu_i\in V$. If $u\neq0$, then for every $v\in V$ there exists $T\in\cT$ such that $Tu=v$ by (1) and (2). However, this contradicts \eqref{e:rk} since $n-1<\dim V$. Therefore $u=0$ and $u_1,\dots,u_n$ are linearly dependent, a contradiction. Hence there exists $T\in\cT$ such that $T(u_1),\dots, T(u_n)$ are linearly independent.

Finally, let $\kk$ be an arbitrary field of characteristic $0$. Suppose the conclusion of the lemma fails, i.e.,
\begin{equation}\label{e:61}
\dim_{\kk}\cT<\dim_{\kk}\Hom_{\cC-\cC}(U,V).
\end{equation}
Let $\ckk$ be the algebraic closure of $\kk$. Then the $\ckk\otimes \cC$-bimodules $\ckk\otimes U,\ckk\otimes V$ and the subspace $\ckk\otimes \cT$ satisfy the assumptions of the lemma, so 
\begin{equation}\label{e:62}
\dim_{\ckk}(\ckk\otimes\cT)
=\dim_{\ckk}\Hom_{\ckk\otimes\cC-\ckk\otimes\cC}(\ckk\otimes U,\ckk\otimes V).
\end{equation}
However, \eqref{e:61} and \eqref{e:62} contradict
$$\dim_{\ckk}(\ckk\otimes\cT)=\dim_{\kk}\cT,\qquad \dim_{\ckk}\Hom_{\ckk\otimes\cC-\ckk\otimes\cC}(\ckk\otimes U,\ckk\otimes V) = \dim_{\kk}\Hom_{\cC-\cC}(U,V).$$
\end{proof}

\begin{lem}\label{l:la2}
Let $U,V$ be finitely generated $\cC$-bimodules, and let $\cA$ be a simple algebra containing $\cC$ as a subalgebra. For every $\phi\in\Hom_{\cC-\cC}(U\otimes_{\cC} V, \cA)$ there exist $m\in\N$ and $\widehat{\phi}_t\in\Hom_{\cC-\cC}(U,\cA)$, $\widecheck{\phi}_t\in\Hom_{\cC-\cC}(V,\cA)$ for $1\le t\le m$ such that
\begin{equation}\label{e:63}
\phi(u\otimes_{\cC}v)=\sum_{t=1}^m \widehat{\phi}_t(u)\widecheck{\phi}_t(v)
\end{equation}
for all $u\in U$ and $v\in V$.
\end{lem}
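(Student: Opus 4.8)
The plan is to derive the statement from Lemma~\ref{l:la1}. For $\widehat{\phi}\in\Hom_{\cC-\cC}(U,\cA)$ and $\widecheck{\phi}\in\Hom_{\cC-\cC}(V,\cA)$ put $(\widehat{\phi}\cdot\widecheck{\phi})(u\otimes_{\cC}v)=\widehat{\phi}(u)\widecheck{\phi}(v)$, the product taken in $\cA$; this is well defined on $U\otimes_{\cC}V$ because $\widehat{\phi}(uc)\widecheck{\phi}(v)=\widehat{\phi}(u)\,c\,\widecheck{\phi}(v)=\widehat{\phi}(u)\widecheck{\phi}(cv)$ for $c\in\cC\subseteq\cA$, and it is a $\cC$-bimodule homomorphism $U\otimes_{\cC}V\to\cA$. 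Let $\cT\subseteq\Hom_{\cC-\cC}(U\otimes_{\cC}V,\cA)$ be the linear span of all such elementary maps; then \eqref{e:63} is exactly the assertion $\cT=\Hom_{\cC-\cC}(U\otimes_{\cC}V,\cA)$. Since $U\otimes_{\cC}V$ is a finitely generated $\cC$-bimodule, the given $\phi$ and each elementary map factor through a finitely generated sub-bimodule of $\cA$, which lets us reduce to the case $\cA$ finite-dimensional (the one needed in applications); following the base-change step in the proof of Lemma~\ref{l:la1} we may moreover assume $\kk$ algebraically closed. It then suffices to verify the two hypotheses of Lemma~\ref{l:la1} for $\cT$, taking $U\otimes_{\cC}V$ as source and $\cA$ as target.

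Hypothesis~(2) is routine. Since $\cC$ is semisimple, a standard argument (cf.\ Remark~\ref{r:facts}(ii)) shows that $\End_{\cC-\cC}(\cA)$ is spanned by the maps $a\mapsto bac$ with $b,c$ in the centralizer of $\cC$ in $\cA$. For such a $\Phi$ and an elementary map one has $\Phi\big(\widehat{\phi}(u)\widecheck{\phi}(v)\big)=\big(b\,\widehat{\phi}(u)\big)\big(\widecheck{\phi}(v)\,c\big)$, and $u\mapsto b\,\widehat{\phi}(u)$ and $v\mapsto\widecheck{\phi}(v)\,c$ are again $\cC$-bimodule homomorphisms into $\cA$ because $b$ and $c$ centralize $\cC$; hence $\Phi\circ\cT\subseteq\cT$.

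Hypothesis~(1) --- that $\cT$ separates the points of $U\otimes_{\cC}V$ --- is the crux, and it is where simplicity of $\cA$ is used. For nonzero idempotents $e,f\in\cC$ the corner $e\cA f$ is nonzero: otherwise $\cA f=(\cA e\cA)f=\cA(e\cA f)=0$, contradicting $f\neq0$, as $\cA e\cA=\cA$ by simplicity. Over the algebraically closed $\kk$ this forces every simple $\cC$-bimodule to embed into $\cA$, so, decomposing the semisimple bimodules $U$ and $V$ into their simple summands, $\Hom_{\cC-\cC}(U,\cA)$ separates the points of $U$ and $\Hom_{\cC-\cC}(V,\cA)$ separates the points of $V$. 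To upgrade these one-sided separations to a single elementary map, I would use the Morita reduction of the block decomposition $\cC=\bigoplus_i M_{n_i}(D_i)$: block-wise $U\otimes_{\cC}V$ is a tensor product $P\otimes_{D}Q$ of a free right $D$-module $P$ and a free left $D$-module $Q$ over a division ring $D$, so a nonzero element of $U\otimes_{\cC}V$ has, in $D$-bases of $P$ and $Q$, a nonzero coordinate; the separating families above, together with the nonvanishing of the corners of $\cA$ into which the two factors multiply, then yield $\widehat{\phi},\widecheck{\phi}$ with $(\widehat{\phi}\cdot\widecheck{\phi})$ nonzero on that element. Granting both hypotheses, Lemma~\ref{l:la1} gives $\cT=\Hom_{\cC-\cC}(U\otimes_{\cC}V,\cA)$, which is the claim.

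The main obstacle is Hypothesis~(1). Without simplicity of $\cA$ --- or, over a non-split base field, without enough simple $\cC$-bimodules occurring in $\cA$ --- some simple $\cC$-bimodule appearing in $U$ or in $V$ need not appear in $\cA$, and then there is no nonzero elementary map at all on the corresponding part of $U\otimes_{\cC}V$. The real content therefore lies in extracting ``every simple $\cC$-bimodule embeds into $\cA$'' from the identity $\cA e\cA=\cA$, and then in the Morita bookkeeping that, from the one-sided separating homomorphisms of $U$ and of $V$, assembles a single elementary map detecting a prescribed nonzero vector of $U\otimes_{\cC}V$.
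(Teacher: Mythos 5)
There are genuine gaps, and they are concentrated in the reductions you need before Lemma \ref{l:la1} can even be invoked. Lemma \ref{l:la1} requires the target to be a finitely generated $\cC$-bimodule (its proof uses $n-1<\dim V$), whereas here $\cA$ is an arbitrary simple algebra containing $\cC$. Your proposed fix --- factor through a finitely generated sub-bimodule of $\cA$ --- does not go through: such a sub-bimodule is not a subalgebra, so the elementary products $\widehat{\phi}(u)\widecheck{\phi}(v)$ need not stay inside it, while a finite-dimensional subalgebra containing $\cC$ and the relevant elements need not be simple, which is exactly what your verification of hypothesis (1) uses; moreover hypothesis (2) must then be checked against \emph{all} of $\End_{\cC-\cC}$ of the cut-down target, which has no reason to preserve the span of elementary maps. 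The base-change step fares no better: $\ckk\otimes\cA$ need not be simple (take $\cA$ a nontrivial field extension of $\kk$). The aside ``the one needed in applications'' concedes you are proving a special case of the stated lemma. The same restriction infects your hypothesis (2) argument even before any reduction: for a general simple $\cA$ it is false that $\End_{\cC-\cC}(\cA)$ is spanned by the maps $a\mapsto bac$ with $b,c$ centralizing $\cC$ --- e.g.\ $\cC=\kk$ and $\cA=K$ a proper field extension give $\End_{\kk-\kk}(K)=\End_{\kk}(K)$, far larger than the span of multiplications. The double-centralizer argument of Remark \ref{r:facts}(ii) is available only for a full matrix algebra over $\kk$ (finite-dimensional central simple), i.e.\ only after the reduction you have not justified. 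Finally, hypothesis (1), which you yourself flag as the crux, is only sketched: separating $U$ and $V$ individually is not enough, and the missing step is to produce, for a nonzero element supported in a simple summand $(L_1\otimes L_2^{{\rm op}})\otimes_{\cC}(L_3\otimes L_4^{{\rm op}})$ with $L_i=\cC c_i$, elements $a,b\in\cA$ with $c_1ac_2\,c_3bc_4\neq0$; this needs $c_1\cA c_2c_3\cA c_4\neq\{0\}$, i.e.\ $\cA c_2c_3\cA=\cA$ whenever $c_2c_3\neq0$, and not merely the nonvanishing of corners $e\cA f$.

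For comparison, the paper proves the lemma directly, without Lemma \ref{l:la1}: by semisimplicity reduce to $U=L_1\otimes L_2^{{\rm op}}$, $V=L_3\otimes L_4^{{\rm op}}$ with $L_i=\cC c_i$ and $c_i$ idempotent; if $c_2c_3=0$ then $U\otimes_{\cC}V=\{0\}$, and otherwise simplicity of $\cA$ gives $a:=\phi\bigl((c_1\otimes c_2)\otimes_{\cC}(c_3\otimes c_4)\bigr)=\sum_t\widehat{a}_tc_2c_3\widecheck{a}_t$, after which $\widehat{\phi}_t(c_1\otimes c_2)=c_1\widehat{a}_tc_2$ and $\widecheck{\phi}_t(c_3\otimes c_4)=c_3\widecheck{a}_tc_4$ satisfy \eqref{e:63}. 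That argument works for any simple $\cA$ over any field, with no finite-dimensionality, no base change and no separation lemma. Note that the identity $\cA c_2c_3\cA=\cA$ you still owe for hypothesis (1) is precisely the paper's single use of simplicity; once you supply it, your route collapses onto this direct construction, so the detour through Lemma \ref{l:la1} adds the unresolved reductions above without removing the real work. (Your strategy could plausibly be completed for the split finite-dimensional case $\cA=\mat{s}$ that the paper actually uses later, but that is strictly weaker than the lemma as stated.)
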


\begin{proof}
By distributivity of $\otimes_{\cC}$ and $\Hom_{\cC-\cC}$ over direct sum it suffices to assume that $U$ and $V$ are simple $\cC-\cC$-bimodules. Moreover, by \cite[Corollary 6.1.9.1]{Pro1} we can further assume that $U=L_1\otimes  L_2^{\op}$ and $V=L_3\otimes  L_4^{\op}$ for some minimal left ideals $L_i\subset\cC$. By \cite[Theorem 6.3.1(2)]{Pro1} we have $L_i=\cC c_i$ for some idempotents $c_i\in\cC$. We distinguish two cases. If $c_2c_3=0$, then $L_2^\op\otimes_{\cC} L_3=\{0\}$ and $U\otimes_{\cC}V=\{0\}$, so the lemma trivially holds. Hence assume $c_2c_3\neq0$, and let
$$a=\phi\big((c_1\otimes c_2)\otimes_{\cC} (c_2\otimes c_4)\big)\in\cA.$$
Since $\cA$ is simple, there exist $\widehat{a}_t,\widecheck{a}_t\in\cA$ such that
\begin{equation}\label{e:64}
a=\sum_t \widehat{a}_tc_2c_3 \widecheck{a}_t
\end{equation}
Define $\widehat{\phi}_t\in\Hom_{\cC-\cC}(U,\cA)$ and $\widecheck{\phi}_t\in\Hom_{\cC-\cC}(V,\cA)$ by
$$\widehat{\phi}_t(c_1\otimes c_2)=c_1\widehat{a}_tc_2,\qquad \widecheck{\phi}(c_2\otimes c_4)=c_3\widecheck{a}_tc_4.$$
Since $\phi$ is a $\cC$-bimodule homomorphism and $c_i$ are idempotents, we have $c_1ac_4=a$ and thus \eqref{e:63} holds by \eqref{e:64}.
\end{proof}

In Section \ref{sec7} we will also require the following fact.

\begin{lem}\label{l:n0}
Let $\cA$ be a central simple $\kk$-algebra containing $\cC$ as a subalgebra. Then $\Hom_{\cC-\cC}(U,\cA)\neq\{0\}$ for every nonzero $\cC$-bimodule $U$.
\end{lem}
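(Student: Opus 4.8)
The plan is to deduce everything from the observation that $\cA$, equipped with its two-sided multiplication action, is a \emph{faithful} module over $\cCe=\cC\otimes\cC^\op$. Granting this, the conclusion follows quickly. Recall from above that $\cCe$ is semisimple with finitely many simple modules $W_1,\dots,W_d$, one for each simple block of $\cCe$. A faithful module over a semisimple ring must contain every simple module as a summand: if $W_i$ did not occur among the simple summands of $\cA$, then every simple summand of $\cA$ would be some $W_j$ with $j\neq i$, on which the $i$-th block of $\cCe$ acts as zero; thus this (nonzero) block would annihilate $\cA$, contradicting faithfulness. Hence each simple $\cC$-bimodule $W$ is isomorphic to a direct summand of $\cA$, so $\Hom_{\cC-\cC}(W,\cA)\neq\{0\}$. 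For an arbitrary nonzero $\cC$-bimodule $U$, semisimplicity of $\cCe$ makes $U$ a direct sum of simple bimodules; choosing one simple summand $S$ with projection $\pi\colon U\to S$, an isomorphism $S\cong W_i$ for some $i$, and the inclusion $W_i\hookrightarrow\cA$, the composite $U\to\cA$ is a nonzero $\cC$-bimodule homomorphism.

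The one real obstacle is thus to show that $\cA$ is faithful as a $\cCe$-module, i.e. that the algebra homomorphism
$$\cCe\longrightarrow\End_\kk(\cA),\qquad c\otimes (c')^\op\longmapsto\big(a\mapsto cac'\big)$$
is injective. I would prove this by factoring it through $\cA\otimes_\kk\cA^\op$. The inclusion $\cC\subseteq\cA$ induces an injection $\cCe=\cC\otimes_\kk\cC^\op\hookrightarrow\cA\otimes_\kk\cA^\op$, since tensoring injective linear maps over a field preserves injectivity. The canonical map $\cA\otimes_\kk\cA^\op\to\End_\kk(\cA)$, $a\otimes b^\op\mapsto(x\mapsto axb)$, is a nonzero algebra homomorphism whose domain $\cA\otimes_\kk\cA^\op$ is simple (the tensor product over $\kk$ of the central simple $\kk$-algebras $\cA$ and $\cA^\op$), hence it is injective -- in fact classically an isomorphism; see e.g. \cite{Pro1}. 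The composite $\cCe\to\End_\kk(\cA)$ is therefore injective, which is exactly faithfulness of $\cA$. (If $\cC$ is not a unital subalgebra of $\cA$, the same argument applies with $\cA$ replaced by the unital $\cC$-bimodule $1_\cC\cA\,1_\cC$, into which every $\cC$-bimodule map from a unital bimodule into $\cA$ automatically takes values.)

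For comparison, a more computational route is to first reduce to $U$ simple and write $U\cong L_1\otimes_\kk L_2^\op$ with $L_i=\cC c_i$ for idempotents $c_i$, exactly as in the proof of Lemma \ref{l:la2}; one then verifies that $x\otimes y^\op\mapsto xay$ yields a bimodule homomorphism $U\to\cA$ for any $a\in c_1\cA c_2$, and that $c_1\cA c_2\neq\{0\}$ because $\cA$ is simple -- if $c_1\cA c_2=\{0\}$ then $c_1\cA c_2\cA=\{0\}$, i.e. $c_1\cA=\{0\}$ since $\cA c_2\cA=\cA$, which forces $c_1=0$. I would nonetheless present the faithfulness argument, since it sidesteps the bookkeeping with the bimodule structure of $L_1\otimes L_2^\op$ and treats non--finitely generated $U$ on the same footing.
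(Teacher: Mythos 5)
Your proof is correct and follows essentially the same route as the paper: both establish that $\cA$ is a faithful $\cCe$-module via the central-simple-algebra map $\cA\otimes\cA^\op\to\End_\kk(\cA)$, and then use semisimplicity of $\cCe$ to conclude that every simple bimodule occurs inside $\cA$, giving a nonzero homomorphism from any nonzero $U$. The only cosmetic difference is that you detect each simple summand via the block-annihilation argument, while the paper realizes simples as minimal left ideals of $\cCe$ and embeds them into $\cA$ by right multiplication with a suitable element.
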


\begin{proof}
Since $\cA$ is a central simple algebra over $\kk$, we have $\cA\otimes \cA^\op\cong\End_{\kk}(\cA)$. Therefore $\cA$ is a faithful left $\cA\otimes \cA^\op$-module, i.e., for every $a\in\cA\otimes \cA^\op\setminus\{0\}$ there exists $m\in\cA$ such that $a\cdot m\neq0$. Then $\cA$ is also a faithful left $\cCe$-module. Every simple $\cCe$-module is isomorphic to a minimal left ideal in $\cCe$ by \cite[Corollary 6.1.9.1]{Pro1}. On the other hand, every minimal left ideal in $\cCe$ is isomorphic to a $\cCe$-submodule of $\cA$ since $\cA$ is faithful. Since every $\cCe$-module $U$ is a direct sum of simple modules by semisimplicity, there exists a nonzero $\cCe$-homomorphism $U\to\cA$. 
\end{proof}

\subsection{Hermite interpolation}

We prove our main interpolation result, Theorem \ref{t:hermite}, using the algebraic tools derived in the previous subsection.

\begin{lem}\label{l:sep1}
Let $Y\in\mats^g$ be a semisimple point and $Z\in\mats^g\setminus [\mats,Y]$. Then there exists $f\in\px$ such that
$$f(Y)=0,\quad f\begin{pmatrix}
Y & Z \\
0 & Y
\end{pmatrix}\neq 0.$$
\end{lem}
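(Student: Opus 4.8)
The statement asserts that for a semisimple point $Y$ and a "tangent vector" $Z$ not lying in $[\mats,Y]$, there is a polynomial $f$ vanishing at $Y$ whose first differential is visible, i.e. $f\left(\begin{smallmatrix}Y&Z\\0&Y\end{smallmatrix}\right)\neq0$. Note that since $f(Y)=0$, the upper-left and lower-right blocks of $f\left(\begin{smallmatrix}Y&Z\\0&Y\end{smallmatrix}\right)$ both vanish, so the claim is equivalently that $\Delta^1_Y f(Z)=(\text{upper-right block})\neq0$. The key observation is that $\Delta^1_Y$, restricted to polynomials vanishing at $Y$, is exactly a linear map $\px\cap\cI_0(Y)\to\mats$, and by Remark~\ref{r:diff} (applied to the nc function given by the polynomial) its image is a $Y$-admissible $1$-linear map in the sense of Definition~\ref{d:adm}; in particular for each $f$ the map $Z\mapsto \Delta^1_Yf(Z)$ descends to a $\cC(Y)$-bimodule homomorphism $\mats^g/[\mats,Y]\to\mats$. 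So it suffices to show that \emph{collectively}, as $f$ ranges over polynomials with $f(Y)=0$, these differentials separate points of $\mats^g/[\mats,Y]$.

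First I would set up the space $\cT=\{\,Z+[\mats,Y]\mapsto \Delta^1_Yf(Z)\;:\;f\in\px,\ f(Y)=0\,\}\subseteq \Hom_{\cC(Y)-\cC(Y)}\big(\mats^g/[\mats,Y],\,\mats\big)$, which is a linear subspace since $\Delta^1_Y$ is linear in $f$. I would then verify the two hypotheses of Lemma~\ref{l:la1} with $\cC=\cC(Y)$, $U=\mats^g/[\mats,Y]$, $V=\mats$ (both finitely generated $\cC(Y)$-bimodules, using that $Y$ is semisimple so $\cC(Y)$ is semisimple). Condition (1) — that for every $0\neq u\in U$ some $T\in\cT$ has $Tu\neq0$ — is precisely the conclusion we want, so this looks circular; the way out is that (1) for Lemma~\ref{l:la1} is the \emph{weaker} statement that the differentials separate points, whereas we want a single $f$; but actually Lemma~\ref{l:la1}'s conclusion $\cT=\Hom_{\cC-\cC}(U,V)$ immediately gives a separating $f$ once we know $\Hom_{\cC-\cC}(U,V)$ itself separates points, which it does since $V=\mats\ni 1$ and $\mats$ is a faithful bimodule (cf. Lemma~\ref{l:n0} with $\cA=\mats$ central simple). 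So the real content is establishing conditions (1) and (2) for $\cT$ directly.

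For condition (2), that $\Phi\circ\cT\subseteq\cT$ for every $\Phi\in\End_{\cC(Y)-\cC(Y)}(\mats,\mats)$: by Remark~\ref{r:facts}(ii), such a $\Phi$ has the form $X\mapsto\sum_t\widehat a_t X\widecheck a_t$ with $\widehat a_t,\widecheck a_t\in\cS(Y)$. Since $Y$ is semisimple (hence every irreducible block is "reachable"), every element of $\cS(Y)$ is $p(Y)$ for some $p\in\px$; then for $f\in\px$ with $f(Y)=0$ the polynomial $\tilde f=\sum_t p_t\,f\,q_t$ also vanishes at $Y$, and by the Leibniz rule for nc differentials, $\Delta^1_Y\tilde f(Z)=\sum_t p_t(Y)\Delta^1_Yf(Z)q_t(Y)=\Phi(\Delta^1_Yf(Z))$ — using crucially that $f(Y)=0$ kills the cross terms. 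That gives (2). For condition (1): given $0\neq Z+[\mats,Y]$, i.e. $Z\notin[\mats,Y]$, I must produce \emph{some} $f$ vanishing at $Y$ with $\Delta^1_Yf(Z)\neq0$; here I would argue that the map $\px\cap\cI_0(Y)\to \Hom_{\cC(Y)-\cC(Y)}(\mats^g/[\mats,Y],\mats)$ is surjective onto a subspace separating points by a dimension count reducing (via Lemma~\ref{l:la1} applied after verifying (2), bootstrapped by taking $\cT$ to be the image of suitable "low-degree" polynomials) — alternatively, observe $Z\notin[\mats,Y]$ means the coordinate functional picking out a nonzero component of $Z$ modulo $[\mats,Y]$ is realized by some word $w$ in the $x_i$ evaluated with one slot differentiated, and then symmetrize/correct by a $\cS(Y)$-bimodule span to land in $\cI_0(Y)$.

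\textbf{Main obstacle.} The delicate point is condition (1) of Lemma~\ref{l:la1}: showing that polynomials vanishing at $Y$ already produce \emph{enough} first differentials to separate all tangent directions outside $[\mats,Y]$. The natural attempt — take a word $w(x)$, differentiate it once, and hope $\Delta^1_Y w(Z)\neq0$ — can fail because $w(Y)$ need not be $0$; one must pre- or post-compose with elements of $\cS(Y)$ (or subtract a correction polynomial agreeing with $w$ at $Y$) to force $f(Y)=0$ while preserving a nonzero differential along $Z$. Making this correction without destroying $\Delta^1_Yf(Z)$ is where semisimplicity is essential (it guarantees $\cC(Y)$ semisimple, $\mats$ a semisimple $\cC(Y)$-bimodule, and the bimodule-homomorphism description of Remark~\ref{r:facts}), and I expect the cleanest route is precisely the Lemma~\ref{l:la1} machinery: take $\cT$ to be the span of differentials of \emph{all} polynomials in $\cI_0(Y)$, check (2) as above, check (1) for a single well-chosen direction by an explicit word construction, and then invoke Lemma~\ref{l:la1} to upgrade to all of $\Hom_{\cC(Y)-\cC(Y)}(\mats^g/[\mats,Y],\mats)$, which contains a homomorphism nonzero on the given $Z$.
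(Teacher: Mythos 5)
There is a genuine gap, and it is the central one: your argument is circular. The space $\cT=\{\Delta^1_Yf : f\in\px,\ f(Y)=0\}$, viewed inside $\Hom_{\cC(Y)-\cC(Y)}\big(\mats^g/[\mats,Y],\mats\big)$, has as its hypothesis (1) in Lemma \ref{l:la1} exactly the statement of Lemma \ref{l:sep1}: ``for every $Z\notin[\mats,Y]$ there is a single $f\in\cI_0(Y)$ with $\Delta^1_Yf(Z)\neq0$.'' There is no weaker ``separation'' version to fall back on, and your final plan --- verify (1) ``for a single well-chosen direction by an explicit word construction, and then invoke Lemma \ref{l:la1} to upgrade'' --- does not work, because hypothesis (1) of Lemma \ref{l:la1} quantifies over \emph{all} nonzero $u\in U$; the lemma gives no mechanism to pass from one separated direction to all of them. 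The ``explicit word construction / dimension count'' that would establish (1) is precisely the content of Lemma \ref{l:sep1} and is never carried out (your own ``main obstacle'' paragraph concedes this). Your verification of hypothesis (2) is fine, but that is the part needed later in Proposition \ref{p:sep}, not here: in the paper the logical order is the reverse of yours --- Lemma \ref{l:sep1} is proved first, by a self-contained argument, and only then fed into Lemma \ref{l:la1} as condition (1).

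The missing idea is to dualize. Suppose no such $f$ exists, i.e.\ $f(Y)=0$ forces $f\left(\begin{smallmatrix}Y&Z\\0&Y\end{smallmatrix}\right)=0$ for all $f\in\px$. Then $Y_j\mapsto\left(\begin{smallmatrix}Y_j&Z_j\\0&Y_j\end{smallmatrix}\right)$ is a well-defined unital algebra homomorphism $\cS(Y)\to\mat{2s}$. The paper now invokes a Skolem--Noether-type theorem (\cite[Lemma 3.10]{KV}) to produce $P\in\gl{2s}$ intertwining $\left(\begin{smallmatrix}Y&Z\\0&Y\end{smallmatrix}\right)$ with $Y\oplus Y$; writing $P$ in $s\times s$ blocks, one gets $[P_{11},Y]=[P_{21},Y]=0$, uses semisimplicity of $\cC(Y)$ to replace $P$ so that $P_{21}$ is invertible, and reads off $P_{21}Z=[Y,P_{22}]$, hence $Z=[Y,P_{21}^{-1}P_{22}]\in[\mats,Y]$ --- contradicting $Z\notin[\mats,Y]$. (Equivalently, one can argue that since $\cS(Y)$ is semisimple the module extension defined by the upper-triangular point splits, and a splitting matrix $T$ satisfies $Z=[T,Y]$.) Either way, the contradiction hypothesis must be converted into a representation-theoretic splitting statement; this conversion, which is where semisimplicity of $Y$ actually enters, is absent from your proposal.
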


\begin{proof}
Suppose
$$f(Y)=0 \quad\implies\quad
f\begin{pmatrix}
Y & Z \\ 0 & Y
\end{pmatrix}=0$$
for all $f\in\px$.
Hence there is a unital homomorphism of algebras $\cS(Y)\to \mat{2s}$ determined by
$$Y_j\mapsto
\begin{pmatrix}
Y_j & Z_j \\ 0 & Y_j
\end{pmatrix}$$
for $j=1,\dots,g$. By the version of Skolem-Noether theorem in \cite[Lemma 3.10]{KV} there exists $P=(P_{ij})_{i,j=1}^2\in\gl{2s}$ such that
\begin{equation}\label{e:sn}
\begin{pmatrix}P_{11} & P_{12} \\ P_{21} & P_{22}\end{pmatrix}
\begin{pmatrix}
Y & Z \\ 0 & Y
\end{pmatrix}
=\begin{pmatrix}
Y & 0 \\ 0 & Y
\end{pmatrix}
\begin{pmatrix}P_{11} & P_{12} \\ P_{21} & P_{22}\end{pmatrix}.
\end{equation}
Therefore $[P_{i1},Y]=0$ for $i\in\{1,2\}$. Since $P$ is invertible, there exists $A\in\mats$ such that $P_{21}+AP_{11}\in \gl{s}$. Moreover, since $P_{11},P_{21}\in\cC(Y)$ and $\cC(Y)$ is semisimple, one can choose $A\in\cC(Y)$. Then we can replace $P$ with
$$\begin{pmatrix}I & 0 \\ A & I\end{pmatrix}P=
\begin{pmatrix}P_{11} & P_{12} \\ P_{21}+A P_{11} & P_{22}+A P_{12}\end{pmatrix}$$
and the relation \eqref{e:sn} still holds. So we can without loss of generality assume that $P_{21}$ is invertible. Furthermore, \eqref{e:sn} implies $P_{21}Z=[Y,P_{22}]$. Therefore $Z=[Y,P_{21}^{-1}P_{22}]$, a contradiction.
\end{proof}

\begin{prop}\label{p:sep}
Let $\ell\in\N$. For $i=1,\dots,h$ let $Y^i\in\mat{s_i}^g$ be separated semisimple points, and let $f_i:(\mats^g)^\ell\to\mats$ be $Y^i$-admissible $\ell$-linear maps. Then there exists $f\in\px$ such that
\begin{equation}\label{e:40}
f\in\cI_{\ell-1}(Y^i),\qquad \Delta^\ell_{Y^i}f=f_i
\end{equation}
for all $i=1,\dots,h$.
\end{prop}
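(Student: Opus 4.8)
The plan is to pass to the single point $Y=Y^1\oplus\cdots\oplus Y^h$, prove an interpolation statement there, and then restrict to each summand. Since the $Y^i$ are separated, $\cC(Y)=\cC(Y^1)\oplus\cdots\oplus\cC(Y^h)$ is block diagonal (all semisimple by Remark~\ref{r:facts}(i)), so the $\cC(Y)$-bimodule $\mats^g/[\mats,Y]$ decomposes as $\bigoplus_i\big(\mat{s_i}^g/[\mat{s_i},Y^i]\big)$ with the $i$-th summand acted on only through $\cC(Y^i)$; tensoring over $\cC(Y)$ kills all mixed terms, so a $Y$-admissible $\ell$-linear map is exactly a tuple of $Y^i$-admissible $\ell$-linear maps, and the data $f_1,\dots,f_h$ assemble into a single $Y$-admissible $\ell$-linear map $\phi$. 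Furthermore, permuting rows and columns of the block bidiagonal matrix in \eqref{e:diff} shows that for every $f\in\px$ and the embedding $\iota_i$ of $\mat{s_i}^g$ into $\mats^g$ as the $(i,i)$ block one has $\Delta_Y^kf(\iota_iZ^1,\dots,\iota_iZ^k)=\iota_i\big(\Delta_{Y^i}^kf(Z^1,\dots,Z^k)\big)$. Thus a polynomial $f$ with $f\in\cI_{\ell-1}(Y)$ and $\Delta_Y^\ell f=\phi$ automatically satisfies \eqref{e:40} for all $i$, and it remains to handle one semisimple point.

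So fix a semisimple $Y\in\mats^g$ and a $Y$-admissible $\ell$-linear map $\phi$. By the Leibniz rule, $\Delta_Y^k(g_1\cdots g_\ell)$ for $k\le\ell-1$ is a sum of terms each containing a factor $\Delta_Y^0g_j=g_j(Y)$, so $\cI(Y)^\ell\subseteq\cI_{\ell-1}(Y)$; hence it suffices to find $f\in\cI(Y)^\ell$ with $\Delta_Y^\ell f=\phi$, and we induct on $\ell$. For $\ell=1$ set $\cT=\{\Delta_Y^1f:f\in\cI(Y)\}$. Conjugating $\left(\begin{smallmatrix}Y&Z\\0&Y\end{smallmatrix}\right)$ by $S\oplus I$ (resp.\ $I\oplus S$) for invertible $S\in\cC(Y)$ and using that polynomials respect similarities gives $\Delta_Y^1f(SZ)=S\,\Delta_Y^1f(Z)$ (resp.\ $\Delta_Y^1f(ZS)=\Delta_Y^1f(Z)S$); as every element of $\cC(Y)$ is a difference of invertibles, $\cT\subseteq\Hom_{\cC(Y)-\cC(Y)}\!\big(\mats^g/[\mats,Y],\mats\big)$. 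Hypothesis~(1) of Lemma~\ref{l:la1} is exactly Lemma~\ref{l:sep1}. For hypothesis~(2), Remark~\ref{r:facts}(ii) writes any $\Phi\in\End_{\cC(Y)-\cC(Y)}(\mats)$ as $X\mapsto\sum_t\widehat a_tX\widecheck a_t$ with $\widehat a_t,\widecheck a_t\in\cS(Y)=\kk\langle Y\rangle$; choosing $p_t,q_t\in\px$ with $p_t(Y)=\widehat a_t$, $q_t(Y)=\widecheck a_t$, the Leibniz rule and $f(Y)=0$ give $\Phi\circ\Delta_Y^1f=\Delta_Y^1\big(\sum_tp_tfq_t\big)\in\cT$. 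Lemma~\ref{l:la1} then yields $\cT=\Hom_{\cC(Y)-\cC(Y)}(\mats^g/[\mats,Y],\mats)$, i.e.\ every $Y$-admissible $1$-linear map equals $\Delta_Y^1f$ for some $f\in\cI(Y)$.

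For the inductive step, regard $\phi$ as a $\cC(Y)$-bimodule homomorphism on $\big(\mats^g/[\mats,Y]\big)^{\otimes_{\cC(Y)}(\ell-1)}\otimes_{\cC(Y)}\big(\mats^g/[\mats,Y]\big)$. As $\mats$ is simple and contains $\cC(Y)$, Lemma~\ref{l:la2} furnishes $Y$-admissible maps $\widehat\phi_t$ of order $\ell-1$ and $\widecheck\phi_t$ of order $1$ with $\phi(u\otimes v)=\sum_t\widehat\phi_t(u)\widecheck\phi_t(v)$. By induction take $h_t\in\cI(Y)^{\ell-1}$ with $\Delta_Y^{\ell-1}h_t=\widehat\phi_t$, by the case $\ell=1$ take $g_t\in\cI(Y)$ with $\Delta_Y^1g_t=\widecheck\phi_t$, and set $f=\sum_th_tg_t\in\cI(Y)^\ell$. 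In the Leibniz expansion $\Delta_Y^\ell(h_tg_t)(Z^1,\dots,Z^\ell)=\sum_{k=0}^\ell\Delta_Y^kh_t(Z^1,\dots,Z^k)\,\Delta_Y^{\ell-k}g_t(Z^{k+1},\dots,Z^\ell)$ every term with $k\le\ell-2$ vanishes because $h_t\in\cI(Y)^{\ell-1}\subseteq\cI_{\ell-2}(Y)$, and the $k=\ell$ term vanishes because $\Delta_Y^0g_t=g_t(Y)=0$; only $k=\ell-1$ survives, contributing $\widehat\phi_t(Z^1,\dots,Z^{\ell-1})\,\widecheck\phi_t(Z^\ell)$. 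Summing over $t$ gives $\Delta_Y^\ell f=\phi$, finishing the induction.

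I expect the base case $\ell=1$ to be the main obstacle: one must show that $\Delta_Y^1\big(\cI(Y)\big)$ is a subspace of the bimodule homomorphisms, is stable under post-composition by $\cC(Y)$-bimodule endomorphisms of $\mats$, and separates the points of $\mats^g/[\mats,Y]$, the last being precisely the content of Lemma~\ref{l:sep1} (hence of the Skolem--Noether-type argument behind it). The reduction to a single point is conceptually straightforward but needs careful bookkeeping with the bimodule decomposition of $\mats^g/[\mats,Y]$ at a separated direct sum and with the compatibility of differentials with restriction to diagonal blocks.
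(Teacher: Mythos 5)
Your single--point argument is essentially the paper's: the same induction on $\ell$, the same base case via Lemma \ref{l:sep1} feeding hypothesis (1) of Lemma \ref{l:la1} and Remark \ref{r:facts}(ii) feeding hypothesis (2), and the same inductive step via Lemma \ref{l:la2} followed by the Leibniz expansion of $\Delta_Y^\ell(h_tg_t)$ in which only the $k=\ell-1$ term survives. The only real deviation is the treatment of several points: you merge them into the single semisimple point $Y=Y^1\oplus\cdots\oplus Y^h$ at the outset, whereas the paper keeps the points separate and uses separatedness twice — in the base case to build polynomials $\widecheck{f}_i$ with $\widecheck{f}_i(Y^{i'})=\delta_{ii'}I$ that glue the one--point solutions, and in the inductive step by invoking the multi-point statement simultaneously for all $i$.

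The reduction step, as you justify it, rests on a false claim. For $g\ge2$ and $h\ge2$ the $\cC(Y)$-bimodule $\mats^g/[\mats,Y]$ is \emph{not} $\bigoplus_i\big(\mat{s_i}^g/[\mat{s_i},Y^i]\big)$: writing $\mats^g=\bigoplus_{i,j}(\kk^{s_i\times s_j})^g$, the $(i,j)$ block of $[\mats,Y]$ is the image of $S\mapsto(SY^j_k-Y^i_kS)_k$, of dimension at most $s_is_j$, so each off-diagonal block survives in the quotient with dimension at least $(g-1)s_is_j>0$. Consequently a $Y$-admissible $\ell$-linear map is \emph{not} just a tuple of $Y^i$-admissible maps: $(\mats^g/[\mats,Y])^{\otimes_{\cC(Y)}\ell}$ splits into ``path'' components indexed by $(i_0,\dots,i_\ell)$, and the mixed paths carry nonzero $\cC(Y)$-bimodule homomorphisms into $\mats$ (Lemma \ref{l:n0}), so tensoring over $\cC(Y)$ does not kill them. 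What your argument actually needs is only the easy direction: the tuple $(f_1,\dots,f_h)$ defines a bimodule map on the constant-path components $\big(\mat{s_i}^g/[\mat{s_i},Y^i]\big)^{\otimes_{\cC(Y^i)}\ell}$ (here separatedness, i.e.\ $\cC(Y)=\bigoplus_i\cC(Y^i)$, is what makes these components independent sub-bimodules mapping into the diagonal blocks $e_i\mats e_i$), and you should \emph{define} $\phi$ as the extension of this map by zero on the complementary path components, then pull back to $(\mats^g)^\ell$. With that repair the reduction is sound — your block-restriction identity $\Delta_Y^k f(\iota_iZ^1,\dots,\iota_iZ^k)=\iota_i\Delta_{Y^i}^kf(Z^1,\dots,Z^k)$ and the inclusion $\cI(Y)^\ell\subseteq\cI_{\ell-1}(Y)$ are correct — and the rest of your proof goes through as written.
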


\begin{proof}
We prove the statement by induction on $\ell$.

Let $\ell=1$. For a fixed $i$ let $U=\mat{s_i}^g/[\mat{s_i},Y^i]$, $V=\mat{s_i}$, and
$$\cT=\left\{\Delta^1_{Y^i}f\colon f\in\cI_0(Y^i) \right\}.$$
Observe that $\cT$ satisfies the hypotheses of  Lemma \ref{l:la1} for $\cC=\cC(Y^i)$. The condition (1) holds by Lemma \ref{l:sep1}. Next, every $\cC(Y^i)$-bimodule endomorphism $\Phi$ of $\mat{s_i}$ is of the form
$$\Phi:X\mapsto \sum_t\widehat{a}_t X \widecheck{a}_t$$
for some $\widehat{a}_t,\widecheck{a}_t\in\cS(Y^i)$ by Remark \ref{r:facts}(ii). There exist $\widehat{f}_t,\widecheck{f}_t\in\px$ such that $\widehat{f}_t(Y^i)=\widehat{a}_t$ and $\widecheck{f}_t(Y^i)=\widecheck{a}_t$ for all $t$. For every $f\in\cI_0(Y)$ we then have
$$(\widehat{f}_tf\widecheck{f}_t)
\begin{pmatrix}Y^i & Z \\ 0 & Y^i\end{pmatrix}
=\begin{pmatrix}0 & \widehat{a}_t(\Delta^1_{Y^i}f(Z))\widecheck{a}_t \\ 0 & 0\end{pmatrix}
$$
and thus
$$\Phi\circ \Delta^1_{Y^i}f=\Delta^1_{Y^i}\left(\sum_t \widehat{f}_tf\widecheck{f}_t \right).$$
Hence the condition (2) is satisfied, so $\cT$ is precisely the subspace of $Y^i$-admissible linear maps by Lemma \ref{l:la1}.

Therefore for each $i$ there exists $\widehat{f}_i\in\cI_0(Y^i)$ such that $f_i=\Delta^1_{Y^i}\widehat{f}_i$. Furthermore, since $Y^1,\dots,Y^h$ are separated, the algebra $\cS(Y^1\oplus\cdots\oplus Y^h)$ contains
$$ (\oplus^{i-1} 0) \oplus I \oplus (\oplus^{h-i}0)$$
for $i=1,\dots,h$. Therefore there exist $\widecheck{f}_i\in\px$ such that $\widecheck{f}_i(Y^{i'})=\delta_{ii'}I$, where $\delta_{ii'}$ is the Kronecker delta. Then $f=\sum_i\widehat{f}_i\widecheck{f}_i$ satisfies \eqref{e:40}, so the basis of induction is proven.

Now let $\ell\ge2$ and assume the statement holds for $\ell-1$. By Lemma \ref{l:la2} and Definition \ref{d:adm} there exist $Y^i$-admissible linear maps $\widehat{f}_{it}:\mat{s_i}^g\to \mat{s_i}$ and $Y^i$-admissible $(\ell-1)$-linear maps $\widecheck{f}_{it}:(\mat{s_i}^g)^{\ell-1}\to \mat{s_i}$ such that
$$f_i(Z^1,\dots,Z^\ell)=\sum_t \widehat{f}_{it}(Z^1)\widecheck{f}_{it}(Z^2,\dots,Z^\ell)$$
for all $1\le i\le h$. By the basis of induction and the induction hypothesis there exist $\widehat{f}_t\in\bigcap_i\cI_0(Y^i)$ and $\widecheck{f}_t\in\bigcap_i\cI_{\ell-2}(Y^i)$ such that
$$\Delta^1_{Y^i}\widehat{f}_t=\widehat{f}_{it},\qquad
\Delta^{\ell-1}_{Y^i}\widecheck{f}_t=\widecheck{f}_{it}$$
for all $i$. Then
$$(\widehat{f}_t\widecheck{f}_t)\begin{pmatrix}
Y^i & Z^1 & 0 & \cdots & 0\\
0 & Y^i & Z^2 & & \vdots\\
\vdots& & \ddots & \ddots & 0 \\ 
\vdots& & & \ddots & Z^\ell \\
0 & \cdots& \cdots & 0 & Y^i
\end{pmatrix}$$
equals
\begin{align*}
& \begin{pmatrix}
0 & \widehat{f}_{it}(Z^1) & * & \cdots & *\\
\vdots & 0 & * & \cdots& \vdots\\ 
\vdots & & \ddots & \ddots & \vdots \\
\vdots & & & \ddots & * \\
0 & \cdots & 0 & \cdots & 0
\end{pmatrix}
\begin{pmatrix}
* & \cdots & \cdots & \cdots & *\\
0 & 0 & \cdots & 0 & \widecheck{f}_{it}(Z^2,\dots,Z^\ell)\\ 
\vdots & & \ddots & & 0 \\
\vdots & & & \ddots & \vdots \\
0 & \cdots & \cdots & \cdots & 0
\end{pmatrix} \\
=& \begin{pmatrix}
0 & \cdots & \cdots & 0 & \widehat{f}_{it}(Z^1)\widecheck{f}_{it}(Z^2,\dots,Z^\ell)\\
\vdots & \ddots & & & 0\\ 
\vdots & & & & \vdots \\
\vdots & & & \ddots & \vdots \\
0 & \cdots & \cdots & \cdots & 0
\end{pmatrix}
\end{align*}
by Remark \ref{r:diff}. Therefore $f=\sum_t \widehat{f}_t\widecheck{f}_t\in\px$ satisfies \eqref{e:40}.
\end{proof}

\begin{thm}\label{t:hermite}
For $i=1,\dots,h$ let $Y^i\in\mat{s_i}^g$ be separated semisimple points, and $L\in\N\cup\{0\}$. If $(f^{(i)}_\ell)_{\ell=0}^L$ are sequences of multilinear maps satisfying $\LAC_L(Y^i)$, then there exists $f\in\px$ such that
\begin{equation}\label{e:hermite}
\Delta^\ell_{Y^i}f=f^{(i)}_\ell
\end{equation}
for all $1\le i\le h$ and $0\le \ell\le L$.
\end{thm}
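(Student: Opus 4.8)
The plan is to prove Theorem~\ref{t:hermite} by induction on $L$, using Proposition~\ref{p:sep} to correct the top-order differential at each stage. For the base case $L=0$, the hypothesis $\LAC_0(Y^i)$ says $[f^{(i)}_0,\cC(Y^i)]=0$, so $f^{(i)}_0$ lies in the centralizer of $\cC(Y^i)$ in $\mat{s_i}$, which equals $\cS(Y^i)$ by the double centralizer theorem (Remark~\ref{r:facts}(i)). Hence $f^{(i)}_0=p_i(Y^i)$ for some $p_i\in\px$. Since the $Y^i$ are separated, $\cS(Y^1\oplus\cdots\oplus Y^h)=\bigoplus_i\cS(Y^i)$ contains the block idempotent $(\oplus^{i-1}0)\oplus I\oplus(\oplus^{h-i}0)$, so there is $q_i\in\px$ with $q_i(Y^j)=\delta_{ij}I$, and then $f:=\sum_i p_iq_i$ satisfies $f(Y^j)=f^{(j)}_0$ for all $j$.

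For the inductive step, assume the theorem for $L-1$ and let $(f^{(i)}_\ell)_{\ell=0}^L$ satisfy $\LAC_L(Y^i)$. By Remark~\ref{r:tru} the truncations $(f^{(i)}_\ell)_{\ell=0}^{L-1}$ satisfy $\LAC_{L-1}(Y^i)$, so the induction hypothesis yields $g\in\px$ with $\Delta^\ell_{Y^i}g=f^{(i)}_\ell$ for $0\le\ell\le L-1$ and all $i$. As $g$ is an nc function on the admissible set $\kk^g_{\nc}$, the sequence $(\Delta^\ell_{Y^i}g)_{\ell\ge0}$ satisfies $\LAC(Y^i)$ by Remark~\ref{r:diff}, hence $\LAC_L(Y^i)$ by Remark~\ref{r:tru}. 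Now $\LAC_L(Y^i)$ is a system of \emph{linear} conditions on a finite sequence of multilinear maps, and the two sequences $(f^{(i)}_\ell)_{\ell=0}^L$ and $(\Delta^\ell_{Y^i}g)_{\ell=0}^L$ both satisfy it while agreeing in orders $0,\dots,L-1$; hence their difference $(0,\dots,0,h_i)$, where $h_i:=f^{(i)}_L-\Delta^L_{Y^i}g$, also satisfies $\LAC_L(Y^i)$. By Remark~\ref{r:adm} this says precisely that each $h_i$ is a $Y^i$-admissible $L$-linear map.

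Applying Proposition~\ref{p:sep} with $\ell=L$ to $h_1,\dots,h_h$ produces $p\in\px$ with $p\in\cI_{L-1}(Y^i)$ and $\Delta^L_{Y^i}p=h_i$ for every $i$; in particular $\Delta^\ell_{Y^i}p=0$ for $0\le\ell\le L-1$, since the block-matrix evaluation defining $\cI_{L-1}(Y^i)$ has these differentials as its entries by \eqref{e:diff}. Then $f:=g+p$ satisfies $\Delta^\ell_{Y^i}f=f^{(i)}_\ell$ for $0\le\ell\le L-1$ and $\Delta^L_{Y^i}f=\Delta^L_{Y^i}g+h_i=f^{(i)}_L$, so \eqref{e:hermite} holds for all $i$ and all $0\le\ell\le L$. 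The real content of the theorem is carried by Proposition~\ref{p:sep} together with the noncommutative-algebra lemmas feeding it, which are assumed here; within the present deduction the only genuinely new step is the linearity observation that collapses the top-order discrepancy into the single $Y^i$-admissible map $h_i$, after which Proposition~\ref{p:sep} applies verbatim. I do not anticipate any serious obstacle beyond carefully matching the index conventions for the ideals $\cI_\ell(Y^i)$ and keeping track of which multilinearity relations hold for $(\Delta^\ell_{Y^i}g)_\ell$.
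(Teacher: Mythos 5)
Your proposal is correct and follows essentially the same route as the paper: induction on $L$, with the base case handled via the double centralizer theorem and separation, and the inductive step correcting the top-order discrepancy $f^{(i)}_L-\Delta^L_{Y^i}g$, which is $Y^i$-admissible by Remark \ref{r:adm}, using Proposition \ref{p:sep}. Your extra details (the explicit idempotent interpolation at $L=0$ and the linearity argument identifying the discrepancy as an admissible map) are exactly what the paper leaves implicit.
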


\begin{proof}
We prove the statement by induction on $L$. The basis of induction $L=0$ holds because $[f_0^{(i)},\cC(Y^i)]=0$ if and only if $f_0^{(i)}\in\cS(Y^i)$ by Remark \ref{r:facts}(i). Now assume that the statement holds for $L-1$. Then there exists $\widehat{f}\in\px$ such that \eqref{e:hermite} holds for all $1\le i\le h$ and $0\le \ell\le L-1$. Let $h^{(i)}_L:=f^{(i)}_L-\Delta^L_{Y^i}\widehat{f}$. Then $h^{(i)}_L$ is a $Y^i$-admissible $L$-linear map by Remark \ref{r:adm}. By Proposition \ref{p:sep} there exists $\widecheck{f}\in\bigcap_i \cI_{L-1}(Y^i)$ such that $\Delta^L_{Y^i}\widecheck{f}=h^{(i)}_L$ for all $i$. Then $f=\widehat{f}+\widecheck{f}$ satisfies \eqref{e:hermite} for $L$.
\end{proof}

\begin{exa}
Let $Y=(E_{12},E_{21})\in\mat{2}^2$. Then $Y$ is an irreducible point and $r(x_1,x_2)=[x_1,x_2]^{-1}\in\cO_Y$. A direct computation shows that
$$f=3(x_1x_2-x_2x_1)+2(x_2x_1x_2x_1-x_1x_2x_1x_2)$$
satisfies $\Delta^\ell_Yf=\Delta^\ell_Yr$ for $\ell\in\{0,1\}$. By brute force one can also check that a minimal-degree polynomial $f$ satisfying $\Delta^\ell_Yf=\Delta^\ell_Yr$ for $\ell\in\{0,1,2\}$ has degree 8.
\end{exa}

\begin{rem}\label{r:bounds}
One can also derive polynomial bounds on the degree of $f$ as in Theorem \ref{t:hermite}. By Remark \ref{r:diff}, the maps $f_\ell=\Delta_Y^\ell f$ for $\ell\le L$ are determined by the action of $f$ on the $Lg s_i^2$ tuples
$$\begin{pmatrix}
	Y^i & Z^1 &  & \\
	& \ddots & \ddots & \\
	& & \ddots & Z^L \\
	& & & Y^i
\end{pmatrix}\in\mat{(L+1)s_i}^g$$
for all $Z^1\otimes\cdots\otimes Z^L$ in some basis for $(\mat{s_i}^g)^{\otimes L}$ and $1\le i\le h$. Write
$$N=L(L+1)g\sum_{i=1}^hs_i^3$$
and let $T\in\mat{N}^g$ be the direct sum of these tuples. Let $\cA\subset\mat{N}$ be the algebra generated by $T_1,\dots,T_g$. By \cite[Theorem 3]{Shi}, $\cA$ is generated by polynomials in $T$ of degree $2N\log_2 N+4N-4$. Therefore there exists $\widetilde{f}\in\px$ of degree at most $2N\log_2 N+4N-4$ such that $f_\ell=\Delta_Y^\ell f=\Delta_Y^\ell \widetilde{f}$ for $\ell\le L$.
\end{rem}

\begin{rem}
The analog of Theorem \ref{t:hermite} fails for non-semisimple points by \cite[Example 3.10]{AM1}. Also, on first glance one might think that for proving Theorem \ref{t:hermite}, it suffices to first show a simpler version of it for collections of irreducible points, in which case the bimodule formalism is redundant. But this is not true since an nc function about a semisimple point $Y$ is not determined by its ``restrictions'' to irreducible blocks of $Y$; see the next remark for a rigorous statement.
\end{rem}

\begin{rem}\label{r:ss}
For arbitrary points $Y'$ and $Y''$ there is a canonical $\kk$-algebra homomorphism
\begin{equation}\label{e:can}
\cO_{Y'\oplus Y''}\to \cO_{Y'}\times \cO_{Y''}.
\end{equation}
Indeed, in Subsection \ref{ssec2.4} we saw that every formal nc germ about $Y'\oplus Y''$ determines an nc function on $\Nilp(Y'\oplus Y'')$. Since nc functions respect direct sums and similarities, it is easy to see that for all
$$X\in \Nilp(Y'\oplus Y'')\cap \big(\matn \otimes (\mat{s'}\oplus \mat{s''})\big)^g$$
we have
$$f(X)\in \matn \otimes (\mat{s'}\oplus \mat{s''}).$$
Consequently, if $K$ is the permutation matrix corresponding to the canonical shuffle of blocks
$$\mat{ns'}\oplus \mat{ns''}\to \matn \otimes (\mat{s'}\oplus \mat{s''}),$$
then for all $X'\oplus X''\in \Nilp(Y'\oplus Y'')$,
\begin{equation}\label{e:dir}
f(K(X'\oplus X'')K^{-1})=K(f'(X')\oplus f''(X''))K^{-1}
\end{equation}
for some nc functions $f'$ and $f''$ on $\Nilp(Y')$ and $\Nilp(Y'')$, respectively. Thus \eqref{e:can} is given by $f\mapsto (f',f'')$. If $\kk=\C$ and $f$ is (uniformly) analytic, then $f'$ and $f''$ are also (uniformly) analytic by \eqref{e:dir}. Thus the homomorphism \eqref{e:can} restricts to homomorphisms
\begin{equation}\label{e:can1}
\Oa_{Y'\oplus Y''}\to \Oa_{Y'}\times \Oa_{Y''},\qquad
\Oua_{Y'\oplus Y''}\to \Oua_{Y'}\times \Oua_{Y''}.
\end{equation}
We refer to \cite[Chapter 9]{KVV3} for further discussion. Corollary \ref{c:inj} below demonstrates that homomorphisms \eqref{e:can} and \eqref{e:can1} are not necessarily injective.
\end{rem}

\subsection{Completions of the free algebra}

In this subsection we apply Hermite interpolation for nc functions to investigate the ring structure of nc germs about semisimple points.

\begin{prop}\label{p:power}
If $Y$ is a semisimple point, then $\cI_\ell(Y)=\cI_0(Y)^{\ell+1}$ for all $\ell\in\N$.
\end{prop}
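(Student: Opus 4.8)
The plan is to treat the two inclusions completely separately. The inclusion $\cI_0(Y)^{\ell+1}\subseteq\cI_\ell(Y)$ is a nilpotency argument that needs nothing beyond $f(Y)=0$: if $f_0,\dots,f_\ell\in\cI_0(Y)$ and $M$ denotes one of the block bidiagonal matrices occurring in the definition of $\cI_\ell(Y)$ (with $Y$ on each of the $\ell+1$ diagonal blocks and $Z^1,\dots,Z^\ell$ on the superdiagonal), then each $f_k(M)$ is block upper triangular with all diagonal blocks equal to $f_k(Y)=0$, hence strictly block upper triangular; therefore $(f_0\cdots f_\ell)(M)=f_0(M)\cdots f_\ell(M)=0$, being a product of $\ell+1$ strictly block upper triangular $(\ell+1)\times(\ell+1)$ block matrices. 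As this holds for all $Z^j$, we get $f_0\cdots f_\ell\in\cI_\ell(Y)$.

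For the reverse inclusion I would induct on $\ell\ge1$ with inductive hypothesis $\cI_{\ell-1}(Y)=\cI_0(Y)^\ell$ (vacuous for $\ell=1$), and the argument is a dimension count. Put $\cA=\cS(Y)$ and $\cC=\cC(Y)$, both semisimple by Remark~\ref{r:facts}(i), and let $d=\dim_\kk\cA$. Let $\mathcal H_\ell$ be the space of $Y$-admissible $\ell$-linear maps $(\mats^g)^\ell\to\mats$ (Definition~\ref{d:adm}), viewed as an $\cA$-bimodule via $(a\cdot\phi\cdot b)(Z^1,\dots,Z^\ell)=a\,\phi(Z^1,\dots,Z^\ell)\,b$. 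The first point is that $\Delta_Y^\ell$ restricts to a surjection $\cI_{\ell-1}(Y)\twoheadrightarrow\mathcal H_\ell$ with kernel $\cI_\ell(Y)$: the image lies in $\mathcal H_\ell$ by Remarks~\ref{r:diff} and~\ref{r:adm}, surjectivity is exactly Proposition~\ref{p:sep} for the single point $Y$, and the kernel is the defining condition of $\cI_\ell(Y)$. Thus $\dim_\kk\cI_{\ell-1}(Y)/\cI_\ell(Y)=\dim_\kk\mathcal H_\ell$. On the other side, multiplication induces a surjection of $\cA$-bimodules $(\cI_0(Y)/\cI_0(Y)^2)^{\otimes_\cA\ell}\twoheadrightarrow\cI_0(Y)^\ell/\cI_0(Y)^{\ell+1}$ (well-definedness over $\cA$ and vanishing of $\cI_0(Y)^2$-terms follow since any product of $\ell+1$ elements of $\cI_0(Y)$ lands in $\cI_0(Y)^{\ell+1}$), so $\dim_\kk\cI_0(Y)^\ell/\cI_0(Y)^{\ell+1}\le\dim_\kk(\cI_0(Y)/\cI_0(Y)^2)^{\otimes_\cA\ell}$.

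The crux — and the step I expect to be the main obstacle — is the identity $\dim_\kk(\cI_0(Y)/\cI_0(Y)^2)^{\otimes_\cA\ell}=\dim_\kk\mathcal H_\ell$; I expect both sides to equal $d\,\big((g-1)d+1\big)^\ell$ (reassuringly $g^\ell$ when $Y$ is scalar). For the left-hand side I would pin down the $\cA$-bimodule structure of $\cI_0(Y)/\cI_0(Y)^2$ from the conormal exact sequence of the surjection $\px\to\cS(Y)$, namely $0\to\cI_0(Y)/\cI_0(Y)^2\to\cS(Y)\otimes_\kk\kk^g\otimes_\kk\cS(Y)\to\cS(Y)\otimes_\kk\cS(Y)\to\cS(Y)\to0$, which is exact because the free algebra $\px$ has global dimension one; this, together with the Wedderburn decomposition of $\cS(Y)$ and Schur's lemma, determines the isotypic multiplicities of $\cI_0(Y)/\cI_0(Y)^2$ and hence the dimension of each $\otimes_\cA$-power. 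For the right-hand side one notes that $S\mapsto[S,Y]$ gives a $\cC$-bimodule isomorphism $\mats/\cC\cong[\mats,Y]$, so $\mats^g/[\mats,Y]$ and $\mats$ have explicit isotypic decompositions as $\cC$-bimodules, and $\mathcal H_\ell=\Hom_{\cC-\cC}\big((\mats^g/[\mats,Y])^{\otimes_\cC\ell},\mats\big)$ can be evaluated. The coincidence of the two counts is a shadow of the double-centralizer relationship between $\cS(Y)$ and $\cC(Y)$ inside $\mats$: the functor $\Hom_{\cC-\cC}(-,\mats)$ carries simple $\cC$-bimodules to simple $\cS(Y)$-bimodules and intertwines $\otimes_\cC$ with $\otimes_{\cS(Y)}$, so in fact $\mathcal H_\ell\cong\mathcal H_1^{\otimes_{\cS(Y)}\ell}$. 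Since all objects in sight base-change well, one may pass to the algebraic closure of $\kk$ for this bookkeeping.

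Assembling the pieces: by the easy inclusion and the inductive hypothesis, $\cI_0(Y)^{\ell+1}\subseteq\cI_\ell(Y)\subseteq\cI_{\ell-1}(Y)=\cI_0(Y)^\ell$, and therefore $\dim_\kk\mathcal H_\ell=\dim_\kk\cI_{\ell-1}(Y)/\cI_\ell(Y)=\dim_\kk\cI_0(Y)^\ell/\cI_\ell(Y)\le\dim_\kk\cI_0(Y)^\ell/\cI_0(Y)^{\ell+1}\le\dim_\kk(\cI_0(Y)/\cI_0(Y)^2)^{\otimes_\cA\ell}=\dim_\kk\mathcal H_\ell$. Hence every inequality is an equality; since $\cI_0(Y)^{\ell+1}\subseteq\cI_\ell(Y)$, the equality $\dim_\kk\cI_0(Y)^\ell/\cI_\ell(Y)=\dim_\kk\cI_0(Y)^\ell/\cI_0(Y)^{\ell+1}$ forces $\cI_\ell(Y)=\cI_0(Y)^{\ell+1}$, which closes the induction.
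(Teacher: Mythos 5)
Your outer skeleton is fine and partly parallels the paper: the easy inclusion $\cI_0(Y)^{\ell+1}\subseteq\cI_\ell(Y)$ by strict block-triangularity is correct, and the identification $\cI_{\ell-1}(Y)/\cI_\ell(Y)\cong\mathcal H_\ell$ via $\Delta_Y^\ell$ (image admissible by Remarks \ref{r:diff} and \ref{r:adm}, surjectivity by Proposition \ref{p:sep}, kernel $\cI_\ell(Y)$) is exactly the first step of the paper's proof. Where you diverge is in how the reverse inclusion is closed: the paper shows directly, via Lemma \ref{l:la2} and the polynomial realization in Proposition \ref{p:sep}, that every $Y$-admissible $(\ell+1)$-linear map is a sum of products of a $1$-linear and an $\ell$-linear admissible map, whence $\cI_\ell(Y)\equiv\cI_0(Y)\cI_{\ell-1}(Y)\bmod\cI_{\ell+1}(Y)$ and a short quotient computation finishes; you instead propose a squeeze between $\dim\cI_0^\ell/\cI_0^{\ell+1}$, $\dim(\cI_0/\cI_0^2)^{\otimes_{\cS(Y)}\ell}$ and $\dim\mathcal H_\ell$.

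The genuine gap is the step you yourself flag as the crux: the identity $\dim_\kk(\cI_0/\cI_0^2)^{\otimes_{\cS(Y)}\ell}=\dim_\kk\mathcal H_\ell$ is asserted but not proved, and both ingredients of your plan are substantive. First, left-exactness of the conormal sequence $0\to\cI_0/\cI_0^2\to\cS(Y)\otimes\kk^g\otimes\cS(Y)\to\cS(Y)\otimes\cS(Y)\to\cS(Y)\to0$ over the free algebra is true (it is the quasi-freeness/formal smoothness of $\px$, or an explicit computation with the free bimodule resolution $0\to\px\otimes\kk^g\otimes\px\to\px\otimes\px\to\px\to0$), but ``$\px$ has global dimension one'' is only a pointer, not an argument, and nothing in the paper supplies it. Second, and more seriously, the statement that $\Hom_{\cC(Y)-\cC(Y)}(-,\mats)$ sends simple $\cC(Y)$-bimodules to simple $\cS(Y)$-bimodules and intertwines $\otimes_{\cC(Y)}$ with $\otimes_{\cS(Y)}$, so that $\mathcal H_\ell\cong\mathcal H_1^{\otimes_{\cS(Y)}\ell}$, is precisely the nontrivial content here: surjectivity of the pairing $\mathcal H_1\otimes_{\cS(Y)}\mathcal H_{\ell-1}\to\mathcal H_\ell$ is in substance the paper's Lemma \ref{l:la2} (together with the double centralizer facts of Remark \ref{r:facts} and Lemma \ref{l:n0}), and your route additionally needs injectivity, i.e.\ exact multiplicity bookkeeping over an arbitrary field of characteristic $0$ (division-algebra endomorphism rings) and for $Y$ with repeated irreducible blocks (where $\cC(Y)$ has matrix factors); ``base-change to $\ckk$ and count'' is a plan, not a proof, and the claimed count $d((g-1)d+1)^\ell$ is itself only verified heuristically. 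So the proposal does not circumvent the work done by Lemma \ref{l:la2}; it defers an equivalent statement to the unproven crux, and as written the proof is incomplete at exactly that point.
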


\begin{proof}
By Proposition \ref{p:sep}, there is a natural isomorphism between $\cI_\ell(Y)/\cI_{\ell+1}(Y)$ and all $Y$-admissible $(\ell+1)$-linear maps. Therefore
$$\cI_\ell(Y)\equiv\cI_0(Y)\cdot\cI_{\ell-1}(Y) \mod \cI_{\ell+1}(Y)$$
follows by Lemma \ref{l:la2} as in the last part of the proof of Proposition \ref{p:sep}. Furthermore, $\cI_0(Y)\cI_{\ell-1}(Y)\subseteq\cI_\ell (Y)\subseteq \cI_{\ell-1}(Y)$, so
\begin{align*}
\cI_{\ell-1}(Y)/\cI_\ell (Y)
&= (\cI_{\ell-1}(Y)/\cI_{\ell+1}(Y)) \big/ (\cI_\ell(Y)/\cI_{\ell+1}(Y)) \\
&= (\cI_{\ell-1}(Y)/\cI_{\ell+1}(Y)) \big/ ((\cI_0(Y)\cI_{\ell-1}(Y))/\cI_{\ell+1}(Y)) \\
&= \cI_{\ell-1}(Y)/(\cI_0(Y)\cI_{\ell-1}(Y)),
\end{align*}
implies $\cI_\ell(Y)=\cI_0(Y)\cdot\cI_{\ell-1}(Y)$.
\end{proof}

\begin{cor}\label{c:compl}
If $Y$ is a semisimple point, then
$$\cO_Y = \varprojlim_\ell \big(\px/\cI_0(Y)^\ell\big).$$
\end{cor}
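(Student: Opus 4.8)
The plan is to realize $\cO_Y$ as the completion of $\px$ with respect to the filtration by the ideals $\cI_0(Y)^\ell$, by matching this filtration against an intrinsic one on $\cO_Y$. For $\ell\geq 0$ set
$$\fa_\ell=\{(f_k)_k\in\cO_Y\colon f_0=\cdots=f_{\ell-1}=0\}.$$
First I would check that each $\fa_\ell$ is a two-sided ideal of $\cO_Y$: from the definition of the convolution product of formal nc germs, the first $\ell$ components of a left or right product with an element of $\fa_\ell$ again vanish (indeed $\fa_\ell\cdot\fa_m\subseteq\fa_{\ell+m}$). Clearly $\bigcap_\ell\fa_\ell=\{0\}$, so the canonical map $\cO_Y\to\varprojlim_\ell\cO_Y/\fa_\ell$ is injective; it is also surjective, since a compatible system of truncations assembles into a single sequence $(f_k)_k$ of multilinear maps each of whose truncations $(f_0,\dots,f_{\ell-1})$ satisfies $\LAC_{\ell-1}(Y)$, hence which satisfies $\LAC(Y)$ by Remark \ref{r:tru}, i.e. lies in $\cO_Y$. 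Thus $\cO_Y=\varprojlim_\ell\cO_Y/\fa_\ell$.

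Next I would identify $\cO_Y/\fa_\ell$ with $\px/\cI_0(Y)^\ell$. The assignment $p\mapsto(\Delta_Y^k p)_k$ is a $\kk$-algebra homomorphism $\px\to\cO_Y$ (nc polynomials are nc functions, so their differentials satisfy $\LAC(Y)$ by Remark \ref{r:diff}); composing with the quotient $\cO_Y\twoheadrightarrow\cO_Y/\fa_\ell$ yields a homomorphism whose kernel is exactly $\{p\colon\Delta_Y^0p=\cdots=\Delta_Y^{\ell-1}p=0\}=\cI_{\ell-1}(Y)$. This composite is surjective: an element of $\cO_Y/\fa_\ell$ is a truncated sequence $(f_0,\dots,f_{\ell-1})$ satisfying $\LAC_{\ell-1}(Y)$, and Theorem \ref{t:hermite} (with $h=1$ and $L=\ell-1$) provides $p\in\px$ with $\Delta_Y^k p=f_k$ for $k\le\ell-1$. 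Hence $\px/\cI_{\ell-1}(Y)\xrightarrow{\ \sim\ }\cO_Y/\fa_\ell$, and by Proposition \ref{p:power} one has $\cI_{\ell-1}(Y)=\cI_0(Y)^\ell$, so $\px/\cI_0(Y)^\ell\xrightarrow{\ \sim\ }\cO_Y/\fa_\ell$ for every $\ell\ge 1$.

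Finally, because all these isomorphisms are induced by the single homomorphism $\px\to\cO_Y$, they intertwine the canonical surjections $\px/\cI_0(Y)^{\ell+1}\to\px/\cI_0(Y)^\ell$ and $\cO_Y/\fa_{\ell+1}\to\cO_Y/\fa_\ell$ (both of which simply discard the top layer). Passing to inverse limits then gives $\varprojlim_\ell\px/\cI_0(Y)^\ell\cong\varprojlim_\ell\cO_Y/\fa_\ell=\cO_Y$, which is the claim. I expect the only step requiring genuine care — as opposed to routine inverse-limit bookkeeping — to be the verification that $\fa_\ell$ is an ideal and behaves correctly under the convolution product in the non-scalar setting (where the product of germs is defined via amplifications of multilinear maps); the substance of the argument is otherwise carried entirely by Theorem \ref{t:hermite}, Proposition \ref{p:power}, and Remark \ref{r:tru}.
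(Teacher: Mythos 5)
Your proof is correct and rests on the same pillars as the paper's: Theorem \ref{t:hermite} (with $h=1$) for surjectivity at each finite level, Proposition \ref{p:power} to rewrite $\cI_{\ell-1}(Y)$ as $\cI_0(Y)^\ell$, and the fact that a nonzero germ has some nonzero differential for injectivity. The only difference is organizational — you make the filtration ideals $\fa_\ell$ and the identification $\cO_Y\cong\varprojlim_\ell \cO_Y/\fa_\ell$ explicit and build the level-wise isomorphisms from the polynomial side, whereas the paper defines the surjections $\cO_Y\to\px/\cI_0(Y)^\ell$ directly via interpolants — so this is essentially the paper's argument.
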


\begin{proof}	
Interpolating polynomials of Theorem \ref{t:hermite}, together with Proposition \ref{p:power}, induce surjective homomorphisms $\cO_Y\to \px/\cI_0(Y)^\ell$ such that the diagram
\begin{center}\begin{tikzpicture}[node distance=1.6cm, auto]
\node (O) {$\cO_Y$};
\node (P) [above of=O] {$\px$};
\node (Fl) [below of=O, left of=O] {};
\node (Fm) [below of=O, right of=O] {};
\node (Pl) [left of=Fl] {$\px/\cI_0(Y)^\ell$};
\node (Pm) [right of=Fm] {$\px/\cI_0(Y)^m$};
\draw[->] (P.270) to node {} (O.90);
\draw[->] (O.210) to node {} (Pl.30);
\draw[->] (O.330) to node {} (Pm.150);
\draw[->] (P.220) to node {} (Pl.120);
\draw[->] (P.320) to node {} (Pm.60);
\draw[->] (Pl) to node {} (Pm);
\end{tikzpicture}\end{center}
commutes for all $\ell>m$. Hence there is a surjective homomorphism
\begin{equation}\label{e:iso}
\cO_Y\to \varprojlim_\ell \big(\px/\cI_0(Y)^\ell\big).
\end{equation}
Furthermore, if $f\in\cO_Y$ is nonzero, then there exists $\ell$ such that $\Delta_{Y}^\ell f\neq0$, so the image of $f$ in $\px/\cI_0(Y)^\ell$ is nonzero. Hence \eqref{e:iso} is an isomorphism.
\end{proof}

We continue by noting some apparent isomorphisms of formal germ algebras.

\begin{lem}\label{l:ss}
If $Y\in\mats^g$ and $P\in\gl{s}$, then $\cO_{PYP^{-1}}\cong \cO_Y$. Furthermore, for arbitrary $Y^1,\dots,Y^h\in\kk_{\nc}^g$ and $m_1,\dots,m_h\in\N$ we have
\begin{equation}\label{e:65}
\cO_{\bigoplus_i (\oplus^{m_i}Y^i)}\cong\cO_{\bigoplus_i Y^i}.
\end{equation}
\end{lem}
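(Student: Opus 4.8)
The plan is to handle the two statements separately; the conjugation isomorphism is routine, and the direct-sum isomorphism is the substantive point.

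\emph{Conjugation invariance.} To a formal germ $(f_\ell)_\ell$ about $Y$ I would assign $(f_\ell^P)_\ell$ with $f_\ell^P(Z^1,\dots,Z^\ell)=Pf_\ell(P^{-1}Z^1P,\dots,P^{-1}Z^\ell P)P^{-1}$. Using $P^{-1}[S,PYP^{-1}]P=[P^{-1}SP,Y]$ one checks line by line that the $\LAC(PYP^{-1})$ identities for $(f_\ell^P)_\ell$ are just the $\LAC(Y)$ identities for $(f_\ell)_\ell$ conjugated by $P$; the assignment is $\kk$-linear, respects the convolution product, and is inverted by conjugation with $P^{-1}$, so it is an algebra isomorphism. (Equivalently, $X\mapsto(\oplus^nP)X(\oplus^nP)^{-1}$ is an isomorphism of nc sets $\Nilp(Y)\to\Nilp(PYP^{-1})$, and composing with it identifies the two germ algebras realized as nc functions on $\Nilp$.) In particular the summands of a direct sum may be permuted freely, which I use below without comment.

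\emph{Reduction of the direct-sum statement.} Permuting summands and inducting on $\sum_i(m_i-1)$, it is enough to peel off one copy at a time, i.e.\ to prove $\cO_{A\oplus B\oplus B}\cong\cO_{A\oplus B}$ for arbitrary points $A$ (possibly absent) and $B$. I would first do the case $A$ absent, $\cO_{\oplus^mZ}\cong\cO_Z$ for $Z\in\mat{z}^g$, where the mechanism is transparent. Since $\oplus^mZ=I_m\otimes Z$, the block-scalars $C\otimes I_z$ ($C\in\mat{m}$) lie in $\cC(\oplus^mZ)$ and $[C\otimes I_z,\oplus^mZ]=0$. Feeding $S=C\otimes I_z$ into $\LAC(\oplus^mZ)$, all the left-hand sides vanish, and what survives says exactly that each term $g_\ell$ of a $\LAC(\oplus^mZ)$-sequence is $\mat{m}$-multilinear (left $\mat{m}$-linear in the first slot, right $\mat{m}$-linear in the last slot, balanced over $\mat{m}$ between adjacent slots, with $g_0$ centralizing $\mat{m}\otimes I_z$). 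Restricting all arguments to the $(1,1)$-block and using these relations to collapse everything else shows $g_\ell$ is the $m$-ampliation $(f_\ell)_m$ of a \emph{unique} $\ell$-linear map $f_\ell\colon(\mat{z}^g)^\ell\to\mat{z}$. Conversely, feeding $S=I_m\otimes S_0$ ($S_0\in\mat{z}$) into $\LAC(\oplus^mZ)$ and testing on block-constant inputs shows $((f_\ell)_m)_\ell$ satisfies $\LAC(\oplus^mZ)$ iff $(f_\ell)_\ell$ satisfies $\LAC(Z)$. The bijection $(f_\ell)_\ell\mapsto((f_\ell)_m)_\ell$ is additive and, by the routine compatibility of ampliation with products, multiplicative; hence it is an isomorphism $\cO_Z\xrightarrow{\ \sim\ }\cO_{\oplus^mZ}$. (One may also say it this way: $\Nilp(\oplus^mZ)=\bigsqcup_n\Nilp(Z)_{nm}$ is a sub-nc-set of $\Nilp(Z)$, so restriction $\cO_Z\to\cO_{\oplus^mZ}$ is a homomorphism — injective because the differentials at the base point are already computed inside $\Nilp(\oplus^mZ)$, surjective by the ampliation argument above together with \eqref{e:amp}.)

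\emph{The general case.} For $\cO_{A\oplus B\oplus B}\cong\cO_{A\oplus B}$, and directly for $\cO_{\bigoplus_i(\oplus^{m_i}Y^i)}\cong\cO_{\bigoplus_iY^i}$ with $Y^i\in\mat{s_i}^g$, I would run the same argument blockwise: with $Y'=\bigoplus_i(\oplus^{m_i}Y^i)$, the centralizer $\cC(Y')$ contains the block-scalars $\bigoplus_i(\mat{m_i}\otimes I_{s_i})$, and feeding these into $\LAC(Y')$ makes each term of a $\LAC(Y')$-sequence collapse blockwise to a unique multilinear map on $(\mat{s}^g)^\ell$, where $s=\sum_is_i$ and $\mat{s}$ — partitioned into $s_i\times s_j$ blocks — carries $\bigoplus_iY^i$; the remaining $\LAC(Y')$ relations then become precisely $\LAC(\bigoplus_iY^i)$ for the collapsed sequence. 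That produces the desired algebra isomorphism. The step I expect to be the real obstacle is this last translation: one must verify carefully that the block-scalar part of $\LAC(Y')$ is genuinely \emph{equivalent} to the ampliation structure — keeping track of the off-diagonal $s_i\times s_j$ blocks, which must survive the collapse so that $\bigoplus_iY^i$ and its commutant are correctly recovered — and that the residual relations match $\LAC(\bigoplus_iY^i)$ exactly; multiplicativity under the convolution product then follows as in the prototype.
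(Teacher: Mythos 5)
Your conjugation step and your prototype case $\cO_{\oplus^m Z}\cong\cO_Z$ are fine, and the prototype is in substance the same ingredient the paper extracts from \eqref{e:amp}: the intertwining relations with the block scalars $\mat{m}\otimes I_z\subseteq\cC(\oplus^m Z)$ force an $\LAC(\oplus^m Z)$-sequence to be the $m$-ampliation of a unique $\LAC(Z)$-sequence. The gap is exactly where you place it: the unequal-multiplicity case, which is the actual content of \eqref{e:65}, is asserted rather than proved, and it is not a verbatim repeat of the prototype. For $Y'=\bigoplus_i(\oplus^{m_i}Y^i)$ there is no single ampliation structure: $D=\bigoplus_i(\mat{m_i}\otimes I_{s_i})$ is not a full matrix algebra and $\mat{s'}$ is not $D\otimes(\text{anything})$, so the off-diagonal blocks $\kk^{m_i\times m_j}\otimes\kk^{s_i\times s_j}$ force a Schur-lemma/corner-idempotent analysis of the simple $\mat{m_i}$-$\mat{m_j}$-bimodules $\kk^{m_i\times m_j}$ (in the spirit of the machinery the paper builds in Lemmas \ref{l:la1} and \ref{l:la2}) to see that a $D$-balanced multilinear map is determined by, and freely reconstructed from, its corner values; one must also note that the collapsed sequences are precisely those balanced over the block scalars $\bigoplus_i\kk I_{s_i}$ (automatic for $\LAC(\bigoplus_i Y^i)$-sequences) and then match the residual relations for arbitrary $S\in\mat{s'}$ with $\LAC(\bigoplus_i Y^i)$, including multiplicativity. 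This can all be completed, but as written the proposal stops at the one step that carries the weight.

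The paper sidesteps that step entirely, and you could too: with $m=\max_i m_i$, take the canonical homomorphisms of Remark \ref{r:ss} (restriction to a sub-direct-sum, which needs only your conjugation observation), namely $\phi:\cO_{\oplus^m(\bigoplus_i Y^i)}\to\cO_{\bigoplus_i(\oplus^{m_i}Y^i)}$ and $\psi:\cO_{\bigoplus_i(\oplus^{m_i}Y^i)}\to\cO_{\bigoplus_i Y^i}$. The composite $\psi\circ\phi$ is the canonical map $\cO_{\oplus^m Y}\to\cO_Y$ for $Y=\bigoplus_i Y^i$, and it is an isomorphism by precisely the equal-multiplicity/ampliation fact you did prove (this is the paper's appeal to \eqref{e:amp}); since $\phi(\psi\circ\phi)^{-1}\psi$ is the identity by the construction of the canonical maps, $\psi$ acquires a two-sided inverse and is an isomorphism. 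In other words, your prototype plus this sandwich argument already yields the general statement, and the blockwise collapse you flag as ``the real obstacle'' never has to be carried out.
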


\begin{proof}
The first claim is obvious. Now let $m=\max\{m_1,\dots,m_h\}$. As in Remark \ref{r:ss}, there are canonical homomorphisms
$$\phi: \cO_{\oplus^m(\bigoplus_i Y^i)}\to \cO_{\bigoplus_i (\oplus^{m_i}Y^i)},\qquad
\psi: \cO_{\bigoplus_i (\oplus^{m_i}Y^i)}\to \cO_{\bigoplus_i Y^i}.$$
Their composition $\psi\circ\phi$ is again a canonical homomorphism of the same kind, and is an isomorphism by \eqref{e:amp}. By the construction of $\phi,\psi$ as in Remark \ref{r:ss} it is also straightforward to see that $\phi(\psi\circ\phi)^{-1}\psi$ is the identity map, so $\psi$ is an isomorphism.
\end{proof}

The following theorem greatly generalizes the observation $\cO_Y\cong \cO_0$ for $Y\in\kk^g$ used in Section \ref{sec5}, and classifies $\cO_Y$ in terms of $Y$. See also \cite{SSS} for results about correspondences between noncommutative varieties and algebras of nc functions on them.

\begin{thm}\label{t:iso}
Let $Y$ and $Y'$ be semisimple points. Then the rings $\cO_Y$ and $\cO_{Y'}$ are isomorphic if and only if $\cS(Y)\cong\cS(Y')$.

The same conclusion holds for (uniformly) analytic nc germs about $Y$ and $Y'$ if $\kk=\C$.
\end{thm}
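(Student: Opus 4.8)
The plan is to prove both implications, reducing everything to the formal case and then deducing the analytic cases by the rigidity machinery already in place. For the easy direction, suppose $\cO_Y\cong\cO_{Y'}$. The maximal-ideal structure and the semisimple quotient should be intrinsic to the ring: by Corollary~\ref{c:compl}, $\cO_Y=\varprojlim_\ell(\pxc/\cI_0(Y)^\ell)$, so $\cI_0(Y)$ is recovered as (the closure of) the Jacobson radical, and $\cO_Y/\cI_0(Y)\cong \pxc/\cI_0(Y)$. But the evaluation map $\pxc\to\mats$, $p\mapsto p(Y)$, has image $\cS(Y)$ and kernel $\cI_0(Y)$, so $\cO_Y/\cI_0(Y)\cong\cS(Y)$. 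Since any ring isomorphism must carry Jacobson radical to Jacobson radical, it induces $\cS(Y)\cong\cS(Y')$. (One has to check that $\cI_0(Y)$ really is the Jacobson radical of $\cO_Y$, or at least that $\cO_Y/\cI_0(Y)$ is the maximal semisimple quotient; this is where semisimplicity of $\cS(Y)$ from Remark~\ref{r:facts}(i) enters, together with $\bigcap_\ell\cI_0(Y)^\ell=0$.)

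For the converse, suppose $\cS(Y)\cong\cS(Y')$. By Wedderburn theory two semisimple $\C$-algebras are isomorphic iff they have the same multiset of matrix-block sizes, and for a semisimple point $Y$ the algebra $\cS(Y)$ is a direct sum of full matrix algebras indexed by the (similarity classes of) irreducible blocks appearing in $Y$, with the block size equal to the size of the corresponding irreducible. Using Lemma~\ref{l:ss} we may, up to isomorphism of $\cO_Y$, replace $Y$ by a direct sum in which each irreducible block appears exactly once; call the resulting point $\widetilde Y$. Doing the same for $Y'$ and using $\cS(Y)\cong\cS(Y')$, we get that $\widetilde Y$ and $\widetilde{Y'}$ have the same irreducible blocks up to similarity — hence are similar — and Lemma~\ref{l:ss} gives $\cO_{\widetilde Y}\cong\cO_{\widetilde{Y'}}$. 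Chaining the isomorphisms yields $\cO_Y\cong\cO_{Y'}$.

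For the analytic statements, the same reduction via Lemma~\ref{l:ss} works provided its isomorphisms restrict to $\Oa$ and $\Oua$; the maps in Lemma~\ref{l:ss} are the canonical amplification/shuffle homomorphisms of Remark~\ref{r:ss}, which by \eqref{e:can1} do restrict to $\Oa_{(-)}$ and $\Oua_{(-)}$, and similarity conjugation clearly preserves (uniform) analyticity, so the forward direction $\cS(Y)\cong\cS(Y')\Rightarrow\Oa_Y\cong\Oa_{Y'}$ (resp.\ $\Oua$) goes through verbatim. For the reverse direction one needs that an abstract isomorphism $\Oa_Y\cong\Oa_{Y'}$ still forces $\cS(Y)\cong\cS(Y')$: here invoke Corollary~\ref{c:compl} again — the inclusions $\pxc\subset\Oua_Y\subset\Oa_Y\subset\cO_Y$ and the fact that $\cO_Y$ is the $\cI_0(Y)$-adic completion — to identify $\cS(Y)$ as $\Oa_Y/\mathfrak m$ (resp.\ $\Oua_Y/\mathfrak m$) where $\mathfrak m$ is the set of germs vanishing at $Y$, which is the unique maximal ideal (Lemma~\ref{l:loc} for the scalar case; in general one argues that a germ with invertible value at $Y$ is invertible, since its leading term $f_0\in\cS(Y)$ is then a unit and one can invert order by order, with convergence in the analytic cases coming from the usual geometric-series estimate as in Lemma~\ref{l:loc}). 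Then any isomorphism carries $\mathfrak m$ to $\mathfrak m'$ and descends to $\cS(Y)\cong\cS(Y')$.

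The main obstacle I expect is the reverse analytic direction: one must be sure that the quotient of $\Oa_Y$ (or $\Oua_Y$) by its maximal ideal really is $\cS(Y)$ and that this maximal ideal is preserved by every abstract isomorphism — i.e.\ that the non-unit germs form a genuine (two-sided) maximal ideal, not merely a subset. Establishing that $\Oa_Y$ and $\Oua_Y$ are local with residue ring $\cS(Y)$ (a mild generalization of Lemma~\ref{l:loc} from scalar to semisimple $Y$, using $\cS(Y)$ semisimple rather than $\C$) is the technical heart; once that is in hand, both directions and all three rings are handled uniformly.
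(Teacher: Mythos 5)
Your forward implication is essentially the paper's: by Corollary \ref{c:compl}, $\cO_Y$ is the $\cI_0(Y)$-adic completion of the free algebra, $\cI_0(Y)$ is the intersection of the finitely many maximal ideals (equivalently the Jacobson radical), and the residue algebra is $\cS(Y)$, so an abstract isomorphism induces $\cS(Y)\cong\cS(Y')$. (Your ``unique maximal ideal''/``local'' phrasing is off when $\cS(Y)$ is not a division algebra --- for irreducible $Y\in\matc{n}^g$ with $n\ge2$ the non-units of the residue algebra $\matc{n}$ do not form an ideal --- but the radical formulation you also give is the correct one and is what the paper uses.)

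The converse direction, however, contains a genuine gap, and it sits exactly where the content of the theorem lies. You claim that after reducing to multiplicity-free points via Lemma \ref{l:ss}, the hypothesis $\cS(Y)\cong\cS(Y')$ forces $\widetilde Y$ and $\widetilde{Y'}$ to have similar irreducible blocks, hence to be similar. This is false for $g\ge2$: two irreducible points of the same size $n\ge2$ both generate $\matc{n}$, yet they need not be simultaneously similar --- for instance $Y=(E_{12},E_{21})$ and $Y'=(E_{12},2E_{21})$ in $\matc{2}^2$ are distinguished by $\tr(Y_1Y_2)$. Such pairs are precisely the nontrivial instances of the theorem, and your argument says nothing about them; Lemma \ref{l:ss} only disposes of conjugation and multiplicities. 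The paper bridges the gap by using Hermite interpolation (the $\LAC_1(Y)$ case of Theorem \ref{t:hermite}) to build an nc polynomial map $F$ with $F(Y)=Y'$ whose first differential at $Y$ is a bijective $\cC(Y)$-bimodule map extending $[S,Y]\mapsto[S,Y']$; the nc inverse function theorem of \cite{AKV} then yields a (uniformly analytic, in general non-polynomial) inverse $G$ near $Y'$, and Corollary \ref{c:compl} extends $x\mapsto F$ and $x\mapsto G$ to mutually inverse homomorphisms of the germ algebras, which also restrict to $\Oa$ and $\Oua$ when $\kk=\C$. The remark following the theorem in the paper shows that in general no nc polynomial automorphism carries $Y$ to $Y'$ (Dicks' commutator test), so no conjugation or relabelling shortcut of the kind you propose can close this gap.
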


\begin{proof}
$(\Rightarrow)$ The description of $\cO_Y$ given by Corollary \ref{c:compl} implies that $\cO_Y$ admits $h$ maximal ideals, where $h$ is the number of simple factors in $\cS(Y)$, and their intersection equals $\cI_0(Y)$. Thus an isomorphism $\cO_Y\to\cO_{Y'}$ maps $\cI_0(Y)$ to $\cI_0(Y')$, and so it induces an isomorphism
$$\cS(Y)\cong \px/\cI_0(Y) \to \px/\cI_0(Y')\cong \cS(Y').$$

$(\Leftarrow)$ By Lemma \ref{l:ss} it suffices to assume that $Y,Y'\in\mats^g$ are direct sums of pairwise non-similar irreducible points. Moreover, since $\cS(Y)\cong\cS(Y')$, each irreducible block of $Y$ is similar to an irreducible block of $Y'$, we can further replace $Y'$ by a similar matrix point to obtain $\cS(Y)=\cS(Y')$. Then also $\cC(Y)=\cC(Y')$, so there is a $\cC(Y)$-isomorphism
\begin{equation}\label{e:66}
[\mats,Y]\to[\mats,Y'],\qquad [S,Y]\mapsto [S,Y'].
\end{equation}
Since $\mats^g$ is a semisimple $\cC(Y)$-bimodule, the isomorphism \eqref{e:66} extends to a $\cC(Y)$-bimodule isomorphism $L:\mats^g\to \mats^g$. Write $L=(L_1,\dots,L_g)$ for $L_j:\mats^g\to\mats$. Then $(Y'_j,L_j)$ satisfy $\LAC_1(Y)$ for all $j$, so there exist $F_1,\dots,F_g\in\px$ such that
$$F_j(Y)=Y'_j,\qquad \Delta_Y^1 F_j=L_j.$$
Since $L$ is an isomorphism, the nc polynomial map $F=(F_1,\dots,F_g)$ admits an inverse nc map $G=(G_1,\dots,G_g)$ about $Y'$ by the inverse function theorem for nc functions \cite[Theorem 1.7]{AKV}, which is uniformly analytic if $\kk=\C$ by \cite[Theorem 1.4]{AKV}. Also note that $G_j\in\cO_{Y'}$. By Corollary \ref{c:compl}, the homomorphisms
\begin{alignat*}{2}
\phi&:\px\to\px, \qquad & x &\mapsto G,\\
\psi&:\px\to \cO_Y,\qquad & x &\mapsto F
\end{alignat*}
extend to homomorphisms
$$\Phi:\cO_Y\to\cO_{Y'},\qquad \Psi:\cO_{Y'}\to \cO_Y.$$
Since $F$ and $G$ are inverse maps, $\Phi$ and $\Psi$ are inverse homomorphisms.
\end{proof}

\begin{rem}
In the proof of Theorem \ref{t:iso} we saw that for any two irreducible points $Y,Y'\in\mats^g$, there exist an nc polynomial map $F$ and a uniformly analytic nc map $G$ on a neighborhood of $Y'$ such that $F(Y)=Y'$, $G(Y')=Y$ and $F\circ G = G\circ F =\id$. It is natural to ask whether we can choose $F,G$ in such a way that $G$ is also polynomial, that is, whether we can find an nc polynomial automorphism $F$ of the noncommutative space $\kk^g_{\nc}$ such that $F(Y)=Y'$.

The answer is positive if $g\ge s+1$ or if $Y$ and $Y'$ are {\bf saturated} (meaning that $Y$ and $Y'$ without last components are already irreducible points) by \cite[Theorems 4.3 and 4.4]{Rei}. However, in general there might not be any nc polynomial automorphism $F$ mapping $Y$ to $Y'$. For example, let $g=2$. To a point $Y=(Y_1,Y_2)$ we assign the span of its commutator $L_Y=\kk\cdot [Y_1,Y_2]\subset\mats$. By \cite[Theorem]{Dic}, every nc polynomial automorphism $F$ preserves $L_Y$. On the other hand, there clearly exist irreducible points $Y,Y'\in\mats^2$ such that $L_Y\neq L_{Y'}$ if $s\ge2$.

Polynomial automorphisms of the noncommutative space are well-understood through the solution of the free Jacobian conjecture and the free Grothendieck theorem \cite{Pas,Aug}.
\end{rem}

\section{Minimal propagation and nilpotent noncommutative functions}\label{sec7}

In this section we describe a particular propagation of a sequence satisfying truncated \lac about a semisimple point, into a uniformly analytic function, which is quite distinct from the Hermite interpolation with nc polynomials described earlier. We construct an embedding of $\cS(Y)$ into $\Oua_Y$, and thus the first example of a nilpotent uniformly analytic nc function. Lastly, we also provide an nc function that vanishes on uniformly open neighborhoods of $Y'$ and $Y''$ but not of $Y'\oplus Y''$.

Before an auxiliary lemma we observe the following. Let $D:\R[t]\to\R[t]$ be the linear map $D(p)=(p-p(0))/t$. For $\ell,m>0$ we have
\begin{equation}\label{e:bin1}
D^{m-1}\big((t+1)^{\ell-1}\big)
+D^m\big((t+1)^{\ell-1}\big)=D^m\big((t+1)^\ell\big)
\end{equation}
and
\begin{equation}\label{e:bin2}
D^m\big((t+1)^\ell\big)-t D^{m+1}\big((t+1)^\ell\big)=\binom{\ell}{m}
\end{equation}
by the binomial coefficient formulas.

\begin{lem}\label{l:seq}
Let $\alpha,\beta>0$. For $\ell\in\N\cup\{0\}$ and $-1\le m\le \ell$ let $c_{\ell,m}\in\R_{\ge0}$ satisfy
\begin{align*}
c_{0,0} &= 1, \\
c_{\ell,\ell} &= 0\quad \text{and} \quad  c_{\ell,-1} = c_{\ell,0} \qquad \text{for } \ell>0, \\
c_{\ell,m} &\le \beta \max\left\{
c_{\ell,m+1}, \alpha(c_{\ell-1,m-1}+c_{\ell-1,m})
\right\}  \qquad \text{for } -1<m<\ell.
\end{align*}
Then
$$\limsup_{\ell\to\infty}\sqrt[\ell]{c_{\ell,0}}<\infty.$$
\end{lem}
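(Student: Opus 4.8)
The goal is to bound the growth of $c_{\ell,0}$, and the natural strategy is to extract from the recursion a single scalar quantity whose growth is controlled. Concretely, I would introduce the row maxima $M_\ell := \max_{-1\le m\le \ell} c_{\ell,m}$ and try to establish a linear recursion of the form $M_\ell \le K(M_\ell \text{ or } M_{\ell-1})$; but since $M_\ell$ appears on both sides via the term $c_{\ell,m+1}$, this requires care. The key observation is that the recursion for fixed $\ell$ expresses $c_{\ell,m}$ in terms of $c_{\ell,m+1}$ (same row, larger second index) and the previous row. Since $c_{\ell,\ell}=0$, one can iterate the inequality $c_{\ell,m}\le \beta\max\{c_{\ell,m+1},\ \alpha(c_{\ell-1,m-1}+c_{\ell-1,m})\}$ downward in $m$, starting from $m=\ell-1$ where $c_{\ell,\ell}=0$ forces $c_{\ell,\ell-1}\le \beta\alpha(c_{\ell-1,\ell-2}+c_{\ell-1,\ell-1})$.

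The cleanest route, I expect, is to prove by downward induction on $m$ (from $m=\ell$ down to $m=0$) a bound of the shape
\[
c_{\ell,m}\le \sum_{k=m+1}^{\ell} \beta^{\,k-m}\alpha\,\big(c_{\ell-1,k-1}+c_{\ell-1,k}\big),
\]
i.e. unrolling the first branch of the max until one is forced into the second branch. This follows because at each step either $c_{\ell,m}\le \beta c_{\ell,m+1}$ (in which case one applies the inductive bound to $c_{\ell,m+1}$ and absorbs a factor $\beta$), or $c_{\ell,m}\le \beta\alpha(c_{\ell-1,m-1}+c_{\ell-1,m})$ directly (which is the $k=m+1$ term). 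Summing over $k$ and bounding $c_{\ell-1,k}\le M_{\ell-1}$, one gets
\[
c_{\ell,0}\le M_{\ell-1}\cdot 2\alpha\sum_{k=1}^{\ell}\beta^{k}
\le M_{\ell-1}\cdot 2\alpha\beta\,\frac{\beta^{\ell}-1}{\beta-1}\quad(\beta\neq1),
\]
and similarly $M_\ell\le C\,\beta^{\ell} M_{\ell-1}$ for a constant $C=C(\alpha,\beta)$ (the case $\beta=1$ gives a factor $\ell$ instead, which is still fine). This recursion yields $M_\ell\le C^{\ell}\beta^{\ell(\ell+1)/2}M_0$, hence $\sqrt[\ell]{c_{\ell,0}}\le \sqrt[\ell]{M_\ell}\le C\,\beta^{(\ell+1)/2}$ — which is \emph{not} bounded. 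So this crude bound is too lossy; the quadratic blow-up in the exponent must be spurious.

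The fix, and the real content of the lemma, is that one cannot repeatedly pay a factor $\beta$ for the \emph{same} first branch: the identities \eqref{e:bin1} and \eqref{e:bin2} are there precisely to build an explicit majorant $c_{\ell,m}\le \gamma^{\ell} D^m\big((t+1)^{\ell}\big)\big|_{t=t_0}$ for a suitable constant $\gamma$ and a suitable evaluation point $t_0=t_0(\alpha,\beta)>0$, so that the downward-in-$m$ recursion is satisfied with \emph{equality-type} structure rather than a geometric pile-up. Indeed \eqref{e:bin1} says $D^{m-1}\big((t+1)^{\ell-1}\big)+D^{m}\big((t+1)^{\ell-1}\big)=D^{m}\big((t+1)^{\ell}\big)$, matching the $c_{\ell-1,m-1}+c_{\ell-1,m}$ combination, while \eqref{e:bin2} rewritten as $D^{m}\big((t+1)^\ell\big)=\binom{\ell}{m}+tD^{m+1}\big((t+1)^\ell\big)$ controls the same-row term $c_{\ell,m+1}$ with coefficient $t$. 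So I would: (i) choose $t_0>0$ and $\gamma>0$ with $\beta t_0\le 1$ and $\beta\alpha\le \gamma/t_0$ — e.g. $t_0=1/\beta$, $\gamma=\alpha\beta^{2}$ — so that the recursion $c_{\ell,m}\le\beta\max\{c_{\ell,m+1},\alpha(c_{\ell-1,m-1}+c_{\ell-1,m})\}$ is dominated termwise by $\gamma^{\ell}$ times the identity coming from \eqref{e:bin1}–\eqref{e:bin2} evaluated at $t=t_0$; (ii) verify the boundary conditions ($D^{\ell}((t+1)^{\ell})=1$ corresponds to, and dominates, $c_{\ell,\ell}=0$; $D^{-1}$ interpreted so that $c_{\ell,-1}=c_{\ell,0}$ is respected; $c_{0,0}=1$ matches $\gamma^0(t_0+1)^0$ up to a harmless constant); (iii) conclude $c_{\ell,0}\le (\text{const})\,\gamma^{\ell}(t_0+1)^{\ell}$, whence $\limsup_\ell\sqrt[\ell]{c_{\ell,0}}\le \gamma(t_0+1)<\infty$.

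\textbf{Main obstacle.} The hard part is not any single computation but identifying the right majorant and the right evaluation point $t_0$: one must choose $t_0,\gamma$ so that \emph{both} branches of the max are simultaneously dominated — the "same-row" branch needs $\beta t_0\le 1$ (so the factor $t$ from \eqref{e:bin2} absorbs the $\beta$), while the "previous-row" branch needs $\gamma\ge \beta\alpha t_0\binom{\ell}{m}/D^m\!\big((t_0+1)^{\ell}\big)$-type control, i.e. $\gamma$ large relative to $\alpha\beta/t_0$. Reconciling these while keeping the boundary data consistent (especially the convention at $m=-1$ and the degenerate cases $\beta=1$, small $\alpha$) is where the bookkeeping lives. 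Once the majorant is in hand, monotonicity/induction on $(\ell,m)$ finishes the proof, and the final $\limsup$ bound is immediate.
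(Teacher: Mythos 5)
Your overall architecture is the right one and is in fact the paper's: discard the crude row-maximum bound (you correctly see it produces a spurious $\beta^{\ell(\ell+1)/2}$), and instead majorize $c_{\ell,m}$ by an expression of the form $F_{\ell,m}=\gamma^{\ell}\,D^{m}\big((t+1)^{\ell}\big)\big|_{t=t_0}$, verifying it by induction (increasing $\ell$, decreasing $m$) via the two binomial identities \eqref{e:bin1} and \eqref{e:bin2}. The paper does exactly this, after normalizing to equality in the recursion and $\beta\ge 2$, obtaining the closed form $c_{\ell,m}=2\alpha^{\ell}\beta^{\ell}D^{m-1}\big((t+1)^{\ell-2}\big)\big|_{t=\beta}$ and $c_{\ell,0}=2\alpha^{\ell}\beta^{\ell+1}(\beta+1)^{\ell-2}$; i.e.\ the evaluation point is $t_0=\beta$ and the per-row factor is of order $\alpha\beta$.

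The gap is in your parameter choice, and it sits exactly where you said the content lies. The inductive step requires two dominations: (a) $\beta F_{\ell,m+1}\le F_{\ell,m}$ and (b) $\beta\alpha\,(F_{\ell-1,m-1}+F_{\ell-1,m})\le F_{\ell,m}$. By \eqref{e:bin2}, $F_{\ell,m}=\gamma^{\ell}\binom{\ell}{m}+t_0\,F_{\ell,m+1}$, so (a) essentially forces $t_0\ge\beta$: for $t_0<\beta$ one would need $(\beta-t_0)F_{\ell,m+1}\le\gamma^{\ell}\binom{\ell}{m}$, which fails already at $m=0$ for large $\ell$ since $F_{\ell,1}=\gamma^{\ell}\big((t_0+1)^{\ell}-1\big)/t_0$ dwarfs $\gamma^{\ell}$. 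Your stated condition ``$\beta t_0\le 1$'' (e.g.\ $t_0=1/\beta$, $\gamma=\alpha\beta^2$) is the reverse inequality, so with those parameters the same-row branch is not dominated and the downward-in-$m$ induction breaks. By contrast, (b) is clean: \eqref{e:bin1} gives $F_{\ell-1,m-1}+F_{\ell-1,m}=F_{\ell,m}/\gamma$, so (b) is just $\alpha\beta\le\gamma$ (with a factor $2$ at $m=0$ coming from $c_{\ell-1,-1}=c_{\ell-1,0}$, absorbed by taking $\beta\ge 2$ without loss of generality, as the paper does, or by doubling $\gamma$); note your condition ``$\beta\alpha\le\gamma/t_0$'' also does not reflect this. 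Choosing $t_0=\beta$ and $\gamma=\alpha\beta$ (up to a constant prefactor) makes both branches close, yields $c_{\ell,0}\le C\,(\alpha\beta)^{\ell}(\beta+1)^{\ell}$, and hence $\limsup_{\ell}\sqrt[\ell]{c_{\ell,0}}\le\alpha\beta(\beta+1)<\infty$, matching the paper. So the idea is right, but as written the crucial verification fails; with $t_0=\beta$ in place of $1/\beta$ your argument becomes the paper's proof.
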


\begin{proof}
It suffices to assume
$$c_{\ell,m} = \beta \max\left\{
c_{\ell,m+1}, \alpha(c_{\ell-1,m-1}+c_{\ell-1,m})
\right\}  \qquad \text{for } -1<m<\ell$$
and $\beta\ge2$. First we compute $c_{1,0}=2\alpha\beta$. Then we claim that
\begin{align}\label{e:bd1}
c_{\ell,m} &= 2\alpha^\ell \beta^\ell D^{m-1}\left((t+1)^{\ell-2}\right)|_{t=\beta} \quad \text{for } 0<m, \\ \label{e:bd2}
c_{\ell,0} &=2\alpha^\ell \beta^{\ell+1}(\beta+1)^{\ell-2}
\end{align}
for $\ell\ge2$. First observe that \eqref{e:bd2} follows from \eqref{e:bd1} since
$$c_{\ell,0}= \beta \max\left\{ c_{\ell,1}, 2\alpha c_{\ell-1,0}\right\}
=\beta \max\left\{2\alpha^\ell \beta^\ell(\beta+1)^{\ell-2}, 2\alpha \cdot 2\alpha^{\ell-1} \beta^\ell(\beta+1)^{\ell-3}\right\}
$$
and $\beta\ge1$ if $\ell>2$, and
$$c_{2,0}= \beta \max\left\{ c_{2,1}, 2\alpha c_{1,0}\right\}
=\beta \max\left\{2\alpha^2 \beta^2, 2\alpha \cdot 2\alpha\beta\right\}$$
since $\beta\ge2$. Moreover, \eqref{e:bd1} clearly holds for $m=\ell$. Next we prove \eqref{e:bd1} by increasing induction on $\ell$ and decreasing induction on $m$. By definition we have
$$c_{2,1} = \beta\alpha(c_{1,0}+c_{1,1})=2\alpha^2\beta^2,$$
so \eqref{e:bd1} holds for $\ell=2$. Now let $2<\ell$ and $1<m<\ell$. By the induction hypothesis we have
\begin{align*}
c_{\ell,m} &= \beta \max\left\{
c_{\ell,m+1}, \alpha(c_{\ell-1,m-1}+c_{\ell-1,m})
\right\} \\
&= 2\alpha^\ell\beta^\ell \max\left\{
\beta D^m\left((t+1)^{\ell-2}\right)|_{t=\beta},
D^{m-2}\left((t+1)^{\ell-3}\right)|_{t=\beta}
+D^{m-1}\left((t+1)^{\ell-3}\right)|_{t=\beta}
\right\} \\
&= 2\alpha^\ell\beta^\ell \max\left\{
\beta D^m\left((t+1)^{\ell-2}\right)|_{t=\beta},
D^{m-1}\left((t+1)^{\ell-2}\right)|_{t=\beta}
\right\} \\
&= 2\alpha^\ell\beta^\ell D^{m-1}\left((t+1)^{\ell-2}\right)|_{t=\beta}
\end{align*}
by \eqref{e:bin1} and \eqref{e:bin2}. Furthermore,
\begin{align*}
c_{\ell,1} &= \beta \max\left\{
c_{\ell,2}, \alpha(c_{\ell-1,0}+c_{\ell-1,1})
\right\} \\
&= 2\alpha^\ell\beta^\ell\max\left\{
D\left((t+1)^{\ell-2}\right)|_{t=\beta},
\beta(\beta+1)^{\ell-3}+
(\beta+1)^{\ell-3}
\right\} \\
&= 2\alpha^\ell\beta^\ell\max\left\{
\left((\beta+1)^{\ell-2}-1\right)/\beta,
(\beta+1)^{\ell-2}
\right\} \\
&= 2\alpha^\ell\beta^\ell(\beta+1)^{\ell-2},
\end{align*}
so \eqref{e:bd1} holds.
\end{proof}

Let $Y\in\mats^g$ be a semisimple point. Recall that $\cC(Y)$-bimodules are semisimple, and that $\mats^g$ and $[\mats,Y]$ are $\cC(Y)$-bimodules in a natural way. Hence there exists a $\cC(Y)$-bimodule projection $\pi:\mats^g\to [\mats,Y]$.

\begin{thm}\label{t:min}
Let $Y\in\mats^g$ be a semisimple point and let $(f_\ell)_{\ell=0}^L$ satisfy $\LAC_L(Y)$ for some $L\in\N\cup\{0\}$. Then there exists a unique propagation $(f_\ell)_{\ell=0}^\infty$ satisfying $\LAC(Y)$ and
\begin{equation}\label{e:min}
f_\ell|_{(\ker\pi)^\ell}=0
\end{equation}
for $\ell>L$.

Moreover, $\limsup_{\ell\to\infty}\sqrt[\ell]{\|f_\ell\|_{\cb}}<\infty$ if $\kk=\C$.
\end{thm}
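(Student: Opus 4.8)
Recall the $\cC(Y)$-bimodule splitting $\mats^g=[\mats,Y]\oplus\ker\pi$. For $\ell\ge1$, sending a sequence $(f_j)_j$ satisfying $\LAC(Y)$ to the $\cC(Y)$-bimodule homomorphism $(\ker\pi)^{\otimes_{\cC(Y)}\ell}\to\mats$, $Z^1\otimes\cdots\otimes Z^\ell\mapsto f_\ell(Z^1,\dots,Z^\ell)$ (legitimate by Definition~\ref{d:adm} and Remark~\ref{r:adm}), together with $f_0\in\cS(Y)$, sets up a bijection between sequences satisfying $\LAC(Y)$ and arbitrary families $(g_0;g_1,g_2,\dots)$ with $g_0\in\cS(Y)$ and $g_\ell\in\Hom_{\cC(Y)-\cC(Y)}\!\big((\ker\pi)^{\otimes_{\cC(Y)}\ell},\mats\big)$: indeed, the $\LAC(Y)$ relations express $f_\ell$ off the $(\ker\pi)^\ell$-diagonal in terms of $f_{\ell-1}$, and prescribing the diagonal part $g_\ell$ then determines $f_\ell$ uniquely and consistently — this is exactly the bimodule mechanism behind Proposition~\ref{p:sep} and the identification $\cO_Y=\varprojlim_\ell\px/\cI_0(Y)^\ell$ of Corollary~\ref{c:compl} (see also \cite[Chapter 5]{KVV3}). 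By Remark~\ref{r:tru}, $(f_\ell)_{\ell=0}^L$ satisfying $\LAC_L(Y)$ means it satisfies $\LAC_\ell(Y)$ for every $\ell\le L$, so its first $L+1$ diagonal homomorphisms are available; declaring $g_\ell:=0$ for $\ell>L$ and invoking the bijection produces the unique sequence $(f_\ell)_{\ell\ge0}$ satisfying $\LAC(Y)$, agreeing with the given data for $\ell\le L$, and with $f_\ell|_{(\ker\pi)^\ell}=0$ for $\ell>L$.

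\textbf{Set-up for the growth bound ($\kk=\C$).} Each $f_\ell$ is a multilinear map of finite-dimensional operator spaces, so $M:=\max_{0\le\ell\le L}\|f_\ell\|_{\cb}<\infty$, and it suffices to control $\|f_\ell\|_{\cb}$ for $\ell>L$. By Subsection~\ref{ssec2.4}, $(f_\ell)_\ell$ is the sequence of nc differentials at $Y$ of the nc function it determines on $\Nilp(Y)$; hence by \eqref{e:amp} and Remark~\ref{r:diff} the ampliated sequence $\big((f_\ell)_n\big)_\ell$ satisfies $\LAC(\oplus^nY)$ for every $n$, and $\|f_\ell\|_{\cb}=\sup_n\|(f_\ell)_n\|_{ns}$. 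Fix a linear section $\sigma\colon[\mats,Y]\to\mats$ with $[\sigma(W),Y]=W$, put $\pi'=\id_{\mats^g}-\pi$, and set $C:=1+\|\pi\|_{\cb}+\|\pi'\|_{\cb}+\|\sigma\|_{\cb}<\infty$. Then $\|\pi_n\|,\|\pi'_n\|,\|\sigma_n\|\le C$ for all $n$, while $[\sigma_n(W),\oplus^nY]=W$ for $W\in\imag\pi_n=([\mats,Y])^{n\times n}$, and $(f_\ell)_n$ vanishes on $(\imag\pi'_n)^\ell$ for $\ell>L$ (ampliation of \eqref{e:min}, valid because $\pi'$ is a bimodule map).

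\textbf{The recursion.} Estimate $\|(f_\ell)_n(Z^1,\dots,Z^\ell)\|_{ns}$ for unit-norm matrices and $\ell>L$ by a left-to-right scan. Expanding each $Z^j=\pi_nZ^j+\pi'_nZ^j$ multilinearly, every term with all arguments in $\imag\pi'_n$ vanishes; in any other term choose a position $j_0$ whose argument lies in $\imag\pi_n$, write it as $[S,\oplus^nY]$ with $S:=\sigma_n(\cdot)$ of norm $\le C$, and apply the corresponding $\LAC(\oplus^nY)$ relation, which rewrites the term as a $\pm$-combination of two evaluations of $(f_{\ell-1})_n$ in which $S$ has been multiplied into a neighbouring argument (or pulled out as a factor), all arguments of norm $\le C$. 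Letting $c_{\ell,m}$ be the supremum of $\|(f_\ell)_n(Z^1,\dots,Z^\ell)\|_{ns}/\prod_j\|Z^j\|_{ns}$ over $n$ and over all tuples in which a prescribed block of $m$ consecutive arguments lies in $\imag\pi'_n$ (with $c_{\ell,-1}:=c_{\ell,0}$), one has $c_{\ell,\ell}=0$ for $\ell>L$, $c_{\ell,m}\le M$ for $\ell\le L$, and, for $\ell>L$ and $0\le m<\ell$,
\[
c_{\ell,m}\ \le\ \beta\max\!\big\{\,c_{\ell,m+1},\ \alpha\,(c_{\ell-1,m-1}+c_{\ell-1,m})\,\big\}
\]
with $\alpha,\beta>0$ depending only on $C$: the $\pi'_n$-part of the next argument enlarges the block ($c_{\ell,m+1}$), while the $\pi_n$-part drops the order and, after absorbing $S$ into the neighbour adjacent to the block, shortens it by one ($c_{\ell-1,m-1}$) or, absorbed into the other neighbour, leaves it unchanged ($c_{\ell-1,m}$). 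Up to a rescaling normalizing the base, this is the recursion of Lemma~\ref{l:seq} (the finitely many indices $\ell\le L$ absorbed into the base data), so $\limsup_{\ell\to\infty}\sqrt[\ell]{c_{\ell,0}}<\infty$; since $\|f_\ell\|_{\cb}=c_{\ell,0}$, this gives $\limsup_{\ell\to\infty}\sqrt[\ell]{\|f_\ell\|_{\cb}}<\infty$, and then Theorem~\ref{t:old4} shows $(f_\ell)_\ell$ is a uniformly analytic nc function on some nc ball about $Y$.

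\textbf{Where the difficulty lies.} The delicate point is arranging the scan so that it yields exactly the recursion of Lemma~\ref{l:seq}. Two issues require care: first, that the order-dropping step shortens the $\ker\pi$-block by at most one element — this forces one to absorb $S$ precisely into the neighbour adjacent to the block and to treat separately the edge cases $j_0=1$ and $j_0=\ell$, using the first and last $\LAC$ relations; second, the transition at level $\ell=L$, where \eqref{e:min} no longer forces vanishing on $\imag\pi'_n$, so the recursion carries a bounded ``base row'' (all entries $\le M$) rather than a single base value, and one must verify that Lemma~\ref{l:seq}, or its immediate analogue with such a base, still produces the exponential bound. The remaining ingredients — complete boundedness of $\pi,\pi',\sigma$ and of their ampliations, correctness of the ampliated $\LAC$ relations, and the bimodule bijection behind existence and uniqueness — are routine given the results already established.
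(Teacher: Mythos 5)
Your treatment of the ``Moreover'' part is essentially the paper's own argument: the paper also passes to the auxiliary maps with a prescribed initial block of arguments in $\ker\pi$ (its $f_{\ell,m}$), uses a section of $S\mapsto[S,Y]$ together with the $\ell_1$-type splitting $Z\mapsto(\pi(Z),\sigma(Z))$ to turn the $\LAC(Y)$ relations into exactly the recursion $\|f_{\ell,m}\|_{\cb}\le\beta\max\{\|f_{\ell,m+1}\|_{\cb},\alpha(\|f_{\ell-1,m-1}\|_{\cb}+\|f_{\ell-1,m}\|_{\cb})\}$, and then invokes Lemma~\ref{l:seq}; the edge cases and the junction at level $L$ that you flag are handled there automatically because the recursion \eqref{e:ugly} is written down explicitly, so these are minor issues, not conceptual ones.

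The genuine gap is in your existence argument. Uniqueness does follow at once from the splitting $\mats^g=[\mats,Y]\oplus\ker\pi$ (this is all the paper says about it), but your claim that prescribing the diagonal data ``determines $f_\ell$ uniquely \emph{and consistently}'' is precisely the nontrivial content of the theorem: if you define $f_\ell$ off $(\ker\pi)^\ell$ by the relations, you must show the result is independent of the choice of $S$ with $[S,Y]=\pi(Z^j)$ (such $S$ is unique only modulo $\cC(Y)$) and of the order in which several commutator arguments are eliminated, and that all order-$\ell$ relations then hold so the induction can continue. The paper's proof is mostly devoted to this: it builds the propagation by the explicit recursion \eqref{e:ugly}, using a $\cC(Y)$-\emph{bimodule} right inverse $\phi$ of $S\mapsto[S,Y]$ with $\phi([S,Y])-S\in\cC(Y)$ (property \eqref{e:0}; a bare linear section, which is all you fix, would not suffice for this verification, though it is enough for your norm estimate), and then checks $\LAC(Y)$ by a long induction. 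Citing ``the bimodule mechanism behind Proposition~\ref{p:sep}'' is not a proof of this consistency; Proposition~\ref{p:sep} is about polynomial realization of admissible maps. Your route can be repaired with the tools you cite, but you must say how: take $q_L\in\px$ with $\Delta^\ell_Y q_L=f_\ell$ for $\ell\le L$ (Theorem~\ref{t:hermite}), and for $\ell>L$ inductively subtract $p_\ell\in\cI_{\ell-1}(Y)$ whose top differential is $(\Delta^\ell_Y q_{\ell-1})\circ(\pi'\times\cdots\times\pi')$ (admissible since $\pi'$ is a bimodule projection killing $[\mats,Y]$, and the restriction of $\Delta^\ell_Y q_{\ell-1}$ to $(\ker\pi)^\ell$ is $\cC(Y)$-balanced by Remark~\ref{r:tru}); such $p_\ell$ exists by Proposition~\ref{p:sep}, and the limit in $\cO_Y=\varprojlim_\ell\px/\cI_0(Y)^\ell$ (Corollary~\ref{c:compl}) is the desired formal propagation, after which your scan, which only uses the already-valid relations together with \eqref{e:min} and \eqref{e:amp}, yields the completely bounded growth estimate. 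As written, however, the central consistency step is asserted rather than proved.
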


\begin{proof}
Since $\mats^g$ is a direct sum of $[\mats,Y]$ and $\ker\pi$, uniqueness follows from the definition of $\LAC(Y)$.

Since $\mats$ is a semisimple $\cC(Y)$-bimodule and the map $\mats\to[\mats,Y]$ given by $S\mapsto[S,Y]$ is a surjective $\cC(Y)$-bimodule homomorphism, it admits a $\cC(Y)$-bimodule right inverse $\phi:[\mats,Y]\to \mats$. Moreover,
\begin{equation}\label{e:0}
\phi([S,Y])-S\in \cC(Y)
\end{equation}
holds for every $S\in\mats$. Let $\sigma:\mats^g\to\ker\pi$ be the projection onto $\ker\pi$ along $[\mats,Y]$, so $Z=\pi(Z)+\sigma(Z)$ for all $Z\in\mats^g$. For $\ell>L$ and $0\le m\le \ell$ we recursively define $\ell$-linear maps
$$f_{\ell,m}:(\ker\pi)^m\times (\mats^g)^{\ell-m}\to\mats$$
by $f_{\ell,\ell} :=0$ and
\begin{equation}\label{e:ugly}
\begin{split}
f_{\ell,\ell-1}(\dots,W^{\ell-1},Z^\ell)
&:= f_{\ell-1,\ell-2}(\dots,W^{\ell-1}(\phi\circ\pi)(Z^\ell)),\\
f_{\ell,m}(\dots,W^m,Z^{m+1},\dots)
&:= f_{\ell,m+1}(\dots,W^m,\sigma(Z^{m+1}),Z^{m+2}\dots) \\
& \quad +f_{\ell-1,m-1}(\dots,W^m(\phi\circ\pi)(Z^{m+1}),Z^{m+2},\dots) \\
& \quad -f_{\ell-1,m}(\dots,W^m,(\phi\circ\pi)(Z^{m+1})Z^{m+2},\dots), \\
f_{\ell,0}(Z^1,\dots)
&:=f_{\ell,1}(\sigma (Z^1),Z^2,\dots) \\
& \quad +(\phi\circ\pi)(Z^1)f_{\ell-1,0}(Z^2,\dots) \\
& \quad -f_{\ell-1,0}((\phi\circ\pi)(Z^1)Z^2,\dots)
\end{split}
\end{equation}
for $0<m<\ell-1$. Now $f_\ell:=f_{\ell,0}$ clearly satisfy \eqref{e:min}. Next, we check $\LAC(Y)$ for $f_\ell$ by induction on $\ell$. Firstly,
\begin{align*}
f_\ell([S,Y],Z^1,\dots)
& =\phi([S,Y])f_{\ell-1}(Z^2,\dots) -f_{\ell-1}(\phi([S,Y])Z^2,\dots) \\
& =S f_{\ell-1}(Z^1,\dots)-f_{\ell-1}(SZ^1,Z^2,\dots)
\end{align*}
by the induction hypothesis and \eqref{e:0}. Next, denote $S'=(\phi\circ\pi)(Z^1)$. Then
\begingroup
\allowdisplaybreaks
\begin{align*}
f_\ell(Z^1,[S,Y], Z^2,\dots)
&=f_{\ell,1}(\sigma (Z^1),[S,Y], Z^2,\dots) \\
& \quad +(S'f_{\ell-1}([S,Y],Z^2,\dots)-f_{\ell-1}(S'[S,Y],Z^2\dots) \\
&=f_{\ell-1,0}(\sigma (Z^1)\phi([S,Y]), Z^2,\dots)
-f_{\ell-1,1}(\sigma (Z^1),\phi([S,Y])Z^2,\dots)\\
& \quad +(S'\left(\phi([S,Y])f_{\ell-2}(Z^2,\dots)
-f_{\ell-2}(\phi([S,Y])Z^2,\dots)\right) \\
& \quad -f_{\ell-1}([S'S,Y]-[S',Y]S,Z^2\dots) \\
&=f_{\ell-1}(\sigma (Z^1)S, Z^2,\dots)-f_{\ell-1}(\sigma (Z^1),SZ^2,\dots)\\
& \quad +S'\left(Sf_{\ell-2}(Z^2,\dots)-f_{\ell-2}(SZ^2,\dots)\right) \\
& \quad -S'Sf_{\ell-2}(Z^2\dots)+f_{\ell-2}(S'SZ^2\dots)+f_{\ell-1}([S',Y]S,Z^2\dots) \\
&=f_{\ell-1}(\sigma (Z^1)S, Z^2,\dots)-f_{\ell-1}(\sigma (Z^1),SZ^2,\dots)\\
& \quad -S'f_{\ell-2}(SZ^2,\dots)+f_{\ell-2}(S'SZ^2\dots)+f_{\ell-1}([S',Y]S,Z^2\dots) \\
&=f_{\ell-1}(\sigma (Z^1)S, Z^2,\dots)-f_{\ell-1}(\sigma (Z^1),SZ^2,\dots)\\
& \quad -f_{\ell-1}([S',Y],SZ^2,\dots)+f_{\ell-1}([S',Y]S,Z^2\dots) \\
&=f_{\ell-1,0}(\sigma (Z^1)S, Z^2,\dots)-f_{\ell-1,1}(\sigma (Z^1),SZ^2,\dots)\\
& \quad -f_{\ell-1}(\pi(Z^1),SZ^2,\dots)+f_{\ell-1}(\pi(Z^1)S,Z^2\dots) \\
& = f_{\ell-1}(Z^1S,Z^2,\dots)-f_{\ell-1}(Z^1,SZ^2,\dots)
\end{align*}
\endgroup
holds by \eqref{e:0} and the induction hypothesis. The rest of $\LAC(Y)$ is verified analogously.

Now let $\kk=\C$. Since $\phi$ is a linear map between finite-dimensional operator spaces, it is completely bounded; let $\alpha=\|\phi\|_{\cb}$. Similarly, let $\beta=\|\psi\|_{\cb}$, where 
$$\psi:\mats^g\to [\mats,Y]\times \ker\pi, \qquad Z\mapsto (\pi(Z),\sigma(Z)).$$
Here $[\mats,Y]\times \ker\pi$ is viewed as the $\ell_1$-direct sum of operator spaces $[\mats,Y]$ and $\ker\pi$ \cite[Section 2.6]{Pis}. Given $\ve_1,\ve_2\in\C$, the map
$$\chi_{\ve_1,\ve_2}: [\mats,Y]\times \ker\pi\to [\mats,Y]\times \ker\pi,
\qquad (X,W)\mapsto (\ve_1X,\ve_2W)$$
satisfies $\|(\chi_{\ve_1,\ve_2})\|_{\cb}=\max\{|\ve_1|,|\ve_2|\}$. By looking at $\chi_{\ve_1,\ve_2}\circ\psi$ we thus obtain
\begin{equation}\label{e:l1}
\ve_1\|\pi(Z)\|_{ns}+\ve_2\|\sigma(Z)\|_{ns}\le \beta\max\{\ve_1,\ve_2\}\|Z\|_{ns}
\end{equation}
for all $\ve_1,\ve_2\ge0$, $Z\in\matc{ns}^g$ and $n\in\N$. By \eqref{e:ugly} we have 
\begin{align*}
\|f_{\ell,m}(\dots,W^m,Z^{m+1},\dots)\|
&\le \|f_{\ell,m+1}\|\cdots\|W^m\|\|\sigma(Z^{m+1})\|\cdots\|Z^\ell\| \\
& \quad +\|f_{\ell-1,m-1}\|\cdots\|W^m(\phi\circ\pi)(Z^{m+1})\|\cdots\|Z^\ell\| \\
& \quad +\|f_{\ell-1,m}\|\cdots\|W^m\|\|(\phi\circ\pi)(Z^{m+1})Z^{m+2}\|\cdots\|Z^\ell\| \\
&\le \|f_{\ell,m+1}\|\cdots\|W^m\|\|\sigma(Z^{m+1})\|\cdots\|Z^\ell\| \\
& \quad +\alpha\|f_{\ell-1,m-1}\|\cdots\|W^m\|\pi(Z^{m+1})\|\cdots\|Z^\ell\| \\
& \quad +\alpha\|f_{\ell-1,m}\|\cdots\|W^m\|\|\pi(Z^{m+1})\|\cdots\|Z^\ell\| \\
&=\|f_{\ell,m+1}\|\|W^1\|\cdots\|Z^\ell\|\cdot \|\sigma(Z^{m+1})\| \\
& \quad +\alpha(\|f_{\ell-1,m-1}\|+\|f_{\ell-1,m}\|)
\|W^1\|\cdots\|Z^\ell\| \cdot \|\pi(Z^{m+1})\|
\end{align*}
$\ell>L$ and $m<\ell$, and thus
$$\|f_{\ell,m}(W^1,\dots,Z^\ell)\|\le
\beta\max\left\{\|f_{\ell,m+1}\|,
\alpha(\|f_{\ell-1,m-1}\|+\|f_{\ell-1,m}\|)
\right\}
\|W^1\|\cdots\|Z^\ell\|$$
by \eqref{e:l1}. Actually, the same conclusion holds for all ampliations of $f_{\ell,m}$. Therefore
$$\|f_{\ell,m}\|_{\cb}\le
\beta\max\left\{\|f_{\ell,m+1}\|_{\cb},
\alpha(\|f_{\ell-1,m-1}\|_{\cb}+\|f_{\ell-1,m}\|)_{\cb}
\right\}.$$
If $f_L=0$, then we have $f_\ell=0$ for all $\ell>L$. Otherwise, $c_{\ell,m}:=\|f_{\ell+L,m}\|_{\cb}/\|f_L\|_{\cb}$ satisfies the assumptions of Lemma \ref{l:seq}, so $\limsup_{\ell\to\infty}\sqrt[\ell]{\|f_\ell\|_{\cb}}<\infty$.
\end{proof}

By applying the ``minimal'' propagation (depending on the choice of projection $\pi$) of Theorem \ref{t:min} to one-term sequences (i.e., matrices), we obtain the following.

\begin{thm}\label{t:alg}
Let $Y\in\mats^g$ be a semisimple point and $a^1,\dots,a^t\in\cS(Y)$. Then there exist nc functions $f^1,\dots,f^t$, uniformly analytic on an nc ball about $Y$ if $\kk=\C$, such that
\begin{enumerate}[(i)]
	\item $f^i(Y)=a^i$ for $i=1,\dots,t$;
	\item for every $p\in\kk\!\Langle y_1,\dots,y_t\Rangle$,
	$$p(a^1,\dots,a^t)=0 \implies p(f^1,\dots,f^t)=0.$$
\end{enumerate}
\end{thm}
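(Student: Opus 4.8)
The plan is to deduce Theorem~\ref{t:alg} from the ``minimal propagation'' result Theorem~\ref{t:min} applied to one-term sequences. First I would fix a $\cC(Y)$-bimodule projection $\pi\colon\mats^g\to[\mats,Y]$ once and for all, since the whole construction depends on it. Given $a\in\cS(Y)$, the one-term sequence $(f_0)$ with $f_0=a$ satisfies $\LAC_0(Y)$: indeed $\LAC_0(Y)$ for $(f_0)$ reads $[f_0,\cC(Y)]=0$, and this holds exactly because $\cS(Y)$ is the centralizer of $\cC(Y)$ in $\mats$ by Remark~\ref{r:facts}(i). So Theorem~\ref{t:min} produces a unique propagation $(f_\ell)_{\ell=0}^\infty$ satisfying $\LAC(Y)$ with $f_\ell|_{(\ker\pi)^\ell}=0$ for $\ell>0$, and when $\kk=\C$ it is uniformly analytic on an nc ball about $Y$ by the ``moreover'' clause (convergence radius given by Lemma~\ref{l:seq}). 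Write $\Theta(a)$ for this propagated germ; then $\Theta(a)(Y)=f_0=a$, which gives property~(i).

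The crux is property~(ii), i.e.\ that $a\mapsto\Theta(a)$ is a $\kk$-algebra homomorphism $\cS(Y)\to\cO_Y$ (and into $\Oua_Y$ when $\kk=\C$). Linearity is immediate from the linearity of the recursion \eqref{e:ugly} defining the $f_{\ell,m}$: the propagation of $\lambda a+\mu b$ is $\lambda$ times the propagation of $a$ plus $\mu$ times that of $b$, by uniqueness in Theorem~\ref{t:min} and the fact that a $\kk$-linear combination of sequences satisfying $\LAC(Y)$ and the vanishing condition \eqref{e:min} again satisfies both. The unit $1=I$ clearly propagates to the constant germ $1$. So the main point is multiplicativity: $\Theta(ab)=\Theta(a)\Theta(b)$ as formal germs, i.e.\ their $\ell$-th differentials agree for all $\ell$. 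Both sides satisfy $\LAC(Y)$ (the product of two germs does, since $\cO_Y$ is an algebra) and both have zeroth term $ab$. By the uniqueness part of Theorem~\ref{t:min}, it therefore suffices to show that the convolution product $\Theta(a)\Theta(b)$ also satisfies the vanishing condition $f_\ell|_{(\ker\pi)^\ell}=0$ for $\ell>0$. I expect this to be the main obstacle, because the convolution product of two germs is not simply the term-by-term product of their differentials: the Leibniz-type formula for $\Delta_Y^\ell(fg)$ mixes lower-order differentials of $f$ and $g$ evaluated on sub-tuples, and a single argument lying in $\ker\pi$ does not obviously force a vanishing of each summand.

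To handle this I would argue as follows. Recall the noncommutative product rule for differentials (\cite[Theorem~4.10 or Chapter~3]{KVV3}): for germs $f,g$,
\begin{equation*}
\Delta_Y^\ell(fg)(Z^1,\dots,Z^\ell)=\sum_{j=0}^\ell \Delta_Y^j f(Z^1,\dots,Z^j)\,\Delta_Y^{\ell-j}g(Z^{j+1},\dots,Z^\ell).
\end{equation*}
If all $Z^i\in\ker\pi$ and $\ell>0$, then for each $j$ either $j>0$ or $\ell-j>0$; since $\Delta_Y^j\Theta(a)$ vanishes on $(\ker\pi)^j$ for $j>0$ and likewise for $\Theta(b)$, every summand vanishes, hence $\Delta_Y^\ell(\Theta(a)\Theta(b))|_{(\ker\pi)^\ell}=0$. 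Thus $\Theta(a)\Theta(b)$ is a $\LAC(Y)$-sequence with zeroth term $ab$ and the minimal vanishing property, so by uniqueness it equals $\Theta(ab)$. This proves $\Theta$ is a homomorphism. Finally, for an arbitrary $p\in\kk\!\Langle y_1,\dots,y_t\Rangle$ with $p(a^1,\dots,a^t)=0$, setting $f^i=\Theta(a^i)$ we get $p(f^1,\dots,f^t)=\Theta(p(a^1,\dots,a^t))=\Theta(0)=0$, which is property~(ii). When $\kk=\C$ each $f^i$ is uniformly analytic on some nc ball about $Y$; intersecting the finitely many balls gives a common nc neighborhood on which all relations hold, and uniform analyticity is preserved under the algebra operations (Lemma~\ref{l:loc} and the discussion of $\Oua_Y$), completing the proof.
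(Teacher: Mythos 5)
Your proposal is correct and follows essentially the same route as the paper: one-term sequences satisfying $\LAC_0(Y)$, the existence--uniqueness of the minimal propagation in Theorem~\ref{t:min}, the Leibniz rule \eqref{e:lei} to show that polynomial combinations of the propagated germs again vanish on $(\ker\pi)^\ell$ for $\ell>0$, and then uniqueness to identify the result. The only difference is packaging: you first establish that $a\mapsto\Theta(a)$ is a unital algebra homomorphism $\cS(Y)\to\Oua_Y$ (which is precisely Corollary~\ref{c:hom}) and deduce (ii) from it, whereas the paper applies the same uniqueness argument directly to the germ of $p(f^1,\dots,f^t)$, comparing it with the zero propagation of $(0)$.
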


\begin{proof}
Since $[a^i,\cC(Y)]=\{0\}$, the one-term sequences $(a^1),\dots,(a^t)$ satisfy $\LAC_0(Y)$, so there exist $(f^i_\ell)_\ell$ for $i=1,\dots,t$ as in Theorem \ref{t:min}. Therefore there are uniformly analytic nc functions $f^1,\dots,f^t$ on an nc ball about $Y$ given by
$$f^i(X)=\sum_{\ell=0}^\infty f^i_\ell(X-\oplus^nY,\dots,X-\oplus^nY)$$
by Theorem \ref{t:old4}. In particular, they satisfy $f^i(Y)=a^i$ and $f^i_\ell|_{(\ker\pi)^\ell}=0$ for $\ell>0$.

Suppose $p(a^1,\dots,a^t)=0$ for $p\in\kk\!\Langle y_1,\dots,y_t\Rangle$, and let $(F_\ell)_\ell$ be the nc germ corresponding to $p(f^1,\dots,f^t)$. By \eqref{e:diff}, nc differential operators are linear and satisfy the Leibniz rule,
\begin{equation}\label{e:lei}
\Delta_Y^\ell(h_1h_2)=\sum_{i=0}^\ell \Delta_Y^i h_1 \Delta_Y^{\ell-i} h_2.
\end{equation}
Since $F$ is an an polynomial in $f^1,\dots,f^t$ and $f^i_\ell|_{(\ker\pi)^\ell}=0$ for all $\ell>0$, \eqref{e:lei} implies $F_\ell|_{(\ker\pi)^\ell}=0$ for all $\ell>0$. Moreover, $F_0=p(a^1,\dots, a^t)=0$. Therefore $(F_\ell)_\ell$ is a propagation of $(0)$ as in Theorem \ref{t:min}. On the other hand, $(0)_\ell$ is another such propagation, so $F_\ell=0$ for all $\ell\in\N$ by uniqueness. Therefore $p(f^1,\dots,f^t)=0$.
\end{proof}

\begin{rem}
If $Y$ is semisimple and not similar to a direct sum of scalar points, then we can choose a nonzero nilpotent matrix $a\in\cS(Y)$, so by Theorem \ref{t:alg} there exists a nontrivial nilpotent uniformly analytic function on $\B_{\ve}(Y)$. Note however that $\ve$ is small enough so that $\B_{\ve}(Y)\cap (\C^g \cdot I)=\emptyset$.
\end{rem}

The following restatement of Theorem \ref{t:alg} gives us some further information about the structure of the germ algebras.

\begin{cor}\label{c:hom}
Let $Y$ be a semisimple point. The one-term propagation of Theorem \ref{t:min} gives an homomorphism $\cS(Y)\to\Oua_Y$ that is a left inverse of the evaluation at $Y$.

In particular, if $Y\in\matcn^g$ is irreducible, then $\cO_Y\cong \opm_n(e\cO_Ye)$ and analogously for (uniformly) analytic germs, where $e\in\Oua_Y$ is the one-term propagation of $e_{11}\in\matcn$.
\end{cor}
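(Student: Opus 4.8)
The plan is to obtain both assertions almost directly from Theorem~\ref{t:min} and Theorem~\ref{t:alg}, with only a little extra bookkeeping. Fix a $\cC(Y)$-bimodule projection $\pi:\mats^g\to[\mats,Y]$. For $a\in\cS(Y)$ the one-term sequence $(a)$ satisfies $\LAC_0(Y)$ precisely because $[a,\cC(Y)]=0$, so Theorem~\ref{t:min} produces a unique germ $\Theta(a)\in\Oua_Y$ (uniform analyticity by the last line of Theorem~\ref{t:min} together with Theorem~\ref{t:old4}) with $\Theta(a)_0=a$ and $\Theta(a)_\ell|_{(\ker\pi)^\ell}=0$ for all $\ell>0$. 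First I would verify that $\Theta\colon\cS(Y)\to\Oua_Y$ is a unital $\C$-algebra homomorphism. Additivity and $\C$-linearity follow from the uniqueness clause of Theorem~\ref{t:min}: for $a,b\in\cS(Y)$ and $\lambda\in\C$, the germ $\Theta(a)+\lambda\Theta(b)$ again satisfies $\LAC(Y)$ and the vanishing condition \eqref{e:min}, hence equals $\Theta(a+\lambda b)$; and $\Theta(1)=1$ for the same reason. Multiplicativity is exactly the argument already run in the proof of Theorem~\ref{t:alg}: by the Leibniz rule \eqref{e:lei} and $\Theta(a)_i|_{(\ker\pi)^i}=0=\Theta(b)_j|_{(\ker\pi)^j}$ for $i,j\ge1$, every summand of $\Delta_Y^\ell\big(\Theta(a)\Theta(b)\big)$ vanishes on $(\ker\pi)^\ell$ once $\ell\ge1$, while $\big(\Theta(a)\Theta(b)\big)_0=ab$; so $\Theta(a)\Theta(b)$ is a propagation of $(ab)$ obeying \eqref{e:min} and therefore equals $\Theta(ab)$ by uniqueness.

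Next I would pin down the evaluation homomorphism. For $f\in\Oua_Y$ and any invertible $P\in\cC(Y)$ we have $PYP^{-1}=Y$, so $f(Y)=Pf(Y)P^{-1}$; since the finite-dimensional algebra $\cC(Y)$ is spanned by its units, $f(Y)$ centralizes $\cC(Y)$ and thus lies in $\cS(Y)$ by the double centralizer theorem, Remark~\ref{r:facts}(i). Hence $\mathrm{ev}_Y\colon\Oua_Y\to\cS(Y)$, $f\mapsto f(Y)$, is a well-defined $\C$-algebra homomorphism, and Theorem~\ref{t:alg}(i) says precisely that $\mathrm{ev}_Y\circ\Theta=\mathrm{id}_{\cS(Y)}$. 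This is the sense in which the one-term propagation $\Theta$ is a one-sided inverse of evaluation at $Y$. (The germs $\Theta(a)$ also lie in $\Oa_Y$ and $\cO_Y$ via $\Oua_Y\subseteq\Oa_Y\subseteq\cO_Y$, so the same $\Theta$ splits evaluation on those algebras as well.)

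For the final clause, suppose $Y\in\matcn^g$ is irreducible, so $\cS(Y)=\matcn$ by Burnside's theorem. Applying the homomorphism $\Theta$ to the standard matrix units $e_{ij}\in\matcn$ yields $E_{ij}:=\Theta(e_{ij})\in\Oua_Y$ satisfying $E_{ij}E_{kl}=\delta_{jk}E_{il}$ and $\sum_{i=1}^nE_{ii}=\Theta(I)=1$, i.e.\ a complete system of $n\times n$ matrix units inside $\Oua_Y$, hence also inside $\Oa_Y$ and $\cO_Y$. By the classical fact that a unital ring $R$ carrying a complete system of matrix units $\{E_{ij}\}$ is isomorphic to $\opm_n(E_{11}RE_{11})$ --- via $r\mapsto(E_{1i}\,r\,E_{j1})_{i,j}$ with inverse $(a_{ij})\mapsto\sum_{i,j}E_{i1}a_{ij}E_{1j}$, cf.\ \cite{Lam} --- we obtain $R\cong\opm_n(e Re)$ for $R\in\{\cO_Y,\Oa_Y,\Oua_Y\}$, where $e=E_{11}=\Theta(e_{11})$. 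Since Theorems~\ref{t:min} and~\ref{t:alg} carry essentially all the weight, there is no real obstacle; the only points needing a moment's care are that $\mathrm{ev}_Y$ genuinely lands in $\cS(Y)$ rather than merely in $\matcn$, and that the matrix units, although built inside $\Oua_Y$, simultaneously serve the larger algebras $\Oa_Y$ and $\cO_Y$.
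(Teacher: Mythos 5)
Your argument is correct and is exactly the route the paper intends: it presents this corollary without a separate proof, as a "restatement" of Theorem \ref{t:alg}, and your details (linearity and multiplicativity of the one-term propagation via the uniqueness clause of Theorem \ref{t:min} together with the Leibniz rule, evaluation landing in $\cS(Y)$ by the double centralizer theorem, and then $\cS(Y)=\matcn$ for irreducible $Y$ giving a complete system of matrix units and $R\cong\opm_n(eRe)$ for $R\in\{\cO_Y,\Oa_Y,\Oua_Y\}$) are precisely the intended ones. The only cosmetic remark is that $f(Y)\in\cS(Y)$ also follows immediately from $\LAC_0(Y)$ for the germ of $f$ (Remark \ref{r:diff}), i.e. $[f(Y),\cC(Y)]=0$, which shortcuts your units-spanning argument.
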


If $Y$ is irreducible, then it follows by Proposition \ref{p:prime} that the ``corner'' algebra $e\cO_Ye$ in Corollary \ref{c:hom} is prime. A finer structure of this algebra is not known to the authors. Also note that if $Y$ is semisimple, non-irreducible and $\cS(Y)\cong \bigoplus_i \matc{n_i}$, then one cannot conclude that $\cO_Y$ is isomorphic to $\bigoplus_i \opm_{n_i}(\cA_i)$ for some algebras $\cA_i$. This is further supported by Corollary \ref{c:inj} below. Namely, we construct examples of nc functions demonstrating that the canonical homomorphism $\Oua_{Y'\oplus Y''}\to \Oua_{Y'}\times \Oua_{Y''}$ from Remark \ref{r:ss} is not injective.

\begin{lem}\label{l:inj}
Let $Y'\in\matc{s'}^g$ and $Y''\in\matc{s''}^g$ be separated semisimple points, and $Y=Y'\oplus Y''$.
Let $f_1$ be a $Y$-admissible linear map such that
$$(\matc{s'}\oplus\matc{s''})^g\subset \ker f_1.$$
Then there exists an nc function $f$, uniformly analytic on an nc ball about $Y$, such that $\Delta_Y^1f=f_1$ and $f$ vanishes on
$$\bigsqcup_n \big(\matcn\otimes (\matc{s'}\oplus\matc{s''})\big)^g.$$
\end{lem}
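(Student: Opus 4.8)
The plan is to run the minimal propagation of Theorem~\ref{t:min} on the two‑term sequence $(0,f_1)$, but with the $\cC(Y)$‑bimodule projection chosen to be compatible with the block structure of $Y=Y'\oplus Y''$, and then to read off the desired vanishing from the shape of the propagated differentials. Put $s=s'+s''$, write $M_{\mathrm d}=(\matc{s'}\oplus\matc{s''})^g$ for the tuples of block‑diagonal matrices and $M_{\mathrm o}$ for the tuples of off‑block‑diagonal matrices, so $\matc s^g=M_{\mathrm d}\oplus M_{\mathrm o}$. Because $Y'$ and $Y''$ are separated, $\cC(Y)=\cC(Y')\oplus\cC(Y'')$ consists of block‑diagonal matrices; hence $M_{\mathrm d},M_{\mathrm o}$ are $\cC(Y)$‑sub‑bimodules of $\matc s^g$, and $[\matc s,Y]$ splits compatibly as $[\matc s,Y]=\bigl([\matc s,Y]\cap M_{\mathrm d}\bigr)\oplus\bigl([\matc s,Y]\cap M_{\mathrm o}\bigr)$, with $[\matc s,Y]\cap M_{\mathrm d}=[\matc{s'},Y']\oplus[\matc{s''},Y'']$. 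Since $\cC(Y)$ is semisimple, fix $\cC(Y)$‑bimodule complements $N_{\mathrm d}\subseteq M_{\mathrm d}$ and $N_{\mathrm o}\subseteq M_{\mathrm o}$ of these two summands and let $\pi\colon\matc s^g\to[\matc s,Y]$ be the projection along $N_{\mathrm d}\oplus N_{\mathrm o}$; then $\pi(M_{\mathrm d})\subseteq[\matc s,Y]\cap M_{\mathrm d}$ and the complementary projection $\sigma\colon\matc s^g\to\ker\pi$ maps $M_{\mathrm d}$ into $N_{\mathrm d}$. Moreover the $\cC(Y)$‑bimodule right inverse $\phi\colon[\matc s,Y]\to\matc s$ of $S\mapsto[S,Y]$ entering the proof of Theorem~\ref{t:min} may be taken with $\phi\bigl([\matc s,Y]\cap M_{\mathrm d}\bigr)\subseteq\matc{s'}\oplus\matc{s''}$ (choose a right inverse of the surjection $\matc{s'}\oplus\matc{s''}\twoheadrightarrow[\matc s,Y]\cap M_{\mathrm d}$, $S\mapsto[S,Y]$, and extend it arbitrarily over $[\matc s,Y]\cap M_{\mathrm o}$). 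Thus $(\phi\circ\pi)(M_{\mathrm d})$ consists of block‑diagonal matrices, and multiplying a tuple in $M_{\mathrm d}$ by a block‑diagonal matrix keeps it in $M_{\mathrm d}$.

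Since $f_1$ is $Y$‑admissible, $(0,f_1)$ satisfies $\LAC_1(Y)$ by Remark~\ref{r:adm}, so with the above data Theorem~\ref{t:min} produces a propagation $(f_\ell)_{\ell\ge0}$ satisfying $\LAC(Y)$, $f_\ell|_{(\ker\pi)^\ell}=0$ for $\ell\ge2$, and $\limsup_\ell\sqrt[\ell]{\|f_\ell\|_{\cb}}<\infty$; by Theorem~\ref{t:old4} this is the differential sequence $(\Delta_Y^\ell f)_\ell$ of a uniformly analytic nc function $f$ on some nc ball $\B_\ve(Y)$, with $\Delta_Y^1f=f_1$ and $f(Y)=f_0=0$. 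The crux is to show that $f_\ell$ vanishes on $M_{\mathrm d}^\ell$ for every $\ell\ge1$; for $\ell=1$ this is the hypothesis $M_{\mathrm d}\subseteq\ker f_1$. For $\ell\ge2$ one unwinds the recursion~\eqref{e:ugly} defining $f_\ell=f_{\ell,0}$, proving by induction on $\ell$ together with a decreasing induction on $m$ that $f_{\ell,m}$ vanishes whenever its first $m$ arguments (the $\ker\pi$‑slots) lie in $N_{\mathrm d}$ and the remaining ones lie in $M_{\mathrm d}$: indeed $f_{\ell,\ell}=0$, and in each branch of~\eqref{e:ugly} the substituted arguments $\sigma(Z^{m+1})\in N_{\mathrm d}$, $W^m(\phi\circ\pi)(Z^{m+1})\in M_{\mathrm d}$ and $(\phi\circ\pi)(Z^{m+1})Z^{m+2}\in M_{\mathrm d}$ stay in the required subspaces, so each summand is killed either by the inner induction on $m$ or by the outer induction on $\ell$ (the one place where the original datum $f_1$ is invoked is at $\ell=2$, via $f_{1,0}=f_1$).

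Finally, I pass from the germ to the function. If $X\in\matc{ns}^g\cap\B_\ve(Y)$ has all $X_j\in\matcn\otimes(\matc{s'}\oplus\matc{s''})$, then $X-\oplus^nY\in\matcn\otimes M_{\mathrm d}$; writing $X-\oplus^nY=\sum_kA_k\otimes W_k$ with $A_k\in\matcn$ and $W_k\in M_{\mathrm d}$, the construction of the amplified map $(f_\ell)_n$ gives $(f_\ell)_n(X-\oplus^nY,\dots,X-\oplus^nY)=\sum_{k_1,\dots,k_\ell}(A_{k_1}\cdots A_{k_\ell})\otimes f_\ell(W_{k_1},\dots,W_{k_\ell})$, which vanishes by the previous step. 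Substituting into the Taylor--Taylor series~\eqref{e:old3} of $f$ at $Y$ yields $f(X)=0$; that is, $f$ vanishes on $\bigsqcup_n(\matcn\otimes(\matc{s'}\oplus\matc{s''}))^g$ throughout its domain, as claimed.

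I expect the main obstacle to be the bimodule bookkeeping of the first paragraph together with its propagation through~\eqref{e:ugly}: one must arrange that $\pi$ (equivalently $\sigma$) \emph{and} $\phi$ simultaneously respect the diagonal/off‑diagonal decomposition of $\matc s^g$, and then verify that this compatibility is preserved along every branch of that rather involved recursion. Once this is set up, the convergence estimate and the reduction from the nc function to its Taylor--Taylor series are immediate from Theorems~\ref{t:min} and~\ref{t:old4} and the definition of the amplified maps.
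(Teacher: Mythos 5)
Your proposal is correct and follows essentially the same route as the paper: choose the $\cC(Y)$-bimodule projection $\pi$ compatibly with the block-diagonal subspace $(\matc{s'}\oplus\matc{s''})^g$, run the minimal propagation of Theorem \ref{t:min} on $(0,f_1)$, show by induction through the recursion \eqref{e:ugly} that every $f_\ell$ kills block-diagonal tuples, and conclude via the Taylor--Taylor series and ampliations. The only cosmetic difference is that you arrange the right inverse $\phi$ by hand, whereas the paper observes this compatibility is automatic for any right inverse from \eqref{e:0}, since $\cC(Y)\subseteq\matc{s'}\oplus\matc{s''}$ (and by the uniqueness in Theorem \ref{t:min} the propagation does not depend on $\phi$ anyway).
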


\begin{proof}
Since $Y'$ and $Y''$ are separated semisimple points, we have $\cC(Y)\subseteq \matc{s'}\oplus\matc{s''}$. Therefore $(\matc{s'}\oplus\matc{s''})^g$ is a $\cC(Y)$-bimodule, so we can choose the projection $\pi$ from the beginning of the section in such a way that
$$\pi(\matc{s'}\oplus\matc{s''})^g\subseteq (\matc{s'}\oplus\matc{s''})^g.$$
Since $(0,f_1)$ satisfies $\LAC_1(Y)$, there exists $(f_\ell)_\ell$ as in Theorem \ref{t:min}, so there is a uniformly analytic nc function on an nc ball about $Y$ given by
$$f(X)=\sum_{\ell=0}^\infty f_\ell(X-\oplus^nY,\dots,X-\oplus^nY).$$
By Theorem \ref{t:old4} it suffices to show that
\begin{equation}\label{e:72}
f_\ell(Z'^1\oplus Z''^1,\dots,Z'^\ell\oplus Z''^\ell)=0
\end{equation}
for all $\ell\ge1$ and $Z'^j\in\matc{s'}^g$, $Z''^j\in\matc{s''}^g$. For $\ell=1$, \eqref{e:72} holds by the assumption. Since $\cC(Y)\subseteq \matc{s'}\oplus\matc{s''}$, we have
$$\phi\big[\matc{s'}\oplus\matc{s''},Y\big]\subseteq \matc{s'}\oplus\matc{s''}$$
by \eqref{e:0}, where $\phi$ is a right inverse of $S\mapsto [S,Y]$. Therefore
\begin{equation}\label{e:73}
(\phi\circ \pi)(\matc{s'}\oplus\matc{s''})^g\subseteq \matc{s'}\oplus\matc{s''}
\end{equation}
by the choice of $\pi$. Moreover,
\begin{equation}\label{e:74}
\sigma(\matc{s'}\oplus\matc{s''})^g\subseteq (\matc{s'}\oplus\matc{s''})^g,
\end{equation}
where $\sigma:\matcs^g\to\ker\pi$ is the projection onto $\ker\pi$ along $[\matcs,Y]$. Now \eqref{e:72} follows by induction on $\ell$ using the recursive relations \eqref{e:ugly}, \eqref{e:73} and \eqref{e:74}.
\end{proof}

\begin{cor}\label{c:inj}
If $Y'$ and $Y''$ are separated semisimple points, then the canonical homomorphism
$\Oua_{Y'\oplus Y''}\to \Oua_{Y'}\times \Oua_{Y''}$ is not injective.
\end{cor}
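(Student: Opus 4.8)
The plan is to construct, via Lemma~\ref{l:inj}, a uniformly analytic nc function $f$ about $Y:=Y'\oplus Y''$ that is nonzero as a germ but whose image under the canonical homomorphism vanishes; the whole argument hinges on producing a suitable nonzero input $f_1$ for that lemma, namely a nonzero $Y$-admissible linear map $f_1\colon(\matc{s}^g)\to\matc{s}$ (with $s=s'+s''$) annihilating $(\matc{s'}\oplus\matc{s''})^g$. To find it, view $\matc{s}$ as block matrices with diagonal blocks of sizes $s'$ and $s''$, and write $\matc{s}=(\matc{s'}\oplus\matc{s''})\oplus N$ with $N=\C^{s'\times s''}\oplus\C^{s''\times s'}$ the off-diagonal part. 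Since $Y'$ and $Y''$ are separated, $\cC(Y)=\cC(Y')\oplus\cC(Y'')$ is block diagonal, so this is a decomposition of $\cC(Y)$-bimodules; and since $Y$ is block diagonal, $[\matc{s},Y]$ splits accordingly, giving
\[
\matc{s}^g/[\matc{s},Y]\ \cong\ \big((\matc{s'}\oplus\matc{s''})^g/[\matc{s'}\oplus\matc{s''},Y]\big)\ \oplus\ \big(N^g/[N,Y]\big)
\]
as $\cC(Y)$-bimodules. Hence the $Y$-admissible linear maps killing $(\matc{s'}\oplus\matc{s''})^g$ are exactly those factoring through $N^g/[N,Y]$, i.e.\ the elements of $\Hom_{\cC(Y)-\cC(Y)}(N^g/[N,Y],\matc{s})$.

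Next I would check this space is nonzero. The $\cC(Y)$-bimodule map $N\to N^g$, $S\mapsto[S,Y]$, has kernel $\cC(Y)\cap N=\{0\}$ by separatedness, so its image $[N,Y]$ has dimension $\dim N=2s's''$, strictly smaller than $\dim N^g=2gs's''$ as soon as $g\ge2$; thus $N^g/[N,Y]\neq\{0\}$. Since $\cC(Y)$ is semisimple by Remark~\ref{r:facts}(i) and $\matc{s}$ is a central simple $\C$-algebra containing $\cC(Y)$, Lemma~\ref{l:n0} yields a nonzero $\cC(Y)$-bimodule homomorphism $\phi\colon N^g/[N,Y]\to\matc{s}$. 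Composing $\phi$ with the $\cC(Y)$-bimodule projection $\matc{s}^g\to N^g/[N,Y]$ produces a $Y$-admissible linear map $f_1$ with $(\matc{s'}\oplus\matc{s''})^g\subseteq\ker f_1$ and $f_1\neq0$.

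Feeding $f_1$ to Lemma~\ref{l:inj} yields an nc function $f$, uniformly analytic on an nc ball about $Y$, with $\Delta_Y^1f=f_1$ and $f$ vanishing on $\bigsqcup_n\big(\matcn\otimes(\matc{s'}\oplus\matc{s''})\big)^g$. On one hand $f\neq0$ in $\Oua_Y$, since $\Delta_Y^1f=f_1\neq0$. On the other hand, let $(f',f'')$ be the image of $f$ under the canonical homomorphism as in Remark~\ref{r:ss}. For each $n$ and each $X'\in\Nilp(Y')$ of size $ns'$, taking $X''=\oplus^nY''$, the shuffled tuple $K(X'\oplus X'')K^{-1}$ lies in $\big(\matcn\otimes(\matc{s'}\oplus\matc{s''})\big)^g\cap\Nilp(Y)$, hence in the vanishing locus of $f$; so \eqref{e:dir} forces $f'(X')=0$. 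Letting $X'$ range over all of $\Nilp(Y')$ gives $f'=0$ in $\Oua_{Y'}$ (a uniformly analytic germ is determined by its values on $\Nilp$, since its differentials are read off the block-triangular tuples there), and symmetrically $f''=0$. Thus $f$ is a nonzero element of the kernel of $\Oua_{Y'\oplus Y''}\to\Oua_{Y'}\times\Oua_{Y''}$.

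I expect the main obstacle to be the bimodule bookkeeping of the first two paragraphs: checking that $[\matc{s},Y]$ is compatible with the diagonal/off-diagonal splitting and, above all, that the off-diagonal quotient $N^g/[N,Y]$ is genuinely nonzero and maps nontrivially into $\matc{s}$ — this is where separatedness and (through the dimension count) the hypothesis $g\ge2$ enter, and where Lemma~\ref{l:n0} does the work. Once a suitable $f_1$ is in hand, the passage from ``$f$ vanishes on block-diagonal tuples'' to ``$f$ dies under the canonical homomorphism'' is a direct application of the direct-sum identity \eqref{e:dir} for nc functions, and the non-vanishing of $f$ is immediate from $\Delta_Y^1f=f_1$.
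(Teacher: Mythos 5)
Your proposal is correct and follows essentially the same route as the paper: a nonzero $Y$-admissible linear map $f_1$ annihilating $(\matc{s'}\oplus\matc{s''})^g$ is obtained from Lemma \ref{l:n0} applied to the nonzero $\cC(Y)$-bimodule $\matc{s'+s''}^g\big/\big([\matc{s'+s''},Y'\oplus Y'']+(\matc{s'}\oplus\matc{s''})^g\big)$, and Lemma \ref{l:inj} then supplies the kernel element. Your identification of that quotient with $N^g/[N,Y]$ is in fact a sharper bookkeeping than the paper's rough dimension estimate (and makes visible that $g\ge 2$ is what is really used), while your closing verification via \eqref{e:dir} merely makes explicit what the paper asserts directly.
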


\begin{proof}
Let $Y'\in\matc{s'}^g$ and $Y''\in\matc{s''}^g$. Since they are separated semisimple points, we have $\cC(Y'\oplus Y'')\subseteq \matc{s'}\oplus\matc{s''}$. Therefore
$(\matc{s'}\oplus\matc{s''})^g$ is a $\cC(Y)$-bimodule. Furthermore,
$$\dim [\matc{s'+s''},Y'\oplus Y'']+\dim (\matc{s'}\oplus\matc{s''})^g<(s'+s'')^2+g((s')^2+(s'')^2)$$
and so
$$\matc{s'+s''}^g\Big/ \big( [\matc{s'+s''},Y'\oplus Y'']+(\matc{s'}\oplus\matc{s''})^g\big)$$
is a nonzero $\cC(Y)$-bimodule. Therefore there exists a nonzero $\cC(Y)$-bimodule homomorphism $f_1:\matc{s'+s''}^g\to\matc{s'+s''}$ such that
$$[\matc{s'+s''},Y'\oplus Y'']+(\matc{s'}\oplus\matc{s''})^g\subseteq\ker f_1$$
by Lemma \ref{l:n0}. Hence the assumptions on Lemma \ref{l:inj} are satisfied, and let $f$ be the resulting nc function. Then $f\in\ker(\Oua_{Y'\oplus Y''}\to \Oua_{Y'}\times \Oua_{Y''})$.
\end{proof}

%%%%%%%%%%%%%%%%%%%%%%%%%%%%%%%%%%%%%%%%%%%%%%%%%%%%%%%%%%%%%%%%%%%%%%%%%%%%
%%%%%%%%%%%%%%%%%%%%%%%%%%%%%%%%%%%%%%%%%%%%%%%%%%%%%%%%%%%%%%%%%%%%%%%%%%%%


\begin{thebibliography}{KK}
	
\bibitem[AK-V15]{AKV}
G. Abduvalieva, D. S. Kaliuzhnyi-Verbovetskyi:
{\it Implicit/inverse function theorems for free noncommutative functions},
J. Funct. Anal. 269 (2015) 2813--2844. 

\bibitem[AM15]{AM0}
J. Agler, J. E. McCarthy:
{\it Global holomorphic functions in several noncommuting variables},
Canad. J. Math. 67 (2015) 241--285. 

\bibitem[AM16]{AM1}
J. Agler, J. E. McCarthy:
{\it Aspects of non-commutative function theory},
Concr. Oper. 3 (2016) 15--24.

\bibitem[AM16']{AM2}
J. Agler, J. E. McCarthy:
{\it The implicit function theorem and free algebraic sets},
Trans. Amer. Math. Soc. 368 (2016) 3157--3175. 

\bibitem[Ami66]{Ami}
S. A. Amitsur:
{\it Rational identities and applications to algebra and geometry},
J. Algebra 3 (1966) 304--359.

\bibitem[Art68]{Art}
M. Artin:
{\it On the solutions of analytic equations},
Invent. Math. 5 (1968) 277--291.

\bibitem[Aug19]{Aug}
M. Augat:
{\it The free Grothendieck theorem},
Proc. London Math. Soc. 118 (2019) 787--825.

\bibitem[BGM05]{BGM}
J. A. Ball, G. Groenewald, T. Malakorn:
{\it Structured noncommutative multidimensional linear systems},
SIAM J. Control Optim. 44 (2005) 1474--1528.

\bibitem[BMV18]{BMV}
J. A. Ball, G. Marx, V. Vinnikov:
{\it Interpolation and transfer-function realization for the noncommutative Schur--Agler class},
Operator theory in different settings and related applications, 23--116,
Oper. Theory Adv. Appl. 262, Birkh{\"a}user/Springer, Cham, 2018.

\bibitem[Ber70]{Ber}
G. M. Bergman:
{\it Skew fields of noncommutative rational functions, after Amitsur},
in S\'eminaire M.P. Sch\"utzenberger, A. Lentin et M. Nivat 1969/70, Probl. Math. Theor. Automates 16, Paris, 1970.

\bibitem[Bla06]{Bla}
B. Blackadar:
{\it Operator algebras. Theory of $C^*$-algebras and von Neumann algebras},
Encyclopaedia of Mathematical Sciences 122, Operator Algebras and Non-commutative Geometry III, Springer-Verlag, Berlin, 2006.

\bibitem[BS99]{BS}
M. Bre\v{s}ar, P. \v{S}emrl:
{\it On locally linearly dependent operators and derivations},
Trans. Amer. Math. Soc. 351 (1999) 1257--1275. 

\bibitem[Coh06]{Coh}
P.M. Cohn:
{\it Free ideal rings and localization in general rings},
New Mathematical Monographs 3, 
Cambridge University Press, Cambridge, 2006.

\bibitem[CR94]{CR}
P.M. Cohn, C. Reutenauer:
{\it A normal form in free fields},
Canad. J. Math. 46 (1994) 517--531.

\bibitem[DK+]{DK}
K. R. Davidson, M. Kennedy:
{\it Noncommutative Choquet theory},
preprint \texttt{arXiv:1905.08436}.

\bibitem[Dic82]{Dic}
W. Dicks:
{\it A commutator test for two elements to generate the free algebra of rank two},
Bull. London Math. Soc. 14 (1982) 48--51.

\bibitem[GMS18]{GMS}
E. Griesenauer, P. S. Muhly, B. Solel:
{\it Boundaries, bundles and trace algebras},
New York J. Math. 24a (2018) 136--154.

\bibitem[HKM11]{HKM}
J. W. Helton, I. Klep, S. McCullough:
{\it Proper analytic free maps},
J. Funct. Anal. 260 (2011) 1476--1490.

\bibitem[HKM12]{HKM1}
J. W. Helton, I. Klep, S. McCullough:
{\it Free analysis, convexity and LMI domains},
Mathematical methods in systems, optimization, and control, 195--219,
Oper. Theory Adv. Appl. 222, Birkh{\"a}user/Springer Basel AG, Basel, 2012. 

\bibitem[HMS18]{HMS}
J. W. Helton, T. Mai, R. Speicher:
{\it Applications of realizations (aka linearizations) to free probability},
J. Funct. Anal. 274 (2018) 1--79.

\bibitem[HMV06]{HMV}
J. W. Helton, S. McCullough, V. Vinnikov:
{\it Noncommutative convexity arises from linear matrix inequalities},
J. Funct. Anal. 240 (2006) 105--191.

\bibitem[K-VV12]{KVV2}
D. S. Kaliuzhnyi-Verbovetskyi, V. Vinnikov:
{\it Noncommutative rational functions, their difference-differential calculus and realizations}, 
Multidimens. Syst. Signal Process. 23 (2012) 49--77.

\bibitem[K-VV14]{KVV3}
D. S. Kalyuzhnyi-Verbovetskyi, V. Vinnikov:
{\it Foundations of free noncommutative function theory},
Mathematical Surveys and Monographs 199,
American Mathematical Society, Providence RI, 2014.

\bibitem[K\v{S}17]{KS}
I. Klep, \v{S}. \v{S}penko:
{\it Free function theory through matrix invariants},
Canad. J. Math. 69 (2017) 408--433.

\bibitem[KVV+]{KVV}
I. Klep, V. Vinnikov, J. Vol\v{c}i\v{c}:
{\it Multipartite rational functions},
preprint \texttt{arXiv:1509.03316}.

\bibitem[KV17]{KV}
I. Klep, J. Vol\v{c}i\v{c}:
{\it Free loci of matrix pencils and domains of noncommutative rational functions},
Comment. Math. Helv. 92 (2017) 105--130.

\bibitem[Kra92]{Kra}
S. G. Krantz:
{\it Function theory of several complex variables},
2nd edition, The Wadsworth \& Brooks/Cole Mathematics Series, Wadsworth \& Brooks/Cole Advanced Books \& Software, Pacific Grove, CA, 1992.

\bibitem[Lam91]{Lam}
T. Y. Lam:
{\it A first course in noncommutative rings},
Graduate Texts in Mathematics 131, Springer-Verlag, New York, 1991.

\bibitem[Lum91]{Lum}
D. Luminet:
{\it Functions of several matrices},
Boll. Un. Mat. Ital. B (7) 11 (1997) 563--586.

\bibitem[Mat89]{Mat}
H. Matsumura:
{\it Commutative ring theory},
2nd edition, Cambridge Studies in Advanced Mathematics 8, Cambridge University Press, Cambridge, 1989.

\bibitem[MS11]{MS}
P. S. Muhly, B. Solel:
{\it Progress in noncommutative function theory},
Sci. China Math. 54 (2011) 2275--2294.

\bibitem[Pas14]{Pas}
J. E. Pascoe:
{\it The inverse function theorem and the Jacobian conjecture for free analysis},
Math. Z. 278 (2014) 987--994.

\bibitem[Pau02]{Pau}
V. Paulsen:
{\it Completely bounded maps and operator algebras},
Cambridge Studies in Advanced Mathematics 78, Cambridge University Press, Cambridge, 2002.

\bibitem[Pis03]{Pis}
G. Pisier:
{\it Introduction to operator space theory},
London Mathematical Society Lecture Note Series 294, Cambridge University Press, Cambridge, 2003.

\bibitem[Pop02]{Pop1}
G. Popescu:
{\it Central intertwining lifting, suboptimization, and interpolation in several variables},
J. Funct. Anal. 189 (2002) 132--154. 

\bibitem[Pop06]{Pop2}
G. Popescu:
{\it Free holomorphic functions on the unit ball of {$B(\mathscr{H})^n$}},
J. Funct. Anal. 241 (2006) 268--333.

\bibitem[Pop08]{Pop3}
G. Popescu:
{\it Free holomorphic functions and interpolation},
Math. Ann. 342 (2008) 1--30. 

\bibitem[Pro76]{Pro}
C. Procesi:
{\it The invariant theory of $n\times n$ matrices},
Adv. Math. 19 (1976) 306--381.

\bibitem[Pro07]{Pro1}
C. Procesi:
{\it Lie groups. An approach through invariants and representations},
Universitext, Springer, New York, 2007.

\bibitem[Rei93]{Rei}
Z. Reichstein:
{\it On automorphisms of matrix invariants},
Trans. Amer. Math. Soc. 340 (1993) 353--371. 

\bibitem[RLL00]{RLL}
M. R{\o}rdam, F. Larsen, N. Laustsen:
{\it An introduction to $K$-theory for $C^*$-algebras},
London Mathematical Society Student Texts 49, Cambridge University Press, Cambridge, 2000.

\bibitem[Row80]{Row}
L. H. Rowen:
{\it Polynomial identities in ring theory},
Pure and Applied Mathematics 84, Academic Press, Inc., New York-London, 1980.

\bibitem[Rya02]{Rya}
R. A. Ryan:
{\it Introduction to Tensor Products of Banach Spaces},
Springer Monographs in Mathematics, Springer-Verlag London, Ltd., London, 2002.

\bibitem[SSS18]{SSS}
G. Salomon, O. Shalit, E. Shamovich:
{\it Algebras of bounded noncommutative analytic functions on subvarieties of the noncommutative unit ball},
Trans. Amer. Math. Soc. 370 (2018) 8639--8690.

\bibitem[Sal99]{Sal}
D. J. Saltman:
{\it Lectures on division algebras},
CBMS Regional Conference Series in Mathematics 94,
American Mathematical Society, Providence RI, 1999.

\bibitem[Shi19]{Shi}
Y. Shitov:
{\it An improved bound for the length of matrix algebras},
Algebra Number Theory 6 (2019) 1501--1507.

\bibitem[Sch85]{Sch}
A. H. Schofield:
{\it Representation of rings over skew fields},
London Mathematical Society Lecture Note Series 92, Cambridge University Press, Cambridge, 1985. 

\bibitem[Tak67]{Tak}
M. Takesaki:
{\it A duality in the representation theory of $C^*$-algebras},
Ann. Math. 85 (1967) 370--382.

\bibitem[Tay72]{Tay1}
J. L. Taylor:
{\it A general framework for a multi-operator functional calculus},
Adv. Math. 9 (1972) 183--252.

\bibitem[Tay73]{Tay2}
J. L. Taylor:
{\it Functions of several noncommuting variables},
Bull. Amer. Math. Soc. 79 (1973) 1--34. 

\bibitem[Voi10]{Voi}
D.-V. Voiculescu:
{\it Free analysis questions II: the Grassmannian completion and the series expansions at the origin}, 
J. Reine Angew. Math. 645 (2010) 155--236. 

\bibitem[Vol18]{Vol}
J. Vol\v{c}i\v{c}:
{\it Matrix coefficient realization theory of noncommutative rational functions}, 
J. Algebra 499 (2018) 397--437.

\end{thebibliography}
\end{document}